\newtheorem{theo}{Theorem}[section]
\newtheorem{lem}[theo]{Lemma}
\newtheorem{coro}[theo]{Corollary}
\newtheorem{que}[theo]{Question}
\theoremstyle{definition}
\newtheorem{defi}[theo]{Definition}
\newtheorem{ex}[theo]{Example}
\newtheorem{rmk}[theo]{Remark}
\newtheorem*{nota}{Notation}
\crefname{theo}{Theorem}{Theorems}
\crefname{lem}{Lemma}{Lemmas}
\crefname{coro}{Corollary}{Corollaries}
\crefname{que}{Question}{Questions}
\crefname{defi}{Definition}{Definitions}
\crefname{ex}{Example}{Examples}
\crefname{rmk}{Remark}{Remarks}
\crefname{enumi}{}{}
\crefname{section}{Section}{Sections}
\newabbreviation{lpc}{lpc}{locally positively closed}
\newabbreviation{lpco}{LPCo}{locally positively compact}
\newabbreviation{lp type}{lp{\hyp}type}{local positive type}
\newabbreviation{lp formula}{lp{\hyp}formula}{local positive formula}
\newabbreviation{LJCP}{LJCP}{local joint continuation property}
\newabbreviation{lp logic topology}{lp{\hyp}logic topology}{local positive logic topology}
\renewcommand*\qedsymbol{\setbox0=\hbox{\quad\footnotesize{\normalfont Q.E.D.}}\kern\wd0 \strut \hfill \kern-\wd0 \box0}
\newcommand*\N{\mathbb{N}}
\newcommand*\Z{\mathbb{Z}}
\newcommand*\Q{\mathbb{Q}}
\newcommand*\R{\mathbb{R}}
\newcommand*\C{\mathbb{C}}
\newcommand*\tbullet{\text{\raisebox{0.25pt}{\scalebox{0.6}{$\bullet$}}}}
\newcommand*\sth{:}
\newcommand*\map{\colon}
\def\rfrac#1#2{\def\rf@num{#1}\def\rf@den{#2}\mathpalette\rfr@c\relax} \def\rfr@c#1#2{\raise.5ex\hbox{$\mathsurround=0pt #1\rf@num$}\kern-0.1em/\kern-0.1em\lower.25ex\hbox{$\mathsurround=0pt #1\rf@den$}}\catcode`@=12
\newcommand*\Ima{{\mathrm{Im}}}
\newcommand*\tp{{\mathrm{tp}}}
\newcommand*\inc{{\iota}}
\newcommand*\id{{\mathrm{id}}}
\newcommand*\For{{\mathrm{For}}}
\newcommand*\LFor{{\mathrm{LFor}}}
\newcommand*\qftp{{\mathrm{qftp}}}
\newcommand*\ltp{{\mathrm{ltp}}}
\newcommand*\premodels{\mathrel{\mathord{\models}_\star}}
\newcommand*\premodelspc{\mathrel{\mathord{\models}^{\mathrm{pc}}_\star}}
\newcommand*\Aut{{\mathrm{Aut}}}
\newcommand*\dd{{\mathrm{d}}}
\newcommand*\lang{{\mathtt{L}}}
\newcommand*\theory{{\mathtt{T}}}
\newcommand*\Th{{\mathrm{Th}}}
\newcommand*\Stone{\mathrm{S}}
\newcommand*\loc{{\mathrm{loc}}}
\newcommand*\pc{{\mathrm{pc}}}
\newcommand*\cf{{\mathrm{cf}}}
\newcommand*\Com{{\mathrm{Com}}}
\newcommand*\retractor{\mathfrak{C}}
\newcommand*\proj{{\mathrm{proj}}}
\newcommand*\Dom{\mathrm{Dom}}
\newcommand*\Dtt{\mathtt{D}}
\newcommand*\Sorts{\mathtt{S}}
\newcommand*\BoundConst{\mathtt{B}}
\newcommand*\Bound{\mathrm{B}}
\newcommand*\ev{\mathrm{ev}}
\newcommand*\toprel{\mathrel{\top}}
\newcommand*\Mod{\mathcal{M}}
\newcommand*\inftesimal{\mathfrak{g}}
\newcommand*\Gscr{\mathscr{G}}
\newcommand*\cl{{\mathrm{cl}}}
\title{Retractors in local positive logic}
\author{Arturo Rodr\'{\i}guez Fanlo} 
\thanks{Rodr\'{\i}guez Fanlo was supported by the Israel Academy of Sciences and Humanities \& Council for Higher Education Excellence Fellowship Program for International Postdoctoral Researchers and partially supported by STRANO PID2021-122752NB-I0.}
\address{Autonomous University of Madrid, Ciudad Universitaria de Cantoblanco, 28049, Madrid, España.}
\email[A.~Rodr\'{\i}guez Fanlo]{arturo.rodriguez@uam.es}
\author{Ori Segel}
\address{Einstein Institute of Mathematics, Hebrew University of Jerusalem, 91904, Jerusalem, Israel.}
\email[O.~Segel]{Ori.Segel@mail.huji.ac.il}
\keywords{Positive logic, local logic, local positive types, retractors, omitting types}
\subjclass[2010]{03C95, 03B60}
\begin{document}
\begin{abstract}
We study type spaces and saturation for local positive logic.
\end{abstract}
\maketitle
\section*{Introduction}
This paper continues the work initiated by the authors on the development of local positive logic \cite{rodriguez2024completeness} and motivated by \cite[Section 2]{hrushovski2022lascar}. In this paper we study type spaces and saturated models (here called \emph{retractors}) for this new logic.

Types for local positive logic were not really discussed in \cite{hrushovski2022lascar}. Here, we cover the study of types for local positive logic in \cref{s:types}. The right notion of types for local positive logic seems to be \emph{local positive types} (\cref{d:local positive type}). For this notion we find in \cref{l:boundedly satisfiable} the right version of finite satisfiability. It is worth noting that for pointed variables (for instance, in the one{\hyp}sort case previously studied by Hrushovski) local positive types can be replaced by positive types (\cref{l:partial positive types}). We also study the topology of spaces of local positive types and adapt the definitions of Hausdorffness and semi{\hyp}Hausdorffness for local positive logic. We conclude giving characterisations of some completeness properties in terms of local positive types (\cref{l:completeness via compatibility,l:local joint continuation property via compatibility}).

In his paper, Hrushovski mostly focuses on the study of \emph{retractor spaces}, being \cite[Proposition 2.1]{hrushovski2022lascar} his main result on local positive logic. We continue his study, going deeper, in \cref{s:retractors}. We start the section explaining the different equivalent definitions of retractors (\cref{t:retractors}) and also the first existence and uniqueness results (\cref{t:relative retractor}). In \cref{s:local positive logic topologies}, we prove the Universality Lemma (\cref{t:universality lemma}), which is a stronger version of \cite[Lemma 2.4]{hrushovski2022lascar} and generalises \cite[Lemma 2.24]{segel2022positive}. In \cref{s:local positive compactness}, we recall from \cite{hrushovski2022lascar} the equivalence of the existence of retractors and local positive compactness (called \emph{primitive positive compactness}, ppC, in \cite{hrushovski2022lascar}). After that, we discus some consequences of this equivalence under different completeness properties. In \cref{s:local retractors and non-local universal models}, we discus the relationship between local retractors and non{\hyp}local homomorphism universal models. 

The study of the group of automorphisms of the core space was another fundamental topic in \cite{hrushovski2022definability,hrushovski2022lascar}. We generalise this study to retractors in arbitrary local positive logics in \cref{s:automorphisms of retractor}. In particular, we generalise several results from \cite{hrushovski2022definability} that were not studied in \cite{hrushovski2022lascar}.

This is the second of a series of papers which aims to extend several results of \cite{hrushovski2022lascar}. In subsequent papers we will study definability patterns for local positive logic, extending \cite{hrushovski2022definability,segel2022positive}. Finally, we will apply our results to study hyperdefinable approximate subgroups and rough approximate subgroups. 

\section{Preliminaries in local positive logic}
We start by recalling some basic definitions about local positive logic. We will follow the terminology and notation of \cite{rodriguez2024completeness}. For preliminaries in positive logic we refer to \cite{yaacov2007fondements,poizat2008positive,yaacov2003positive}.

\subsection{Local languages}
A \emph{local (first{\hyp}order) language} $\lang$ is a first{\hyp}order language\footnote{As it is common practice in positive logic, we consider that every first{\hyp}order language contains a binary relation of equality $=$ for each sort and two $0${\hyp}ary relations $\bot$ and $\top$, where $\top$ is always true and $\bot$ is always false.} with the following additional data:
\begin{enumerate}[label={\rm{(\roman*)}}, wide]
\item It is a relational language with constants, i.e. it has no function symbols except for constant symbols.
\item It has a distinguished partial ordered commutative monoid $\Dtt^s{\coloneqq} (\Dtt^s,\dd^0,\ast_{s},\prec_s)$ of binary relation symbols for each single sort $s$. The binary relation symbols in $\Dtt^s$ are the \emph{locality relation symbols} on the sort $s$. The identity $\dd^0$ of $\Dtt^s$ is the equality symbol on sort $s$ and it is also the least element for $\prec_s$. 
\item There is a function $\BoundConst$ assigning to any pair $c_1\, c_2$ of constant symbols of the same sort a locality relation symbol $\BoundConst_{c_1,c_2}$ on that sort. We call $\BoundConst$ the \emph{bound function} and $\BoundConst_{c_1,c_2}$ the \emph{bound} for $c_1$ and $c_2$.
\end{enumerate}

We call $(\Dtt,\BoundConst)$ the \emph{local signature} of $\lang$. A \emph{sort} is a tuple of single sorts; its \emph{arity} is its length. We write $\Sorts$ for the set of single sorts. A \emph{variable} $x$ is a set of single variables; its \emph{arity} is its size. The \emph{sort} of a variable $x$ is the sort $s=(s_{x_i})_{x_i\in x}$ where $s_{x_i}$ is the sort of $x_i$. 

We say that a local language $\lang$ is \emph{pointed} if it has at least one constant symbol for every sort. We say that a set of parameters (new constants) $A$ is \emph{pointed} for $\lang$ if the expansion $\lang(A)$ is pointed. We say that a variable $x$ of sort $s$ is \emph{pointed} in $\lang$ if there is at least one constant symbol for every sort that does not appear in $s$. We say that a sort $s$ is \emph{pointed} if there is at least one constant symbol for every sort that does not appear in $s$. 

A formula is \emph{positive} if it is built from atomic formulas by conjunctions, disjunctions and existential quantifiers; let $\For_+$ denote the set of positive formulas. A formula is \emph{negative} if it is the negation of a positive formula; let $\For_-$ denote the set of negative formulas. Let $\varphi$ be a formula, $x$ a single variable, $\dd$ a locality relation on the sort of $x$ and $t$ a term of the same sort such that $x$ does not appear in $t$ (i.e. a constant or a variable different to $x$). The \emph{local existential quantification of $\varphi$ on $x$ in $\dd(t)$} is the formula $\exists x \in  \dd(t)\mathrel{} \varphi\coloneqq \exists x\mathrel{} \dd(x,t)\wedge \varphi$. A formula is \emph{local} if it is built from atomic formulas using negations, conjunctions and local existential quantification; let $\LFor$ denote the set of local formulas. 
A formula is a \emph{\gls{lp formula}} if it is built from atomic formulas using conjunctions, disjunctions and local existential quantification; let $\LFor_+$ denote the set of \glspl{lp formula}. A formula is \emph{local negative} if it is the negation of a \gls{lp formula}.

\subsection{Local structures}
A \emph{local $\lang${\hyp}structure} is an $\lang${\hyp}structure $M$ satisfying the following locality axioms:
\begin{enumerate}[label={\rm{(A{\arabic*})}}, ref={\rm{A{\arabic*}}}, wide]
\item \label{itm:axiom 1} Every locality relation symbol is interpreted as a symmetric binary relation.
\item \label{itm:axiom 2} Any two locality relation symbols $\dd_1$ and $\dd_2$ on the same sort with $\dd_1\preceq \dd_2$ satisfy $\dd^M_1\subseteq \dd^M_2$.
\item \label{itm:axiom 3} Any two locality relation symbols $\dd_1$ and $\dd_2$ on the same sort satisfy $\dd^M_1\circ \dd^M_2\subseteq (\dd_1\ast \dd_2)^M$.
\item \label{itm:axiom 4} Any two constant symbols $c_1$ and $c_2$ of the same sort satisfy $\BoundConst_{c_1,c_2}(c_1,c_2)$.
\item \label{itm:axiom 5} Any two single elements $a$ and $b$ of the same sort satisfy $\dd(a,b)$ for some locality relation $\dd$ on that sort. 
\end{enumerate}

For any element $a$ and locality relation $\dd$ on its sort, the \emph{$\dd${\hyp}ball at $a$} is the set $\dd(a)\coloneqq \{b\sth M\models \dd(a,b)\}$.

Expansions and reducts of local structures are defined in the usual way. 

\begin{rmk} The first four locality axioms could be easily expressed by first{\hyp}order universal axiom schemes. We write $\theory_\loc(\lang)$ for the universal first{\hyp}order theory given by axioms \cref{itm:axiom 1,itm:axiom 2,itm:axiom 3,itm:axiom 4} --- we omit $\lang$ if it is clear from the context.
\end{rmk}

We naturally restrict satisfaction to local structures. We write $M\models_\lang \Gamma$ to say that $M$ is an $\lang${\hyp}local model of $\Gamma$. We write $\Mod(\Gamma/\lang)$ for the class of $\lang${\hyp}local models of $\Gamma$. We say that $\Gamma$ is \emph{$\lang${\hyp}locally satisfiable} if it has an $\lang${\hyp}local model. We say that $\Gamma$ is \emph{finitely $\lang${\hyp}locally satisfiable} if every finite subset of $\Gamma$ is $\lang${\hyp}locally satisfiable. We typically omit $\lang$ if it is clear from the context.

\begin{nota} We write $\premodels$ for satisfaction for arbitrary structures and $\Mod_\star(\Gamma)$ for the class of arbitrary models of $\Gamma$.
\end{nota}

A \emph{negative $\lang${\hyp}local theory} $\theory$ is a set of negative sentences closed under local implication. We typically omit $\lang$ if it is clear from the context.  The \emph{associated positive counterpart} $\theory_+$ is the set of positive sentences whose negations are not in $\theory$. In other words, a positive formula $\varphi$ is in $\theory_+$ if and only if there is a local model $M\models_\lang \theory$ satisfying $\varphi$; we say in that case that $M$ is a \emph{witness of $\varphi\in\theory_+$}. We write $\theory_\pm\coloneqq \theory\wedge\theory_+$\footnote{We write $\Gamma\wedge\Gamma'\coloneqq\Gamma\cup \Gamma'$ for sets of formulas.}. 

For a local structure $M$, we write $\Th_-(M)$ for its negative theory, i.e. the set of negative sentences satisfied by $M$. For a subset $A$, we write $\Th_-(M/A)$ for the theory of the expansion of $M$ given by adding parameters for $A$. Note that $\Th_-(M)$ must be a local theory when $M$ is local.

\subsection{Positive closedness}
A homomorphism $f\map A\to B$ between two structures is a \emph{positive embedding} if $B\premodels \varphi(f(a))$ implies $A\premodels\varphi(a)$ for any positive formula $\varphi(x)$ and $a\in A^x$. A \emph{positive substructure} is a substructure $A\leq B$ such that the inclusion $\inc\map A\to B$ is a positive embedding; we denote it by $A\preceq^+B$\footnote{We use $\preceq^+$ imitating the notation for substructures and elementary substructures: we write $A\leq B$ to denote that $A$ is a substructure of $B$ and $A\preceq B$ to denote that $A$ is an elementary substructure of $B$.}. 

We say that a structure $A$ is \emph{positively closed} for a class of structures $\mathcal{K}$ if every homomorphism $f\map A\to B$ with $B\in\mathcal{K}$ is a positive embedding. We say that $M$ is a \emph{\gls{lpc} model} of $\Gamma$, written $M\models^\pc_\lang\Gamma$, if $M$ is a local model of $\Gamma$ positively closed for the class $\Mod(\Gamma/\lang)$ of local models of $\Gamma$; we write $\Mod^\pc(\Gamma/\lang)$ for the class of \gls{lpc} models of $\Gamma$. We say that $M$ is \gls{lpc} if it is a \gls{lpc} model of $\Th_-(M)$. We typically omit $\lang$ if it is clear from the context. Given two subsets $\Gamma_1$ and $\Gamma_2$ of first{\hyp}order sentences, we write $\Gamma_1\models^\pc_\lang\Gamma_2$ if $N\models_\lang \Gamma_2$ for any $N\models^\pc_\lang\Gamma_1$. 

\begin{nota} We say that $M$ is a \emph{(non{\hyp}local) positively closed model of $\Gamma$}, written $M\premodelspc\Gamma$, if $M$ is a model of $\Gamma$ and is positively closed for the class $\Mod_\star(\Gamma)$. We say that $\Gamma_1\premodelspc\Gamma_2$ if $N\premodels\Gamma_2$ for any $N\premodelspc\Gamma_1$ 
\end{nota}

\subsection{Completeness}
Let $\theory$ be a locally satisfiable negative local theory:
\begin{enumerate}[wide]
\item[\textrm{(I)}] $\theory$ is \emph{irreducible} if, for any two positive formulas $\varphi(x)$ and $\psi(y)$ with $x$ and $y$ disjoint finite variables of the same sort, we have
\[\exists x\mathrel{}\varphi(x)\in\theory_+\text{ and }\exists y\mathrel{}\psi(y)\in\theory_+\Rightarrow \exists x y\mathrel{} (\varphi(x)\wedge\psi(y))\in\theory_+.\]
\item[\textrm{(LJCP)}] $\theory$ has the \emph{\gls{LJCP}} if for any two local models $A,B\models_\lang \theory$ there are homomorphisms $f\map A\to M$ and $g\map B\to M$ to a common local model $M\models_\lang \theory$.
\item[\textrm{(wC)}] $\theory$ is \emph{weakly complete} if $\theory=\Th_-(M)$ for some $M\models_\lang\theory$. Equivalently, $\theory=\Th_-(M)$ for some $M\models^\pc_\lang\theory$.
\item[\textrm{(C)}] $\theory$ is \emph{complete} if $\theory=\Th_-(M)$ for every $M\models^\pc_\lang\theory$.
\end{enumerate}

\subsection{Denials}
Let $\varphi$ be a positive formula. A \emph{denial} of $\varphi$ is a positive formula $\psi$ such that $\theory\models_\lang \neg \exists x\mathrel{} (\varphi\wedge\psi)$. We write $\theory\models_\lang\varphi\perp\psi$ or $\varphi\perp \psi$ if $\theory$ is clear from the context. An \emph{approximation} to $\varphi$ is a positive formula $\psi$ such that every denial of $\psi$ is a denial of $\varphi$; we write $\theory\models_\lang \varphi\leq \psi$ or $\varphi\leq \psi$ if $\theory$ is clear from the context. We say that two positive formulas $\psi$ and $\varphi$ are \emph{complementary} if $\varphi$ approximates every denial of $\psi$; we write $\theory\models_\lang\varphi\toprel\psi$ or $\varphi\toprel \psi$ if $\theory$ is clear from the context. A \emph{full system of denials} of $\varphi$ in $\theory$ is a family $\Psi$ of denials of $\varphi$ such that, for any $M\models^\pc_\lang\theory$ and any $a\in M^x$, either $M\models_\lang\varphi(a)$ or $M\models_\lang\psi(a)$ for some $\psi\in \Psi$. A \emph{full system of approximations} of $\varphi$ in $\theory$ is a family $\Psi$ of approximations of $\varphi$ such that, for any $M\models^\pc_\lang\theory$ and any $a\in M^x$, if $M\models_\lang \psi(a)$ for all $\psi\in \Psi$, then $M\models_\lang\varphi(a)$.

\section{Local positive types} \label{s:types}
From now on, unless otherwise stated, we fix a local language $\lang$ and a locally satisfiable negative local theory $\theory$.

\begin{defi} \label{d:local positive type}
Let $x$ be a variable. A \emph{partial \gls{lp type} on $x$ of $\theory$} is a subset $\Gamma\subseteq \LFor^x_+(\lang)$ such that $\Gamma$ is \emph{locally satisfiable} in $\theory$, i.e. there is a local model $M\models_\lang \theory$ and an element $a\in M^x$ such that $M\models_\lang \Gamma(a)$. A \gls{lp type} of $\theory$ is a maximal partial \gls{lp type} of $\theory$. As usual, we omit $x$ if it is clear from the context.

Let $M$ be a local $\lang${\hyp}structure, $A$ a subset and $a\in M^x$. The \emph{partial \gls{lp type}} of $a$ in $M$ over $A$ is the set 
\[\ltp^M_+(a/A)\coloneqq\{\varphi\in\LFor^x_+(\lang(A))\sth M\models_\lang \varphi(a)\}.\]

We omit $M$ if it is clear from the context, and we omit $A$ if $A=\emptyset$. 
\end{defi}

\begin{rmk} It is a well known fact in positive logic that the partial positive type of an element is not maximal in general. Similarly, in the local logic context, $\ltp^M_+(a)$ is not maximal. Indeed, it suffices to find a homomorphism $h\map M\to N$ such that $N\models_\lang\varphi(h(a))$ with $\varphi\in \LFor_+(\lang)$ while $M\not\models_\lang\varphi(a)$. Then, $\ltp^M_+(a)\subsetneq \ltp^N_+(h(a))$. \end{rmk}

\begin{lem}\label{l:partial local positive types} Let $\Gamma\subseteq\LFor^x_+(\lang)$ be a subset of \glspl{lp formula}. Then, $\Gamma$ is a partial \gls{lp type} of $\theory$ if and only if there are $M\models^\pc_\lang\theory$ and $a\in M^x$ such that $M\models_\lang\Gamma(a)$. 
\end{lem}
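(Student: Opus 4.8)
The plan is to prove the two implications separately, the forward one being the substantive direction. The backwards implication is immediate: if $M\models^\pc\theory$ then in particular $M$ is a local model of $\theory$, so any $a\in M^x$ with $M\models\Gamma(a)$ already witnesses that $\Gamma$ is locally satisfiable in $\theory$, i.e.\ that $\Gamma$ is a partial local positive type of $\theory$.

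For the forward implication, I would fix a local model $M\models\theory$ and $a\in M^x$ with $M\models\Gamma(a)$, and reduce to one fact together with one routine observation. The observation is that local positive formulas are preserved under homomorphisms: unfolding the local quantifiers via $\exists y{\in}\dd(t)\,\varphi\coloneqq\exists y\,(\dd(y,t)\wedge\varphi)$ exhibits every formula of $\LFor_+$ as a genuine positive formula, and positive formulas are preserved under homomorphisms by definition. The fact is that every local model of $\theory$ admits a homomorphism into a local positively closed model of $\theory$ --- the local positive analogue of the basic existence theorem for positively closed models in positive logic, established as in \cite{rodriguez2024completeness}. Granting these, I would pick a homomorphism $h\map M\to N$ with $N\models^\pc\theory$ and put $b\coloneqq h(a)$; then $N\models\Gamma(b)$, which is what we need.

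Should the existence of positively closed continuations not be available off the shelf, I would prove it by the usual chain argument, whose only genuinely local ingredient is that the class $\Mod(\theory/\lang)$ of local models of $\theory$ is closed under direct limits along homomorphisms. This in turn rests on a routine induction showing that a local positive formula holds in a direct limit exactly when it holds at some stage: from this the negative local sentences of $\theory$ pass to direct limits, and each locality axiom of \cref{d:local structure} is witnessed at a finite stage and hence also passes to direct limits. One then builds a sufficiently long transfinite chain starting at $M$; at a successor step one processes a single pair $(\varphi,\bar c)$, with $\varphi$ positive and $\bar c$ a tuple of the current model, such that $\varphi(\bar c)$ is realised by some homomorphism of the current model into a local model of $\theory$, and amalgamates to a local model of $\theory$ realising $\varphi(\bar c)$; limit steps are direct limits. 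A cardinality argument shows the chain stabilises at a local model $N\models\theory$, which is then positively closed because any pair relevant to its positive closedness already occurs at the stage where it is processed and is still realisable there, its witnessing homomorphism factoring through $N$.

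I expect the existence statement to be the only real obstacle; everything else is bookkeeping. Its delicate point is that a homomorphism can destroy realisability of a positive formula, so one cannot simply amalgamate all realisable formulas at once (that would require a compactness or amalgamation assumption we do not have): processing them one at a time and exploiting that the witnessing homomorphisms factor through the final limit is precisely what makes the chain construction work.
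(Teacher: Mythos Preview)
Your proof is correct and follows essentially the same route as the paper: the backward implication is immediate, and for the forward one you take a local model realising $\Gamma$, invoke the existence of a homomorphism into a local positively closed model of $\theory$ (the paper cites \cite[Theorem 2.7]{rodriguez2024completeness} for precisely this), and push $\Gamma$ along that homomorphism using preservation of positive formulas. Your additional sketch of the chain construction for the existence result is unnecessary here since the paper takes it as established, but it is a faithful outline of how that theorem is proved.
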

\begin{proof} Suppose $\Gamma$ is a partial \gls{lp type}. By definition, $\Gamma$ is locally satisfiable. Then, there is $N\models_\lang\theory$ and $b\in N^x$ such that $N\models_\lang \Gamma(b)$. By \cite[Theorem 2.9]{rodriguez2024completeness}, there is a homomorphism $f\map N\to M$ to $M\models^\pc_\lang\theory$. Thus, as homomorphisms preserve satisfaction of positive formulas, we conclude that $M\models_\lang\Gamma(a)$ for $a=f(b)$, so $\Gamma$ is realised in a \gls{lpc} model of $\theory$.
\end{proof}
\begin{rmk} \label{r:lowenheim skolem realising types} Using downwards L\"{o}wenheim{\hyp}Skolem Theorem, we can add that $|M|\leq |\lang|+|x|$ in \cref{l:partial local positive types}.
\end{rmk}
\begin{lem} \label{l:positively closed and local positive types} Let $M\models_\lang\theory$. Then, $M$ is a \gls{lpc} model of $\theory$ if and only if $\ltp^M_+(a)$ is a \gls{lp type} of $\theory$ for every pointed tuple $a$ in $M$.
\end{lem}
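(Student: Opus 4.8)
The plan is to prove both implications with one device: since $a$ is pointed, axiom~\cref{itm:axiom 5} lets one replace an unbounded existential quantifier ranging over elements whose sorts are all named by $\lang(a)$ with a \emph{local} one, so that an a priori merely positive statement about $a$ becomes local positive, and hence is recorded inside a local positive type of $a$. This is exactly the point at which the hypothesis that the tuple is pointed gets used. Throughout I use that $\LFor_+\subseteq\For_+$, so that homomorphisms preserve local positive formulas and positive embeddings reflect them.

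For the direction ($\Leftarrow$), I would assume that $\ltp^M_+(a)$ is maximal for every pointed tuple $a$ in $M$, take an arbitrary homomorphism $h\map M\to N$ with $N\models\theory$, and show it is a positive embedding. Given a positive $\varphi(x)$ and $m\in M^x$ with $N\models\varphi(h(m))$, I first enlarge $m$ to a pointed tuple $a$ of sort $x'$, by appending one element of $M$ from each single sort that occurs neither in $x$ nor among the constant symbols (the sorts of $M$ being non{\hyp}empty), so that $\lang(a)$ is pointed. Since $h$ preserves local positive formulas, $\ltp^M_+(a)\subseteq\ltp^N_+(h(a))$, and the latter is a partial local positive type of $\theory$; by maximality of $\ltp^M_+(a)$ the two coincide. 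Writing $\varphi$ as $\exists\bar z\,\varphi_0(\bar z,x)$ with $\varphi_0$ quantifier{\hyp}free positive and choosing $\bar n$ in $N$ with $N\models\varphi_0(\bar n,h(m))$, each component $n_i$ lies in some locality ball of $d_i$ for a constant $d_i$ of $\lang(a)$ of the correct sort (by \cref{itm:axiom 5}); hence $N$ satisfies at $h(a)$ a local positive $\lang${\hyp}formula $\theta$ that (locally) implies $\varphi$. Then $\theta\in\ltp^N_+(h(a))=\ltp^M_+(a)$, so $M\models\theta(a)$ and therefore $M\models\varphi(m)$. Thus $h$ is a positive embedding and $M\models^\pc\theory$.

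For the direction ($\Rightarrow$), I would assume $M\models^\pc\theory$, fix a pointed tuple $a$ of sort $x$ --- so that $\ltp^M_+(a)$ is a partial local positive type, by \cref{l:partial local positive types} --- and suppose towards a contradiction that it is not maximal: there is $\varphi\in\LFor^x_+(\lang)$ with $M\not\models\varphi(a)$ such that $\ltp^M_+(a)\cup\{\varphi\}$ is still a partial local positive type, realised by some $b$ in some $N\models\theory$. The aim is to amalgamate $M$ with the point $b$. Working in the pointed expansion $\lang(M)$ --- with the bound of $c_m,c_{m'}$ chosen as a locality relation witnessed by $m,m'$ in $M$, using \cref{itm:axiom 5} --- I consider the set $\Sigma$ consisting of $\theory$, the local axioms $\theory_\loc(\lang(M))$, the atomic diagram of $M$ (in the new constants $c_m$), and $\varphi(c_a)$. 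A finite fragment of $\Sigma$ is realised in $N$ by interpreting $c_a$ as $b$ and assigning to the finitely many other new constants that occur values witnessing the relevant quantifier{\hyp}free positive $\alpha$: the localisation trick above turns ``$M\models\exists\bar z\,\alpha(a,\bar z)$'' into a local positive formula lying in $\ltp^M_+(a)\subseteq\ltp^N_+(b)$, which supplies those witnesses in $N$ (and $N\models\varphi(b)$, $N\models\theory$). So $\Sigma$ is finitely locally satisfiable, hence locally satisfiable by compactness for local positive logic (see \cite{rodriguez2024completeness}); a local model $N'\models\Sigma$ yields a homomorphism $f\map M\to N'$ into a model of $\theory$ with $N'\models\varphi(f(a))$, and since $M$ is positively closed, $f$ is a positive embedding, forcing $M\models\varphi(a)$ --- a contradiction.

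I expect two steps to need care. The conceptual one is the mismatch between positive (unbounded) and local positive (bounded) formulas: local positive types only see the latter while positive embeddings and diagrams a priori involve the former, and the localisation via \cref{itm:axiom 5} is exactly what bridges this --- and it is precisely here that pointedness of $a$ enters, since one needs a constant of every relevant sort to anchor the locality balls. The technical one is the compactness input in ($\Rightarrow$) used to produce the homomorphism out of $M$; for this I would invoke the compactness/completeness machinery of \cite{rodriguez2024completeness}, together with the routine check that the bounds chosen on $\lang(M)$ are consistent with $\Sigma$, each such bound being witnessed by an atom of the diagram of $M$.
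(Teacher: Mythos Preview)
Your $(\Leftarrow)$ direction is essentially the paper's argument: the localisation device via \cref{itm:axiom 5} is the same. The only cosmetic difference is that the paper anchors the existential witnesses to images $h(b)$ of arbitrarily chosen elements $b\in M^y$ of the right sorts (and then extends $ab$ to a pointed tuple), whereas you anchor directly to constants of $\lang(a)$; these are interchangeable.

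Your $(\Rightarrow)$ direction is genuinely different. The paper dispatches it in two lines by quoting \cite[Lemma~2.12]{rodriguez2024completeness}: if $M\models^\pc\theory$, $a$ is pointed and $M\not\models\varphi(a)$ for a local positive $\varphi$, then there is a local positive denial $\psi$ of $\varphi$ with $M\models\psi(a)$, so $\ltp^M_+(a)\cup\{\varphi\}$ is already unsatisfiable and maximality follows. Your route instead manufactures, via compactness, a homomorphism $f\map M\to N'$ into a local model of $\theory$ with $N'\models\varphi(f(a))$, and then invokes positive closedness to pull $\varphi$ back to $M$ --- a contradiction. This is more self{\hyp}contained (it does not appeal to the denial lemma, and in effect unfolds part of its proof), at the price of being longer; the paper's route is cleaner once that lemma is on the shelf.

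One technical caution on your compactness step. To show that a finite fragment $\Sigma_0\subseteq\Sigma$ is \emph{locally} satisfiable over $\lang(M)$ you must exhibit a local $\lang(M)$-structure, which entails interpreting \emph{all} new constants $c_m$ in $N$ compatibly with the $\lang(M)$-bounds --- not only the finitely many occurring in $\Sigma_0$. Your description only assigns values to those finitely many. The cleanest repair is to avoid the constant expansion and instead treat the atomic diagram together with $\varphi$ as a partial type $\Gamma(\bar y)$ in variables indexed by $M$; the diagram already implies a bound on $\bar y$, so finite local satisfiability in $\theory$ (over $\lang$) --- which your localisation argument establishes verbatim in $N$ --- yields local satisfiability by the compactness machinery of \cite{rodriguez2024completeness}, and a realisation gives the desired homomorphism $M\to N'$.
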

\begin{proof} Suppose that $\ltp^M_+(c)$ is a \gls{lp type} of $\theory$ for every pointed $c$ in $M$. Let $h\map M\to N$ be a homomorphism to a local model $N\models_\lang\theory$. Suppose $N\models_\lang\varphi(h(a))$ for some $\varphi\in\For^x_+(\lang)$ and $a\in M^x$. We want to show that $M\models_\lang\varphi(a)$. 

Without loss of generality, $\varphi(x)=\exists y\mathrel{} \phi(x,y)$ where $\phi(x,y)$ is quantifier free and positive. Then, $N\models_\lang\phi(a,c)$ for some $c\in N^y$. Take $b\in M^y$ arbitrary and extend $ab$ to a pointed $a'$ arbitrarily. As $N$ is a local model, we have that $N\models_\lang \dd(c,h(b))$ for some locality relation symbol $\dd$. Consider $\varphi'(x,y)=\exists z \in \dd(y)\mathrel{} \phi(x,z)\in\LFor^{xy}_+(\lang)$ where $z$ is of the same sort as $y$. Then, we have that $N\models_\lang\varphi'(h(a,b))$, so $\varphi'(x,y)\in\ltp^{N}_+(h(a'))$. 

Since homomorphisms preserve satisfaction of positive formulas, $\ltp^M_+(a')\subseteq \ltp^{N}_+(h(a'))$. As $\ltp^M_+(a')$ is maximal among partial \glspl{lp type} of $\theory$, it follows that $\ltp^M_+(ab)=\ltp^{N}_+(h(ab))$, so we get $M\models_\lang\varphi'(a,b)$. In particular, $M\models_\lang\phi(a,c')$ for some $c'$ in $M$, so $M\models_\lang\varphi(a)$. Since $a$, $h$, $N$ and $\varphi$ are arbitrary, we conclude that $M$ is a \gls{lpc} model of $\theory$.

On the other hand, suppose $M$ is a \gls{lpc} model and $a\in M^x$ is pointed. Take $\varphi\in\LFor^x_+(\lang)$ such that $M\not\models_\lang\varphi(a)$. By \cite[Lemma 2.12]{rodriguez2024completeness}, we have $M\models_\lang\psi(a)$ for some $\psi\in\LFor^x_+(\lang)$ such that $\theory\models_\lang\psi\perp\varphi$. Thus, there is $\psi\in \ltp^M_+(a)$ such that $\{\varphi,\psi\}$ is not locally satisfiable in $\theory$. In other words, $\ltp^M_+(a)\wedge\varphi$ is not locally satisfiable in $\theory$. As $\varphi$ is arbitrary, we conclude $\ltp^M_+(a)$ is a \gls{lp type} of $\theory$.
\end{proof}
\begin{ex} The pointedness hypothesis is crucial in the previous result: $\ltp^N_+(a)$ does not need to be maximal for $a\in N^x$ with $N\models^\pc_\lang\theory$ when $a$ is not pointed. For instance, consider the language $\lang$ with two sorts $\Sorts=\{1,2\}$, locality relations $\Dtt^1=\{\dd_n\sth n\in\N\}$ and $\Dtt^2=\{=,\toprel\}$, unary relations $\{P_n\}_{n\in\N}$ of sort $1$, and binary relations $\{Q_n\}_{n\in\N}$ of sort $(1,2)$. Let $Z$ be the local $\lang${\hyp}structure with $1${\hyp}sort $\Z$, $2${\hyp}sort a singleton and interpretations $Z\models \dd_n(x,y)\Leftrightarrow |x-y|\leq n$, $Z\models P_n(x)\Leftrightarrow x\geq n$ and $Z\models Q_n(x,y)\Leftrightarrow x\leq-n$. By \cite[Lemma 2.6]{rodriguez2024completeness}, $Z$ is a \gls{lpc} model. 
Now, $\ltp_+(-1)\subsetneq \ltp_+(0)$. Indeed, a \gls{lp formula} $\varphi(x)$ with $x$ variable of sort $1$ cannot contain variables of sort $2$. Hence, $\varphi$ is a \gls{lp formula} in the reduct $\lang'$ of $\lang$ given by removing $Q_n$ for $n\in\N$. As the map $f\map Z\to Z$ given by $k\mapsto k+1$ for $k\in\Z$ is obviously an $\lang'${\hyp}homomorphism we get that $\ltp_+(-1)\subseteq \ltp_+(0)$. Now, $P_0(x)\in \ltp_+(0)$ while $P_0(x)\notin \ltp_+(-1)$.
\end{ex}
\begin{coro} \label{c:local positive types} Let $x$ be a pointed variable and $\Gamma\subseteq\LFor^x_+(\lang)$ a subset of \glspl{lp formula}. Then, $\Gamma$ is a \gls{lp type} of $\theory$ if and only if $\Gamma=\ltp^M_+(a)$ for some $M\models^\pc_\lang\theory$ and $a\in M^x$.
\end{coro}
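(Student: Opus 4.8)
The plan is to derive \cref{c:local positive types} as a straightforward consequence of the two preceding results, \cref{l:partial local positive types} and \cref{l:positively closed and local positive types}, together with the observation that a local positive type (being maximal among partial local positive types) is completely determined by whatever model and element realise it.

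For the forward direction, suppose $\Gamma$ is a local positive type of $\theory$. Since $\Gamma$ is in particular a partial local positive type, \cref{l:partial local positive types} gives $M\models^\pc\theory$ and $a\in M^x$ with $M\models\Gamma(a)$. Then $\Gamma\subseteq\ltp^M_+(a)$, and since $\Gamma$ is maximal among partial local positive types while $\ltp^M_+(a)$ is itself a partial local positive type (it is realised in $M$, which is a local model of $\theory$), we get $\Gamma=\ltp^M_+(a)$. Note that here I have not even needed the full strength of \cref{l:positively closed and local positive types}; the hypothesis that $x$ is pointed does not intervene in this direction.

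For the converse, suppose $\Gamma=\ltp^M_+(a)$ for some $M\models^\pc\theory$ and $a\in M^x$, with $x$ pointed. The point is that $a$ is then a pointed tuple in $M$ (since $x$ is a pointed variable, $a$ together with the interpretations of the constant symbols forms a pointed tuple — or one phrases it directly: being pointed is a property of the variable's sort relative to the available constants). By \cref{l:positively closed and local positive types}, since $M$ is a local positively closed model of $\theory$, $\ltp^M_+(a)$ is a local positive type of $\theory$; that is, $\Gamma$ is a local positive type.

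The only mild subtlety — and hence the place to be slightly careful rather than a genuine obstacle — is the bookkeeping around the word ``pointed'': \cref{l:positively closed and local positive types} is stated for pointed \emph{tuples} $a$, whereas \cref{c:local positive types} hypothesises a pointed \emph{variable} $x$, so I should note explicitly that an element realising a pointed variable is (or canonically extends to) a pointed tuple, so that the lemma applies. Everything else is immediate from maximality of local positive types and the fact that homomorphisms — in particular inclusions into local models — preserve satisfaction of local positive formulas.
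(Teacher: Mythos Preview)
Your proposal is correct and is precisely the intended derivation: the paper states the result as an immediate corollary of \cref{l:partial local positive types} and \cref{l:positively closed and local positive types} without spelling out a proof, and your argument via maximality for the forward direction and direct application of \cref{l:positively closed and local positive types} for the converse is exactly how it follows. The terminological point you flag about pointed variables versus pointed tuples is indeed just bookkeeping --- a tuple realising a pointed variable is by definition a pointed tuple.
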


As an immediate consequence of \cite[Theorem 2.9]{rodriguez2024completeness} and \cref{c:local positive types}, we can show that every partial \gls{lp type} on a pointed variable extends to a \gls{lp type}. Naturally, the same is still true for arbitrary variables, but we require an alternative argument relying on the notion of \emph{bounded satisfiability}. As \cref{l:boundedly satisfiable} below shows, this notion is the right replacement of finite satisfiability in local logic. In particular, boundedness is the necessary condition to apply compactness in local logic. 

\begin{defi}\label{d:bound} Let $x=\{x_i\}_{i\in N}$ be a variable. Let $I$ be the subset of indexes $i\in N$ such that there is a constant symbol on the sort of $x_i$ and $J$ the set of pairs of indexes $(i,j)\in N\times N$ such that $x_i$ and $x_j$ are on the same sort. A \emph{bound} of $x$ in $\theory$ is a partial atomic type of the form \[\Bound(x)\coloneqq \bigwedge_{i\in I}\dd_i(x_i,c_i) \wedge\bigwedge_{(i,j)\in J} \dd_{i,j}(x_i,x_j)\] where $c_i$ is a constant on the sort of $x_i$ for $i\in I$, $\dd_i$ is a locality relation on the sort of $x_i$ for $i\in I$ and $\dd_{i,j}$ is a locality relation on the common sort of $x_i$ and $x_j$ for $(i,j)\in J$.
\end{defi}

As a relevant example, we point out the following special case. We say that a variable $x$ is \emph{duplicate free} if $x$ has no duplicated sorts (counting constants), i.e. it contains no single variable with the same sort as a constant or another single variable in $x$. For instance, a variable is duplicate free and pointed if and only if it is \emph{minimal pointed} (pointed without proper pointed subvariables). When $x$ is duplicate free, the only bound of $x$ is $\toprel$.

\begin{defi}\label{d:bounded satisfiable} A subset of formulas $\Gamma(x)\subseteq\For^x(\lang)$ is \emph{boundedly satisfiable} in $\theory$ if there is a bound $\Bound(x)$ such that $\Gamma(x)\wedge \Bound(x)$ is finitely locally satisfiable in $\theory$. In that case, we say that $\Bound$ is a \emph{feasible bound} for $\Gamma$. 
\end{defi}

\begin{rmk} When $\Gamma(x)\models_\lang \Bound(x)$ for some bound $\Bound(x)$, we have that $\Gamma(x)$ is boundedly satisfiable if and only if it is finitely locally satisfiable. In particular, when $x$ is duplicate free, as bounds on $x$ are trivial, we get that $\Gamma(x)$ is boundedly satisfiable if and only if it is finitely locally satisfiable.
\end{rmk}

\begin{lem} \label{l:boundedly satisfiable} 
Let $\Gamma$ be a subset of $\Pi_1${\hyp}local formulas of $\lang$. Then, $\Gamma$ is locally satisfiable in $\theory$ if and only if it is boundedly satisfiable in $\theory$. 
\end{lem}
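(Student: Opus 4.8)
The plan is to prove the non-trivial direction: bounded satisfiability implies local satisfiability. (The converse is immediate, since a local model realising $\Gamma$ in $\theory$ also realises some bound $\Bound$ by axiom \cref{itm:axiom 5} applied finitely many times, and then $\Gamma\wedge\Bound$ is finitely locally satisfiable.) So suppose $\Bound(x)$ is a feasible bound for $\Gamma$, meaning $\Gamma(x)\wedge\Bound(x)$ is finitely locally satisfiable in $\theory$. I want to produce a single local model $M\models\theory$ and $a\in M^x$ with $M\models\Gamma(a)$.

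The first idea is to reduce to ordinary (non-local) first-order compactness. Consider the first-order theory $\theory_\loc(\lang)$ (the universal axioms \cref{itm:axiom 1,itm:axiom 2,itm:axiom 3,itm:axiom 4}) together with $\theory$ (viewed as plain first-order sentences), the bound $\Bound(x)$, and $\Gamma(x)$ with $x$ turned into new constants. Finite satisfiability of this set in the ordinary sense must be extracted from the finite local satisfiability hypothesis: any finite fragment mentions finitely many formulas of $\Gamma$ and the finitely many axioms of $\theory_\loc$, and by hypothesis there is a local model of $\theory$ realising that fragment of $\Gamma\wedge\Bound$ — such a structure is in particular an ordinary model of $\theory_\loc$ plus that fragment. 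The only subtlety is that $\Gamma$ consists of $\Pi_1$-local formulas rather than quantifier-free ones, but $\Pi_1$ formulas are preserved in the relevant direction and cause no problem for finite satisfiability. By ordinary compactness we obtain an $\lang$-structure $N \models \theory_\loc \cup \theory$ with a tuple $a$ such that $N \models \Gamma(a) \wedge \Bound(a)$, but $N$ need not be a local structure, since axiom \cref{itm:axiom 5} — every pair of same-sort single elements lies in some locality ball — may fail.

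The heart of the argument is therefore to pass from $N$ to a genuine local model while preserving $\Gamma(a)$. This is where the bound does its work: the element $a$ itself lies inside the balls prescribed by $\Bound$, so $a$ sits in the "local part" of $N$. The natural move is to take the substructure (or the appropriate closure) $M$ of $N$ consisting of the elements that are connected, via finite chains of locality relations, to $a$ and to the constants of $\lang$; by axioms \cref{itm:axiom 2,itm:axiom 3} this set is closed under the locality structure and, crucially, any two same-sort single elements of $M$ are then joined by a locality relation (using \cref{itm:axiom 3} to compose), so \cref{itm:axiom 5} holds in $M$. The remaining points to check are that $M$ is still a model of $\theory$ and that $M \models \Gamma(a)$; since $\theory$ is a negative local theory and $\Gamma$ is $\Pi_1$-local, both are downward-preserved from $N$ to a suitable substructure once one is careful that the existential quantifiers appearing (inside the $\Pi_1$ formulas, and hidden in local existential quantifiers) are \emph{local} existentials, so their witnesses automatically lie in $M$. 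I expect this last verification — confirming that restricting to the locality-connected component genuinely preserves $\theory$ and the $\Pi_1$-local formulas of $\Gamma$, with the local quantifiers keeping all needed witnesses inside $M$ — to be the main obstacle, and it is exactly the place where the hypothesis "$\Pi_1$-local" (rather than arbitrary local) formulas is used.

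Alternatively, and perhaps more cleanly, one can avoid building $N$ by hand: apply \cref{l:partial local positive types} after first massaging $\Gamma$. Since bounded finite satisfiability gives, for each finite $\Gamma_0 \subseteq \Gamma$, a local model $M_{\Gamma_0}\models\theory$ with a tuple realising $\Gamma_0\wedge\Bound$, one can take an ultraproduct of the $M_{\Gamma_0}$ along an ultrafilter concentrating on each $\Gamma_0$; ultraproducts of local models of a negative local theory are again local models of that theory (the locality axioms are universal and \cref{itm:axiom 5} is preserved because the bound $\Bound$ pins each coordinate of the diagonal tuple into a fixed ball), and $\Pi_1$-local formulas in $\Gamma$ transfer by Łoś to the tuple $a = [(a_{\Gamma_0})]$. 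This realises all of $\Gamma$ in the ultraproduct, giving local satisfiability. Either route works; I would present the ultraproduct argument as the main line, since it makes the role of the bound — uniformly confining the realising tuples to a fixed ball so that \cref{itm:axiom 5} survives the limit — most transparent, and relegate the compactness-plus-restriction argument to a remark.
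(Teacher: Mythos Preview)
Your first approach (non{\hyp}local compactness, then restrict to the local component at $a$ together with the constants) is correct and is essentially an inline unfolding of \cite[Theorem 1.9 and Lemma 1.8]{rodriguez2024completeness}. The paper does something slicker: it expands $\lang$ to $\lang(\underline{x})$ by adding a new constant $\underline{x}_i$ for each variable, and uses the chosen bound $\Bound$ to \emph{define the bound function} $\BoundConst_{\underline{x}_i,c}$ and $\BoundConst_{\underline{x}_i,\underline{x}_j}$ for these new constants. Then $\widetilde{\theory}\coloneqq\theory\wedge\Gamma(\underline{x})$ is a finitely locally satisfiable set of $\Pi_1${\hyp}local \emph{sentences} in $\lang(\underline{x})$, and the local compactness theorem \cite[Theorem 1.9]{rodriguez2024completeness} applies directly to yield a local $\lang(\underline{x})${\hyp}model, whose $\lang${\hyp}reduct realises $\Gamma$. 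The gain is that the bound is absorbed into the \emph{locality data of the language} rather than carried as a side condition, so no explicit ``pass to the local component'' step is needed; your route makes the mechanism more visible but repeats work already packaged in \cite[Theorem 1.9]{rodriguez2024completeness}.

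Your second (ultraproduct) approach, which you propose as the main line, has a genuine gap. Ultraproducts of local structures are \emph{not} in general local: axiom \cref{itm:axiom 5} is an infinite disjunction over locality symbols and is not preserved by {\L}o\'{s}. Your justification --- that the bound pins the diagonal tuple into a fixed ball --- only shows that the image $a=[(a_{\Gamma_0})]$ satisfies $\Bound$; it says nothing about arbitrary pairs of same{\hyp}sort elements of the ultraproduct. To repair this you must, after forming the ultraproduct, restrict to the local component $\Dtt(a)$ (plus constants) and then argue, via \cite[Lemma 1.8]{rodriguez2024completeness}, that $\theory$ and the $\Pi_1${\hyp}local formulas in $\Gamma$ survive the restriction. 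But that is precisely the substantive step of your first approach, so the ultraproduct detour buys nothing; either drop it or present it honestly as ``non{\hyp}local compactness via ultraproducts, then restrict'', which collapses into Approach~1.
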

\begin{proof} Obviously, if $\Gamma$ is locally satisfiable, then it is boundedly satisfiable as any $a$ realising $\Gamma$ gives us a feasible bound for $\Gamma$. On the other hand, suppose that $\Gamma(x)$ is boundedly satisfiable. Pick a feasible bound $\Bound(x)$ for $\Gamma(x)$. Let $\lang(\underline{x})$ be the expansion of $\lang$ given by adding new constant symbols for each variable in $x$: we take $\BoundConst_{\underline{x}_i,c_i}$ and $\BoundConst_{\,\underline{x}_i,\underline{x}_j}$ according to $\Bound$ --- for any other constant $c'$ in $\lang(\underline{x})$, we define $\BoundConst_{\underline{x}_i,c'}$ according to the structure of $\Dtt$. Now, consider $\widetilde{\theory}=\theory\wedge \Gamma(\underline{x})$. By hypothesis, $\widetilde{\theory}$ is finitely locally satisfiable. Thus, by \cite[Theorem 1.9]{rodriguez2024completeness}, there is a local $\lang(\underline{x})${\hyp}structure $M_{\underline{x}}$ satisfying $\widetilde{\theory}$. Hence, the respective $\lang${\hyp}reduct $M$ is a local model of $\theory$ and the interpretation of $\underline{x}$ realises $\Gamma$.
\end{proof}
\begin{coro} \label{c:there are types} Let $\Gamma\subseteq\LFor^x_+(\lang)$ be a partial \gls{lp type} of $\theory$. Then, there is a \gls{lp type} extending $\Gamma$.
\end{coro}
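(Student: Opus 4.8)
The plan is to extend $\Gamma$ by a Zorn's lemma argument, but carried out inside a poset that has been pinned down by a fixed bound, so that unions of chains remain locally satisfiable. The pointed case is quicker: \cref{l:partial local positive types} realises $\Gamma$ in some $M\models^\pc\theory$ by $a\in M^x$, and \cref{c:local positive types} then says $\ltp^M_+(a)$ is already a local positive type containing $\Gamma$. The interest is in the general case, where finite local satisfiability alone is too weak to pass through unions of chains and bounded satisfiability has to be invoked.

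First I would realise $\Gamma$: being a partial local positive type it is locally satisfiable, so fix $M\models\theory$ and $a\in M^x$ with $M\models\Gamma(a)$, and write $x=\{x_i\}_{i\in N}$. From $a$ I extract a bound as in \cref{d:bound}: whenever the sort of $x_i$ carries a constant $c_i$, axiom \cref{itm:axiom 5} gives a locality relation $\dd_i$ with $M\models\dd_i(a_i,c_i)$, and for each pair $(i,j)$ of indices on a common sort it gives $\dd_{i,j}$ with $M\models\dd_{i,j}(a_i,a_j)$; the resulting bound $\Bound(x)$ satisfies $M\models\Bound(a)$. Since its conjuncts are atomic, hence local positive, the set $\Gamma\cup\Bound$ (viewing $\Bound$ as the set of its atomic conjuncts) is still a partial local positive type.

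Next I would apply Zorn's lemma to the poset $P$ of all partial local positive types $\Gamma'$ of $\theory$ with $\Gamma\cup\Bound\subseteq\Gamma'\subseteq\LFor^x_+(\lang)$, ordered by inclusion; it is nonempty since $\Gamma\cup\Bound\in P$. The key point --- and the main obstacle --- is that every chain $C\subseteq P$ has an upper bound. Its union $\Gamma^\ast$ is finitely locally satisfiable, because any finite subset of it already lies in a single member of the chain, and $\Gamma^\ast$ entails $\Bound$ because $\Bound\subseteq\Gamma^\ast$; hence $\Gamma^\ast$ is boundedly satisfiable by the remark following \cref{d:bounded satisfiable}, and since $\Gamma^\ast$ consists of local positive (in particular $\Pi_1${\hyp}local) formulas, \cref{l:boundedly satisfiable} upgrades this to local satisfiability. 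So $\Gamma^\ast\in P$, and Zorn produces a maximal $\Gamma^{\max}\in P$.

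Finally I would note that $\Gamma^{\max}$ is maximal not only in $P$ but among all partial local positive types of $\theory$ on $x$: any partial local positive type $\Delta\supseteq\Gamma^{\max}$ contains $\Gamma\cup\Bound$, hence lies in $P$, hence equals $\Gamma^{\max}$. Therefore $\Gamma^{\max}$ is a local positive type extending $\Gamma$. The only nonformal ingredient is the closure of $P$ under unions of chains, which is precisely where local satisfiability, rather than mere finite local satisfiability, must be recovered, and this is exactly what the fixed bound plus \cref{l:boundedly satisfiable} deliver.
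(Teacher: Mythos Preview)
Your proof is correct and follows essentially the same route as the paper: fix a feasible bound and apply Zorn's lemma to extensions containing it. The only cosmetic difference is that the paper's poset consists of \emph{finitely} locally satisfiable extensions (making chain closure trivial and deferring \cref{l:boundedly satisfiable} to the very end), whereas you work directly with locally satisfiable extensions and invoke \cref{l:boundedly satisfiable} at the chain step; both arrangements work equally well.
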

\begin{proof} As $\Gamma$ is boundedly satisfiable, pick any feasible bound $\Bound(x)$. Take $\Gamma'(x)=\Gamma(x)\wedge \Bound(x)$. Now, consider $\mathcal{K}=\{\Sigma\subseteq \LFor^x_+(\lang)\sth \Gamma'\subseteq \Sigma$ finitely locally satisfiable in $\theory\}$, which is partially ordered by $\subseteq$. For any chain $\Omega\subseteq \mathcal{K}$ we have that $\bigcup \Omega$ is an upper bound for $\Omega$ in $\mathcal{K}$. Indeed, for any $\Lambda\subseteq \bigcup \Omega$ finite, there is $\Sigma\in \Omega$ such that $\Lambda\subseteq \Sigma$, so $\bigcup\Omega$ is finitely locally satisfiable, concluding $\bigcup\Omega\in \mathcal{K}$. By Zorn{'s} Lemma, there is a maximal element $p\in \mathcal{K}$. We know that $\Gamma\subseteq p$ and $p$ is boundedly satisfiable. On the other hand, if $p\subseteq \Sigma$ with $\Sigma$ boundedly satisfiable, then $\Sigma\in \mathcal{K}$ and, by maximality of $p$, $\Sigma=p$. Hence, $p$ is a \gls{lp type} extending $\Gamma$.
\end{proof}

In general, we consider only partial \glspl{lp type}, as local compactness \cite[Theorem 1.9]{rodriguez2024completeness} applies to this kind of formulas. However, in the case of pointed variables, we can handle positive types by means of the following result.
\begin{lem} \label{l:partial positive types} Let $M$ be a local $\lang${\hyp}structure. Let $x$ be a pointed variable and $a,b\in M^x$. Then, $\ltp^M_+(a)\subseteq \ltp^M_+(b)$ if and only if $\tp^M_+(a)\subseteq \tp^M_+(b)$. 
\end{lem}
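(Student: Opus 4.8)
The plan is to treat the two implications separately. The forward implication is immediate from the fact that every local positive formula is positive: a local existential quantifier $\exists z{\in}\dd(t)\,\phi$ is by definition the positive formula $\exists z\,\dd(z,t)\wedge\phi$, so $\LFor^x_+(\lang)\subseteq\For^x_+(\lang)$. Hence if $\tp^M_+(a)\subseteq\tp^M_+(b)$ and $\varphi\in\ltp^M_+(a)$, then $\varphi\in\tp^M_+(a)\subseteq\tp^M_+(b)$, i.e. $M\models\varphi(b)$, and since $\varphi$ is local positive this says $\varphi\in\ltp^M_+(b)$.

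For the converse I would mimic the argument in the proof of \cref{l:positively closed and local positive types}. Assume $\ltp^M_+(a)\subseteq\ltp^M_+(b)$ and let $\varphi\in\tp^M_+(a)$; as there, we may assume $\varphi(x)=\exists y\,\phi(x,y)$ with $y=(y_1,\dots,y_n)$ and $\phi$ quantifier{\hyp}free positive, and we fix a witness $c=(c_j)_j\in M^y$ with $M\models\phi(a,c)$. The point is that, because $x$ is pointed, every $c_j$ can be ``localised'' to a term built from constants and the variables $x$: if the sort of $c_j$ does not occur in the sort of $x$, the language has a constant $e_j$ of that sort and \cref{itm:axiom 5} yields a locality relation $\dd_j$ with $M\models\dd_j(c_j,e_j)$, so we put $t_j\coloneqq e_j$; otherwise that sort equals the sort of some $x_{i(j)}$ and \cref{itm:axiom 5} yields $\dd_j$ with $M\models\dd_j(c_j,a_{i(j)})$, so we put $t_j\coloneqq x_{i(j)}$. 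In either case $t_j$ is a constant or one of the variables $x_i$, so
\[\varphi'(x)\coloneqq \exists z_1{\in}\dd_1(t_1)\,\cdots\,\exists z_n{\in}\dd_n(t_n)\ \phi(x,z_1,\dots,z_n),\]
with $z=(z_1,\dots,z_n)$ of the sort of $y$, is a well{\hyp}formed local positive formula, and $c$ witnesses $M\models\varphi'(a)$. Then $\varphi'\in\ltp^M_+(a)\subseteq\ltp^M_+(b)$, so $M\models\varphi'(b)$; extracting a witness gives $c'\in M^y$ with $M\models\phi(b,c')$, hence $M\models\varphi(b)$. Since $\varphi$ was arbitrary, $\tp^M_+(a)\subseteq\tp^M_+(b)$.

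The only real subtlety — and the one place where pointedness of $x$ is essential — is producing, for each coordinate of the witness $c$, a term $t_j$ in the available vocabulary to bound it; without pointedness a sort occurring in $y$ could be invisible both to the constants and to $x$, and the statement genuinely fails. One should also double{\hyp}check that the nested local quantifiers in $\varphi'$ are legitimate, i.e. that each $t_j$ does not contain the bound variable $z_j$; this holds since each $t_j$ is a constant or one of the free variables $x_i$.
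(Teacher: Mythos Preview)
Your proof is correct and follows essentially the same strategy as the paper's: reduce to a prenex positive formula $\exists y\,\phi(x,y)$, pick a witness $c$, and use pointedness of $x$ together with axiom~\cref{itm:axiom 5} to bound each coordinate of $c$ by a locality ball centred at a term built from $x$ and constants, thereby producing a local positive formula $\varphi'$ with $\varphi'\leq\varphi$ and $M\models\varphi'(a)$. The paper's version is terser (it treats the witness tuple as a single element and simply writes ``find a term $t(a)$ on the sort of $c$''), but the argument is identical; your explicit coordinate{\hyp}wise treatment is if anything clearer.
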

\begin{proof} Obviously, $\tp_+(a)\subseteq \tp_+(b)$ implies $\ltp_+(a)\subseteq\ltp_+(b)$. Conversely, assume $\ltp_+(a)\subseteq \ltp_+(b)$. Take $\varphi(x)\in\For^x_+(\lang)$. Without loss of generality, $\varphi(x)=\exists y\mathrel{} \phi(x,y)$ with $\phi$ quantifier{\hyp}free positive. Suppose $M\models_\lang\varphi(a)$. Then, there is $c$ such that $M\models_\lang\phi(a,c)$. As $a$ is pointed, we can find a term $t(a)$ on the sort of $c$. As $M$ is a local structure, there is a locality relation $\dd$ with $\dd(t(a),c)$. Thus, $M\models_\lang\varphi'(a)$ where $\varphi'(x)=\exists y\in \dd(t(x))\mathrel{} \phi(x,y)\in\LFor^x_+(\lang)$. As $\ltp_+(a)\subseteq \ltp_+(b)$, we conclude that $M\models_\lang\varphi'(b)$ too. Since $\varphi$ is arbitrary, we conclude that $\tp_+(a)\subseteq\tp_+(b)$.
\end{proof}

\subsection{Local positive types over parameters}
Let $M$ be a local $\lang${\hyp}structure and $A$ a subset. A \emph{(partial) \gls{lp type} of $M$ over $A$} is a (partial) \gls{lp type} of $\Th_-(M/A)$. We say that $M$ has the \emph{\gls{LJCP} over $A$} if $\Th_-(M/A)$ has the \gls{LJCP}. 

\begin{lem} \label{l:local positive types over parameters} Let $M$ be a local $\lang${\hyp}structure and $A$ a subset. Let $\Gamma\subseteq\LFor^x_+(\lang(A))$ be a subset of \glspl{lp formula} with parameters in $A$. Assume $M$ has the \gls{LJCP} over $A$. Then, $\Gamma$ is a partial \gls{lp type} of $M$ over $A$ if and only if there is an $\lang(A)${\hyp}homomorphism $f\map M_A\to N_A$ to $N_A\models^\pc_\lang\Th_-(M/A)$ realising $\Gamma$. Furthermore, suppose $x$ is pointed over $\lang(A)$. Then, $\Gamma(x)$ is a \gls{lp type} of $M$ over $A$ if and only if there is an $\lang(A)${\hyp}homomorphism $f\map M_A\to N_A$ to some $N_A\models^\pc\Th_-(M/A)$ such that $\Gamma=\ltp^{N_A}_+(a/A)$ for some $a\in N^x$. 
\end{lem}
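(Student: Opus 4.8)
The plan is to transfer the problem to the negative local theory $\theory_A\coloneqq\Th_-(M/A)$ in the expanded language $\lang(A)$, using the hypothesis only to produce a homomorphism out of $M_A$. Recall that ``$M$ has the local joint property over $A$'' means precisely that $\theory_A$ has \textrm{(LJP)}; that a (partial) local positive type of $M$ over $A$ is, by definition, a (partial) local positive type of $\theory_A$; and that $N_A\models^\pc\theory_A$ entails $N_A\models\theory_A$, so $N_A$ is then a local model of $\theory_A$.

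For the first equivalence, the ``if'' direction needs neither the homomorphism nor \textrm{(LJP)}: from an $\lang(A)${\hyp}homomorphism $f\map M_A\to N_A$ with $N_A\models^\pc\theory_A$ and $N_A\models\Gamma(a)$ for some $a\in N^x$, we read off that the local model $N_A$ of $\theory_A$ realises $\Gamma$ (local positive formulas being positive), so $\Gamma$ is locally satisfiable in $\theory_A$, i.e.\ a partial local positive type of $M$ over $A$. For the ``only if'' direction, suppose $\Gamma$ is a partial local positive type of $M$ over $A$; unwinding the definition, there is a local model $P\models\theory_A$ and $b\in P^x$ with $P\models\Gamma(b)$. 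Applying \textrm{(LJP)} to the two local models $M_A$ and $P$ of $\theory_A$ gives a local model $R\models\theory_A$ together with homomorphisms $g\map M_A\to R$ and $h\map P\to R$; since homomorphisms preserve positive formulas, $R\models\Gamma(h(b))$. Then \cite[Theorem 2.7]{rodriguez2024completeness} supplies a homomorphism $k\map R\to N_A$ with $N_A\models^\pc\theory_A$, and $f\coloneqq k\circ g\map M_A\to N_A$, together with the realiser $k(h(b))\in N^x$ of $\Gamma$, is as required.

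For the ``furthermore'' part, assume in addition that $x$ is pointed over $\lang(A)$, so any $a\in N^x$ is a pointed tuple in a local $\lang(A)${\hyp}structure $N$. Given an $f\map M_A\to N_A$ with $N_A\models^\pc\theory_A$ and $\Gamma=\ltp^{N_A}_+(a/A)$, \cref{l:positively closed and local positive types} applied to $N_A$ and $\theory_A$ shows that $\ltp^{N_A}_+(a/A)=\Gamma$ is a local positive type of $\theory_A$, i.e.\ of $M$ over $A$; this is the ``if'' direction. Conversely, if $\Gamma$ is a local positive type of $M$ over $A$ then it is in particular a partial one, so by the first equivalence we get $f\map M_A\to N_A$ with $N_A\models^\pc\theory_A$ and $N_A\models\Gamma(a)$ for some $a\in N^x$; then $\Gamma\subseteq\ltp^{N_A}_+(a/A)$, and by the ``if'' direction just proved $\ltp^{N_A}_+(a/A)$ is again a local positive type of $M$ over $A$, hence maximal among partial local positive types of $\theory_A$. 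As $\Gamma$ is also maximal, $\Gamma=\ltp^{N_A}_+(a/A)$.

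The only genuinely delicate step is the ``only if'' direction of the first equivalence: one must feed \textrm{(LJP)} exactly the pair consisting of $M_A$ and a realiser of $\Gamma$ inside some local model of $\theory_A$, and then post{\hyp}compose with the homomorphism of \cite[Theorem 2.7]{rodriguez2024completeness} so that the target is positively closed while still receiving a homomorphism from $M_A$. The rest is bookkeeping with the definitions of (maximal) partial local positive types and with the observation that, $x$ being pointed over $\lang(A)$, every realiser of $\Gamma$ is a pointed tuple to which \cref{l:positively closed and local positive types} applies.
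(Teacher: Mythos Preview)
Your proof is correct and follows essentially the same strategy as the paper's: obtain a local model of $\theory_A$ realising $\Gamma$, use \textrm{(LJP)} to amalgamate it with $M_A$, then apply \cite[Theorem 2.7]{rodriguez2024completeness} to land in a positively closed model; the ``furthermore'' clause is handled identically via maximality and \cref{l:positively closed and local positive types}. The only difference is cosmetic: the paper first invokes \cref{l:partial local positive types} to get a positively closed realiser of $\Gamma$ before amalgamating, whereas you take any local realiser straight from the definition---your route is marginally more direct since \cref{l:partial local positive types} itself already uses \cite[Theorem 2.7]{rodriguez2024completeness}.
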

\begin{proof} If there is an $\lang(A)${\hyp}homomorphism to a \gls{lpc} model of $\Th_-(M/A)$ realising $\Gamma$, then $\Gamma$ is obviously a partial \gls{lp type} over $A$. On the other hand, suppose $\Gamma$ is a partial \gls{lp type} over $A$. By \cref{l:partial local positive types}, there are $N'_A\models^\pc_\lang\Th_-(M/A)$ and $b\in {N'}^x_A$ such that $N'_A\models_\lang\Gamma(b)$. By assumption, $\Th_-(M/A)$ satisfies the \gls{LJCP}, so there are $\lang(A)${\hyp}homomorphisms $h_0\map N'_A\to N''_A$ and $h_1\map M_A\to N''_A$ to a common local model $N''_A\models_\lang\Th_-(M/A)$. By \cite[Theorem 2.9]{rodriguez2024completeness}, there is an $\lang(A)${\hyp}homomorphism $f'\map  N''_A\to N_A$ to $N_A\models^\pc_\lang\Th_-(M/A)$, so $f=f'\circ h_1\map M_A\to N_A$ is an $\lang(A)${\hyp}homomorphism with $N_A\models^\pc_\lang \Th_-(M/A)$. Since homomorphisms preserve satisfaction of positive formulas, we have that $N_A\models_\lang\Gamma(a)$ with $a=f'\circ h_0(b)$. 

Finally, if $\Gamma$ is a \gls{lp type} of $M$ over $A$, then $\Gamma=\ltp^N_+(a/A)$ by maximality. On the other hand, by \cref{l:positively closed and local positive types}, $\ltp^N_+(a/A)$ is a \gls{lp type} of $\Th_-(M/A)$ for any $N_A\models^\pc_\lang\Th_-(M/A)$ and $a\in N^x_A$ when $x$ is pointed.
\end{proof}
\begin{rmk} \label{r:lowenheim skolem realising types with parameters} Applying downwards L\"{o}wenheim{\hyp}Skolem we can take $|N|\leq |\lang|+|M|+|x|$ in \cref{l:local positive types over parameters}.
\end{rmk}
\begin{lem} \label{l:local positive types over parameters and denials} Let $M$ be a local $\lang${\hyp}structure and $A$ a subset. Let $p(x)$ be a partial \gls{lp type} of $M$ over $A$ on a variable $x$ pointed over $\lang(A)$. Then, $p$ is a \gls{lp type} of $M$ over $A$ if and only if, for every \gls{lp formula} $\varphi(x)$ over $A$ with $\varphi(x)\notin p$, there is a local primitive positive formula $\psi(x)$ over $A$ with $M\models_\lang\neg\exists x\mathrel{}(\psi(x)\wedge\varphi(x))$ such that $\psi(x)\in p$.
\end{lem}
\begin{proof} The right{\hyp}to{\hyp}left implication is obvious. On the other hand, suppose $p$ is a \gls{lp type} over $A$ and $\varphi(x)\notin p$. By \cref{c:local positive types}, there is $N_A\models^\pc_\lang\Th_-(M/A)$ and $c\in N^x$ such that $p(x)=\ltp^{N_A}_+(c/A)$. As $\varphi(x)\notin p$, we get that $N_A\not\models_\lang \varphi(c)$. By \cite[Lemma 2.12]{rodriguez2024completeness}, there is a local primitive positive formula $\psi(x)$ over $A$ with $M\models_\lang \psi(x)\perp\varphi(x)$ such that $N_A\models_\lang\psi(c)$, so $\psi(x)\in p$.
\end{proof}


It should be noted that, contrary to what happens in usual positive logic, \glspl{lp type} might not be preserved under adding parameters. Explicitly, a partial \gls{lp type} of $M$ over $A$ is not necessarily a partial \gls{lp type} of $M$ over $B$ for $A\subseteq B$. The key issue is that a bound over $A$ is not generally a bound over $B$. For instance, consider \cite[Example 3.8(1)]{rodriguez2024completeness}. Recall that $\lang\coloneqq\{P_n,Q_n,\dd_n\}_{n\in\N}$ and $\theory\coloneqq\Th_-(\Z)$ where $\Z\models P_n(x)\Leftrightarrow x>n$, $\Z\models Q_n(x)\Leftrightarrow x<-n$ and $\Z\models \dd_n(x,y)\Leftrightarrow |x-y|\leq n$. Then, $\bigwedge_{n\in \N}P_n(x)$ is a partial \gls{lp type} over $\emptyset$ but it is not a partial \gls{lp type} over $0$. Such a counterexample cannot be given when $A$ is pointed:
\begin{lem} \label{l:types over pointed parameters} Let $M$ be a local $\lang${\hyp}structure and $A$ a pointed subset. Let $A\subseteq B$. Then, every partial \gls{lp type} of $M$ over $A$ is a partial \gls{lp type} of $M$ over $B$.
\end{lem}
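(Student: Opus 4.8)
The plan is to reduce everything to bounded satisfiability via \cref{l:boundedly satisfiable}, and to exploit the fact that, when $A$ is pointed, passing from $\lang(A)$ to $\lang(B)$ does not enlarge the set of variables that a bound is required to constrain.

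Concretely, I would start from a partial local positive type $\Gamma\subseteq\LFor^x_+(\lang(A))$ of $M$ over $A$, that is, from the fact that $\Gamma$ is locally satisfiable in $\Th_-(M/A)$. By \cref{l:boundedly satisfiable} this makes $\Gamma$ boundedly satisfiable in $\Th_-(M/A)$, so I fix a feasible bound $\Bound(x)=\bigwedge_{i\in I}\dd_i(x_i,c_i)\wedge\bigwedge_{(i,j)\in J}\dd_{i,j}(x_i,x_j)$ over $\lang(A)$. The first step --- and, I expect, the only delicate one --- is to note that, because $\lang(A)$ is already pointed, the index set $I$ of \cref{d:bound} is the set of \emph{all} indices of $x$; since $J$ depends only on the sorts of the variables of $x$, and each $c_i$ is a constant of $\lang(A)\subseteq\lang(B)$, the same formula $\Bound(x)$ is a bound of $x$ in the sense of \cref{d:bound} relative to $\Th_-(M/B)$. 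This is exactly the point at which pointedness is used: as the example preceding the statement illustrates, over a non-pointed $A$ a bound over $A$ may fail to constrain a variable that every bound over $B$ is forced to constrain, and that is what permits a type to stop being one.

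The second step is to check that $\Bound$ remains feasible, i.e. that $\Gamma\wedge\Bound$ is finitely locally satisfiable in $\Th_-(M/B)$. Given a finite $\Delta\subseteq\Gamma\wedge\Bound$, I would first adjoin to it the finitely many conjuncts $\dd_i(x_i,c_i)$ of $\Bound$ for the variables $x_i$ occurring in $\Delta$ (this keeps $\Delta$ finite and inside $\Gamma\wedge\Bound$, and it suffices to treat such enlarged $\Delta$), so that every variable of $\Delta$ is bounded by a constant inside $\Delta$. Quantifying those finitely many variables inside their balls $\dd_i(c_i)$ yields a local positive $\lang(A)$-sentence $\sigma$ for which, over any negative local theory $\theory'$ in a language extending $\lang(A)$, local satisfiability of $\Delta$ in $\theory'$ is equivalent to $\sigma\in\theory'_+$, hence to $\neg\sigma\notin\theory'$. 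Taking $\theory'=\Th_-(M/A)$ and $\theory'=\Th_-(M/B)$ yields, in both cases, the single condition $M\not\models\neg\sigma$, since $\neg\sigma$ is an $\lang(A)$-sentence and its truth in $M$ is insensitive to which parameters have been named. Feasibility of $\Bound$ over $A$ gives local satisfiability of $\Delta$ in $\Th_-(M/A)$, hence $M\not\models\neg\sigma$, hence local satisfiability of $\Delta$ in $\Th_-(M/B)$.

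Finally, since $\Delta$ was an arbitrary finite subset, $\Gamma\wedge\Bound$ is finitely locally satisfiable in $\Th_-(M/B)$, so $\Bound$ witnesses bounded satisfiability of $\Gamma$ in $\Th_-(M/B)$, and \cref{l:boundedly satisfiable} then gives that $\Gamma$ is locally satisfiable in $\Th_-(M/B)$, i.e. a partial local positive type of $M$ over $B$. I expect the main obstacle to be purely the bookkeeping of the first step --- verifying that the feasible bound produced over $\lang(A)$ genuinely satisfies \cref{d:bound} over $\lang(B)$, which hinges on pointedness forcing $I$ to be the full index set. The rest is soft: local positive sentences are in particular positive, so the positive-counterpart criterion for local satisfiability applies, and naming further constants does not alter the truth of $\lang(A)$-sentences in $M$.
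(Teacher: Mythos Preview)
Your proposal is correct and follows the same route as the paper's proof: reduce to bounded satisfiability via \cref{l:boundedly satisfiable}, and use pointedness of $A$ to ensure that a bound of $x$ over $\lang(A)$ already has $I$ equal to the full index set, hence remains a bound over $\lang(B)$. The paper's proof simply asserts that the same $\Bound$ is feasible over $B$ without further comment; your second step (packaging a finite $\Delta\subseteq\Gamma\wedge\Bound$ into a local positive $\lang(A)$-sentence $\sigma$ and observing that $\neg\sigma\in\Th_-(M/A)\Leftrightarrow M\models\neg\sigma\Leftrightarrow \neg\sigma\in\Th_-(M/B)$) is a correct and welcome unpacking of that assertion.
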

\begin{proof} Let $\Gamma(x)$ be a partial \gls{lp type} of $M$ over $A$. By \cref{l:boundedly satisfiable}, $\Gamma(x)$ is boundedly satisfiable over $A$. Pick a feasible bound $\Bound(x)$ of $x$ over $A$ for $\Gamma$. Since $A$ is pointed, $\Bound(x)$ is also a bound over $B$. Hence, $\Gamma(x)$ is boundedly satisfiable over $B$ with feasible bound $\Bound$, so a partial \gls{lp type} over $B$ by \cref{l:boundedly satisfiable}.  
\end{proof}
It is natural to wonder if we can relax the pointedness hypothesis in the previous lemma. The following result shows that, in \gls{lpc} models, the \gls{LJCP} suffices. Furthermore, in some sense, the \gls{LJCP} is equivalent to preservation of types. 

\begin{lem} \label{l:types over local joint continuation property parameters} Let $M$ be a local $\lang${\hyp}structure, $A\subseteq B$ be subsets of $M$ and $M\models^\pc_\lang\Th_-(M/A)$. Suppose that $M$ has the \gls{LJCP} over $A$. Then, every partial \gls{lp type} of $M$ over $A$ is a partial \gls{lp type} of $M$ over $B$. Conversely, if $M$ has the \gls{LJCP} over $B$ and every partial \gls{lp type} on a minimal pointed variable of $M$ over $A$ is a partial \gls{lp type} of $M$ over $B$, then $M$ has the \gls{LJCP} over $A$.
\end{lem}
\begin{proof} By adding parameters to the language, we may assume that $A$ is empty. 

Suppose first that $M$ has the \gls{LJCP} over $\emptyset$. Let $\Gamma(x)$ be a partial \gls{lp type} of $M$ over $\emptyset$. By \cref{l:local positive types over parameters}, there is an $\lang${\hyp}homomorphism $f\map M\to N$ to $N\models^\pc_\lang\Th_-(M)$ realising $\Gamma(x)$. Now, $f$ is a positive embedding as $M$ is a \gls{lpc} structure. On the other hand, consider the $\lang(B)${\hyp}expansion $N_B$ of $N$ given by $b^{N_B}=f(b)$ for $b\in B$. As $f\map M\to N$ is a positive embedding, $N_B\models_\lang\Th_-(M/B)$. Consequently, $\Gamma(x)$ is locally satisfiable in $\Th_-(M/B)$, i.e. it is a partial \gls{lp type} of $M$ over $B$. 

Now, assume that $M$ has the \gls{LJCP} over $B$ and every partial \gls{lp type} on a minimal pointed variable of $M$ over $\emptyset$ is a partial \gls{lp type} of $M$ over $B$. Let $N\models^\pc_\lang\Th_-(M)$ and pick $c$ minimal pointed in $N$. Then, $p(x)\coloneqq\ltp^N_+(c)$ is a \gls{lp type} on a minimal pointed variable of $M$ over $\emptyset$. By hypothesis, $p(x)$ is a partial \gls{lp type} of $M$ over $B$. By \cref{l:local positive types over parameters}, there is an $\lang(B)${\hyp}homomorphism $f\map M_B\to M'_B$ to $M'_B\models^\pc_\lang\Th_-(M/B)$ realising $p(x)$. Pick a realisation $c'$ of $p(x)$. By \cite[Remark 2.4]{rodriguez2024completeness}, $M_B\models^\pc_\lang\Th_-(M/B)$, so $f$ is a positive embedding. By maximality, as $p(x)$ is a \gls{lp type} over $\emptyset$, we conclude $\ltp^{M'}_+(c')=p(x)$. Let $M'_c$ be the $\lang(c)${\hyp}expansion of $M'$ given by $c^{M'_c}=c'$. Then, $M'_c\models_\lang\Th_-(N/c)$. Indeed, $\varphi(c)\in \Th_-(N/c)$ if and only if $\varphi(x)\notin \ltp^N_+(c/A)=p(x)=\ltp^{M'}_+(c'/A)$, if and only if $M'_c\models_\lang\varphi(c)$. Since $c$ is pointed, by \cite[Theorem 3.5]{rodriguez2024completeness}, there are homomorphisms $g\map M'_c\to N'_c$ and $h\map N_c\to N'_c$ to $N'_c\models^\pc_\lang\Th_-(N/c)$. Then, $N'\models_\lang\Th_-(N)$ where $N\models_\lang\Th_-(M)$, so $N'\models_\lang\Th_-(M)$. Finally, $g\circ f\map M\to N'$ and $h\map N\to N'$ are homomorphisms. 

As $N$ is arbitrary and $M$ is \gls{lpc}, we conclude that $\Th_-(M)$ has the \gls{LJCP}. Indeed, given $N_1\models_\lang \Th_-(M)$ and $N_2\models_\lang \Th_-(M)$, there are then homomorphisms $f_1\map N_1\to N'_1$, $f_2\map M\to N'_1$, $g_1\map M\to N'_2$ and $g_2\map N_2\to N'_2$ with $N'_1,N'_2\models_\lang\Th_-(M)$. As $M\models^\pc_\lang\theory$, we have that $f_2$ and $g_1$ are positive embeddings, so $N'_1,N'_2\models_\lang\Th_-(M/M)$. As $M$ is pointed, by \cite[Theorem 3.5]{rodriguez2024completeness}, there are $\lang(M)${\hyp}homomorphisms $h_1\map N'_1\to N'$ and $h_2\map N'_2\to N'$ to a common local model $N'\models_\lang \Th_-(M/M)$. In particular, $h_1\circ f_1\map N_1\to N'$ and $h_2\circ g_2\map N_2\to N'$ are homomorphisms with $N'\models_\lang\Th_-(M)$. 
\end{proof}

The following example shows that the hypothesis $M\models^\pc_\lang\Th_-(M/A)$ is required in the previous lemma. In other words, for general local structures, assuming only the \gls{LJCP} is not enough for preservation of types:
\begin{ex}\label{e:types over LJP are not preserved under adding paramters} Consider the language $\lang$ with two sorts $\Sorts=\{1,2\}$, locality relations $\Dtt^s=\{\dd^s_q\sth q\in \Q_{\geq 0}\}$ for each $s\in \Sorts$ (and the usual ordered monoid structure induced by the positive rationals), and one binary relation $P$ of sort $(1,2)$. Let $R$ be the local $\lang${\hyp}structure whose sorts are two copies of the positive reals, $\R_{\geq 0}^1,\R_{\geq 0}^2$, with interpretations $R\models\dd^s_q(x,y)\Leftrightarrow |x-y|\leq q$ and $R\models P(x,y)\Leftrightarrow x\neq 0\wedge y>\sfrac{1}{x}$. Consider the sets of parameters $A=\{0^1\}$ and $B=\{0^1,0^2\}$ and the partial \gls{lp type} $p(xy)=\bigwedge_{q>0} \dd^1_q(x,0^1)\wedge P(x,y)$, where $0^s$ is the zero of sort $s$. We claim that $R$ has the \gls{LJCP} over $A$ and $p$ is a partial \gls{lp type} over $A$, but it is not a partial \gls{lp type} over $B$. 

Now, let $\widetilde{R}$ be an $\aleph_1${\hyp}saturated elementary extension of $R$. Since $p(xy)$ is finitely satisfiable, we find $\varepsilon^1$ of sort $1$ and $\infty^2$ of sort $2$ in $\widetilde{R}$ realising $p$. Consider the substructure $I$ of $\widetilde{R}$ with universe $I^1=\{\varepsilon^1,0^1\}$ and $I^2=\{\infty^2\}$. We get that $I\models_\lang \Th_-(R/A)$. Now, for any $M\models_\lang \Th_-(R/A)$, consider $f\map M\to I$ given by $f\map 0^1\mapsto 0^1$, $f\map a\mapsto \varepsilon^1$ for all $a\in M^1$ with $a\neq 0^1$, and $f\map a\mapsto \infty^2$ for all $a\in M^2$. Trivially, $f$ is a homomorphism. Therefore, $\Th_-(R/A)$ has the \gls{LJCP}. This also shows that $p(xy)$ is locally satisfiable over $A$. 

Now, $R\models_\lang \neg \exists x\mathrel{} (\dd_{\sfrac{1}{n}}(x,0^1)\wedge P(x,y)\wedge \dd_n(y,0^2))$ for all $n\in\N$. Therefore, $\Th_-(R/B)\models_\lang \left(\dd_{\sfrac{1}{n}}(x,0^1)\wedge P(x,y)\right)\perp \dd_n(y,0^2)$. Hence, $p(xy)$ is not boundedly satisfiable over $B$, so it is not a partial \gls{lp type} over $B$.    
\end{ex}

If $A$ is pointed and $A\subseteq B$, then $B$ is pointed too, i.e. pointedness over parameters is upwards preserved. It is natural to wonder whether the \gls{LJCP} over parameters is upwards preserved too. The following example shows that this is not the case.
\begin{ex} \label{e:ljp no upward preserved} Consider the language $\lang$ with two sorts $\Sorts=\{1,2\}$, locality relations $\Dtt^1=\{\dd^1_n\sth n\in \N\}$ (with the usual ordered monoid structure induced by the natural numbers) in sort $1$ and $\Dtt^2=\{=,\toprel\}$ (with the trivial ordered monoid structure) in sort $2$, one binary relation $P$ of sort $(1,2)$ and binary relations $\{Q_n\}_{n\in\N}$ of sort $(2,1)$. Let $N$ be the local $\lang${\hyp}structure, whose universe is $\N$ in sort $1$ and $\{-\infty,\infty\}$ in sort $2$, with interpretations $N\models\dd^1_n(x,y)\Leftrightarrow |x-y|\leq n$, $N\models P(x,y)\Leftrightarrow x=0\text{ and }y=-\infty$, and $N\models Q_n(y,x)\Leftrightarrow y=\infty\text{ or } x\geq n$. Let us show that $N\models^\pc_\lang\Th_-(N)$ and that $N$ has the \gls{LJCP} over $\emptyset$ but not over $\{\infty\}$. 

Take $M\models_\lang\Th_-(N)$ arbitrary. Consider the map $f\map M\to N$ given by 
\[\begin{array}{ll} 
f(a)=\min(\{k\sth M\models_\lang \exists xy\mathrel{}(\dd_k(a,x)\wedge P(x,y))\}\cup\{0\})&\text{ in sort }M_1, \vspace{5pt}\\ 
f(b)=\begin{cases} -\infty & \text{if }M\models_\lang\exists x\mathrel{}P(x,b),\\ \phantom{-}\infty & \text{otherwise},\end{cases}&\text{ in sort }M_2.\end{array}\]
We check that $f$ is a homomorphism. Obviously, $\toprel$ and $=$ are preserved by $f$. Take $a,a'\in M_1$ and $b\in M_2$ arbitrary. Suppose $M\models_\lang P(a,b)$. Then, by definition, $f(b)=-\infty$ and $f(a)=0$, so $N\models_\lang P(f(a),f(b))$. Suppose $M\models_\lang\dd_k(a,a')$ and $f(a)=n$ and $f(a')=m$. If there is no $a''\in M_1$ with $f(a'')=0$, then $f(a)=f(a')=0$ and $N\models_\lang\dd_k(f(a),f(a'))$ trivially. Otherwise, by definition of $f$, there is $a''\in M_1$ such that $M\models_\lang\exists x\mathrel{} P(a'',x)$ and $M\models_\lang\dd_n(a,a'')$. Then, by \cref{itm:axiom 3}, $M\models_\lang\dd_{n+k}(a',a'')$, so $m\leq n+k$ by definition of $f$. Symmetrically, $n\leq m+k$, so $|n-m|\leq k$, concluding $N\models_\lang \dd_k(f(a),f(a'))$. Finally, suppose $M\models_\lang Q_n(b,a)$. If $M\not\models_\lang \exists x\mathrel{}P(x,b)$, then $f(b)=\infty$ and $N\models_\lang Q_n(f(b),f(a))$. Otherwise, $f(b)=-\infty$ and $M\models_\lang \exists x\mathrel{}(Q_n(b,a)\wedge P(x,b))$. Say $f(a)=m$, so $M\models_\lang \exists xx'y \mathrel{}(P(x,b)\wedge Q_n(b,a)\wedge \dd_m(x',a)\wedge P(x',y))$. However, $N\models_\lang\neg\exists uvxx'y\mathrel{} (P(x,v)\wedge Q_n(v,u)\wedge\dd_m(x',u)\wedge P(x',y))$ for $n>m$. As $M\models_\lang\Th_-(N)$, we conclude $m\geq n$. In other words, $f(a)\geq n$, so $N\models_\lang Q_n(f(b),f(a))$.

In sum, $f$ is a homomorphism. As $M$ is arbitrary, we conclude that $\Th_-(N)$ has the \gls{LJCP}. Furthermore, this shows that $N$ is \gls{lpc} --- alternatively we can show that $N$ is \gls{lpc} using \cite[Lemma 2.6]{rodriguez2024completeness}.

Now, we show that $N$ does not have the \gls{LJCP} over $-\infty$. For the sake of contradiction, suppose $N$ has the \gls{LJCP} over $-\infty$. Consider $p(x)\coloneqq\bigwedge_{n\in\N} Q_n(-\infty,x)$. Obviously, $p(x)$ is finitely satisfiable in $N$ and $x$ is a minimal pointed variable. Therefore, $p(x)$ is a partial \gls{lp type} of $N$ over $-\infty$ by \cref{l:boundedly satisfiable}. Hence, by \cref{l:local positive types over parameters}, assuming that $N$ has the \gls{LJCP} over $-\infty$, there is $f\map N\to M$ to $M\models_\lang\Th_-(N/-\infty)$ such that $M$ realises $p(x)$; take $a\in M$ realising $p(x)$. Since $M$ is a local structure, there is $k$ such that $M\models_\lang \dd_k(f(0),a)$, concluding that $M\models_\lang \dd_k(f(0),a)\wedge P(f(0),f(-\infty))\wedge Q_{k+1}(f(-\infty),a)$. However, 
\[
N\models_\lang \neg\exists xx'y \mathrel{} (\dd_k(x,x')\wedge P(x,y)\wedge Q_{k+1}(y,x')),
\] contradicting that $M\models_\lang\Th_-(N/-\infty)$.        
\end{ex}
\begin{rmk} \cref{e:ljp no upward preserved} also shows that the \gls{lp type} of the parameters $A$ is not sufficient to determine whether the structure has the \gls{LJCP} over them. Indeed, in this example, $\infty$ and $-\infty$ have the same \gls{lp type} and $N$ has the \gls{LJCP} over $\infty$ but not over $-\infty$.   
\end{rmk}

\subsection{Spaces of types}
The \emph{space of \glspl{lp type}} of $\theory$ on $x$ over $\lang$, which we denote by $\Stone^x(\theory/\lang)$, is the set of \glspl{lp type} of $\theory$ on $x$ over $\lang$; the \emph{space of \glspl{lp type}} $\Stone(\theory/\lang)$ is the respective sorted set. We omit $\lang$ if it is clear from the context.

Let $\Gamma\subseteq \LFor^x_+(\lang)$ be a subset of \glspl{lp formula}. The \emph{interpretation} of $\Gamma$ in $\Stone^{x}(\theory/\lang)$ is the set
\[\langle \Gamma(x)\rangle_{\Stone(\theory/\lang)}\coloneqq \{p\in \Stone^x(\theory/\lang)\sth \Gamma\subseteq p\}\]
As usual, we omit $\lang$ and $x$ if they are clear from the context. 

The \emph{\gls{lp logic topology}} of $\Stone^x(\theory)$ is the topology defined by taking $\langle\Gamma\rangle$ as closed for every $\Gamma\subseteq \LFor^x_+(\lang)$. 

\begin{lem} The \gls{lp logic topology} of $\Stone^x(\theory)$ is a well{\hyp}defined topology.
\end{lem}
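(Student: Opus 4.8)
The plan is to verify that the family
\[\mathcal{C}\coloneqq\{\langle\Gamma\rangle\sth\Gamma\subseteq\LFor^x_+(\lang)\}\cup\{\emptyset\}\]
is the collection of closed sets of a topology on $\Stone^x(\theory)$, i.e. that it is closed under arbitrary intersections and finite unions and contains $\emptyset$ and the whole space. The whole space equals $\langle\emptyset\rangle$, since every local positive type contains the empty set of formulas, so it lies in $\mathcal{C}$, and $\emptyset\in\mathcal{C}$ by construction; this also takes care of the empty intersection and the empty union. For arbitrary intersections the only point is the identity $\bigcap_{i\in I}\langle\Gamma_i\rangle=\big\langle\bigcup_{i\in I}\Gamma_i\big\rangle$, which is immediate from the fact that a type $p$ contains all the $\Gamma_i$ if and only if it contains their union; and any intersection one of whose factors is $\emptyset$ is again $\emptyset$. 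Hence $\mathcal{C}$ is closed under arbitrary intersections.

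For finite unions it suffices, by induction, to treat binary unions, and since $\emptyset\cup C=C$ we may assume both sets have the form $\langle\Gamma_i\rangle$. Set $\Gamma_1\vee\Gamma_2\coloneqq\{\varphi_1\vee\varphi_2\sth\varphi_1\in\Gamma_1,\ \varphi_2\in\Gamma_2\}$, which is again a subset of $\LFor^x_+(\lang)$ because local positive formulas are built using disjunctions; I will show $\langle\Gamma_1\rangle\cup\langle\Gamma_2\rangle=\langle\Gamma_1\vee\Gamma_2\rangle$. The inclusion ``$\subseteq$'' is purely formal: if $p\supseteq\Gamma_1$, then for every $\varphi_1\in\Gamma_1$ and $\varphi_2\in\Gamma_2$ the set $p\cup\{\varphi_1\vee\varphi_2\}$ is locally satisfiable in $\theory$ (any realisation of $p$ realises it), so $\varphi_1\vee\varphi_2\in p$ by maximality of $p$; thus $\Gamma_1\vee\Gamma_2\subseteq p$, and symmetrically if $p\supseteq\Gamma_2$.

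The reverse inclusion is the only substantive step, and it is where I expect the (mild) obstacle to lie: splitting a disjunction inside a type is not a formal manipulation, so one must pass to a genuine realisation, which is exactly what \cref{l:partial local positive types} provides. Concretely, suppose $p\in\langle\Gamma_1\vee\Gamma_2\rangle$ while $p\notin\langle\Gamma_1\rangle$ and $p\notin\langle\Gamma_2\rangle$; choose $\varphi_1\in\Gamma_1\setminus p$ and $\varphi_2\in\Gamma_2\setminus p$. By \cref{l:partial local positive types} there are $M\models^\pc\theory$ and $a\in M^x$ with $M\models p(a)$. Since $\varphi_1\vee\varphi_2\in p$, we get $M\models\varphi_1(a)$ or $M\models\varphi_2(a)$; say $M\models\varphi_1(a)$. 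Then $p\cup\{\varphi_1\}$ is locally satisfiable in $\theory$ (witnessed by $a$ in $M$), so $\varphi_1\in p$ by maximality of $p$, contradicting $\varphi_1\notin p$. Hence $\langle\Gamma_1\vee\Gamma_2\rangle\subseteq\langle\Gamma_1\rangle\cup\langle\Gamma_2\rangle$, so $\mathcal{C}$ is closed under finite unions, and the local positive logic topology is well defined.
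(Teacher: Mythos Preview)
Your proof is correct and follows essentially the same approach as the paper: the same identity $\bigcap_i\langle\Gamma_i\rangle=\langle\bigcup_i\Gamma_i\rangle$ for intersections, the same set $\Gamma_1\vee\Gamma_2$ for binary unions, and the same appeal to \cref{l:partial local positive types} to split a disjunction by passing to an actual realisation in a local positively closed model. The only cosmetic differences are that the paper phrases the reverse inclusion by first noting that maximality of $p$ together with $M\models p(a)$ forces $\ltp^M_+(a)=p$ (so $\varphi_i\notin p$ directly gives $M\not\models\varphi_i(a)$), whereas you reach the contradiction via maximality after the split; and you are slightly more explicit than the paper in justifying the ``easy'' inclusion $\langle\Gamma_1\rangle\cup\langle\Gamma_2\rangle\subseteq\langle\Gamma_1\vee\Gamma_2\rangle$ via maximality.
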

\begin{proof} By \cref{c:there are types}, there is a \gls{lp type} extending $x=x$, so $\Stone^x(\theory)\neq\emptyset$. Obviously, $\langle x=x\rangle=\Stone^{x}(\theory)$ and $\langle \bot\rangle=\emptyset$, so they are closed.  

Let $\{\Gamma_i\}_{i\in I}$ be a family of \glspl{lp formula}. Then, $\bigcap \langle \Gamma_i\rangle=\langle \bigwedge\Gamma_i\rangle$, where $\bigwedge \Gamma_i\coloneqq \bigcup \Gamma_i=\{\varphi\sth \varphi\in \Gamma_i\text{ for some }i\}$, so arbitrary intersection of closed subsets are closed.

Let $\Gamma_1,\Gamma_2\subseteq \LFor^x_+(\lang)$ and write $\Gamma_1\vee\Gamma_2\coloneqq \{\varphi_1\vee\varphi_2\sth \varphi_1\in\Gamma_1,\ \varphi_2\in\Gamma_2\}$. Then, $\langle \Gamma_1\vee\Gamma_2\rangle=\langle \Gamma_1\rangle\cup\langle\Gamma_2\rangle$. Indeed, trivially $\langle\Gamma_1\rangle \cup\langle\Gamma_2\rangle\subseteq \langle\Gamma_1\vee\Gamma_2\rangle$. On the other hand, suppose $p\notin \langle\Gamma_1\rangle\cup \langle\Gamma_2\rangle$. Then, there are $\varphi_1\in \Gamma_1$ and $\varphi_2\in\Gamma_2$ such that $\varphi_1\notin p$ and $\varphi_2\notin p$. By \cref{l:partial local positive types}, take $M\models^\pc_\lang\theory$ an $a\in M^x$ such that $\ltp^M_+(a)=p$. As $M\not\models_\lang\varphi_1(a)$ and $M\not\models_\lang\varphi_2(a)$, we conclude that $\varphi_1\vee\varphi_2\notin p$, so $p\notin \langle \Gamma_1\vee\Gamma_2\rangle$.
\end{proof}

\begin{rmk} The \gls{lp logic topology} is a $\mathrm{T}_1$ topology (i.e. Fr\'{e}chet). Indeed, for every \gls{lp type} $p$, we have that $\langle p\rangle$ is closed and, by maximality, only contains $p$.
\end{rmk}
As we already noted in \cite{rodriguez2024completeness}, compactness is not true in general for local logic. In particular, $\Stone(\theory)$ is not necessarily compact. However, compactness is true under boundedness.
\begin{lem} \label{l:bounds are compact} Every bound of $x$ is a compact subset of $\Stone^x(\theory)$.
\end{lem}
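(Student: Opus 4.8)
The plan is to use the finite intersection property (FIP) characterisation of compactness. Write $\langle\Bound\rangle\coloneqq\{p\in\Stone^x(\theory)\sth \Bound\subseteq p\}$ for the subset of the type space determined by (the set of conjuncts of) the bound $\Bound$; if $\langle\Bound\rangle=\emptyset$ it is trivially compact, so assume otherwise. Since the closed sets of $\Stone^x(\theory)$ are the $\langle\Gamma\rangle$ (together with $\emptyset$) and $\bigcap_i\langle\Gamma_i\rangle=\langle\bigcup_i\Gamma_i\rangle$, the closed subsets of $\langle\Bound\rangle$ in the subspace topology are exactly the sets $\langle\Bound\cup\Gamma\rangle$ with $\Gamma\subseteq\LFor^x_+(\lang)$ (and $\emptyset$, which is not a member of any family having the FIP). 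So fix a family $\{\langle\Bound\cup\Gamma_k\rangle\}_{k\in K}$ with the FIP and set $\Sigma\coloneqq\Bound\cup\bigcup_{k\in K}\Gamma_k$; since $\bigcap_{k\in K}\langle\Bound\cup\Gamma_k\rangle=\langle\Sigma\rangle$, it suffices to show $\langle\Sigma\rangle\neq\emptyset$.

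The key step is that $\Sigma$ is boundedly satisfiable in $\theory$ with feasible bound $\Bound$. As $\Bound\subseteq\Sigma$ we have $\Sigma\wedge\Bound=\Sigma$, so it is enough to check that $\Sigma$ is finitely locally satisfiable. Let $\Sigma_0\subseteq\Sigma$ be finite; then $\Sigma_0\subseteq\Bound\cup\Gamma_{k_1}\cup\dots\cup\Gamma_{k_n}$ for finitely many $k_1,\dots,k_n\in K$, and by the FIP the set $\langle\Bound\cup\Gamma_{k_1}\rangle\cap\dots\cap\langle\Bound\cup\Gamma_{k_n}\rangle=\langle\Bound\cup\Gamma_{k_1}\cup\dots\cup\Gamma_{k_n}\rangle$ is nonempty (when $n=0$ use directly that $\langle\Bound\rangle\neq\emptyset$), so it contains some local positive type $p\supseteq\Sigma_0$. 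As $p$ is in particular a partial local positive type it is locally satisfiable, hence so is its subset $\Sigma_0$.

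Finally, $\Sigma$ is a boundedly satisfiable set of local positive formulas, so by \cref{l:boundedly satisfiable} it is locally satisfiable, i.e.\ a partial local positive type of $\theory$; by \cref{c:there are types} it then extends to a local positive type $q\in\langle\Sigma\rangle$, so $\langle\Sigma\rangle\neq\emptyset$ and we are done. I do not anticipate a serious obstacle here: the substantive content — that bounded satisfiability upgrades to genuine local satisfiability, proved in \cref{l:boundedly satisfiable} by absorbing the bound into the local signature and invoking \cite[Theorem 1.9]{rodriguez2024completeness} — is already available, so the remaining work is the FIP bookkeeping together with the routine but essential observation that the single fixed bound $\Bound$ simultaneously serves as a feasible bound for the whole union $\Sigma$, which is precisely what makes \cref{l:boundedly satisfiable} applicable.
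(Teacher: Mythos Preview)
Your proof is correct and follows essentially the same approach as the paper: both use the finite intersection property, observe that the union of the $\Gamma_k$ together with $\Bound$ is finitely locally satisfiable (hence boundedly satisfiable with feasible bound $\Bound$), apply \cref{l:boundedly satisfiable} to obtain local satisfiability, and then invoke \cref{c:there are types} to produce a type in the intersection. Your version is slightly more explicit about the edge cases ($\langle\Bound\rangle=\emptyset$ and the empty closed set), but the argument is the same.
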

\begin{proof} Let $\Bound(x)$ be a bound of $x$ in $\theory$. We claim that $\langle \Bound(x)\rangle$ is a compact subset of $\Stone^x(\theory)$. Let $\{\Gamma_i(x)\}_{i\in I}$ be any family of partial \glspl{lp type} of $\theory$ and suppose that $\bigcap _{i\in I_0}\langle \Gamma_i(x)\rangle\cap \langle \Bound(x)\rangle$ is not empty for every $I_0\subseteq I$ finite. Consider $\Gamma(x)=\bigwedge \Gamma_i(x)\wedge \Bound(x)$. Then, $\Gamma$ is finitely locally satisfiable in $\theory$ and $\Gamma\models_\lang \Bound$. Therefore, by \cref{l:boundedly satisfiable}, $\Gamma$ is locally satisfiable in $\theory$. By \cref{c:there are types}, we conclude that there is $p\in\langle \Gamma\rangle=\bigcap \langle \Gamma_i\rangle\cap \langle \Bound\rangle$. As $\{\Gamma_i\}_{i\in I}$ is arbitrary, we conclude that $\langle \Bound\rangle$ is compact. 
\end{proof}
\begin{defi} We say that $\theory$ is \emph{Hausdorff} if $\Stone^x(\theory)$ is Hausdorff with the \gls{lp logic topology} for any variable $x$. 
\end{defi}
\begin{lem} \label{l:Hausdorff} $\Stone^x(\theory)$ is Hausdorff if and only if, for any two different \glspl{lp type} $p,q\in\Stone^x(\theory)$, there are \glspl{lp formula} $\varphi(x)$ and $\psi(x)$ with $\varphi\notin p$, $\psi\notin q$ and $\varphi\toprel\psi$.  
\end{lem}
\begin{proof} Suppose $\Stone^x(\theory)$ is Hausdorff and let $p,q\in\Stone^x(\theory)$ be two different \glspl{lp type}. Thus, there are $\Gamma_1,\Gamma_2\subseteq \LFor^x_+(\lang)$ such that $\langle \Gamma_1\rangle\cup\langle\Gamma_2\rangle=\Stone^x(\theory)$ and $p\notin \langle\Gamma_1\rangle$ and $q\notin\langle\Gamma_2\rangle$. There are $\varphi\in \Gamma_1$ and $\psi\in \Gamma_2$ such that $\varphi\notin p$ and $\psi\notin q$. As $\langle \Gamma_1\rangle\cup\langle\Gamma_2\rangle =\Stone^x(\theory)$, we have that $\langle \varphi\vee\psi\rangle=\Stone^x(\theory)$. For any $M\models^\pc_\lang\theory$ and any $a\in M^x$, we have that $\varphi\vee\psi\in \ltp_+(a)$. In other words, $\theory\models^\pc_\lang\forall x\mathrel{} (\varphi\vee\psi)$, i.e. $\varphi\toprel\psi$ by \cite[Lemma 2.15(2)]{rodriguez2024completeness}.

On the other hand, suppose that for any two different \glspl{lp type} $p(x)$ and $q(x)$ there are $\varphi,\psi\in\LFor^x_+(\lang)$ such that $\varphi\notin p$, $\psi\notin q$ and $\varphi\toprel\psi$. Then, by \cite[Lemma 2.15(2)]{rodriguez2024completeness} and \cref{l:partial local positive types}, $\langle\varphi\rangle\cup\langle\psi\rangle=\langle\varphi\vee\psi\rangle=\Stone^x(\theory)$, so $\langle\varphi\rangle^c$ and $\langle\psi\rangle^c$ are open disjoint subsets of $\Stone^x(\theory)$ separating $p$ and $q$.
\end{proof}
\begin{coro} \label{c:hausdorff finite variables} $\theory$ is Hausdorff if and only if $\Stone^x(\theory)$ is Hausdorff for every finite variable $x$.
\end{coro}
\begin{proof} Let $x$ be an arbitrary variable. Suppose $p,q\in\Stone^x(\theory)$ are different. Then, $p(x)\wedge q(x)$ is not locally satisfiable. As $p$ and $q$ are maximal, they contain a bound on $x$, so $p(x)\wedge q(x)$ contains a bound on $x$. Consequently, by \cref{l:boundedly satisfiable}, $p(x)\wedge q(x)$ is not locally finitely satisfiable. Hence, there is a finite subvariable $y\subseteq x$ such that $p_{\mid y}\wedge q_{\mid y}$ is not locally finitely satisfiable, where $p_{\mid y}$ and $q_{\mid y}$ are corresponding restrictions. Hence, $\langle p_{\mid y}\rangle$ and $\langle q_{\mid y}\rangle$ are disjoint. Since $p$ and $q$ are maximal, they contain bounds on $x$, so $p_{\mid y}$ and $q_{\mid y}$ contain bounds on $y$. By \cref{l:bounds are compact}, we conclude that $\langle p_{\mid y}\rangle$ and $\langle q_{\mid y}\rangle$ are compact disjoint. Say $\langle p_{\mid y}\rangle=\{p_i(y)\}_{i\in I}$ and $\langle q_{\mid y}\rangle=\{q_j(y)\}_{j\in J}$. By \cref{l:Hausdorff}, for each $i\in I$ and $j\in J$, there are $\varphi_{ij},\psi_{ij}\in \LFor^y_+(\lang)$ with $\varphi_{ij}\toprel\psi_{ij}$ such that $\varphi_{ij}\notin p_i$ and $\psi_{ij}\notin q_j$. Thus, $\{\langle \psi_{ij}\rangle^c\sth j\in J\}$ is an open covering of $\langle q_{\mid y}\rangle$. Hence, by compactness, there is a finite $J_i\subseteq J$ such that $\langle q_{\mid y}\rangle\subseteq \langle \bigwedge_{j\in J_i} \psi_{ij}\rangle^c$. Then, $p_i\in \langle \bigvee_{j\in J_i} \varphi_{ij}\rangle^c$ where $\langle \bigvee_{j\in J_i} \varphi_{ij}\rangle^c$ and $\langle \bigwedge_{j\in J_i} \psi_{ij}\rangle^c$ are disjoint. Hence, $\{\langle\bigvee_{j\in J_i} \varphi_{ij}\rangle^c\sth i\in I\}$ is an open covering of $\langle p_{\mid y}\rangle$. By compactness, we get that there is a finite $I'\subseteq I$ such that $\langle p_{\mid y}\rangle\subseteq \langle \bigwedge_{i\in I'}\bigvee_{j\in J_i}\varphi_{ij}\rangle^c$ and $\langle q_{\mid y}\rangle \subseteq \langle \bigvee_{i\in I'}\bigwedge_{j\in J_i}\psi_{ij}\rangle^c$ with $\langle \bigwedge_{i\in I'}\bigvee_{j\in J_i}\varphi_{ij}\rangle^c$ and $\langle \bigvee_{i\in I'}\bigwedge_{j\in J_i}\psi_{ij}\rangle^c$ disjoint. Then, $\varphi(y)\coloneqq \bigwedge_{i\in I'}\bigvee_{j\in J_i}\psi_{ij}$ and $\psi(y)\coloneqq \bigvee_{i\in I'}\bigwedge_{j\in J_i}\psi_{ij}$ satisfy that $\varphi\toprel\psi$ and $\varphi\notin p$ and $\psi\notin q$. As $p$ and $q$ are arbitrary, by \cref{l:Hausdorff}, we conclude that $\Stone^x(\theory)$ is Hausdorff. As $x$ is arbitrary, we conclude that $\theory$ is Hausdorff.
\end{proof}

Let $x$ and $y$ be two variables. Recall that a \emph{substitution} of $x$ by $y$ is a function $\zeta\map x\to y$ such that $x_i$ and $\zeta(x_i)$ have the same sort for any $x_i\in x$. Given a formula $\varphi$, we write $\zeta_*(\varphi)$ for the formula recursively defined by replacing at the same time every free instance of $x_i$ in $\varphi$ by a free instance of $\zeta(x_i)$. 

In positive logic, a key notion generalizing Hausdorffness is semi{\hyp}Hausdorffness. The definition of semi{\hyp}Hausdorffness depends on the fact that pull{\hyp}backs by substitutions of variables are well{\hyp}defined continuous maps between the spaces of types. Unfortunately, in local positive logic, for non pointed variables, the pull{\hyp}back of a \gls{lp type} by a substitution of variables is not necessarily maximal, i.e. it may be only a partial \gls{lp type}. Therefore, for non pointed variables, pull{\hyp}backs by substitutions of variables are not necessarily well{\hyp}defined maps between spaces of types. In particular, the definition of semi{\hyp}Hausdorffness only makes sense for pointed variables.    
\begin{lem} Let $x$ and $y$ be pointed variables and $\zeta$ a substitution of $x$ by $y$. Then:
\begin{enumerate}[label={\rm{(\arabic*)}}, ref={\rm{\arabic*}}, wide]
\item $\zeta^*\coloneqq \Ima^{-1}\zeta_*\map \Stone^y(\theory)\to\Stone^x(\theory)$ is a well{\hyp}defined function. Furthermore, for any $M\models_\lang \theory$ and $a\in M^y$, we have $\zeta^*\ltp_+(a)=\ltp_+(\zeta^*a)$.
\item $\Ima^{-1}\zeta^*(\langle \Gamma\rangle)=\langle \Ima\, \zeta_*(\Gamma)\rangle$ for any $\Gamma\subseteq\LFor^x_+(\lang)$.
\end{enumerate}
\end{lem}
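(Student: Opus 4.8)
The plan is to reduce both statements to the first{\hyp}order substitution lemma together with \cref{c:local positive types}. First I would record the routine bookkeeping: that $\zeta_*$ restricts to a well{\hyp}defined map $\LFor^x_+(\lang)\to\LFor^y_+(\lang)$ --- substituting free variables by variables of the same sort, with the usual renaming of bound variables to avoid capture, sends atomic formulas to atomic formulas and commutes with $\wedge$, $\vee$ and local existential quantification --- and that, for $a\in M^y$, the reindexed tuple $\zeta^*a\in M^x$ given by $(\zeta^*a)_{x_i}\coloneqq a_{\zeta(x_i)}$ indeed has sort $x$, since $x_i$ and $\zeta(x_i)$ have the same sort. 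The crucial input is the substitution lemma: for every formula $\varphi(x)$ and every local $\lang${\hyp}structure $M$, $M\models\varphi(\zeta^*a)$ if and only if $M\models\zeta_*(\varphi)(a)$.

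Granting this, I would prove the ``furthermore'' clause of (1) first, as it drives everything else. For $M\models\theory$ and $a\in M^y$, a formula $\varphi(x)\in\LFor^x_+(\lang)$ lies in $\ltp^M_+(\zeta^*a)$ iff $M\models\varphi(\zeta^*a)$ iff $M\models\zeta_*(\varphi)(a)$ iff $\zeta_*(\varphi)\in\ltp^M_+(a)$ iff $\varphi\in\Ima^{-1}\zeta_*(\ltp^M_+(a))=\zeta^*\ltp^M_+(a)$; hence $\zeta^*\ltp^M_+(a)=\ltp^M_+(\zeta^*a)$. To see that $\zeta^*$ is well{\hyp}defined on type spaces, take $q\in\Stone^y(\theory)$; since $y$ is pointed, \cref{c:local positive types} yields $M\models^\pc\theory$ and $a\in M^y$ with $q=\ltp^M_+(a)$, and then $\zeta^*(q)=\ltp^M_+(\zeta^*a)$ by the identity just proved. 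Since $x$ is pointed, $\zeta^*a\in M^x$ and $M\models^\pc\theory$, the other direction of \cref{c:local positive types} shows $\ltp^M_+(\zeta^*a)\in\Stone^x(\theory)$, so $\zeta^*(q)\in\Stone^x(\theory)$, as required.

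Part (2) would then be a direct unwinding of the definitions, using no model theory. Note first that $\Ima\,\zeta_*(\Gamma)\subseteq\LFor^y_+(\lang)$ by the first paragraph, so the right{\hyp}hand interpretation makes sense. For $q\in\Stone^y(\theory)$ we have $q\in\Ima^{-1}\zeta^*(\langle\Gamma\rangle)$ iff $\Gamma\subseteq\zeta^*(q)=\Ima^{-1}\zeta_*(q)$ iff $\zeta_*(\varphi)\in q$ for every $\varphi\in\Gamma$ iff $\Ima\,\zeta_*(\Gamma)\subseteq q$ iff $q\in\langle\Ima\,\zeta_*(\Gamma)\rangle$. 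In particular $\zeta^*$ pulls closed sets back to closed sets, so it is continuous.

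Since the computations are entirely formal, I do not expect a real obstacle; the one point that genuinely uses the hypotheses is the maximality of the pull{\hyp}back in (1), which is exactly where pointedness of $x$ (and of $y$) enters, via \cref{c:local positive types} --- equivalently \cref{l:positively closed and local positive types}. Dropping pointedness, $\zeta^*q$ would in general be only a partial local positive type, which is precisely the phenomenon flagged in the paragraph preceding the lemma.
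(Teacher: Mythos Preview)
Your proposal is correct and follows essentially the same approach as the paper: both arguments rest on the substitution lemma $M\models\zeta_*\varphi(a)\Leftrightarrow M\models\varphi(\zeta^*a)$ to get $\zeta^*\ltp_+(a)=\ltp_+(\zeta^*a)$, then invoke \cref{c:local positive types} for well{\hyp}definedness, and handle (2) by the same chain of set{\hyp}theoretic equalities. Your version is more explicit about the bookkeeping (that $\zeta_*$ preserves $\LFor_+$, that $\zeta^*a$ has the right sort) and about where pointedness is actually used, but the mathematical content is identical.
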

\begin{proof} 
\begin{enumerate}[label={\rm{(\arabic*)}}, wide]
\item[\hspace{-1.2em}\setcounter{enumi}{1}\theenumi] First, recall that $\premodels \zeta_*\varphi(a)$ if and only if $\,\premodels \varphi(\zeta^*a)$. Thus, $\zeta_*\varphi\in \ltp_+(a)$ if and only if $\varphi\in \ltp_+(\zeta^*a)$ for any $\varphi\in\LFor^x_+(\lang)$, $M\models_\lang\theory$ and $a\in M^y$. In other words, $\zeta^*\ltp_+(a)=\ltp_+(\zeta^*a)$. In particular, by \cref{c:local positive types}, $\zeta^*\map \Stone^y(\theory)\to \Stone^x(\theory)$ is a well{\hyp}defined function.
\item  $\Ima^{-1}\zeta^*(\langle\Gamma\rangle)=\{q\in\Stone^y(\theory)\sth \zeta^*(q)\in \langle\Gamma\rangle\}=\{q\in\Stone^y(\theory)\sth \Gamma \subseteq \Ima^{-1}\zeta_*(q)\}=\{q\in\Stone^y(\theory)\sth \Ima\,\zeta_*(\Gamma)\subseteq q\}=\langle\Ima\, \zeta_*(\Gamma)\rangle.$
\qedhere
\end{enumerate}
\end{proof}

\begin{coro} Let $x$ and $y$ be pointed variables and $\zeta$ a substitution of $x$ by $y$. Then, $\zeta^*\map \Stone^y(\theory)\to\Stone^x(\theory)$ is a continuous map for the logic topologies.  
\end{coro}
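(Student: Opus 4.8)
The plan is to verify continuity directly from the characterisation of closed sets in the logic topology together with part (2) of the preceding lemma. Recall that the closed subsets of $\Stone^x(\theory)$ are exactly the sets of the form $\langle\Gamma\rangle$ for $\Gamma\subseteq\LFor^x_+(\lang)$, possibly together with $\emptyset$. So it suffices to show that the $\zeta^*${\hyp}preimage of each such set is closed in $\Stone^y(\theory)$.

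First I would note that, since $x$ and $y$ are pointed and $\zeta$ is a substitution of $x$ by $y$, for every local positive formula $\varphi\in\LFor^x_+(\lang)$ the substituted formula $\zeta_*(\varphi)$ is again a local positive formula on $y$; hence for any $\Gamma\subseteq\LFor^x_+(\lang)$ we have $\Ima\,\zeta_*(\Gamma)\subseteq\LFor^y_+(\lang)$, so $\langle\Ima\,\zeta_*(\Gamma)\rangle$ is a closed subset of $\Stone^y(\theory)$. By part (2) of the previous lemma, $(\zeta^*)^{-1}(\langle\Gamma\rangle)=\langle\Ima\,\zeta_*(\Gamma)\rangle$, which is therefore closed. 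Finally $(\zeta^*)^{-1}(\emptyset)=\emptyset$ is closed. Thus preimages of closed sets are closed, and $\zeta^*$ is continuous.

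There is essentially no obstacle here: the whole content has already been carried by the previous lemma, and the only thing to observe is the stability of $\LFor_+$ under substitutions between pointed variables, which is immediate from the recursive definition of $\zeta_*$ (substitution commutes with conjunction, disjunction and local existential quantification, and sends atomic formulas to atomic formulas). The one point worth stating explicitly in the write{\hyp}up is why the sets $\langle\Gamma\rangle$ (together with $\emptyset$) exhaust the closed sets, which is just the definition of the local positive logic topology.
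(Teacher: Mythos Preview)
Your proposal is correct and is exactly the argument the paper has in mind: the corollary is stated without proof in the paper, as it follows immediately from part~(2) of the preceding lemma together with the definition of the local positive logic topology. Your write{\hyp}up simply unpacks this, including the observation that $\zeta_*$ preserves $\LFor_+$ and the trivial case of~$\emptyset$.
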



Let $x_1,\ldots,x_n$ be pointed variables and write $\overline{x}=(x_1,\ldots,x_n)$. Let $y$ be a \emph{disjoint copy} of $\overline{x}$, i.e. $y$ is a variable disjoint to $x_1,\ldots,x_n$ together with a partition $y=y_1\sqcup\cdots\sqcup y_n$ and bijective substitutions $\iota_i\map x_i\to y_i$. Then, 
\[\begin{array}{lccc} \iota^*:& \Stone^y(\theory)&\to &\Stone^{\bar{x}}(\theory)\coloneqq \prod_i\Stone^{x_i}(\theory)\\ & p&\mapsto & (\iota^*_1(p),\ldots,\iota^*_n(p))\end{array}\] 
is a continuous map. 

Let $\Gamma\subseteq \LFor^y_+(\lang)$ be a subset of \glspl{lp formula}. The \emph{interpretation} of $\Gamma$ in $\Stone^{\bar{x}}(\theory)=\prod\Stone^{x_i}(\theory)$ is the subset $\Gamma(\Stone^{\overline{x}}(\theory))\coloneqq\Ima\, \iota^*(\langle\Gamma\rangle)$; we write $\theory\models_\lang\Gamma(p_1,\ldots,p_n)$ when $(p_1,\ldots,p_n)\in \Gamma(\Stone^{\bar{x}}(\theory))$. In other words, $\theory\models_\lang \Gamma(p_1,\ldots,p_n)$ if and only if there are $a_1,\ldots,a_n$ in $M\models_\lang\theory$ with $\ltp^M_+(a_i)=p_i$ for $1\leq i\leq n$ such that $M\models_\lang \Gamma(a_1,\ldots,a_n)$. 

We say that a subset $V$ of $\Stone^{\bar{x}}(\theory)$ is \emph{locally positively $\bigwedge${\hyp}definable} if there is $\Gamma\subseteq \LFor^y_+(\lang)$ such that $\Ima^{-1}\iota^*(V)=\langle\Gamma\rangle$; we say then that $\Gamma$ \emph{defines} $V$. In other words, for any $M\models^\pc_\lang\theory$ and $a_1,\ldots,a_n$ in $M$, we have $(\ltp^M_+(a_i))_{1\leq i\leq n}\in V$ if and only if $M\models \Gamma(a_1,\ldots,a_n)$. 
\begin{rmk} Thus, $\Gamma$ defines $V$ if and only if $\Gamma(\Stone^{\overline{x}}(\theory))=V$ and, for any $a_1,\ldots,a_n$ and $a'_1,\ldots,a'_n$ in \gls{lpc} models with $\ltp_+(a_i)=\ltp_+(a'_i)$, we have $\Gamma(a_1,\ldots,a_n)$ if and only if $\Gamma(a'_1,\ldots,a'_n)$.
\end{rmk}
\begin{defi} We say that $\theory$ is \emph{semi{\hyp}Hausdorff} if, for any pointed variable $x$, the diagonal $\Delta^x(\theory)\coloneqq \{(p,q)\in \Stone^x(\theory)\sth p=q\}$ is locally positively $\bigwedge${\hyp}definable. In other words, $\theory$ is semi{\hyp}Hausdorff if and only if, for any pointed variable $x$, there is a partial \gls{lp type} $\Delta(x,y)\subseteq \LFor^{xy}_+(\lang)$, where $y$ is disjoint copy of $x$, such that, for any $M\models^\pc_\lang\theory$ and any $a,b\in M^x$, $M\models_\lang\Delta(a,b)$ if and only if $\ltp^M_+(a)=\ltp^M_+(b)$. 
\end{defi}
 
\begin{lem} If $\theory$ is Hausdorff, then $\theory$ is semi{\hyp}Hausdorff.
\end{lem}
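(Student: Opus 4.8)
The plan is to invoke the reformulation of semi{\hyp}Hausdorffness stated just after its definition: it suffices, for each pointed variable $x$, to exhibit a partial local positive type $\Delta(x,y)\subseteq\LFor^{x,y}_+(\lang)$, with $y$ a disjoint copy of the sort of $x$, such that for every $M\models^\pc\theory$ and all $a,b\in M^x$ one has $M\models\Delta(a,b)$ if and only if $\ltp^M_+(a)=\ltp^M_+(b)$. I would take
\[\Delta(x,y)\coloneqq\{\varphi(x)\vee\psi(y)\sth \varphi,\psi\in\LFor^x_+(\lang),\ \varphi\toprel\psi\},\]
where $\psi(y)$ is the result of substituting $x$ by $y$ in $\psi$, so that $\varphi(x)\vee\psi(y)\in\LFor^{x,y}_+(\lang)$. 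The guiding idea is that a complementary pair $\varphi\toprel\psi$ behaves like a disjunction holding on every realisation: recall from the proof of the characterisation of Hausdorffness that $\varphi\toprel\psi$ is equivalent to $\langle\varphi\vee\psi\rangle=\Stone^x(\theory)$, i.e. to $\varphi\vee\psi$ belonging to every local positive type on $x$. So $\varphi(x)\vee\psi(y)$ expresses that $a$ and $b$ are not separated by this particular pair, and letting the pair range over all complementary pairs forces $\ltp_+(a)=\ltp_+(b)$ exactly under Hausdorffness.

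First I would check that $\Delta$ is a partial local positive type. Since $x=x$ is trivially a partial local positive type, \cref{c:there are types} and \cref{c:local positive types} provide some $M\models^\pc\theory$ and $a\in M^x$; for every $\varphi\toprel\psi$ we have $\varphi\vee\psi\in\ltp^M_+(a)$, hence $M\models(\varphi(x)\vee\psi(y))(a,a)$, so $\Delta$ is realised and therefore locally satisfiable in $\theory$. Next, for the main equivalence, fix $M\models^\pc\theory$ and $a,b\in M^x$. If $\ltp^M_+(a)=\ltp^M_+(b)=p$, then for every $\varphi\toprel\psi$ the formula $\varphi\vee\psi$ lies in $p$, so $\varphi\in p=\ltp^M_+(a)$ or $\psi\in p=\ltp^M_+(b)$; in the first case $M\models\varphi(a)$ and in the second $M\models\psi(b)$, so in either case $M\models(\varphi(x)\vee\psi(y))(a,b)$, i.e. $M\models\Delta(a,b)$. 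Conversely, suppose $\ltp^M_+(a)\neq\ltp^M_+(b)$; since $x$ is pointed and $M\models^\pc\theory$, \cref{l:positively closed and local positive types} ensures $p\coloneqq\ltp^M_+(a)$ and $q\coloneqq\ltp^M_+(b)$ are local positive types on $x$, so by the characterisation of Hausdorffness there are $\varphi,\psi\in\LFor^x_+(\lang)$ with $\varphi\notin p$, $\psi\notin q$ and $\varphi\toprel\psi$. Then $\varphi(x)\vee\psi(y)\in\Delta$ while $M\not\models\varphi(a)$ and $M\not\models\psi(b)$, so $M\not\models\Delta(a,b)$. This establishes the equivalence, hence semi{\hyp}Hausdorffness.

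No single step is a serious obstacle here: once the candidate $\Delta$ is identified the argument is short. The delicate point is the converse direction, which is where the full strength of Hausdorffness enters, via the existence of a separating complementary pair, combined with the equivalence between $\varphi\toprel\psi$ and ``$\varphi\vee\psi$ belongs to every local positive type''; the only other thing to track carefully is the variable bookkeeping, namely that $\varphi$ and $\psi$ are on the common sort of the disjoint copies $x$ and $y$, so that $\varphi(x)\vee\psi(y)$ is a genuine element of $\LFor^{x,y}_+(\lang)$.
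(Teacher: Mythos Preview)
Your proof is correct and follows essentially the same route as the paper's: both define $\Delta(x,y)=\{\varphi(x)\vee\psi(y)\sth \varphi\toprel\psi\}$ and verify the equivalence in the two directions in the same way, using that $\varphi\toprel\psi$ forces $\varphi\vee\psi$ into every realised type and invoking Hausdorffness for the separation in the converse. You are in fact slightly more careful than the paper, explicitly checking local satisfiability of $\Delta$ and invoking \cref{l:positively closed and local positive types} to justify that $\ltp^M_+(a)$ and $\ltp^M_+(b)$ are maximal before applying the Hausdorffness characterisation.
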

\begin{proof} Pick $x,y$ disjoint pointed variables of the same sort. Take 
\[\Delta(x,y)=\{\varphi(x)\vee\psi(y)\sth \varphi,\psi\in\LFor^x_+(\lang),\ \varphi\toprel\psi\}.\] 
We claim that $\Delta(x,y)$ defines the diagonal of $\Stone^x(\theory)$. In other words, we claim that for any $M\models^\pc_\lang\theory$ and $a,b\in M^s$, $M\models_\lang\Delta(a,b)$ if and only if $\ltp_+(a)=\ltp_+(b)$. 

Say $\ltp_+(a)=\ltp_+(b)$. Take $\varphi,\psi\in\LFor^x_+(\lang)$ arbitrary with $\varphi\toprel\psi$. In particular, we have that $M\models_\lang \varphi(a)\vee\psi(a)$ and $M\models_\lang \varphi(b)\vee\psi(b)$. Without loss of generality, say $M\models_\lang\psi(a)$. As $\ltp_+(a)=\ltp_+(b)$, we conclude that $M\models_\lang\psi(b)$, so $M\models_\lang \varphi(a)\vee\psi(b)$. As $\varphi,\psi$ are arbitrary, we conclude that $M\models_\lang\Delta(a,b)$.

On the other hand, suppose $\ltp_+(a)\neq\ltp_+(b)$. Then, by Hausdorffness, there are $\varphi,\psi\in\LFor^x_+(\lang)$ such that $\varphi\toprel\psi$ and $\varphi\notin\ltp_+(a)$ and $\psi\notin\ltp_+(b)$. Hence, $M\not\models_\lang \varphi(a)\vee\psi(b)$ with $\varphi(x)\vee\psi(y)\in \Delta(x,y)$, concluding $M\not\models_\lang\Delta(a,b)$.
\end{proof}

\begin{rmk} In classic first{\hyp}order logic, reducts are also continuous functions between the spaces of types. It is a well known fact that, in positive logic, that is no longer the case. The issue is that the reduct of a positive type could be only a partial positive type. Of course, the same still applies in local positive logic. \end{rmk}

Let $M$ be a local $\lang${\hyp}structure, $x$ a variable and $A$ a set. The \emph{space of \glspl{lp type}} $\Stone^x(M/A)$ of $M$ on $x$ over $A$ is the set of \glspl{lp type} of $M$ on $x$ over $A$; the \emph{space of \glspl{lp type}} $\Stone(M/A)$ is the respective sorted set. We omit $M$ if it is clear from the context, but we never omit $A$.

Let $M$ be a local $\lang${\hyp}structure and $A$ a subset. Let $\Gamma\subseteq\LFor^x_+(\lang(A))$ be a subset of \glspl{lp formula}. We define
\[\langle \Gamma(x)\rangle_{\Stone(A)}\coloneqq\{p\in \Stone^x(A)\sth \Gamma\subseteq p\}.\]

The \emph{\gls{lp logic topology}} of $\Stone^x(A)$ is the topology defined by taking $\langle \Gamma\rangle$ as closed for every $\Gamma\subseteq\LFor^x_+(\lang(A))$.

\begin{rmk} Tautologically, $\Stone(A)=\Stone(\Th_-(M/A))$ as topological spaces.
\end{rmk}

Let $M$ be a local $\lang${\hyp}structure and $A$ a subset. We say that $M$ is \emph{Hausdorff over $A$} if $\Stone(A)$ is Hausdorff. In other words, $M$ is Hausdorff over $A$ if $\Th_-(M/A)$ is Hausdorff. We say that $M$ is \emph{semi{\hyp}Hausdorff over $A$} if $\Th_-(M/A)$ is semi{\hyp}Hausdorff. In other words, $M$ is semi{\hyp}Hausdorff over $A$ if the diagonal $\Delta^x_+(A)\coloneqq \{(p,q)\in\Stone^x(A)\sth p=q\}$ is locally positively $\bigwedge_A${\hyp}definable for any variable $x$ pointed over $A$. 

\begin{lem} \label{l:Hausdorffness and parameters} Let $M$ be \gls{lpc} and $A\subseteq B$. Assume $M$ has the \gls{LJCP} over $B$ and $x$ is a pointed variable over $B$. If $\Stone^x(M/A)$ is Hausdorff, then $\Stone^x(M/B)$ is Hausdorff. In particular, if $B$ is pointed with $A\subseteq B$ and $M$ Hausdorff over $A$, then $M$ is Hausdorff over $B$. 
\end{lem}
\begin{proof} Without loss of generality, assume $A=\emptyset$ --- note that $M_A\models^\pc_\lang\Th_-(M/A)$ by \cite[Remark 2.4]{rodriguez2024completeness}. Let $p(x)$ and $q(x)$ be two different \glspl{lp type} of $M$ over $B$. Let $z$ be a variable disjoint to $x$ with an enumeration $b\in M^z$ of $B$. Consider $\widehat{p}(xz)\coloneqq\{\phi(x,z)\in\LFor^{xz}_+(\lang)\sth \phi(x,b)\in p(x)\}$ and $\widehat{q}(xz)\coloneqq\{\phi(x,z)\in\LFor^{xz}_+(\lang)\sth\phi(x,b)\in q(x)\}$. We note that $\widehat{p}(xz)$ and $\widehat{q}(xz)$ are \glspl{lp type} in $M$. Indeed, by \cref{l:local positive types over parameters}, find a homomorphism $f\map M\to N$ to $N_B\models^\pc_\lang \Th_-(M/B)$ such that $\ltp_+(a/B)=p(x)$ for some $a\in N^x$. Then, $\ltp_+(a\, b^{N_B})=\widehat{p}(xz)$. By \cite[Lemma 2.5]{rodriguez2024completeness}, $N\models^\pc_\lang\Th_-(M)$. By \cref{l:positively closed and local positive types}, we conclude that $\widehat{p}$ is a \gls{lp type}. Now, $\widehat{p}(xz)\neq \widehat{q}(xz)$, so there are $\psi(x,z)$ and $\varphi(x,z)$ with $M\models_\lang\psi\toprel\varphi$ such that $\psi\notin \widehat{p}$ and $\varphi\notin \widehat{q}$. Consequently, $M\models_\lang\psi(x,b)\toprel\varphi(x,b)$ and $\psi(x,b)\notin p$ and $\varphi(x,b)\notin q$. As $p$ and $q$ are arbitrary, we conclude that $\Stone^x(M/B)$ is Hausdorff by \cref{l:Hausdorff}. 
\end{proof}

\begin{lem} \label{l:semi-Hausdorffness and parameters} Let $M$ be a \gls{lpc} and $A\subseteq B$ subsets. Suppose $M$ has the \gls{LJCP} over $B$. If $M$ is semi{\hyp}Hausdorff over $A$, then $M$ is semi{\hyp}Hausdorff over $B$. 
\end{lem}
\begin{proof} Without loss of generality, by adding parameters, assume $A=\emptyset$ --- note that $M_A\models^\pc_\lang\Th_-(M/A)$ by \cite[Remark 2.4]{rodriguez2024completeness}. 

Take a pointed variable $x$ over $B$. Let $z$ be a variable disjoint to $x$ with an enumeration $b\in M^z$ of $B$. Since $M$ is semi{\hyp}Hausdorff over $\emptyset$, there is a partial \gls{lp type} $\Delta(xz,x'z')$ (with $x'z'$ disjoint copy of $xz$) defining equality of \glspl{lp type} in $\Stone^{xz}(M/\emptyset)$. In other words, for any $N\models^\pc_\lang\Th_-(M)$, $c\in N^{xz}$ and $c'\in N^{x'z'}$, we get that $N\models_\lang\Delta(c,c')$ if and only if $\ltp_+(c)=\ltp_+(c')$.  Consider the partial \gls{lp type} over $B$ given by $\Delta'(x,x')\coloneqq \Delta(xb,x'b)$. Take $N_B\models^\pc_\lang\Th_-(M/B)$ and $a\in N^x$ and $a'\in N^{x'}$. If $\ltp_+(a/b)=\ltp_+(a'/b)$, then $\ltp_+(ab)=\ltp_+(a'b)$, so $N\models_\lang\Delta(ab,a'b)$, i.e. $N_B\models_\lang \Delta'(a,a')$. Conversely, if $N_B\models_\lang \Delta'(a,a')$, then $N\models_\lang\Delta(ab,a'b)$. By the \gls{LJCP} over $B$ (and \cite[Theorem 2.9]{rodriguez2024completeness}), there are homomorphisms $g\map N_B \to N'_B$ and $f\map M_B\to N'_B$ to $N'_B\models^\pc_\lang\Th_-(M/B)$. As $N_B\models^\pc_\lang\Th_-(M/B)$, we get that $g$ is a positive embedding. By \cite[Lemma 2.5]{rodriguez2024completeness}, $N'\models^\pc_\lang\Th_-(M)$. By \cite[Corollary 2.13]{rodriguez2024completeness}, we conclude $N\models^\pc_\lang\Th_-(M)$. 
Thus, $\ltp_+(ab)=\ltp_+(a'b)$, concluding that $\ltp_+(a/B)=\ltp_+(a'/B)$.
\end{proof}

\begin{defi} Let $M\models_\lang\theory$ and $I$ be a first{\hyp}order structure. Let $a=(a_i)_{i\in I}$ be a sequence in $M^x$ for some variable $x$. We say that $(a_i)_{i\in I}$ is \emph{locally positively indiscernible} if $\ltp^M_+(a_{\bar{\imath}})=\ltp^M_+(a_{\bar{\jmath}})$ whenever $\qftp^I(\bar{\imath})=\qftp^I(\bar{\jmath})$.
\end{defi}
\begin{lem}\label{l:standard lemma} Let $M\models_\lang\theory$ and $I$ be a linearly ordered set\footnote{We can consider other Ramsey first{\hyp}order structures.}. Let $a=(a_i)_{i\in I}$ be a sequence in $M$. Assume that there is a locality relation $\dd$ such that $M\models \dd(a_i,a_j)$ for any $i,j\in I$. Then, there is a local model $N\models_\lang\theory$ and a locally positively indiscernible sequence $b=(b_i)_{i\in I}$ of $N$ such that, for any \gls{lp formula} $\varphi\in \LFor_+(\lang)$, we have $N\models_\lang \varphi(b_{\bar{\jmath}})$ whenever $M\models_\lang \varphi(a_{\bar{\imath}})$ for every $\bar{\imath}$ with $\qftp(\bar{\imath})=\qftp(\bar{\jmath})$.
\end{lem}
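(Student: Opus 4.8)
The plan is to deduce this from the classical (first{\hyp}order) Ramsey{\hyp}plus{\hyp}compactness extraction of an indiscernible sequence and then pass to a suitable local positive substructure. Call a first{\hyp}order $\lang${\hyp}formula $\varphi(y_1,\dots,y_n)$, with each $y_k$ of the sort of $x$, a \emph{limit formula} for a quantifier{\hyp}free $I${\hyp}type $\tau$ of an $n${\hyp}tuple if $M\models\varphi(a_{\bar\imath})$ for every $\bar\imath$ with $\qftp^I(\bar\imath)=\tau$; in particular $\Bound$ and $\dd$ furnish limit formulas for every relevant $\tau$. Running the usual construction of a first{\hyp}order indiscernible sequence for $M$, viewed as an ordinary $\lang${\hyp}structure, while also imposing in the theory all sentences of $\theory$ (true in $M$) and the sentences $\varphi(\underline c_{\bar\jmath})$ for every $\bar\jmath$ from $I$ and every limit formula $\varphi$ for $\qftp^I(\bar\jmath)$, yields an ordinary $\lang${\hyp}structure $M^*$ and a first{\hyp}order indiscernible sequence $c=(c_i)_{i\in I}$ in $M^*$ with $M^*\mmodels\theory$ and with $M^*\mmodels\varphi(c_{\bar\jmath})$ for every $\bar\jmath$ and every limit $\varphi$ for $\qftp^I(\bar\jmath)$; in particular $M^*\mmodels\Bound(c_i)\wedge\dd(c_i,c_j)$ for all $i,j$. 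Finite satisfiability in the compactness step is witnessed inside $M$ by interpreting the finitely many relevant $\underline c_j$ along a Ramsey{\hyp}homogeneous copy of $I$ (so $I$ must be a Ramsey structure, e.g.\ an infinite linear order, as the footnote indicates): on such a copy every formula in the finite fragment has a constant truth value, which is \emph{true} for the limit formulas by definition.

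Next I would let $N$ be the substructure of $M^*$ whose universe is the closure of $\{c_i:i\in I\}$ together with the interpretations of the constant symbols of $\lang$ under witnesses of local existential quantifiers: whenever $\bar u$ is a tuple from $N$, $t$ a term of $\lang$, $\dd'$ a locality relation symbol on the appropriate sort, and $\phi$ a local positive formula with $M^*\mmodels\exists z{\in}\dd'(t(\bar u))\,\phi(\bar u,z)$, one adjoins to $N$ some $z$ with $M^*\mmodels\dd'(z,t(\bar u))\wedge\phi(\bar u,z)$. Then $N$ is a local $\lang${\hyp}structure: \cref{itm:axiom 1,itm:axiom 2,itm:axiom 3,itm:axiom 4} are universal and pass to substructures, while for \cref{itm:axiom 5} one shows by induction on the generation of $N$ that, thanks to $M^*\mmodels\Bound(c_i)\wedge\dd(c_i,c_j)$, every element of $N$ lies in a single locality ball around some coordinate of some $c_i$ or around some constant, so any two elements of $N$ of a common sort are related by some locality relation (composing via \cref{itm:axiom 3}, using \cref{itm:axiom 4} for constants). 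Since the inclusion $N\hookrightarrow M^*$ is a homomorphism and homomorphisms preserve local positive formulas, every negative sentence true in $M^*$ holds in $N$, so $N\models\theory$; and since $N$ is closed under local existential witnesses, a routine induction on local positive formulas shows that $N$ \emph{reflects} them from $M^*$, i.e.\ $M^*\mmodels\psi(\bar e)$ if and only if $N\models\psi(\bar e)$, for every $\psi\in\LFor_+(\lang)$ and every matching $\bar e$ in $N$.

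Finally, taking $b_i\coloneqq c_i$: for $\varphi\in\LFor_+(\lang)$ and tuples $\bar\jmath,\bar\jmath'$ from $I$ with $\qftp^I(\bar\jmath)=\qftp^I(\bar\jmath')$, reflection and first{\hyp}order indiscernibility of $c$ in $M^*$ give $N\models\varphi(b_{\bar\jmath})$ iff $M^*\mmodels\varphi(b_{\bar\jmath})$ iff $M^*\mmodels\varphi(b_{\bar\jmath'})$ iff $N\models\varphi(b_{\bar\jmath'})$, so $\ltp^N_+(b_{\bar\jmath})=\ltp^N_+(b_{\bar\jmath'})$ and $b$ is local positive indiscernible; and if $M\models\varphi(a_{\bar\imath})$ for every $\bar\imath$ with $\qftp^I(\bar\imath)=\qftp^I(\bar\jmath)$, then $\varphi$ is a limit formula for $\qftp^I(\bar\jmath)$, whence $M^*\mmodels\varphi(b_{\bar\jmath})$ and so $N\models\varphi(b_{\bar\jmath})$ by reflection. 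The main obstacle I anticipate is the verification of \cref{itm:axiom 5} for $N$ --- equivalently, that iterating the adjunction of local existential witnesses never escapes a locality ball --- which is precisely what the hypothesised common bound $\Bound$ and locality relation $\dd$ for the $a_i$ are there to guarantee; the remaining points (reflection, and the standard strengthening of the classical extraction lemma so that it carries the limit formulas along) are bookkeeping.
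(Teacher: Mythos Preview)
Your proposal is correct and follows the same overall strategy as the paper: run the classical Ehrenfeucht--Mostowski extraction in a (non{\hyp}local) model of $\theory$, then pass to a local substructure where the indiscernible sequence lives and local positive formulas are reflected.

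The execution differs. Rather than building a local{\hyp}positive Skolem hull and verifying reflection and axiom~(\ref{itm:axiom 5}) by hand, the paper takes a $\kappa${\hyp}saturated elementary extension $\widetilde{N}\succeq M$, finds the indiscernible sequence $(b_i)$ there, picks a basepoint $o=(o_s)_{s\in\Sorts}$ (with $o_s=b_0$ on the sort of $x$, a constant where available, and an arbitrary element otherwise), and sets $N\coloneqq\Dtt(o)$, the local component at $o$. The facts you prove inductively---that $N$ is a local model of $\theory$ and reflects local (positive) formulas from the ambient structure---are exactly the content of \cite[Lemma~1.8]{rodriguez2024completeness}, which the paper simply cites. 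Your route is more self{\hyp}contained; the paper's is shorter because the reflection lemma is already in hand. One small point to patch in your version: sorts with no constants that do not occur in $x$ never acquire elements in your hull (no term $t$ of such a sort is available to anchor a local quantifier), so you should throw in an arbitrary point of each such sort, as the paper does via the choice of $o_s$; this does not affect reflection since no local positive formula in the free variables of sort $x$ can reach those sorts.
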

\begin{proof} Take an elementary extension $M\preceq \widetilde{N}$ into a (non{\hyp}local) $\kappa${\hyp}saturated structure with $\kappa>|\lang|+|I|$ --- note that $\widetilde{N}$ is not longer a local structure in general. By the Ehrenfeucht{\hyp}Mostowski{'s} Standard Lemma \cite[Lemma 5.1.3]{tent2012course}, we can find an indiscernible sequence $b=(\widetilde{b}_i)_{i\in I}$ in $\widetilde{N}$ such that, for any formula $\varphi$, $\widetilde{N}\premodels \varphi(b_{\bar{\jmath}})$ whenever $M\premodels \varphi(a_{\bar{\imath}})$ for every $\bar{\imath}$ with $\qftp(\bar{\imath})=\qftp(\bar{\jmath})$. As $M$ is a local model, $a_0$ must satisfy some bound $\Bound(x)$. Now, there is a bound $\Bound'(x)$ implied by $\Bound(y)\wedge\dd(x,y)$, so that $a_i$ satisfies $\Bound'(x)$ for all $i\in\N$. Hence, $b_i$ satisfies $\Bound'(x)$ for all $i\in\N$. Complete $b_0$ to a pointed tuple $o$ picking one element for each remaining single sort. Let $\Dtt(o)$ be the local component at $o$. By the choice of $o$, and the fact that $b_0$ satisfies a bound, we conclude that $\Dtt(o)\models_\lang\theory$ and satisfies every local formula with parameters in $\Dtt(o)$ satisfied by $\widetilde{N}$ by \cite[Lemma 1.8]{rodriguez2024completeness}. As $\widetilde{N}\premodels \dd(b_0,b_i)$ for each $i\in I$, we know that $b$ lies in $\Dtt(o)$. Therefore, $\Dtt(o)$ contains a locally positively indiscernible sequence $b$ such that, for any \gls{lp formula} $\varphi\in \LFor_+(\lang)$, we have $\Dtt(o)\models_\lang \varphi(b_{\bar{\jmath}})$ whenever $M\models_\lang \varphi(a_{\bar{\imath}})$ for every $\bar{\imath}$ with $\qftp(\bar{\imath})=\qftp(\bar{\jmath})$. 
\end{proof}

\subsection{Completeness via types}
We say that two subsets $\Gamma_1(x)$ and $\Gamma_2(y)$ of formulas in $\lang$ are \emph{(locally) compatible} in $\theory$ if there is a local model $M\models_\lang \theory$ realising both of them, i.e. there are $a\in M^x$ and $b\in M^y$ such that $M\models_\lang \Gamma_1(a)$ and $M\models_\lang\Gamma_2(b)$. 

By definition, $\theory$ is irreducible if and only if any two locally satisfiable quantifier free positive formulas are compatible. By \cite[Theorem 3.12]{rodriguez2024completeness}, weak completeness is equivalent to irreducibility in countable languages, which gives a characterisation of weak completeness in terms of compatibility. The following lemma provides a characterisation of completeness in terms of compatibility.

\begin{lem} \label{l:completeness via compatibility}  $\theory$ is complete if and only if any \gls{lp type} $p(x)$ of $\theory$, with $x$ finite, and any quantifier free positive formula $\varphi(y)$, with $\exists y\mathrel{} \varphi(y)\in\theory_+$, are compatible in $\theory$.
\end{lem}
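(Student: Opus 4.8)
The plan is to prove the two implications separately; the forward direction is immediate, and the backward one carries all the work.

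For the forward implication, starting from a local positive type $p$ on a finite variable $x$ and a quantifier{\hyp}free positive $\varphi(y)$ with $\exists y\,\varphi(y)\in\theory_+$, I would use \cref{l:partial local positive types} to realise $p$ in some $M\models^\pc\theory$, say $M\models p(a)$. Completeness gives $\Th_-(M)=\theory$, so $\neg\exists y\,\varphi(y)\notin\theory=\Th_-(M)$ and hence $M\models\varphi(b)$ for some $b$; then $M$ is a single local model of $\theory$ realising both $p$ and $\varphi$, so they are compatible.

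For the backward implication I would fix $M\models^\pc\theory$ and show $\Th_-(M)\subseteq\theory$, which (the reverse inclusion being automatic and $\Th_-(M)$ a local theory) gives completeness. First reduce the goal: a negative sentence is $\neg\psi$ with $\psi$ positive, every positive sentence is locally equivalent to a prenex one $\exists y\,\varphi(y)$ with $\varphi$ quantifier{\hyp}free positive and $y$ finite, and $\neg\psi\in\theory$ iff $\psi\notin\theory_+$; so it suffices to prove $\exists y\,\varphi(y)\in\theory_+\Rightarrow M\models\exists y\,\varphi(y)$. Suppose not. I would pick a finite tuple $a\in M^x$ (with $x$ chosen to contain a variable of the sort of each coordinate of $y$ whose sort carries no constant symbol), extend $\ltp^M_+(a)$ to a local positive type $p$ on $x$ via \cref{c:there are types}, and apply the hypothesis to $p$ and $\varphi$ to obtain a local model $N\models\theory$ with $N\models p(c)\wedge\varphi(d)$ for some $c\in N^x$, $d\in N^y$. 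Since $N$ is a local structure, each $d_j$ sits in a bounded ball around a constant or a coordinate of $c$, so $\exists y\,\varphi(y)$ is witnessed in $N$ by a genuinely \emph{local} positive formula $\chi(x)\in\LFor^x_+(\lang)$ with $N\models\chi(c)$ and $\theory\models\forall x\,(\chi(x)\to\exists y\,\varphi(y))$. Because $\ltp^M_+(a)\subseteq p\subseteq\ltp^N_+(c)$ and $\chi\in\ltp^N_+(c)$, the set $\ltp^M_+(a)\cup\{\chi\}$ is locally satisfiable in $\theory$. On the other hand $M\not\models\chi(a)$ (as $M$ omits $\exists y\,\varphi(y)$) and $M\models^\pc\theory$, so \cite[Lemma~2.12]{rodriguez2024completeness} yields a local positive $\vartheta\in\ltp^M_+(a)$ with $\theory\models\vartheta\perp\chi$; then $\{\vartheta,\chi\}$ is not locally satisfiable, a contradiction. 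Hence $M\models\exists y\,\varphi(y)$.

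The step I expect to be the main obstacle is the localisation: choosing the finite variable $x$ and writing down $\chi(x)$ so that $\exists y\,\varphi(y)$ is implied by a formula of $\LFor^x_+(\lang)$. Each coordinate $y_j$ must be bounded in $N$ by a constant or by a variable of $x$ of its sort, so $x$ has to carry a variable for each constant{\hyp}free sort appearing in $y$; when $\lang$ is pointed this is free from axiom \cref{itm:axiom 5}, but in general one must also handle the case that $M$ omits such a sort, which I would dispatch by applying the hypothesis to the formula $z=z$ on a variable of that sort. A minor point to verify is that \cite[Lemma~2.12]{rodriguez2024completeness} applies to the (possibly non{\hyp}pointed) tuple $a$; if not, the same conclusion follows by repeating the denial argument used in the proof of \cref{l:positively closed and local positive types}.
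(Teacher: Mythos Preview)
Your argument is correct and, in the backward direction, genuinely more economical than the paper's. Both proofs start from the same setup: given $M\models^\pc\theory$ omitting $\exists y\,\varphi(y)\in\theory_+$, choose a finite tuple $a\in M^x$ whose variable $x$ contains a coordinate of each constant{\hyp}free sort appearing in $y$, so that any realisation of $\varphi$ in a local model containing a realisation of $\ltp^M_+(a)$ can be bounded against constants and the $x${\hyp}coordinates. From here the two proofs diverge. You apply the compatibility hypothesis to a maximal extension $p\supseteq\ltp^M_+(a)$, obtain $N\models p(c)\wedge\varphi(d)$, localise to a single $\chi\in\LFor^x_+(\lang)$ with $N\models\chi(c)$, and finish in one stroke with the denial lemma: $M\not\models\chi(a)$ produces $\vartheta\in\ltp^M_+(a)\subseteq p$ with $\vartheta\perp\chi$, contradicting $N\models\vartheta(c)\wedge\chi(c)$. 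The paper instead works contrapositively and, rather than invoking the denial lemma, enlarges $x$ by an auxiliary variable $z$ so that $xz$ is pointed, sets $q=\ltp^M_+(a,b)$, and shows for every $\dd$ that $q(x,z)\wedge\varphi(y)\wedge\dd(x,y)$ is inconsistent with $\theory$; that inconsistency is established by passing to a positively closed model, using maximality of the pointed type $q$ to match negative theories, and then appealing to the local joint property over the pointed set (via \cite[Theorem~3.5]{rodriguez2024completeness}). Your route replaces this whole detour with a single call to \cite[Lemma~2.12]{rodriguez2024completeness}, which is legitimate since that lemma requires only $M\models^\pc\theory$ and a local positive formula, not pointedness of the tuple. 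The side issue you flag about sorts possibly empty in $M$ is also present verbatim in the paper's proof (it too must ``choose $a\in M^x$ and $b\in M^z$''), so it is either handled by the ambient convention that sorts are nonempty or is an equally minor lacuna in both arguments.
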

\begin{proof} One direction is obvious. By \cref{l:partial local positive types}, $p$ is realised in some $M\models^\pc_\lang\theory$. Thus, if $\theory$ is complete, since $\Th_-(M)=\theory$, $M\models_\lang\theory_+$, so $M\models_\lang\exists y\mathrel{} \varphi(y)$, concluding that it realises $\varphi$.

Conversely assume $\theory$ is incomplete. Then, there is some $M\models^\pc_\lang\theory$ such that $M$ is not a model of $\theory_+$. Let $\exists y\mathrel{} \varphi(y)\in\theory_+$, with $\varphi$ quantifier free, such that $M\models_\lang\neg\exists y\mathrel{} \varphi(y)$. Let $s=(s(y_i))_{y_i\in y}$ be the sort of $y$. For each single sort $s(y_i)$ pick a single variable $x_{s(y_i)}$ disjoint to $y$ of sort $s(y_i)$ and let $x=\{x_{s(y_i)}\sth y_i\in y\}$. Let $z$ be a duplicate free variable disjoint to $x$ and $y$ such that $xz$ is pointed. Choose $a\in M^x$ and $b\in M^z$, and set $p\coloneqq\ltp_+^M(a)$ and $q\coloneqq\ltp_+^M(ab)$. 

For each $y_i\in y$, choose a locality relation $\dd_{y_i}$ of sort $s(y_i)$ and set $\dd(x,y)=\bigwedge\dd_{y_i}(x_{s(y_i)},y_i)$. Then, the set of formulas 
\[\Sigma_\dd(x,y,z)\coloneqq q(x,z)\wedge \varphi(y)\wedge \dd(x,y)\] 
is inconsistent with $\theory$.

Indeed, for the sake of contradiction, suppose otherwise and take $N\premodels\theory$ with $c\in N^x$, $d\in N^y$ and $e\in N^z$ such that $N\premodels\Sigma_\dd(c,d,e)$. Consider $N'\coloneqq\Dtt(ce)$ (and note $d\in N'$ as $N\premodels \dd(c,d)$). By \cite[Lemma 1.8]{rodriguez2024completeness}, since $\theory$ is a negative local theory and $\Sigma_\dd$ is a set of \glspl{lp formula}, $N'\models_\lang\Sigma_\dd(c,d,e)$ and $N'\models_\lang\theory$. By \cite[Theorem 2.9]{rodriguez2024completeness}, there is a homomorphism $h\map N'\to P$ to $P\models^\pc_\lang\theory$. Since homomorphisms preserve satisfaction of positive formulas, we get that $P\models_\lang\Sigma_\dd(h(c,d,e))$. In particular, $\ltp^P_+(h(ce))=q$ by maximality of $q$. Hence, as $\ltp^P_+(h(ce))=q$, we get that $P_{ce}\models_\lang \psi$ for any $\psi\in \LFor^0_-(\lang(ce))$ realised by $M_{ce}$, where $P_{ce}$ is the $\lang(ce)${\hyp}expansion of $P$ given by $h$. As $ce$ is pointed, this implies that $P_{ce}\models_\lang \Th_-(M/ce)$. Hence, as $P\models^\pc_\lang\theory$, we get that $P_{ce}\models^\pc_\lang\Th_-(M/ce)$ and, similarly, $M_{ce}\models^\pc_\lang\Th_-(M/ce)$. Therefore, by \cite[Theorem 3.5]{rodriguez2024completeness} applied to $\Th_-(M/ce)$, we get that $\Th_-(P_{ce})=\Th_-(M_{ce})$. Therefore, since $P\models_\lang\exists y\mathrel{} \varphi(y)$, we conclude that $M$ satisfies $\exists y\mathrel{} \varphi(y)$ as well, getting a contradiction. 

Thus, for any choice of locality relations $\dd$, there is some $\psi_\dd(x,z)\in q$ such that $\neg\exists x y z\mathrel{} (\psi_\dd(x,z)\wedge\varphi(y)\wedge\dd(x,y))\in\theory$ --- where the existential quantifier ranges only on the variables that actually appears in the formula. Now, note that $\exists z\mathrel{} \psi_\dd(x,z)\in p$. Thus, we conclude that $p$ contradicts $\exists y\mathrel{} (\dd(x,y)\wedge\varphi(y))$ for any choice of $\dd$. In other words, $p$ and $\exists y\mathrel{} \varphi(y)$ are incompatible.
\end{proof}

On the other hand, we have the following characterisation of the \gls{LJCP} in terms of compatibility.
\begin{lem} \label{l:local joint continuation property via compatibility} A local negative theory $\theory$ has the \gls{LJCP} if and only if any two \glspl{lp type} on minimal pointed variables are compatible.
\end{lem}
\begin{proof} One direction is obvious. Let $p(x)$ and $q(y)$ be \glspl{lp type} on pointed variables. By definition, $p$ and $q$ are realised in some $A,B\models_\lang\theory$. If $\theory$ has the \gls{LJCP}, there are homomorphisms $f\map A\to M$ and $g\map B\to M$ to $M\models_\lang \theory$. Then, as homomorphisms preserve satisfaction of positive formulas, $M$ realises $p$ and $q$.

On the other hand, suppose that any two partial \glspl{lp type} on minimal pointed variables are compatible. Let $A,B\models_\lang\theory$. Pick $a\in A^x$ and $b\in B^y$ minimal pointed tuples (with $x$ and $y$ disjoint). Then, by assumption, $p(x)\coloneqq\ltp(a)$ and $q(y)\coloneqq\ltp_+(b)$ are compatible, so $p(x)\wedge q(y)$ is boundedly satisfiable. Pick a feasible bound $\Bound(x,y)$. Let $M^\ast\premodels\theory$ be (non{\hyp}local) $\kappa${\hyp}saturated. Then, $p(x)\wedge q(y)\wedge \Bound(x,y)$ is realised in $M^\ast$ by some $a'$ and $b'$. By saturation, there are homomorphism $f\map A\to M^\ast$ and $g\map B\to M^\ast$ with $f(a)=a'$ and $g(b)=b'$. Consider the substructure $M\coloneqq\Dtt(a')$. By \cite[Lemma 1.8]{rodriguez2024completeness}, $M\models_\lang \theory$. As $M^\ast\premodels \Bound(a',b')$, we get that $b'$ is in $M$. Since $A$ and $B$ are local, we get that $f\map A\to M$ and $g\map B\to M$, concluding that $\theory$ has the \gls{LJCP}. 
\end{proof}
\begin{que} \label{q:joint property via compatibility} In light of \cref{l:completeness via compatibility}, does compatibility of \glspl{lp type} on finite variables imply the \gls{LJCP}? \end{que}

\section{Retractors} \label{s:retractors} Recall that, unless otherwise stated, we have fixed a local language $\lang$ and a locally satisfiable negative local theory $\theory$.

We start this section by defining saturation in local positive logic. Normally, saturation is defined in terms of realisation of types. In local logic, however, without the \gls{LJCP}, there are incompatible \glspl{lp type}. Consequently, in general, it is not possible to have a structure realising every \gls{lp type}. In order to overcome this issue, we define local positive saturation as a universal property for positive embeddings. 
\begin{defi} Let $M\models_\lang\theory$ and $\kappa$ a cardinal. We say that $M$ is a \emph{locally positively $\kappa${\hyp}saturated} for $\theory$ if, for any positive embeddings $f\map A\to M$ and $g\map A\to B$ to $B\models_\lang\theory$ with $|A|,|B|<\kappa$, there is a homomorphism $h\map B\to M$ such that $f=h\circ g$. 
\end{defi}
From now on, for the sake of simplicity, we will abbreviate the terminology by writing ``saturated'' in place of ``locally positively saturated''.
\begin{rmk} When $M\models^\pc_\lang\theory$, the condition that $g\map A\to B$ is a positive embedding is superfluous. Indeed, since $f\map A\to M$ is a positive embedding, we automatically have $A\models^\pc_\lang\theory$ by \cite[Corollary 2.13]{rodriguez2024completeness}, so $g\map A\to B$ must be a positive embedding.
\end{rmk}
\begin{lem} \label{l:saturation and parameters} If $M$ is $\kappa${\hyp}saturated for $\theory$, then $M_A$ is $\kappa${\hyp}saturated for $\Th_-(M/A)$ for every subset $A$ with $|A|<\kappa$. Conversely, if $M$ is $\kappa${\hyp}saturated for $\Th_-(M)$, then $M$ is $\kappa${\hyp}saturated for $\theory$. 
\end{lem}
\begin{proof} Suppose $M$ is $\kappa${\hyp}saturated for $\theory$. Let $f\map B_A\to M_A$ and $g\map B_A\to C_A$ be positive embeddings where $C_A\models_\lang\Th_-(M/A)$ and $|B|,|C|<\kappa$. In particular, $C\models_\lang\theory$ so, as $M$ is $\kappa${\hyp}saturated for $\theory$, there is an $\lang${\hyp}homomorphism $h\map C\to M$ such that $f=h\circ g$. Now, $h(a^{C_A})=h(g(a^{B_A}))=f(a^{B_A})=a$ for any $a\in A$, so $h$ is in fact an $\lang(A)${\hyp}homomorphism. Hence, we conclude that $M_A$ is $\kappa${\hyp}saturated for $\Th_-(M/A)$.

Conversely, suppose $M$ is $\kappa${\hyp}saturated for $\Th_-(M)$. Let $f\map A\to M$ and $g\map A\to B$ be positive embeddings where $B\models_\lang\theory$ and $|A|,|B|<\kappa$. As $f$ and $g$ are positive embeddings, $\Th_-(M)=\Th_-(A)=\Th_-(B)$. Since $M$ is $\kappa${\hyp}saturated for $\Th_-(M)$, we conclude that there is a homomorphism $h\map B\to M$ with $f=hg$. Hence, $M$ is $\kappa${\hyp}saturated for $\theory$.
\end{proof}

Despite the convenience of defining local positive saturation as a universal property, a natural characterisation in terms of the realisation of certain types remains.   

\begin{lem} \label{l:saturation and types} A local model $M\models_\lang\theory$ is $\kappa${\hyp}saturated for $\theory$ if and only if it realises every \gls{lp type} on $x$ over $A$ for any variable $|x|<\kappa$ and any subset $A$ with $|A|<\kappa$ such that $M$ satisfies the \gls{LJCP} over $A$. 
\end{lem}
\begin{proof} Let $M$ be $\kappa${\hyp}saturate for $\theory$ and $p\in\Stone^x(A)$ with $|x|<\kappa$ and $A$ a subset with $|A|<\kappa$ such that $M$ satisfies the \gls{LJCP} over $A$. By downwards L\"{o}wenheim{\hyp}Skolem, there is $B_A\preceq M_A$ with $|B|\leq |\lang|+|A|<\kappa$. As $B_A\preceq M_A$, we have that $\Th_-(B_A)=\Th_-(M/A)$. Then, by \cref{l:local positive types over parameters}, there is an $\lang(A)${\hyp}homomorphism $g\map B_A\to N_A$ to $N_A\models^\pc_\lang\Th_-(M/A)$ such that $N_A\models_\lang p(b)$ for some $b$. Furthermore, we can take $|N|\leq |\lang|+|B|+|x|<\kappa$ by using downwards L\"{o}wenheim{\hyp}Skolem Theorem. As $M$ is $\kappa${\hyp}saturated for $\theory$, we find a homomorphism $h\map N\to M$ such that $\id=h\circ g\map B\to M$ is the natural inclusion. In particular, $h(a^N)=h(g(a))=a$, so $h$ is an $\lang(A)${\hyp}homomorphism too. As homomorphisms preserve satisfaction of positive formulas, we conclude that $h(b)$ realises $p$ in $M$. 

Conversely, suppose $M\models_\lang\theory$ realises every \gls{lp type} on $x$ over $A$ for any variable $|x|<\kappa$ and any subset $A$ with $|A|<\kappa$ such that $M$ has the \gls{LJCP} over $A$. Let $f\map A\to M$ and $g\map A\to B$ be positive embeddings with $B\models_\lang \theory$ and $|A|,|B|<\kappa$. As $f$ and $g$ are positive embeddings, we conclude that $\Th_-(B_A)=\Th_-(A/A)=\Th_-(M/A)$ where $B_A$ and $M_A$ are the natural $\lang(A)${\hyp}expansions given by $g$ and $f$ respectively. Let $b=(b_i)_{i\in I}$ be an enumeration of $B$ and let $p$ be the \gls{lp type} of $b$ in $B_A$. Then, $p$ is a \gls{lp type} of $\Th_-(M/A)$, i.e. a \gls{lp type} of $M$ over $g(A)$. Then, there is $c=(c_i)_{i\in I}$ in $M$ realising $p$. Consider the map $h\map B\to M$ given by $h\map b_i\mapsto c_i$. Obviously, $f=h\circ g$. Also, as $b$ and $c$ have the same \gls{lp type}, they have in particular the same quantifier free positive type, so $h$ is a homomorphism. 
\end{proof}
\begin{rmk} By \cite[Theorem 3.5]{rodriguez2024completeness}, every structure has the \gls{LJCP} over any pointed set of parameters. Thus, every $\kappa${\hyp}saturated model realises every \gls{lp type} over any pointed set of parameters of size less than $\kappa$. On the other hand, note that in \cref{l:saturation and types} the proof of the right-to-left implication actually shows that a local model is $\kappa$-saturated if it realises every \gls{lp type} over a pointed set of parameters of size less than $\kappa$. 
\end{rmk}

We now show that there are saturated models in local positive logic. The proof is essentially word for word the usual one for classic first{\hyp}order logic or positive logic. We present here a sketch of the proof. 

\begin{theo} \label{t:existence saturated} For any $N\models_\lang \theory$, there is a homomorphism $h\map N\to M$ to a $\kappa${\hyp}saturated $M\models^\pc_\lang\theory$. 
\end{theo}
\begin{proof} Without loss of generality assume $\kappa\geq |\lang|$. By \cite[Theorem 2.9]{rodriguez2024completeness}, find a homomorphism $f_{-1}\map N\to M_0$ with $M_0\models^\pc_\lang\theory$. Take a cardinal $\lambda$ such that $\kappa\leq \cf(\lambda)$ (for example, $\lambda=2^\kappa$). Applying \cite[Theorem 2.9 and Lemma 2.8]{rodriguez2024completeness} repeatedly, we define a direct sequence $(M_i,f_{i,j})_{i,j\in \lambda, i\geq j}$ of \gls{lpc} models of $\theory$ such that, for any $i,j\in \lambda$ with $j\leq i$ and any positive embedding $g\map A\to B$ from $A\preceq^+ M_j$ to $B\models_\lang\theory$ with $|A|,|B|<\kappa$, there is a homomorphism $h\map B\to M_i$ with ${f_{ji}}_{\mid A}=h\circ g$.

Take $M= \underrightarrow{\lim}_{i\in\lambda} M_i$, $f_i$ the corresponding coprojections and $f=f_0\circ f_{-1}\map N\to M$. By \cite[Lemma 2.8]{rodriguez2024completeness}, $M$ is a \gls{lpc} model of $\theory$. Let $f\map A\to M$ and $g\map A\to B$ be positive embeddings where $B\models_\lang \theory$ with $|A|,|B|<\kappa$. As $\kappa<\cf(\lambda)$, we get $\Ima\, f\subseteq \Ima\, f_i$ for some $i\in \lambda$. As $f$ and $f_i$ are positive embeddings, there is a positive embedding $f'\map A\to M_i$ with $f=f_i\circ f'$. By construction, there is $h\map B\to M_{i+1}$ such that $f'=h\circ g$, so $f=h'\circ g$ with $h'=f_i\circ h$. Hence, $M$ is $\kappa${\hyp}saturated.
\end{proof}
\begin{rmk} \label{r:lowenheim skolem saturation} Using L\"{o}wenheim{\hyp}Skolem Theorem we can add a bound on the size of $M$ in \cref{t:existence saturated}. Explicitly, if $|N|\leq 2^\kappa$ and $\kappa\geq|\lang|$,  we can add $|M|\leq 2^{\kappa}$. Indeed, taking $\lambda=2^\kappa$ and using downwards L\"{o}wenheim{\hyp}Skolem Theorem at each step, we can take $|M_i|\leq 2^{\kappa}$, $\mu_i\leq 2^{\kappa}$ and $|M^\eta_i|\leq 2^{\kappa}$ for each $i,\eta\in 2^{\kappa}$.
\end{rmk}

One useful consequence of saturation is homogeneity. In local positive logic, we have the following version of homogeneity for pointed tuples.

\begin{lem} \label{l:homogeneity for pointed} Let $M\models^\pc_\lang\theory$ be $\kappa${\hyp}saturated and $a$ and $a'$ pointed tuples in $M$ with $|a|=|a'|<\kappa$ such that $\ltp_+(a)=\ltp_+(a')$. Pick $b$ with $|b|<\kappa$. Then, there is $b'$ such that $\ltp_+(ab)=\ltp_+(a'b')$.
\end{lem}
\begin{proof} Say $a,a'\in M^y$. Consider $p(x)\coloneqq\{\varphi(x,a')\sth \varphi(x,a)\in \ltp_+(b/a)\}$. We claim that $p(x)$ is a \gls{lp type} over $a'$. As $\ltp_+(b/a)$ contains a bound over $a$, $p(x)$ also contains a bound over $a'$. Obviously, $p(x)$ is closed under finite conjunctions. For any $\varphi(x,a)\in \ltp_+(b/a)$, we have that $\exists x\mathrel{}\varphi(x,y)\in\tp_+(a)$. By \cref{l:partial positive types}, as $\ltp_+(a)=\ltp_+(a')$ and $a$ and $a'$ are pointed, we conclude that $\tp_+(a)=\tp_+(a')$, so $\exists x\mathrel{}\varphi(x,y)\in\tp_+(a')$.  Consequently, $p(x)$ is finitely satisfiable. Hence, by \cref{l:boundedly satisfiable}, $p(x)$ is a partial \gls{lp type} over $a'$. Finally, if $\varphi(x,a')\notin p(x)$, then $\varphi(x,a)\notin \ltp_+(b/a)$, so $M\not\models_\lang \varphi(b,a)$. By \cite[Lemma 2.12]{rodriguez2024completeness}, there is a \gls{lp formula} $\psi(x,y)$ with $M\models_\lang \varphi(x,y)\perp\psi(x,y)$ such that $M\models_\lang\psi(b,a)$. Thus, $\psi(x,a)\in\ltp_+(b/a)$, so $\psi(x,a')\in p(x)$ with $M\models_\lang \psi(x,a')\perp\varphi(x,a')$. As $\varphi(x,a')$ is arbitrary, we conclude that $p(x)$ is a \gls{lp type} over $a'$ by \cref{l:local positive types over parameters and denials}.

Since $M$ is $\kappa${\hyp}saturated, by \ref{l:saturation and types}, there is $b'$ realising $p(x)$, so $\ltp_+(b'/a')=p(x)$. Hence, for any \gls{lp formula} $\varphi(x,y)$, we have that $M\models_\lang\varphi(b',a')$ if and only if $\varphi(x,a')\in p(x)$, if and only if $\varphi(x,a)\in\ltp_+(b/a)$, if and only if $M\models_\lang\varphi(b,a)$. In conclusion, $\ltp_+(ab)=\ltp_+(a'b')$. 
\end{proof}

In addition to saturation, it is often desirable to ensure the existence of enough automorphisms; that is strong homogeneity. In model theory, it is customary to work with saturated and strongly homogeneous (for a sufficiently big cardinal) structures; these structures are often referred to as \emph{monster} models. By adopting the methodology for constructing monster models in classic first{\hyp}order logic, we can construct corresponding monster models in local positive logic. We provide here a sketch of the proof.

\begin{defi} Let $M\models_\lang\theory$ and $\kappa$ a cardinal. 
We say that $M$ is \emph{locally positively strongly $\kappa${\hyp}homogeneous} if, for any pointed tuples $a$ and $b$ with $|a|=|b|<\kappa$ such that $\ltp_+(a)=\ltp_+(b)$, there is an automorphism $\sigma\in\Aut(M)$ with $\sigma(a)=b$. 
\end{defi}

From now on, for the sake of simplicity, we will abbreviate the terminology by writing ``strongly homogeneous'' in place of ``locally positively strongly homogeneous''. 

\begin{theo} \label{t:existence strongly homogeneous} For any $N\models_\lang \theory$, there is a homomorphism $h\map N\to M$ to a $\kappa${\hyp}saturated and strongly $\kappa${\hyp}homogeneous $M\models^\pc_\lang\theory$. 
\end{theo}
\begin{proof}
Let $\lambda\geq |\lang|\oplus|N|$ be a strong limit cardinal with $\cf(\lambda)\geq \kappa$. 
Using \cref{t:existence saturated,r:lowenheim skolem saturation}, we define by recursion a direct system $(M_\eta,f_{\eta\nu})_{\nu,\eta\in\lambda, \eta>\nu}$ of \gls{lpc} models of $\theory$ with $|M_\eta|\leq 2^{(|\eta|\oplus|\lang|)^+}$ for each $\eta$ such that $M_\eta\models^\pc_\lang \theory$ is $(|\eta|\oplus|\lang|)^+${\hyp}saturated and there is a homomorphism $h_0\map N\to M_0$. 
Let $M$ be the direct limit of $(M_\eta,f_{\eta\nu})_{\nu,\eta\in\lambda \eta>\nu}$. Obviously, $|M|\leq \lambda$ and, by \cite[Lemma 2.8]{rodriguez2024completeness}, $M\models^\pc_\lang\theory$. Without loss of generality, we identify $M_\eta$ with its image in $M$ via the corresponding coprojection. 
By \cref{l:saturation and types}, as $\kappa<\cf(\lambda)$, $M$ is $\kappa${\hyp}saturated. 

We now prove that $M$ is strongly $\kappa${\hyp}homogeneous. Take pointed tuples $a$ and $b$ with $|a|=|b|<\kappa$ such that $\ltp_+(a)=\ltp_+(b)$. Since $\kappa<\cf(\lambda)$, there is $\eta>\kappa$ such that $M_\eta$ contains $a$ and $b$. Let $(a_i)_{i<\lambda}$ and $(b_i)_{i<\lambda}$ be enumerations of $M$ with $a=(a_i)_{i<\alpha}$ for some $\alpha<\eta$ and $\{a_i,b_i\}_{i<\xi}$ contained in $M_\xi$ for each $\xi>\eta$. By back{\hyp}and{\hyp}forth, using \cref{l:homogeneity for pointed}, we recursively define a chain of partial maps $(f_\xi)_{\xi\in\lambda}$ preserving satisfaction of \glspl{lp formula} such that $f_\xi\map a_i\mapsto b_i$ for $i<\alpha$ and $a_i\in \Dom f_\xi\subseteq M_{\eta+\xi}$ and $b_i\in \Ima f_\xi\subseteq M_{\eta+\xi}$ for all $i<\xi$. 
Take $f=\bigcup_{\xi<\lambda} f_\xi$. Then, $f\map M\to M$ is surjective with $f(a)=b$ and preserves satisfaction of \glspl{lp formula}. In particular, $f$ is an homomorphism. As $M$ is \gls{lpc}, $f$ is a positive embedding, so it is also injective. Therefore, $f$ is an automorphism.    
\end{proof}

\begin{defi} \label{d:retractor}
Let $f\map A\to B$ be a homomorphism. A \emph{retraction} of $f$ is a homomorphism $g\map B\to A$ such that $g\circ f=\id$. A \emph{(local) retractor of $\theory$} is a local model $\retractor\models_\lang\theory$ such that every homomorphism $f\map \retractor\to N$ to $N\models_\lang\theory$ has a retraction.
\end{defi}

\begin{theo} \label{t:retractors} Let $\retractor\models_\lang \theory$. Then, $\retractor$ is a retractor of $\theory$ if and only if $\retractor$ is a $\kappa${\hyp}saturated \gls{lpc} model of $\theory$ for every cardinal $\kappa$.
\end{theo}
\begin{proof} Suppose that $\retractor\models^\pc_\lang\theory$ is $\kappa${\hyp}saturated for every cardinal $\kappa$. Let $f\map \retractor\to A$ be a homomorphism with $A\models_\lang\theory$. Since it is \gls{lpc}, $f$ is a positive embedding. By saturation with $\id\map \retractor\to \retractor$, there is a homomorphism $g\map A\to \retractor$ such that $\id=g\circ f$, so $g$ is a retraction of $f$.

On the other hand, suppose $\retractor$ is a retractor of $\theory$. Take a homomorphism $h\map \retractor\to M$ to $M\models_\lang\theory$. Since $\retractor$ is a retractor, there is $g\map M\to \retractor$ such that $\id=g\circ h$. Take $\varphi\in\For^x_+(\lang)$ and $a\in \retractor^x$. Suppose $M\models_\lang\varphi(h(a))$. Then, $\retractor\models_\lang\varphi(gh(a))$, so $\retractor\models_\lang\varphi(a)$ as $gh(a)=a$. Hence, we conclude that $\retractor$ is \gls{lpc}.

Let $A$ be a subset such that $\retractor$ satisfies the \gls{LJCP} over $A$ and let $p(x)$ be a \gls{lp type} of $\retractor$ over $A$. By \cref{l:local positive types over parameters}, there is an $\lang(A)${\hyp}homomorphism $f\map \retractor_A\to N_A$ with $N_A\models^\pc_\lang \Th_-(\retractor/A)$ and $b\in N^x$ such that $N_A\models_\lang p(b)$. Since $\retractor$ is a retractor, there is a homomorphism $g\map N\to \retractor$ such that $\id=g\circ f$. As $f$ is an $\lang(A)${\hyp}homomorphism, we conclude that $g$ is an $\lang(A)${\hyp}homomorphism. Thus, $\retractor_A \models_\lang p(g(b))$, concluding that $\retractor_\lang$ realises $p(x)$. Since $p(x)$ and $A$ are arbitrary, we conclude that $\retractor$ is $\kappa${\hyp}saturated for any cardinal $\kappa$ by \cref{l:saturation and types}
\end{proof}

The following corollary is \cite[Proposition 2.1(3)]{hrushovski2022lascar}. 
\begin{coro} \label{c:endomorphism of retractor} Let $\retractor$ be a retractor of $\theory$. Then, every endomorphism of $\retractor$ is an automorphism. 
\end{coro}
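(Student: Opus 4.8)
The plan is to use the characterisation of retractors from \cref{t:retractors}, in particular the equivalence between being a retractor and being an absolutely ultrahomogeneous local positively closed model. Let $e\map \retractor\to\retractor$ be an endomorphism. The key observation is that $e$ is a homomorphism from $\retractor$ to a model of $\theory$ (namely $\retractor$ itself), so since $\retractor$ is a retractor, $e$ has a retraction $g\map \retractor\to\retractor$ with $g\circ e=\id$. This already shows $e$ is injective. It remains to show $e$ is surjective and that $e^{-1}$ is a homomorphism, i.e.\ that $e$ reflects satisfaction of positive formulas.

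First I would establish surjectivity. Since $g\circ e=\id$, the map $g$ is itself an endomorphism of $\retractor$, so by the same argument $g$ has a retraction $h$ with $h\circ g=\id$. Then $h=h\circ(g\circ e)=(h\circ g)\circ e=e$, so $e\circ g=\id$ as well. Combined with $g\circ e=\id$, this shows $e$ is a bijection with inverse $g$.

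The remaining point is that $e^{-1}=g$ is a homomorphism that is actually an \emph{isomorphism} of local structures, which is immediate: $g$ is already a homomorphism by construction, and since $g=e^{-1}$ and $e$ is a homomorphism, for any atomic $\varphi$ and $a$ with $\retractor\models\varphi(g(a))$ we get $\retractor\models\varphi(e(g(a)))=\varphi(a)$; thus $g$ reflects atomic formulas, so $g$ is an embedding, hence $e$ and $g$ are mutually inverse isomorphisms. Actually the cleanest phrasing: $e$ is a bijective homomorphism whose inverse $g$ is also a homomorphism, which is exactly the definition of an automorphism of the local structure $\retractor$.

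I do not expect a genuine obstacle here; the only mild subtlety is making sure one does not conflate ``bijective homomorphism'' with ``isomorphism'' in the positive/local setting — one genuinely needs the inverse to be a homomorphism, which is why the two-step retraction argument ($g$ retracts $e$, then $h$ retracts $g$, forcing $h=e$) is the right way to proceed rather than trying to argue directly that a bijective homomorphism of a retractor is an isomorphism.
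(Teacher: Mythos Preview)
Your argument is correct, and in fact slightly more self{\hyp}contained than the paper's. The paper applies the retractor property once to get $g$ with $g\circ f=\id$, then invokes \cref{t:retractors} to know $\retractor\models^\pc\theory$, so that $g$ (a homomorphism into a model of $\theory$) is a positive embedding, hence injective; combined with the surjectivity of $g$ from $g\circ f=\id$, this makes $g$ an isomorphism and $f=g^{-1}$. You instead apply the retractor property a second time to $g$ itself, obtaining $h$ with $h\circ g=\id$ and computing $h=e$, which gives $e\circ g=\id$ directly. Your route avoids any appeal to \cref{t:retractors} (despite your opening sentence), using only the bare definition of a retractor; the paper's route makes the structural reason (positive closedness forces every endomorphism to be a positive embedding) more visible. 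Both are short and complete; your caution about not treating a bijective homomorphism as automatically an isomorphism is well placed and exactly why the double{\hyp}retraction trick is the clean way to close the argument.
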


The following corollary is \cite[Proposition 2.1(4)]{hrushovski2022lascar}. We provide an alternative proof of it.

\begin{coro} \label{c:homogeneity of retractor} Let $\retractor$ be a retractor of $\theory$. Let $A$ be a subset of $\retractor$ and $x$ a variable pointed over $\lang(A)$. Take $a,b\in \retractor^x$. Then, $\ltp^{\retractor}_+(a/A)=\ltp^{\retractor}_+(b/A)$ if and only if there is an automorphism $\sigma\in \Aut(\retractor/A)$ such that $\sigma(a)=b$. In particular, $\ltp^{\retractor}_+(a/A)=\ltp^{\retractor}_+(b/A)$ if and only if $\tp(a/A)=\tp(b/A)$.
\end{coro}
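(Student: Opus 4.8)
For the easy direction, if $\sigma\in\Aut(\retractor/A)$ satisfies $\sigma(a)=b$, then $\sigma$ and $\sigma^{-1}$ are automorphisms fixing $A$, so $\retractor\models\varphi(a)$ iff $\retractor\models\varphi(b)$ for every $\varphi(x)\in\LFor^x_+(\lang(A))$, giving $\ltp^{\retractor}_+(a/A)=\ltp^{\retractor}_+(b/A)$; running the same computation with arbitrary formulas yields $\tp(a/A)=\tp(b/A)$. Since conversely $\tp(a/A)=\tp(b/A)$ trivially implies $\ltp^{\retractor}_+(a/A)=\ltp^{\retractor}_+(b/A)$, the ``in particular'' clause will follow once the nontrivial direction of the main equivalence is proved. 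So everything reduces to: assuming $p\coloneqq\ltp^{\retractor}_+(a/A)=\ltp^{\retractor}_+(b/A)$, produce $\sigma\in\Aut(\retractor/A)$ with $\sigma(a)=b$.

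The plan is to name $a$ and $b$, turning the statement into the uniqueness up to isomorphism of retractors of a theory with the local joint property. As $\retractor$ is a local $\lang$-structure, axiom \cref{itm:axiom 5} gives a bound $\Bound(x)$ of $x$ over $A$ with $\retractor\models\Bound(a)$ (pick, for each relevant pair, a locality relation witnessing \cref{itm:axiom 5}); every conjunct of $\Bound$ is an atomic local positive formula over $A$, hence lies in $p$, so $\retractor\models\Bound(b)$ as well. Let $\lang'\coloneqq\lang(A)(\underline{x})$ be the expansion of $\lang(A)$ adding a fresh constant $\underline{x}_i$ for each $x_i$, with locality signature defined from $\Bound$ exactly as in the proof of \cref{l:boundedly satisfiable}; since $x$ is pointed over $\lang(A)$, the language $\lang'$ is pointed. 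Because $\retractor$ satisfies both $\Bound(a)$ and $\Bound(b)$, the two $\lang'$-expansions $\retractor_a$ and $\retractor_b$ of $\retractor$ interpreting $\underline{x}$ as $a$ and as $b$ respectively are both local $\lang'$-structures.

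Next I would check $\Th_-(\retractor_a)=\Th_-(\retractor_b)=:\theory'$. A negative $\lang'$-sentence has the form $\neg\psi(\underline{x})$ for a positive $\lang(A)$-formula $\psi(x)$, and $\retractor_a\models\neg\psi(\underline{x})$ iff $\psi\notin\tp^{\retractor}_+(a/A)$, and likewise for $b$; so it suffices that $\tp^{\retractor}_+(a/A)=\tp^{\retractor}_+(b/A)$, which follows from $\ltp^{\retractor}_+(a/A)=\ltp^{\retractor}_+(b/A)$ by \cref{l:partial positive types} applied inside $\retractor_A$, using that $x$ is pointed over $\lang(A)$. As $\lang'$ is pointed, $\theory'$ has the local joint property by \cite[Theorem 3.5]{rodriguez2024completeness}. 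Moreover $\retractor_a$ and $\retractor_b$ are expansions of $\retractor$ by parameters and $\retractor$ is absolutely ultrahomogeneous for $\theory$ by \cref{t:retractors}, so by (the argument of) \cref{l:ultrahomogeneity and paramters} each of $\retractor_a$, $\retractor_b$ is absolutely ultrahomogeneous for its negative theory $\theory'$, hence a retractor of $\theory'$ by \cref{t:retractors} again.

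It remains to show that two retractors $\retractor_a,\retractor_b$ of a theory $\theory'$ with the local joint property are $\lang'$-isomorphic. By the local joint property and \cite[Theorem 2.7]{rodriguez2024completeness} there are $\lang'$-homomorphisms $\retractor_a\to N$ and $\retractor_b\to N$ with $N\models^\pc\theory'$; composing a retraction of the first with the second gives a homomorphism $v\map\retractor_b\to\retractor_a$, and symmetrically $w\map\retractor_a\to\retractor_b$. Then $v\circ w$ is an endomorphism of the retractor $\retractor_a$, hence an automorphism by \cref{c:endomorphism of retractor}, so $v$ is surjective; and $v$ is a positive embedding because $\retractor_b\models^\pc\theory'$, hence injective, and a bijective positive embedding is an isomorphism. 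Finally, the $\lang$-reduct of $v$ is an automorphism of $\retractor$; since $v$ respects the constants of $\lang'$, it fixes $A$ pointwise and maps $b=\underline{x}^{\retractor_b}$ to $a=\underline{x}^{\retractor_a}$, so $\sigma\coloneqq v^{-1}\in\Aut(\retractor/A)$ satisfies $\sigma(a)=b$. The only delicate point I expect is packaging $a$ and $b$ into a single local expansion $\lang'$, i.e.\ producing a feasible bound valid for both; once one notices that this bound is atomic local positive and hence transfers from $a$ to $b$, the rest is assembling cited facts, and this step is exactly where pointedness of $x$ over $\lang(A)$ is indispensable — it is what makes $\lang'$ pointed and hence $\theory'$ have the local joint property — in keeping with the failure of this kind of statement for non-pointed parameters shown in \cref{e:types over LJP are not preserved under adding paramters}.
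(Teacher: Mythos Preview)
Your proof is correct and takes a genuinely different route from the paper's. The paper argues directly by a Zorn's Lemma back{\hyp}and{\hyp}forth: it considers the family of partial homomorphisms $f\map D\subseteq\retractor\to\retractor$ fixing $A$ with $f(a)=b$, takes a maximal element $\sigma$, and uses saturation of $\retractor$ (via \cref{t:retractors}) together with pointedness of $\Dom\,\sigma$ to show that any single element can be added to the domain, forcing $\Dom\,\sigma=\retractor$; then \cref{c:endomorphism of retractor} upgrades $\sigma$ to an automorphism. Your argument instead packages everything into a uniqueness{\hyp}of{\hyp}retractors statement: naming $a$ and $b$ produces two local $\lang'${\hyp}expansions $\retractor_a,\retractor_b$ with the same negative theory $\theory'$ (this is where you need \cref{l:partial positive types} and hence pointedness), both of which are retractors of $\theory'$ by \cref{l:ultrahomogeneity and paramters} and \cref{t:retractors}; since $\lang'$ is pointed, $\theory'$ has the local joint property, and you recover the isomorphism $\retractor_b\cong\retractor_a$ by the same retraction{\hyp}and{\hyp}endomorphism trick that underlies \cref{l:universality of retractor}. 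The paper's approach is more self{\hyp}contained and uses only saturation and \cref{c:endomorphism of retractor}; yours is more structural, showing that homogeneity is really a shadow of the uniqueness of retractors under the local joint property, and makes transparent exactly where pointedness enters (to get both $\tp_+(a/A)=\tp_+(b/A)$ and the local joint property of $\theory'$). Note that your final step essentially reproves the uniqueness half of \cref{l:universality of retractor}, which in the paper appears after \cref{c:homogeneity of retractor}, but since you only invoke \cref{c:endomorphism of retractor} and the companion paper's \cite[Theorems 2.7 and 3.5]{rodriguez2024completeness}, there is no circularity.
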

\begin{proof} The backward direction is obvious. 
On the other hand, consider the family $\mathcal{K}$ of partial homomorphisms of $\retractor$ fixing $A$ and mapping $a$ to $b$. Assuming $\ltp^{\retractor}_+(a/A)=\ltp^{\retractor}_+(b/A)$, we have that $\mathcal{K}\neq\emptyset$. Obviously, $\mathcal{K}$ is partially ordered by $\subseteq$. Also, for any chain $\Omega\subseteq \mathcal{K}$, we have that $\bigcup \Omega\in\mathcal{K}$. Therefore, by Zorn{'s} Lemma, there is a maximal element $\sigma\in \mathcal{K}$. Take an enumeration $d$ of $\Dom\, \sigma$. Let $c\in\retractor$ be an arbitrary single element. As $d$ contains $a$ and $A$, we have that $d$ is pointed. By saturation of $\retractor$, we conclude that there is an element $c'$ such that $\ltp^{\retractor}_+(cd)=\ltp^{\retractor}_+(c'\sigma(d))$. Therefore, $\sigma\cup \{(c,c')\}\in\mathcal{K}$, concluding by maximality that $c\in \Dom\,\sigma$. As $c$ is arbitrary, we conclude that $\sigma\map \retractor\to\retractor$ is an endomorphism. By \cref{c:endomorphism of retractor}, we conclude that $\sigma\in\Aut(\retractor/A)$.
\end{proof}

\begin{defi} Let $M\models_\lang \theory$ and $\kappa$ a cardinal. We say that $M$ is \emph{locally $\kappa${\hyp}homomorphism{\hyp}universal for $\theory$} if for every $N\models_\lang \theory$ with $|N|<\kappa$ there is a homomorphism $f\map N\to M$. We say that it is \emph{locally absolutely homomorphism{\hyp}universal for $\theory$} if it is locally $\kappa${\hyp}homomorphism{\hyp}universal for every cardinal $\kappa$.\end{defi}
From now on, for the sake of simplicity, we will abbreviate the terminology by writing ``universal'' in place of ``locally homomorphism{\hyp}universal''. 
 
\begin{lem} \label{l:universality of retractor} Assume $\theory$ has the \gls{LJCP} and  $\kappa>|\lang|$. Then, every $\kappa${\hyp}saturated model of $\theory$ is $\kappa${\hyp}universal for $\theory$. In particular, when $\theory$ satisfies the \gls{LJCP}, there is at most one retractor of $\theory$ up to automorphism and it is absolutely universal for $\theory$. 
\end{lem}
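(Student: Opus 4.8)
The plan is to derive universality from $\kappa${\hyp}ultrahomogeneity together with the local joint property, using a small positive substructure of $M$ as a pivot. Fix a $\kappa${\hyp}ultrahomogeneous model $M$ of $\theory$ with $\kappa>|\lang|$, and let $N\models\theory$ with $|N|<\kappa$; we want a homomorphism $N\to M$. First, by downwards L\"{o}wenheim{\hyp}Skolem pick an elementary substructure $A\preceq M$ with $|A|\leq|\lang|<\kappa$. Then $A$ is again a local model of $\theory$ (axioms \cref{itm:axiom 1,itm:axiom 2,itm:axiom 3,itm:axiom 4} are universal, \cref{itm:axiom 5} holds in $A$ because each relevant instance is an atomic formula already witnessed in $M$, and $\theory\subseteq\Th_-(M)=\Th_-(A)$), and the inclusion $\inc\map A\to M$ is a positive embedding. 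Applying the local joint property to $A$ and $N$ yields homomorphisms $f_1\map A\to M'$ and $f_2\map N\to M'$ to a common local model $M'\models\theory$. Since $\lang$ has no non{\hyp}constant function symbols, the substructure of $M'$ generated by $\Ima f_1\cup\Ima f_2$ has size at most $|A|+|N|+|\lang|<\kappa$, is still local, and still satisfies $\theory$ (a negative sentence is $\Pi_1$, hence inherited by substructures); so we may assume $|M'|<\kappa$ and that $f_1,f_2$ corestrict to $M'$. Now $\kappa${\hyp}ultrahomogeneity of $M$, applied to the positive embedding $\inc\map A\to M$ and the homomorphism $f_1\map A\to M'$, provides a homomorphism $h\map M'\to M$ with $\inc=h\circ f_1$ (only the existence of $h$ is used), and $h\circ f_2\map N\to M$ is the required homomorphism. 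Hence $M$ is $\kappa${\hyp}universal.

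For the remaining assertions, let $\retractor$ be a retractor of $\theory$. By \cref{t:retractors}, $\retractor$ is absolutely ultrahomogeneous, so by the first part it is $\kappa${\hyp}universal for every $\kappa>|\lang|$, and therefore absolutely universal (the cases $\kappa\leq|\lang|$ are subsumed by $|\lang|^+${\hyp}universality). For uniqueness, let $\retractor_1,\retractor_2$ be retractors of $\theory$; choosing $\kappa>|\retractor_1|+|\retractor_2|+|\lang|$ and applying universality in both directions gives homomorphisms $f\map\retractor_2\to\retractor_1$ and $g\map\retractor_1\to\retractor_2$. Then $g\circ f$ and $f\circ g$ are endomorphisms of retractors, hence automorphisms by \cref{c:endomorphism of retractor}; in particular $f$ is injective (as $g\circ f$ is) and surjective (as $f\circ g$ is). Finally, since $\retractor_2\models^\pc\theory$ by \cref{t:retractors} and $\retractor_1\models\theory$, the homomorphism $f$ is a positive embedding, hence reflects atomic formulas; a bijective positive embedding is an isomorphism, so $\retractor_1\cong\retractor_2$.

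The only delicate point is the L\"{o}wenheim{\hyp}Skolem bookkeeping: one must check that the pivot $A$ genuinely is a local model of $\theory$ for which $\inc$ is a positive embedding, and that the common model supplied by the local joint property can be trimmed below $\kappa$ while remaining a local model of $\theory$ still receiving $f_1$ and $f_2$. Everything else is a diagram chase. The structural reason the argument is shaped this way is that ultrahomogeneity is only available for domains of size $<\kappa$, which forces us to route through the small substructure $A$ of $M$ and to shrink $M'$ rather than feeding $M$ itself into the ultrahomogeneity hypothesis.
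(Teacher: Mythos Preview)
Your proof is correct and follows essentially the same approach as the paper. The only minor differences are: (i) you shrink the common model $M'$ by passing to the substructure generated by $\Ima f_1\cup\Ima f_2$, whereas the paper invokes downwards L\"{o}wenheim{\hyp}Skolem a second time; and (ii) for uniqueness, you obtain homomorphisms in both directions by universality and then appeal to \cref{c:endomorphism of retractor} to conclude the compositions are automorphisms, whereas the paper uses ultrahomogeneity directly (applied to $\id\map\retractor_i\to\retractor_i$) to arrange that $g\circ f=\id$ and $f'\circ g=\id$ on the nose --- both routes are short and valid.
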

\begin{proof} See \cite[Proposition 2.1(6)]{hrushovski2022lascar}.\end{proof}

Without the \gls{LJCP}, retractors are not necessarily unique. However, we can still classify them in a natural way, getting also a bound on the number of retractors. In \cite{hrushovski2022lascar}, Hrushovski suggested a way of doing so; he suggested considering all the possible types and when they are compatible. We provide here a rephrasing of that idea that seems to work better in the many sorted case.  

\begin{defi}\label{d:compatible} We say that two local models $A$ and $B$ of $\theory$ are \emph{(locally) compatible in $\theory$} if there are homomorphisms $f\map A\to M$ and $g\map B\to M$ to a common local model $M\models_\lang \theory$; write $A\frown_{\theory} B$. For a local model $A\models_\lang\theory$, we write $\Com_{\theory}(A)\coloneqq\{B\models_\lang\theory\sth A\frown_{\theory}B\}$ and $\Com^\pc_{\theory}(A)\coloneqq\{B\in\Com_{\theory}(A)\sth B\models^\pc_\lang\theory\}$. As usual, we omit $\theory$ if it is clear from the context.
\end{defi}
\begin{lem}\label{l:compatible} Let $M\models^\pc_\lang\theory$. Then, for any $A_1,A_2\in\Com(M)$, $A_1\frown A_2$. In particular, $\frown$ is an equivalence relation in the class $\Mod^\pc(\theory)$ and $\Com^\pc(M)$ is the equivalence class of $M$ for every $M\models^\pc_\lang\theory$.
\end{lem}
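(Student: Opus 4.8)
The plan is to deduce the whole statement from \cref{t:existence ultrahomogeneous}, which already packages all the real work.

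First I would prove the main assertion: for $M\models^\pc\theory$ and $A_1,A_2\in\Com(M)$ we must find a common local target. Unwinding \cref{d:compatible}, fix for each $i=1,2$ a local model $N_i\models\theory$ together with homomorphisms $f_i\map A_i\to N_i$ and $g_i\map M\to N_i$. Choose a cardinal $\kappa$ with $\kappa>|\lang|+|M|+|N_1|+|N_2|$ and apply \cref{t:existence ultrahomogeneous} to obtain a homomorphism $e\map M\to\retractor$ into a $\kappa${\hyp}ultrahomogeneous model $\retractor\models^\pc\theory$. Since $M$ is local positively closed for $\theory$ and $\retractor\models\theory$, the homomorphism $e$ is in fact a positive embedding. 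Hence, for each $i$, $\kappa${\hyp}ultrahomogeneity of $\retractor$ applied to the positive embedding $e\map M\to\retractor$ and the homomorphism $g_i\map M\to N_i$ (both of size $<\kappa$) yields a homomorphism $h_i\map N_i\to\retractor$ with $e=h_i\circ g_i$. Then $h_1\circ f_1\map A_1\to\retractor$ and $h_2\circ f_2\map A_2\to\retractor$ are homomorphisms into the common local model $\retractor\models\theory$, so $A_1\frown A_2$.

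The ``in particular'' clauses are then formal. Reflexivity of $\frown$ on $\Mod^\pc(\theory)$ is witnessed by $\id\map M\to M$, and symmetry is built into \cref{d:compatible}. For transitivity, let $A,B,C\models^\pc\theory$ with $A\frown B$ and $B\frown C$, witnessed by homomorphisms $A\to N\leftarrow B$ and $B\to N'\leftarrow C$ into local models $N,N'\models\theory$; then $N,N'\in\Com(B)$ (via these maps together with the identities on $N$ and $N'$), so by the assertion already proved $N\frown N'$, say via $N\to P\leftarrow N'$ with $P\models\theory$ local, and composing gives $A\to N\to P$ and $C\to N'\to P$, i.e. $A\frown C$. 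Thus $\frown$ restricted to $\Mod^\pc(\theory)$ is an equivalence relation, and since $\Com^\pc(M)=\{B\models^\pc\theory\sth M\frown B\}$ by definition, this set is precisely the $\frown${\hyp}class of $M$.

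I do not anticipate a genuine obstacle; the only points requiring care are that $\kappa$ must be chosen \emph{after} the auxiliary models $N_1,N_2$ are fixed (so that $|N_i|<\kappa$), and that local positive closedness of $M$ is exactly what makes $e$ a positive embedding, so that it may legitimately serve as the positive embedding in the hypothesis of $\kappa${\hyp}ultrahomogeneity. (An alternative, if one wants to avoid \cref{t:existence ultrahomogeneous}, is to amalgamate $N_1$ and $N_2$ directly over $M$: expand $\lang$ by constants for $M$, note that $M_M\models^\pc\Th_-(M/M)$ and that $\Th_-(M/M)$ has the local joint property over the resulting pointed language, and use that the positive embeddings $g_i$ make each $N_i$ a model of $\Th_-(M/M)$; but this is strictly more laborious.)
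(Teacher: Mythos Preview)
Your argument is correct. The main assertion follows cleanly from \cref{t:existence ultrahomogeneous} exactly as you describe, and your derivation of the equivalence{\hyp}relation clauses is accurate (and more explicit than the paper, which leaves them to the reader).

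The paper, however, takes precisely the route you sketch in your parenthetical and dismiss as ``strictly more laborious'': it amalgamates $N_1$ and $N_2$ directly over $M$. Since $M\models^\pc\theory$, the maps $M\to N_i$ are positive embeddings, so each $N_i$ becomes a model of $\Th_-(M/M)$; then, because $M$ is pointed as a set of parameters, \cite[Theorem 3.5]{rodriguez2024completeness} gives the local joint property for $\Th_-(M/M)$, producing a common $N$ receiving both $N_i$ over $M$. Composing yields $A_1\frown A_2$. This is actually the \emph{lighter} argument: \cref{t:existence ultrahomogeneous} is a substantial construction whose successor step already invokes exactly this amalgamation over pointed parameters. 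Your approach is perfectly valid but packages the same ingredient inside a larger black box; the paper's proof isolates the single step that is really needed.
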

\begin{proof} As $A_1\frown M$ and $A_2\frown M$. There are then homomorphisms $f_1\map A_1\to B_1$, $f_2\map M\to B_1$, $g_1\map M\to B_2$ and $g_2\map A_2\to B_2$ with $B_1,B_2\models_\lang\theory$. As $M\models^\pc_\lang\theory$, we have that $f_2$ and $g_1$ are positive embeddings, so $B_1,B_2\models_\lang\Th_-(M/M)$. As $M$ is pointed, by \cite[Theorem 3.5]{rodriguez2024completeness}, there are $\lang(M)${\hyp}homomorphisms $h_1\map B_1\to N$ and $h_2\map B_2\to N$ to a common local model $N\models_\lang \Th_-(M/M)$. In particular, $h_1\circ f_1\map A_1\to N$ and $h_2\circ g_2\map A_2\to N$ are homomorphisms with $N\models_\lang\theory$. Therefore, $A_1\frown A_2$.
\end{proof}

\begin{theo}\label{t:relative retractor} Let $\mathcal{E}$ be an equivalence class of compatibility in $\Mod^\pc(\theory)$. Then:
\begin{enumerate}[label={\rm{(\arabic*)}}, ref={\rm{\arabic*}}, wide]
\item \label{itm:relative retractor:universal} $\retractor\in\mathcal{E}$ is a retractor of $\theory$ if and only if it is universal for $\mathcal{E}=\Com^\pc(\retractor)$.
\item \label{itm:relative retractor:uniqueness} There is at most one retractor in $\mathcal{E}$ up to isomorphism.
\item \label{itm:relative retractor:existence} There is a retractor in $\mathcal{E}$ if and only if $\mathcal{E}$ is bounded, i.e. there is a cardinal $\kappa$ such that every element of $\mathcal{E}$ has cardinality at most $\kappa$.  
\end{enumerate}
\end{theo}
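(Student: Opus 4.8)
The plan is to dispatch the two ``only if'' implications by a direct compatibility argument, manufacture a retractor for the two ``if'' implications out of \cref{t:existence ultrahomogeneous}, and then read off uniqueness from \cref{c:endomorphism of retractor}. For (1)$\Rightarrow$ and (3)$\Rightarrow$: if $\retractor\in\mathcal{E}$ is a retractor then $\mathcal{E}=\Com^\pc(\retractor)$ by \cref{l:compatible}, so for any $N\in\mathcal{E}$ there are homomorphisms $\retractor\to M$ and $N\to M$ into a common $M\models\theory$, and composing the latter with a retraction $M\to\retractor$ of the former yields a homomorphism $N\to\retractor$. Since $N\models^\pc\theory$, this homomorphism is a positive embedding, hence injective; this simultaneously exhibits $\retractor$ as universal for $\mathcal{E}$ and shows $|N|\le|\retractor|$, so $\mathcal{E}$ is bounded.

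The engine for the converses is a lemma I would prove first: \emph{if $M\models^\pc\theory$ is $\kappa${\hyp}ultrahomogeneous for $\theory$ and $|M|<\kappa$, then every endomorphism of $M$ is an automorphism.} An endomorphism $e$ of $M$ is a positive embedding (as $M\models^\pc\theory$), hence injective, and $e(M)\preceq^+M$ has size $<\kappa$; applying $\kappa${\hyp}ultrahomogeneity to the isomorphism $e^{-1}\map e(M)\to M$ and the inclusion $e(M)\hookrightarrow M$ produces $k\map M\to M$ with $k\circ e=\id$; then $e\circ k$ is idempotent and, being an endomorphism of $M$, is a positive embedding, hence injective, whence $e\circ k=\id$ and $e$ is invertible. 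Consequently, if $M_1,M_2\models^\pc\theory$ with $M_i$ being $\kappa_i${\hyp}ultrahomogeneous and $|M_i|<\kappa_i$, and there are homomorphisms in both directions, then $M_1\cong M_2$: the round{\hyp}trip composites are automorphisms, so each map is a bijective positive embedding, hence an isomorphism.

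For (1)$\Leftarrow$, assume $\retractor$ is universal for $\mathcal{E}$. For each cardinal $\kappa>|\retractor|+|\lang|$, \cref{t:existence ultrahomogeneous} gives a homomorphism $\retractor\to\retractor_\kappa$ with $\retractor_\kappa\models^\pc\theory$ that is $\kappa${\hyp}ultrahomogeneous; by \cref{l:compatible} $\retractor_\kappa\in\mathcal{E}$, so universality provides a homomorphism $\retractor_\kappa\to\retractor$, which is a positive embedding, hence injective, so $|\retractor_\kappa|\le|\retractor|<\kappa$. The composite $\retractor_\kappa\to\retractor\to\retractor_\kappa$ is then an automorphism by the lemma, so $\retractor\to\retractor_\kappa$ is onto and, being a positive embedding, an isomorphism; thus $\retractor\cong\retractor_\kappa$ is $\kappa${\hyp}ultrahomogeneous. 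As $\kappa$ is an arbitrary large cardinal and $\kappa${\hyp}ultrahomogeneity passes to smaller cardinals, $\retractor$ is absolutely ultrahomogeneous, hence a retractor by \cref{t:retractors}. For (3)$\Leftarrow$, let $\lambda$ bound $\mathcal{E}$, fix $N_0\in\mathcal{E}$, and for each $\kappa>\lambda+|\lang|$ build as above $\retractor_\kappa\models^\pc\theory$ that is $\kappa${\hyp}ultrahomogeneous, lies in $\mathcal{E}$ (via a homomorphism $N_0\to\retractor_\kappa$), and has size $\le\lambda<\kappa$. Given $\kappa<\kappa'$ among these, $\retractor_\kappa\frown\retractor_{\kappa'}$ gives positive embeddings $u\map\retractor_\kappa\to M$, $v\map\retractor_{\kappa'}\to M$ into a common $M\models\theory$ (positive since both are positively closed); let $D\le M$ be the substructure generated by $u(\retractor_\kappa)\cup v(\retractor_{\kappa'})$, which is local, satisfies $\theory$, and has $|D|\le\lambda+|\lang|<\kappa$. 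Applying $\kappa${\hyp}ultrahomogeneity of $\retractor_\kappa$ to $u^{-1}\map u(\retractor_\kappa)\to\retractor_\kappa$ and $u(\retractor_\kappa)\hookrightarrow D$ gives a homomorphism $D\to\retractor_\kappa$, hence a homomorphism $\retractor_{\kappa'}\to\retractor_\kappa$; symmetrically one gets $\retractor_\kappa\to\retractor_{\kappa'}$, so $\retractor_\kappa\cong\retractor_{\kappa'}$ by the lemma. Hence all these $\retractor_\kappa$ share one isomorphism type $\retractor\in\mathcal{E}$, which is $\kappa${\hyp}ultrahomogeneous for every large, hence every, $\kappa$, i.e.\ absolutely ultrahomogeneous, so a retractor in $\mathcal{E}$ by \cref{t:retractors}.

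Finally, (2) follows from (1)$\Rightarrow$: two retractors $\retractor,\retractor'\in\mathcal{E}$ admit homomorphisms in both directions, whose round{\hyp}trip composites are automorphisms by \cref{c:endomorphism of retractor}, so each map is a bijective positive embedding, hence an isomorphism. I expect the delicate point to be the bootstrap in the previous paragraph: \cref{t:existence ultrahomogeneous} only ever delivers $\kappa${\hyp}ultrahomogeneity for a single $\kappa$, and it is precisely the boundedness of $\mathcal{E}$ (together with the fact that homomorphisms out of positively closed models are injective) that keeps every candidate model of size $<\kappa$, so that the endomorphism lemma can upgrade the $\kappa${\hyp}dependent construction to a canonical, $\kappa${\hyp}independent isomorphism type --- the absolutely ultrahomogeneous model.
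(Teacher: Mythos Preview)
Your proof is correct. For parts (\ref{itm:relative retractor:universal}), (\ref{itm:relative retractor:uniqueness}) and the forward direction of (\ref{itm:relative retractor:existence}) you essentially reproduce the paper's argument, only repackaged: where the paper applies $\lambda${\hyp}ultrahomogeneity of $\retractor'$ directly to obtain a left inverse, you isolate the statement ``a $\kappa${\hyp}ultrahomogeneous positively closed model of size $<\kappa$ has no proper endomorphisms'' as a separate lemma and then invoke it; both routes are equivalent.

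The genuinely different step is (\ref{itm:relative retractor:existence})$\Leftarrow$. The paper argues by Zorn: boundedness makes the isomorphism types in $\mathcal{E}$ a set, $\precsim^+$ is directed (by compatibility through a common positively closed target) and chain{\hyp}complete (by direct limits, \cite[Lemma 2.6]{rodriguez2024completeness}), so there is a maximum, which is then universal and a retractor by (\ref{itm:relative retractor:universal}). You instead run a \emph{stabilisation} argument: for each large $\kappa$ produce a $\kappa${\hyp}ultrahomogeneous $\retractor_\kappa\in\mathcal{E}$, use compatibility and the ``small generated substructure still models $\theory$'' trick to get homomorphisms both ways between any two of them, and conclude via your endomorphism lemma that they all share one isomorphism type, which is then absolutely ultrahomogeneous. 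Both are valid; the Zorn proof is shorter and avoids the auxiliary comparison of different $\retractor_\kappa$'s, while your approach is more explicit about why the candidate model does not depend on $\kappa$. Note also that your substructure argument actually gives a third, slightly quicker, variant: fix a single $\kappa>\lambda+|\lang|$, build $\retractor_\kappa$, and use the same generated{\hyp}substructure trick (with an arbitrary $N\in\mathcal{E}$ in place of $\retractor_{\kappa'}$) to show $\retractor_\kappa$ is universal for $\mathcal{E}$ directly; then (\ref{itm:relative retractor:universal}) finishes, with no need to compare different $\kappa$'s.
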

\begin{proof}
\begin{enumerate}[wide] 
\item[\hspace{-1.2em}{\rm{(\ref*{itm:relative retractor:universal})}}] Pick $A\in\mathcal{E}$. As $A\frown\retractor$, there are homomorphisms $f\map A\to B$ and $g\map \retractor\to B$ to $B\models_\lang\theory$. Now, there is a retraction $h\map B\to\retractor$ of $g$. In particular, $h\circ f\map A\to \retractor$ is a homomorphism. Since $A$ is arbitrary, we conclude that $\retractor$ is universal for $\mathcal{E}$.

Now, suppose $\retractor$ is universal for $\mathcal{E}$. By \cref{t:existence saturated}, there is a homomorphism $h\map \retractor\to \retractor'$ where $\retractor'$ is a $\lambda${\hyp}saturated \gls{lpc} model of $\theory$ for any $\lambda>|\retractor|$. As $\retractor'\in\mathcal{E}$, we conclude that there is a positive embedding $g\map \retractor'\to\retractor$. In particular, we have that $|\retractor'|\leq |\retractor|<\lambda$. By $\lambda${\hyp}saturation of $\retractor'$, there is a homomorphism $f\map \retractor\to\retractor'$ such that $\id=f\circ g$, in particular, $f$ is surjective. As $\retractor$ is \gls{lpc} for $\theory$, $f$ is a positive embedding. Hence, $f$ is a surjective embedding, that is an isomorphism. Hence, $\retractor$ is $\lambda${\hyp}saturated for $\theory$. As $\lambda$ is arbitrary, we conclude that it is a retractor of $\theory$.
\item[{\rm{(\ref*{itm:relative retractor:uniqueness})}}] Let $\retractor_1$ and $\retractor_2$ be compatible retractors of $\theory$. By {\rm{(\ref*{itm:relative retractor:universal})}}, there is a homomorphism $f\map \retractor_1\to \retractor_2$. Now, considering $\id\map \retractor_1\to \retractor_1$, by saturation of $\retractor_1$, there is a homomorphism $g\map \retractor_2\to \retractor_1$ such that $\id=g\circ f$. Again, considering $\id\map \retractor_2\to \retractor_2$, by saturation of $\retractor_2$, there is a homomorphism $f'\map \retractor_1\to \retractor_2$ such that $\id=f'\circ g$. Hence, $g$ is an isomorphism, $\retractor_1\cong\retractor_2$ and, \emph{a posteriori}, $f=f'=g^{-1}$.
\item[{\rm{(\ref*{itm:relative retractor:existence})}}] By {\rm{(\ref*{itm:relative retractor:universal})}}, if $\retractor$ is a retractor in $\mathcal{E}$, then $\retractor$ is universal for $\mathcal{E}$, so every element of $\mathcal{E}$ has cardinality at most $|\retractor|$. On the other hand, suppose $\mathcal{E}$ is bounded. Since it is bounded, we can take a set $\mathcal{E}_0$ of representatives of $\mathcal{E}$ such that every element of $\mathcal{E}$ is isomorphic to exactly one element of $\mathcal{E}_0$. Now, $\mathcal{E}_0$ is partially ordered by $\lesssim$, where $A\lesssim B$ if there is an embedding $f\map A\to B$. Furthermore, note that $\lesssim$ is a directed order since any $A_1,A_2\in\mathcal{E}_0$ are compatible. Now, take a chain $\{A_i\}_{i\in I}$ in $\mathcal{E}_0$. By \cite[Lemma 2.6]{rodriguez2024completeness}, $A\coloneqq\underrightarrow{\lim}A_i$ is a \gls{lpc} model of $\theory$ and $A_i\lesssim A$ for every $i\in I$. Therefore, by Zorn{'s} Lemma, there are maximal elements in $\mathcal{E}_0$. As $\lesssim$ is directed, we conclude that, in fact, $\mathcal{E}_0$ has a maximum $\retractor$. Thus, $\retractor$ is universal for $\Com^\pc(\retractor)=\mathcal{E}_0$ and, by (\ref*{itm:relative retractor:universal}), it is a retractor. \qedhere
\end{enumerate}
\end{proof}

\begin{rmk} \label{lowenheim skolem number of retractors} By downwards L\"owenheim{\hyp}Skolem Theorem, using \cite[Corollary 2.13]{rodriguez2024completeness}, every equivalence class of compatibility in $\Mod^\pc(\theory)$ contains at least one element $A$ with $|A|\leq |\lang|$. Therefore, there are at most $2^{|\lang|}$ possible equivalence classes. In particular, $\theory$ has at most $2^{|\lang|}$ different retractors. 
\end{rmk}
\begin{ex} \label{e:sharp bound number retractors}
The bound $2^{|\lang|}$ on the number of retractors is sharp:

Let $I$ be a set and $\lang$ the one{\hyp}sorted local language with unary predicates $\{P_i,Q_i\}_{i\in I}$ and locality relations $\Dtt=\{\dd_n\}_{n\in\N}$ (with the usual ordered monoid structure induced by the natural numbers). Consider the $\lang${\hyp}structure $M$ with universe $2^I$ and interpretations $M\models \dd_n(\eta,\zeta)\Leftrightarrow |\{i\sth \eta(i)\neq\zeta(i)\}|\leq n$ for $n\in\N$, $M\models P_i(\eta)\Leftrightarrow \eta(i)=1$ and $M\models Q_i(\eta)\Leftrightarrow \eta(i)=0$ for each $i\in I$. Let $\theory=\Th_-(M)$ be the negative theory of $M$. Since $M\models\theory_\loc$, $\theory$ is locally satisfiable by \cite[Theorem 1.8]{rodriguez2024completeness}. For $\eta\in M$, write $M_\eta\coloneqq\Dtt(\eta)$.

Now, we look at $M$ as no{\hyp}local structure, i.e. as $\lang_\star${\hyp}structure. Note that the positive logic topology in $M$ is coarser than the Tychonoff topology, so $M$ is compact with the positive logic topology. Thus, by \cite[Lemma 2.24]{segel2022positive}, we get that $M$ is universal for $\Mod_\star(\theory)$. In particular, by \cref{l:non-local retractors}, $\theory$ has retractors.

Let $\eta\in M$ and suppose $M\models_\lang \varphi$ with $\varphi$ a positive formula in $\lang$. For $I_0\subseteq I$ finite, let $\lang_{I_0}$ be the reduct of $\lang$ consisting of $\{\dd_k\}_{k\in\N}\cup \{P_i,Q_i\sth i\in I_0\}$ and take $I_0$ such that $\varphi$ is a formula in $\lang_{I_0}$. Consider the map $f\map M\to M_\eta$ given by $f(\xi)(i)=\xi(i)$ for $i\in I_0$ and $f(\xi)(i)=\eta(i)$ for $i\notin I_0$. Obviously, $P_i(\xi)$ implies $P_i(f(\xi))$ and $Q_i(\xi)$ implies $Q_i(f(\xi))$ for any $i\in I_0$. On the other hand, $|\{i\sth f(\xi)(i)\neq f(\zeta)(i)\}|\leq |\{i\sth \xi(i)\neq\zeta(i)\}|$, so $\dd_k(\xi,\zeta)$ implies $\dd_k(f(\xi),f(\zeta))$ for any $k$. Thus, $f$ is an $\lang_{I_0}${\hyp}homomorphism. As homomorphisms preserve satisfaction, we conclude that $M_\eta\models_\lang\varphi$. As $\varphi$ is arbitrary, we conclude that $\Th_-(M_\eta)=\theory$ for any $\eta\in M$. Hence, $\theory$ is a local theory. 

Note that $M_\eta\not\frown M_\zeta$ for $\eta,\zeta\in M$ with $\{i\sth \zeta(i)\neq \eta(i)\}$ infinite. In order to reach a contradiction, suppose that there are homomorphisms $f\map M_\zeta\to N$ and $g\map  M_\zeta\to N$ with $N\models_\lang \theory$. Then, $\bigwedge_{i\in I}R_i(f(\eta),g(\zeta))$ where $R_i(x,y)=(P_i(x)\wedge Q_i(y))\vee (Q_i(x)\wedge P_i(y))$ and $I=\{i\sth \zeta(i)\neq\eta(i)\}$. On the other hand, for any finite subset $J\subseteq I$, $\theory\models_\lang \neg\exists x y\mathrel{} \left(\bigwedge_{i\in J}R_i(x,y)\wedge \dd_{|J|-1}(x,y)\right)$. Thus, we get $N\not\models_\lang \dd_n(f(\eta),g(\zeta))$ for every $n\in\N$, contradicting that $N$ is a local structure.
\end{ex}

The following easy lemma provides a useful variation of \cref{t:relative retractor}(\cref{itm:relative retractor:universal}) which is also a converse to \cref{c:endomorphism of retractor}.

\begin{lem} \label{l:retractor iff endomorphism are automorphisms} Let $\retractor\models_\lang\theory$. Suppose that $\retractor$ is universal for $\Com(\retractor)$ and every endomorphism of $\retractor$ is an automorphism. Then, $\retractor$ is a retractor of $\theory$.
\end{lem}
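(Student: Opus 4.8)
The plan is to use \cref{t:relative retractor}(\cref{itm:relative retractor:universal}), which characterises retractors of $\theory$ inside a compatibility class as the universal elements of that class. So the goal reduces to showing that $\retractor$ lies in some equivalence class $\mathcal{E}$ of compatibility in $\Mod^\pc(\theory)$ and is universal for it. First I would observe that, since every endomorphism of $\retractor$ is an automorphism, $\retractor$ must be positively closed: given a homomorphism $h\map \retractor\to N$ with $N\models\theory$, universality of $\retractor$ for $\Com(\retractor)$ gives a homomorphism $g\map N\to\retractor$ (as $N\in\Com(\retractor)$, witnessed by $h$ and $\id_N$), so $g\circ h$ is an endomorphism of $\retractor$, hence an automorphism; in particular $g\circ h$ is a positive embedding, and since $h$ is a factor of a positive embedding, $h$ is itself a positive embedding. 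Thus $\retractor\models^\pc\theory$.

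Next, by \cref{l:compatible}, $\retractor$ being positively closed means its compatibility class $\mathcal{E}$ within $\Mod^\pc(\theory)$ is exactly $\Com^\pc(\retractor)$. It remains to check that $\retractor$ is universal for $\mathcal{E}$ in the sense required by \cref{t:relative retractor}(\cref{itm:relative retractor:universal}): for every $A\in\mathcal{E}$ there should be a homomorphism $A\to\retractor$. But $A\in\mathcal{E}=\Com^\pc(\retractor)\subseteq\Com(\retractor)$, so by the hypothesis that $\retractor$ is universal for $\Com(\retractor)$, there is a homomorphism $A\to\retractor$. Hence $\retractor$ is universal for its class $\mathcal{E}$, and \cref{t:relative retractor}(\cref{itm:relative retractor:universal}) yields that $\retractor$ is a retractor of $\theory$.

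The only subtle point — the one I would be most careful about — is making sure the notion of universality in the hypothesis (``universal for the class $\Com(\retractor)$'', with no cardinality restriction, i.e.\ absolutely universal) matches what \cref{t:relative retractor}(\cref{itm:relative retractor:universal}) consumes, namely a homomorphism from each member of $\mathcal{E}$ into $\retractor$. Since $\mathcal{E}\subseteq\Com(\retractor)$, absolute universality for $\Com(\retractor)$ is certainly enough, so there is no real obstacle; the argument is essentially a bookkeeping exercise combining \cref{l:compatible} and \cref{t:relative retractor}. One could alternatively avoid invoking \cref{t:relative retractor} altogether and argue directly: given $f\map\retractor\to N$ with $N\models\theory$, universality produces $g\map N\to\retractor$, and then (after replacing $N$ by a positively closed model it maps to, if one wishes) $g\circ f$ is an endomorphism of $\retractor$, hence an automorphism $\sigma$; then $\sigma^{-1}\circ g$ is a retraction of $f$. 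This direct route is short and self-contained, so I would present that.
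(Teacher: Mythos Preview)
Your proposal is correct, and the direct route you describe at the end is exactly the paper's proof: given $f\map\retractor\to M$ with $M\models\theory$, use universality for $\Com(\retractor)$ to get $g\map M\to\retractor$, observe $g\circ f$ is an endomorphism hence an automorphism, and take $(g\circ f)^{-1}\circ g$ as the retraction. The longer first route through \cref{t:relative retractor} is also valid but unnecessary; since you already say you would present the direct argument, you and the paper agree.
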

\begin{proof} Let $f\map \retractor\to M$ be a homomorphism to $M\models_\lang \theory$. By universality of $\retractor$ for $\Com(\retractor)$, there is a homomorphism $g\map M\to \retractor$. Thus, $h\coloneqq g\circ f$ is an endomorphism, so an automorphism. Therefore, $h^{-1}\circ g\map M\to\retractor$ is a retraction of $f$, concluding that $\retractor$ is a retractor. 
\end{proof}

\subsection{Local positive logic topologies} \label{s:local positive logic topologies}
Recall that, unless otherwise stated, we have fixed a local language $\lang$ and a locally satisfiable negative local theory $\theory$. 

Let $M$ be a local $\lang${\hyp}structure, $x$ a variable and $A$ a subset. We say that $D\subseteq M^x$ is \emph{locally positively $A${\hyp}definable} if there is a \gls{lp formula} $\underline{D}(x)$ with parameters in $A$ defining it, i.e. $D=\{a\in M^x\sth M\models_\lang\underline{D}(a)\}$. We say that $D$ is \emph{locally positively $\bigwedge_A${\hyp}definable} if there is $\underline{D}(x)\subseteq \LFor^x_+(\lang(A))$ defining it, i.e. $D=\{a\in M^x\sth M\models_\lang\underline{D}(a)\}$. The \emph{local positive $A${\hyp}logic topology} of $M^x$ is the topology on $M^x$ given by taking as closed sets the locally positively $\bigwedge_A${\hyp}definable subsets. 
The local positive $M${\hyp}logic topology is simply called the \emph{\gls{lp logic topology}}.

\begin{lem} \label{l:product logic topologies} Let $M$ be a local $\lang${\hyp}structure, $x,y$ two disjoint variables and $A$ a subset. Then, the local positive $A${\hyp}logic topology in $M^x\times M^y$ is finer than the product topology from the local positive $A${\hyp}logic topologies of $M^x$ and $M^y$.
\end{lem}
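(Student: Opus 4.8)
The plan is to reduce the statement to the continuity of the two coordinate projections $\pi_x\map M^x\times M^y\to M^x$ and $\pi_y\map M^x\times M^y\to M^y$. Indeed, the product topology on $M^x\times M^y$ coming from the local positive $A${\hyp}logic topologies of the factors is, by definition, the coarsest topology making both projections continuous; so it suffices to show that $\pi_x$ and $\pi_y$ are continuous when $M^x\times M^y$ is equipped with its own local positive $A${\hyp}logic topology. Once this is done, the identity map from $(M^x\times M^y)$ with its local positive $A${\hyp}logic topology to $(M^x\times M^y)$ with the product topology is continuous, which is exactly the asserted comparison of topologies.

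First I would check continuity of $\pi_x$. Let $C\subseteq M^x$ be closed in the local positive $A${\hyp}logic topology of $M^x$, so there is $\underline{C}(x)\subseteq\LFor^x_+(\lang(A))$ with $C=\{a\in M^x\sth M\models\underline{C}(a)\}$ (the case $C=\emptyset$ being covered by the convention that adds $\emptyset$ as a closed set, since then $\pi_x^{-1}(C)=\emptyset$ as well). Since $x$ and $y$ are disjoint, each $\varphi(x)\in\underline{C}(x)$ is also a local positive formula over $A$ in the joint variable obtained from $x$ and $y$ --- it simply does not involve the single variables of $y$ --- and $\underline{C}$, read in this larger variable, defines precisely $\pi_x^{-1}(C)=C\times M^y$. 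Hence $C\times M^y$ is local positive $\bigwedge_A${\hyp}definable in $M^x\times M^y$, thus closed, and $\pi_x$ is continuous. Exchanging the roles of $x$ and $y$ gives continuity of $\pi_y$ by the same argument, and the conclusion follows.

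I do not expect any genuine obstacle. The only points that need a word of care are the bookkeeping around the empty set in the definition of these topologies, and the (entirely routine) observation that a formula of $\LFor_+(\lang(A))$ in the variable $x$ may be viewed, without any change of meaning, as a formula in the combined variable once $x$ and $y$ are disjoint. Beyond that, the argument only uses the standard topological facts that a map is continuous as soon as preimages of a generating family of closed sets are closed, and that the product topology is characterised as the coarsest topology making the projections continuous.
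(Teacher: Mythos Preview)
Your proposal is correct and follows essentially the same approach as the paper's proof: both reduce to continuity of the projections and observe that the preimage of a local positive $\bigwedge_A${\hyp}definable set under $\pi_x$ is defined by the same partial type, now read with the extra dummy variable $y$. Your version is slightly more detailed (treating the empty set explicitly and spelling out the universal property of the product topology), but the argument is the same.
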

\begin{proof} It suffices to show that the Cartesian projections are continuous. By symmetry, we just show that $\proj_x\map M^x\times M^y\to M^x$ is continuous. Let $V\subseteq M^x$ be locally positively $\bigwedge_A${\hyp}definable. Then, $\proj^{-1}_x(V)$ is trivially locally positively $\bigwedge_A${\hyp}definable, being defined by the same partial type $\underline{V}$ as $V$ but with an extra dummy variable $y$.
\end{proof}

The main aim of this subsection is to prove the following two fundamental results, one being the converse of the other. The first was already proven in \cite[Lemma 2.3]{hrushovski2022lascar}. 
\begin{theo} \label{t:compactness of retractors} Let $\retractor$ be a retractor of $\theory$. Then, every ball of $\retractor$ is compact in the \gls{lp logic topology}.
\end{theo}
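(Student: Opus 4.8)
The plan is to translate the statement into a question about realising local positive types and then to use that a retractor is absolutely saturated. I would first reduce to bounds. A ball of $\retractor$ is a set $\dd(a)=\{b\in\retractor^s\sth\retractor\models\dd(a,b)\}$ for a single element $a$ of some sort $s$ and a locality relation $\dd$ on $s$ (and, similarly, a polyball $\prod_i\dd_i(a_i)$). Adjoining the element(s) $a$ as new constant symbols, the expansion $\retractor_a$ is again a retractor, now of $\Th_-(\retractor/a)$, by \cref{l:ultrahomogeneity and paramters} together with \cref{t:retractors}. Over $\lang(a)$ the formula $\dd(x,a)$ (together, in the polyball case, with suitable locality relations between the $x_i$ read off from axioms \cref{itm:axiom 3,itm:axiom 4}) is a bound $\Bound(x)$, whose realisation set in $\retractor_a$ is precisely $\dd(a)$ (resp.\ the polyball); moreover, since $a$ is already an element of $\retractor$, the local positive logic topology of $\retractor^s$ computed over $\lang(a)$ coincides with the one over $\lang$. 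Hence it suffices to prove: for every bound $\Bound(x)$ the set $B\coloneqq\{a\in\retractor^x\sth\retractor\models\Bound(a)\}$ is compact in the local positive $\retractor${\hyp}logic topology of $\retractor^x$.

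For this I would use the finite intersection property. Let $\{C_j\}_{j\in J}$ be closed subsets of $B$ with the finite intersection property; each is of the form $B\cap\{a\in\retractor^x\sth\retractor\models\Gamma_j(a)\}$ for some $\Gamma_j(x)\subseteq\LFor^x_+(\lang(\retractor))$, and I set $\Gamma(x)\coloneqq\{\Bound(x)\}\cup\bigcup_{j\in J}\Gamma_j(x)$, a set of local positive formulas over $\retractor$. Any finite subset of $\Gamma$ mentions only finitely many of the $\Gamma_j$, so by the finite intersection property it is realised by a tuple of $\retractor$; reading the parameters as constants, that tuple still realises it in the expansion $\retractor_\retractor$, which is a local model of $\theory'\coloneqq\Th_-(\retractor/\retractor)$. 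Thus $\Gamma$ is finitely locally satisfiable in $\theory'$, and since $\Bound\in\Gamma$ is a bound, \cref{l:boundedly satisfiable} applied to $\theory'$ over $\lang(\retractor)$ shows that $\Gamma$ is locally satisfiable in $\theory'$; that is, $\Gamma$ is a partial local positive type of $\retractor$ over $\retractor$, which by \cref{c:there are types} extends to a local positive type $p(x)$ of $\retractor$ over $\retractor$.

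It then remains to realise $p$ inside $\retractor$. By \cref{t:retractors} the retractor $\retractor$ is absolutely saturated (and local positively closed), and since the set $\retractor$ is pointed for $\lang$ it has the local joint property over $\retractor$ by \cite[Theorem 3.5]{rodriguez2024completeness}. Hence $\retractor$ realises $p$, say by $b\in\retractor^x$, so $\retractor\models\Gamma(b)$; in particular $b\in B$ and $b\in C_j$ for every $j\in J$, whence $\bigcap_{j\in J}C_j\neq\emptyset$. Together with the reduction of the first paragraph, this proves the theorem.

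I expect the delicate point to be exactly this transfer back to $\retractor$: one cannot simply quote the compactness of $\langle\Bound\rangle$ in $\Stone^x(\theory)$ (\cref{l:bounds are compact}) together with saturation, because local positive types over non{\hyp}pointed parameter sets need not be preserved (\cref{e:types over LJP are not preserved under adding paramters}) and, when $x$ is not pointed, the evaluation map $a\mapsto\ltp^\retractor_+(a)$ need not even take values in $\Stone^x(\theory)$, since local positive types of tuples need not be maximal. Passing to $\Th_-(\retractor/\retractor)$ is what repairs this: there the relevant parameters are constants, $\Gamma$ genuinely becomes a bounded (hence realisable) partial local positive type, and $\retractor$ retains both absolute saturation and the local joint property over the whole of $\retractor$. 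An alternative for the last two steps that avoids saturation is to invoke \cref{l:local positive types over parameters} to obtain a homomorphism from $\retractor_\retractor$ into a local positively closed model $N$ of $\theory'$ realising $\Gamma$, and then to retract $N$ back onto $\retractor_\retractor$, which is possible because $\retractor_\retractor$ is itself a retractor of $\theory'$ (again by \cref{l:ultrahomogeneity and paramters} and \cref{t:retractors}).
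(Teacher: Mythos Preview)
Your proposal is correct and follows essentially the same route as the paper: take a family of closed sets with the finite intersection property inside a ball, collect the defining formulas together with the ball condition $\dd(x,a)$ into a set $\Gamma$ of local positive formulas over $\retractor$, observe that $\dd(x,a)$ is a bound so \cref{l:boundedly satisfiable} gives local satisfiability in $\Th_-(\retractor/\retractor)$, and then realise in $\retractor$ by absolute saturation. The paper's proof is terser---it works directly over $\retractor$ without your preliminary reduction to $\lang(a)$, and it invokes saturation without explicitly passing through \cref{c:there are types} or checking the local joint property over $\retractor$---but these are exactly the details you spell out, not a different argument.
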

The next theorem is a stronger version of \cite[Lemma 2.4]{hrushovski2022lascar}, closer to the original version for positive logic.
\begin{theo} \label{t:compactness and positively closed implies retractor} Let $M\models^\pc_\lang\theory$. Suppose that every ball in a single sort of $M$ is compact in the \gls{lp logic topology}. Then, $M$ is a retractor of $\theory$.
\end{theo}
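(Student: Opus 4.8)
The plan is to show that $M$ is absolutely saturated and then invoke the implication \cref{itm:retractors:retractor is saturated}$\Rightarrow$\cref{itm:retractors:retractor} of \cref{t:retractors}, which applies because $M$ is by hypothesis a local positively closed model of $\theory$. So fix a variable $x$, a subset $A\subseteq M$ over which $M$ has the local joint property, and a local positive type $p(x)\in\Stone^x(M/A)$; the task is to realise $p$ in $M$.

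First I would produce an ``external'' realisation. By \cref{l:local positive types over parameters} (here the local joint property over $A$ is used) there are an $\lang(A)${\hyp}homomorphism $f\map M_A\to N_A$ to some $N_A\models^\pc\Th_-(M/A)$ and a tuple $b\in N^x$ with $p\subseteq\ltp^{N_A}_+(b/A)$; by maximality of $p$ this is an equality. Since $M\models^\pc\theory$ we have $M_A\models^\pc\Th_-(M/A)$ by \cite[Remark 2.10]{rodriguez2024completeness}, so $f$ is a positive embedding; in particular $N\models\varphi(f(\bar m))$ implies $M\models\varphi(\bar m)$ for every $\varphi\in\LFor_+(\lang)$ and every finite tuple $\bar m$ from $M$.

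The heart of the proof is to pull $b$ back into $M$ one coordinate at a time. Enumerate $x=(x_i)_{i<\mu}$ and build elements $c_i\in M^{s_{x_i}}$ by recursion on $i$, maintaining the invariant $(\star_i)$: for every $\varphi\in\LFor_+(\lang)$ and every finite tuple $\bar m$ from $M$, if $N\models\varphi(b_{<i},f(\bar m))$ then $M\models\varphi(c_{<i},\bar m)$. The case $i=0$ is exactly the last sentence of the previous paragraph, and limit stages are immediate since each $\varphi$ mentions only finitely many of the $x_j$. For the successor step I would consider the partial local positive type over $A\cup c_{<i}$ given by
\[q(z)\coloneqq\{\varphi(c_{<i},z,\bar m)\sth \varphi\in\LFor_+(\lang),\ \bar m\subseteq M,\ N\models\varphi(b_{<i},b_i,f(\bar m))\},\]
so that any $c_i\in M$ realising $q$ preserves $(\star_{i+1})$ by construction. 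To realise $q$: each of its members defines a closed subset of $M^{s_{x_i}}$ in the local positive logic topology, and $q$ concentrates on a ball of $M$. Indeed, $b$ realises the locally (hence boundedly) satisfiable type $p$, so $b$ satisfies a bound; thus if $s_{x_i}$ carries a constant of $\lang$ or meets $A$, then $b_i$ lies in a ball $\dd(e)$ with $e$ a constant or an element of $A$, whence $\dd(z,e)\in q$. Otherwise $s_{x_i}$ has no constant and is disjoint from $A$, but then $M^{s_{x_i}}\neq\emptyset$ --- else the negative sentence $\neg\exists z\,(z=z)$ of sort $s_{x_i}$ would belong to $\Th_-(M/A)$, contradicting $b_i\in N^{s_{x_i}}$ --- and, picking any $a_*\in M^{s_{x_i}}$ and applying axiom \cref{itm:axiom 5} in $N$, there is a locality relation $\dd$ with $N\models\dd(b_i,f(a_*))$, so $\dd(z,a_*)\in q$. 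Either way realisations of $q$ lie in a ball of $M$, compact by hypothesis; and $q$ is finitely satisfiable in $M$, because a finite subset of $q$ together with the distinguished bounding member is, after a single bounded existential quantification over $z$, of the form $\chi(c_{<i},\bar m)$ with $\chi\in\LFor_+(\lang)$ and $N\models\chi(b_{<i},f(\bar m))$, so $(\star_i)$ gives $M\models\chi(c_{<i},\bar m)$, i.e.\ a realisation of that finite subset inside the ball. Compactness of the ball then produces $c_i\in M$ realising all of $q$. At the end $(\star_\mu)$, applied to formulas with parameters from $A$, says precisely that $c=(c_i)_{i<\mu}$ realises $p$ in $M$; as $x$, $A$ and $p$ were arbitrary, $M$ is absolutely saturated.

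The step that genuinely requires care is the pull{\hyp}back: the local positive logic topology on $M^x$ is strictly finer than the product of the single{\hyp}sort topologies (\cref{l:product logic topologies}), so $p$ cannot be realised ``in one shot'' by Tychonoff on a product of compact balls --- hence the coordinate{\hyp}by{\hyp}coordinate recursion with the invariant $(\star_i)$, which reduces each step to compactness of a single ball. A secondary nuisance is the bookkeeping for sorts carrying no constant and disjoint from $A$, handled via the emptiness observation together with axiom \cref{itm:axiom 5}.
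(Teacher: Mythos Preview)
Your proof is correct. The paper takes a different but closely related route: rather than proving absolute saturation and invoking \cref{t:retractors}, it establishes absolute ultrahomogeneity directly. Given a positive embedding $f\map A\to M$ and a homomorphism $g\map A\to B$ with $A,B\models\theory$, the paper uses Zorn's Lemma on partial homomorphisms $h\map D\subseteq B\to M$ satisfying $f=h\circ g$ and preserving satisfaction of local positive formulas, then extends a maximal such $h$ by one element of $B$ at a time via compactness of balls. The inner mechanism is essentially identical to yours --- a transfer invariant for local positive formulas, plus a one{\hyp}coordinate extension step that reduces to compactness of a single ball --- but the paper's framing avoids the detour through \cref{l:local positive types over parameters} and the case analysis on sorts. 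Two minor remarks on your write{\hyp}up: the justification ``$b$ satisfies a bound because $p$ is boundedly satisfiable'' is not quite the right reason (a feasible bound for $p$ need not be implied by $p$); what you actually use, and what suffices, is axiom \cref{itm:axiom 5} applied in $N$ to $b_i$ and $f(a_*)$ for any $a_*\in M^{s_{x_i}}$, which also subsumes your constant{\hyp}or{\hyp}$A$ case. Second, your non{\hyp}emptiness argument for $M^{s_{x_i}}$ is fine as written, since $\neg\exists z\,(z=z)$ is a genuine negative sentence (not merely local negative) and $N_A\models\Th_-(M/A)$.
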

\begin{proof} Let $f\map A\to M$ be a positive embedding and $g\map A\to B$ a homomorphism with $A,B\models_\lang\theory$. Consider the family $\mathcal{K}$ of partial homomorphisms $h\map D\subseteq B\to M$ with $f=h\circ g$ and preserving satisfaction of \glspl{lp formula} (i.e. if $B\models_\lang \varphi(d)$ with $d\in D$ and $\varphi$ local positive, then $M\models_\lang\varphi(h(d))$). By \cite[Corollary 2.13]{rodriguez2024completeness}, as $f$ is a positive embedding and $M\models^\pc_\lang\theory$, we have that $A\models^\pc_\lang\theory$. Therefore, $g\map A\to B$ is a positive embedding, so $h_0=f\circ g^{-1}\map \Ima\, g\to M$ is a partial homomorphism from $B$ to $M$ with $f=h_0\circ g$. If $B\models_\lang\varphi(g(a))$ with $\varphi$ \gls{lp formula}, then $A\models_\lang\varphi(a)$ as $g$ is a positive embedding, so $M\models_\lang \varphi(f(a))$, so $h$ preserves satisfaction of \glspl{lp formula}. In particular, $\mathcal{K}\neq \emptyset$. Obviously, $\mathcal{K}$ is partially ordered by $\subseteq$ and, for any chain $\Omega\subseteq \mathcal{K}$, $\bigcup \Omega\in\mathcal{K}$. By Zorn{'s} Lemma, there is a maximal element $h\in\mathcal{K}$. Write $D\coloneqq \Dom\ h$ and take an arbitrary single element $b\in B$. For some $a\in A$ and locality relation $\dd$, we have $b\in \dd(g(a))$. Consider $\Gamma(x,D)=\ltp^{B}_+(b/D)$. Then, for any $\Delta(x,D)\subseteq \Gamma(x,D)$ finite, we have that $B\models_\lang \exists x\in\dd(g(a))\mathrel{} \bigwedge \Delta(x,D)$. Since $h$ preserves satisfaction of \glspl{lp formula}, we have that $M\models_\lang \exists x\in\dd(a)\mathrel{} \bigwedge \Delta(x,h(D))$. Therefore, $\Delta(M,h(D))\cap \dd(a)\neq \emptyset$ for any $\Delta\subseteq \Gamma$ finite. By compactness of $\dd(a)$ in the \gls{lp logic topology}, we conclude that there is $c\in \dd(a)$ such that $M\models_\lang\Gamma(c,h(D))$. Thus, $h\cup \{(b,c)\}\in \mathcal{K}$, concluding by maximality that $b\in D$. As $b$ is arbitrary, we conclude that $h\map B\to M$ is a homomorphism with $f=h\circ g$. 
\end{proof}
Without local positive closedness, $M$ is not necessarily a retractor. Indeed, the \gls{lp logic topology} of a reduct is always coarser than the \gls{lp logic topology} on the original language, so every reduct of a retractor still satisfies that balls are compact on the \gls{lp logic topology}. However, we know that the reduct of a \gls{lpc} structure might be no \gls{lpc}, so the reduct of a retractor is not necessarily a retractor. 

However, it is worth noting that, in the proof of \cref{t:compactness and positively closed implies retractor}, the fact that $M$ is \gls{lpc} for $\theory$ is only used to conclude that $A$ is a \gls{lpc} model of $\theory$. Therefore, dropping out this hypothesis, we can get the following useful variation.
\begin{theo}[Universality Lemma] \label{t:universality lemma} Let $M\models_\lang\theory$. Suppose that every ball in a single sort in $M$ is compact in the \gls{lp logic topology}. Then, $M_A$ is universal for $\Th_-(M/A)$ for any substructure $A\leq M$. In particular, if $A\models^\pc_\lang\theory$, $M$ is universal for $\Com^\pc(A)$ and so there is a retractor of $\theory$ at $A$. 
\end{theo}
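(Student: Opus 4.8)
The plan is to reuse the argument of \cref{t:compactness and positively closed implies retractor} almost verbatim, noting (as the preceding paragraph points out) that the only place local positive closedness of $M$ was invoked was to deduce $A\models^\pc\theory$, which is exactly what lets us turn a homomorphism $g\map A\to B$ into a positive embedding. Here we instead start from a positive embedding directly, so this step becomes unnecessary. Concretely, given $N\models\Th_-(M/A)$, I would first expand to $\lang(A)$, so that I may assume $A=\emptyset$ and prove: if every ball in a single sort of $M$ is compact in the local positive logic topology and $N\models\Th_-(M)$, then there is a homomorphism $N\to M$.

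First I would fix a positive embedding into $M$ to build from: the canonical one is $\inc\map A_0\to M$ where $A_0$ is a (small) substructure of $M$, but to get universality for all of $\Th_-(M/A)$ the cleanest is to take $A_0=A$ with the inclusion $A\leq M$, which is trivially a positive embedding, and a homomorphism $g\map A\to N$ — but wait, what we want is a homomorphism $N\to M$, so the roles should be set up as in \cref{t:compactness and positively closed implies retractor}: take the positive embedding $f$ to be the inclusion $A\hookrightarrow M$ (after adding parameters, the identity $\emptyset\to M$, which carries no content) and take $g$ to be the identity homomorphism $A\to N$ viewing $A$ as a substructure of $N$ via the $\lang(A)$-expansion. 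Then run the Zorn's-lemma back-and-forth: consider the poset $\mathcal K$ of partial $\lang(A)$-homomorphisms $h\map D\subseteq N\to M$ that preserve satisfaction of local positive formulas, which is nonempty (the map on $A$ itself works, since $A\leq N$ is an $\lang(A)$-structure and $N\models\Th_-(M/A)$ guarantees local negative sentences transfer correctly, so local positive formulas over $A$ satisfied in $N$ with parameters from $A$ are satisfied in $M$). Given a maximal $h\map D\to M$ and an arbitrary single element $b\in N$, pick $a\in A\cup D$ and a locality relation $\dd$ with $b\in\dd(g(a))$ using axiom \cref{itm:axiom 5}; since $D$ need not be pointed I take the "centre" of the ball in $D\cup A$, and as $A$ is pointed over nothing extra is needed — actually the ball can be centred at any element of $D$, and $h(D)\cup A$ lies in $M$. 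Look at $\Gamma(x,D)\coloneqq\ltp^N_+(b/D\cup A)$; for each finite $\Delta\subseteq\Gamma$ we have $N\models\exists x{\in}\dd(a)\,\bigwedge\Delta$, and since $h$ preserves local positive formulas, $M\models\exists x{\in}\dd(h(a))\,\bigwedge\Delta(x,h(D))$, so $\Delta(M,h(D))\cap\dd(h(a))\neq\emptyset$. By compactness of the ball $\dd(h(a))$ in the local positive logic topology (together with \cref{l:product logic topologies} to handle the finitely many parameters in $h(D)$), the intersection over all of $\Gamma$ is nonempty, giving $c$ with $M\models\Gamma(c,h(D))$; then $h\cup\{(b,c)\}\in\mathcal K$, so by maximality $b\in D$. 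As $b$ was arbitrary, $D=N$ and $h\map N\to M$ is the desired $\lang(A)$-homomorphism.

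For the last sentence: if $A\models^\pc\theory$, then every $B\in\Com^\pc(A)$ is compatible with $A$, so there are $f\map A\to P$, $g\map B\to P$ with $P\models\theory$; but in fact we want a homomorphism $B\to M$ directly. Here I would instead use \cref{l:compatible}: since $M\models^\pc\theory$ (it has a positive embedding $A\hookrightarrow M$ with $A\models^\pc\theory$, hence by \cite[Corollary 2.13]{rodriguez2024completeness} $M\models^\pc\theory$ too — actually one needs to know $M$ itself is positively closed, which follows since the inclusion $A\leq M$ is a positive embedding and $A$ is positively closed, forcing $M$ positively closed by the standard argument, or simply note any $B\in\Com^\pc(A)$ satisfies $\Th_-(B)=\Th_-(A)=\Th_-(M)$), every $B\in\Com^\pc(A)$ has $\Th_-(B)=\Th_-(M)$, so by the main statement there is a homomorphism $B\to M$, i.e. $M$ is universal for $\Com^\pc(A)$. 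Then $M$ is a retractor of $\theory$ "at $A$" by \cref{t:relative retractor}(\cref{itm:relative retractor:universal}), once we observe $M\in\Com^\pc(A)=\mathcal E$. The main obstacle I anticipate is the bookkeeping around parameters and pointedness: in \cref{t:compactness and positively closed implies retractor} the poset $\mathcal K$ lives over $\emptyset$, but here everything must be relativised to $\lang(A)$, and one must check that "$A\leq M$ is a positive embedding" plus "$N\models\Th_-(M/A)$" genuinely suffices to seed $\mathcal K$ — i.e. that local positive formulas over parameters from $A$ behave correctly — and that the compactness of balls, stated for the $\retractor$-logic topology, is available in the $A$-logic topology (it is, being coarser, which is the easy direction and already used implicitly). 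The transfer of compactness from balls in single sorts to balls in tuples via \cref{l:product logic topologies} is routine but should be mentioned.
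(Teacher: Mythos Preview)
Your argument for the first part (universality of $M_A$ for $\Th_-(M/A)$) is essentially the paper's: seed the poset $\mathcal{K}$ with the partial map on (the image of) $A$, apply Zorn, and extend one element at a time using ball compactness. One small gap: you treat $A$ as a substructure of $N$, but a priori the map $a\mapsto a^{N_A}$ need not be injective; the paper checks this using $\neg a_1{=}a_2\in\Th_-(M/A)$ and then verifies that the inverse $g^{-1}\map \Ima\,g\to A$ preserves local positive formulas. This is easily fixed and does not affect your strategy. Your route to universality for $\Com^\pc(A)$ via $\Th_-(B)=\Th_-(A)=\Th_-(M)$ also works and is slightly slicker than the paper's (which goes through an intermediate $N$ to get universality for all of $\Com(A)$).

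The genuine error is at the very end. You conclude that $M$ itself is a retractor by invoking \cref{t:relative retractor}(\ref{itm:relative retractor:universal}) and claiming $M\in\Com^\pc(A)$. But \cref{t:relative retractor}(\ref{itm:relative retractor:universal}) requires $\retractor\in\mathcal{E}$, i.e.\ $M\models^\pc\theory$, which is \emph{not} assumed here---indeed the paragraph preceding the theorem explicitly warns that without local positive closedness $M$ need not be a retractor. Your justification (``$A\hookrightarrow M$ is a positive embedding with $A\models^\pc\theory$, hence by \cite[Corollary 2.13]{rodriguez2024completeness} $M\models^\pc\theory$'') applies that corollary in the wrong direction: it says positive closedness descends along positive embeddings, not that it ascends. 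There is no ``standard argument'' giving the converse. The correct conclusion is only that \emph{some} retractor exists at $A$: universality of $M$ for $\Com^\pc(A)$ bounds the cardinalities in that class by $|M|$, and then \cref{t:relative retractor}(\ref{itm:relative retractor:existence}) yields a retractor. This is exactly what the theorem asserts and what the paper proves.
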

\begin{proof} Let $B_A\models_\lang\Th_-(M/A)$. Write $f\map A\to M$ for the inclusion and let $g\map A\to B$ be the map given by $a\mapsto a^{B_A}$. Consider the family $\mathcal{K}$ of partial homomorphisms $h\map D\subseteq B\to M$ preserving satisfaction of \glspl{lp formula} such that $f=h\circ g$. 

First, note that $\mathcal{K}\neq \emptyset$. If $a_1\neq a_2$, then $\neg a_1=a_2\in \Th_-(M/A)$, so $a^{B_A}_1\neq a^{B_A}_2$. Hence, $g$ is injective. On the other hand, since $B_A\models_\lang \Th_-(M/A)$, $g$ preserves satisfaction of local negative formulas, so $g^{-1}\map \Ima\, g\to A$ preserves satisfaction of \glspl{lp formula}. Therefore, $h_0=f\circ g^{-1}\map \Ima\, g\to M$ is a partial homomorphism from $B$ to $M$ with $f=h_0\circ g$ preserving satisfaction of \glspl{lp formula}, concluding that $h_0\in \mathcal{K}$. 

Obviously, $\mathcal{K}$ is partially ordered by $\subseteq$ and, for any chain $\Omega\subseteq \mathcal{K}$, $\bigcup \Omega\in\mathcal{K}$. By Zorn{'s} Lemma, there is a maximal element $h\in\mathcal{K}$. As in the proof of \cref{t:compactness and positively closed implies retractor}, we conclude that $\Dom\, h=B$, so, in other words, $h\map B_A\to M_A$ is an $\lang(A)${\hyp}homomorphism. 

Suppose now that $A\models^\pc_\lang\theory$. Then, we have that $B_A\models_\lang \Th_-(M/A)$ if and only if $B\models_\lang\theory$ and there is a homomorphism from $A$ to $B$ --- where $B$ is the $\lang${\hyp}reduct of $B_A$ and $B_A$ is the $\lang(A)${\hyp}expansion of $B$ given by the homomorphism from $A$ to $B$. 
Now, let $B\models_\lang\theory$ with $A\frown B$. There is $N\models_\lang\theory$ and homomorphisms from $A$ to $N$ and from $B$ to $N$. It follows that $N_A\models_\lang \Th_-(M/A)$, so there is a homomorphism from $N$ to $M$ by universality of $M$ for $\Th_-(M/A)$. In particular, there is a homomorphism from $B$ to $M$. As $B\in\Com(A)$ is arbitrary, we conclude that $M$ is universal for $\Com(A)$. In particular, $|N|\leq |M|$ for every $N\in\Com^\pc(A)$, so there is a retractor of $\theory$ at $A$ by \cref{t:relative retractor}(\ref{itm:relative retractor:existence}).
\end{proof}

\begin{lem}\label{l:ori condition} Let $M$ be a local structure. Suppose that every single sort admits a topology such that balls are compact and every relation symbol (except equality) is closed in the corresponding product topology. Then, balls are compact in the \gls{lp logic topology}.
\end{lem}
\begin{proof} According to the statement, pick a topology $\mathcal{T}_s$ in $M^s$, for every single sort $s$, such that every ball in $M^s$ is compact in $\mathcal{T}_s$ and every relation $R$ (except equality) of sort $s_1\ldots s_n$ is closed in $M^{s_1}\times\cdots\times M^{s_n}$ with the corresponding product topology. It suffices to show that, for every ball, the subspace topology induced by $\mathcal{T}$ is finer than the \gls{lp logic topology}. 

Let $\varphi(x)=\exists y_1\ldots y_n\in\dd(x)\mathrel{}\phi(x,y,\bar{c})$ be a \gls{lp formula} with $x$ single variable, $c=(c_1,\ldots,c_m)$ parameters and $\phi$ quantifier free positive. Without loss of generality, we may assume that $\varphi$ contains no equality, as it can be always replaced by identifying the corresponding variables and constants. Take a ball $\dd_0(e)$ in $M^x$ where $x$ is a single variable. Consider $\psi(x,c,e)\coloneqq\exists y_1\ldots y_n\in \dd\ast \dd_0(e)\mathrel{}\psi'(x,y,c,e)$ with $\psi'(x,y,c,e)\coloneqq \phi(x,y,c)\wedge\bigwedge\dd(y_i,x)\wedge \dd_0(x,e)$. Note that $\psi(M,c,e)=\varphi(M,c)\cap\dd_0(e)$.

Let $a\notin \psi(M,c,e)$. Write $P\coloneqq \prod^m_{i=1}\{c_i\} \times\{e\}$. Then, $\{a\}\times\prod^n_{i=1} \dd\ast \dd_0(e)\times P$ is disjoint to $\psi'(M)$. Now, $\psi'(M)$ is closed in the product topology. Hence, for any $b=(b_1,\ldots,b_n)\in \prod^n_{i=1}\dd\ast\dd_0(e)$, there are $\mathcal{T}${\hyp}open sets $A_b\subseteq M^x$ and $B_b\subseteq M^y$ with $a\in A_b$ and $b\in B_b$ such that $A_b\times B_b\times P$ and $\psi'(M)$ are disjoint. Now, $\prod^n_{i=1}\dd\ast\dd_0(e)$ is compact, so there are finitely many $\mathcal{T}${\hyp}open sets $A_1,\ldots,A_k\subseteq M^x$ and $B_1,\ldots,B_k\subseteq M^y$ such that $A_j\times B_j\times P$ and $\psi'(M)$ disjoint for each $j\in\{1,\ldots,k\}$ with $a\in \bigcap^k_{j=1}A_j$ and $\prod^n_{i=1}\dd\ast\dd_0(e)\subseteq \bigcup^k_{j=1}B_j$. Consider the $\mathcal{T}${\hyp}open set $A\coloneqq\bigcap^k_{j=1} A_j$, neighbourhood of $a$. Pick $a'\in A$ and $b'\in\prod^n_{i=1}\dd\ast\dd_0(e)$. Now, $b'\in B_j$ for some $j\in\{1,\ldots,k\}$, so $(a',b',c,e)\in  A_j\times B_j\times P$, concluding that $(a',b',c,e)$ is not in $\psi'(M)$. Consequently, $A\times  \prod^n_{i=1}\dd\ast\dd_0(e)\times P$ is disjoint to $\psi'(M)$, so $A$ is disjoint to $\psi(M,c,e)$. As $a$ is arbitrary, $\varphi(M,c)\cap\dd_0(e)=\psi(M,c,e)$ is $\mathcal{T}${\hyp}closed. Since $\varphi(x,c)$ is arbitrary, the subspace topology in $\dd_0(e)$ induced by $\mathcal{T}$ is finer than the \gls{lp logic topology}. As $\dd_0(e)$ is compact in $\mathcal{T}$, we conclude that $\dd_0(e)$ is compact in the \gls{lp logic topology}. 
\end{proof}

\begin{defi} \label{d:entourage} Let $\dd_0$ and $\dd$ be locality relations. An \emph{entourage of $\dd_0$ in $\dd$} is a pair of \glspl{lp formula} $\psi(x,y)$ and $\varphi(x,y)$ such that $\psi(x,y)\leq \dd(x,y)$, $\varphi(x,y)\perp \dd_0(x,y)$ and $\psi(x,y)\toprel \varphi(x,y)$. We say that a pair of \glspl{lp formula} is an \emph{entourage} if it is an entourage for some locality relations $\dd_0$ and $\dd_0$. We say that $\theory$ has \emph{entourages} if it has entourages in all sorts. 
\end{defi}
\begin{rmk} If $(\psi,\varphi)$ is an entourage of $\dd_0$, then $\dd_0\leq\psi$ by definition of $\psi\toprel\varphi$.\end{rmk}  
\begin{lem} \label{l:basic entourages}  
\begin{enumerate}[label={\rm{(\arabic*)}}, ref={\rm{\arabic*}}, wide, topsep=0pt]
\item[\hspace{-1.2em}\setcounter{enumi}{1}\rm{(\theenumi)}] 
Let $(\psi,\varphi)$ and $(\psi',\varphi')$ be entourages of $\dd_0$ in $\dd$. Then, $(\psi\wedge\psi',\varphi\vee\varphi')$ is an entourage of $\dd_0$ in $\dd$.
\item \label{itm:basic entourages:coarser} Let $(\psi,\varphi)$ be an entourage of $\dd_0$ in $\dd$. Let $\psi'$ and $\varphi'$ be \glspl{lp formula} with $\psi\leq\psi'\leq \dd$ and $\varphi\leq \varphi'$ with $\varphi'\perp \dd_0$. Then, $(\psi',\varphi')$ is an entourage of $\dd_0$ in $\dd$.
\item \label{itm:basic entourages:approximation} Let $(\psi,\varphi)$ be an entourage of $\dd_0$ in $\dd$. If $\dd'_0\leq\dd_0$ and $\dd\leq\dd'$, then $(\psi,\varphi)$ is an entourage of $\dd'_0$ in $\dd'$. In particular, every entourage is an entourage of equality.
\item \label{itm:basic entourages:product} Let $(\psi(x,y),\varphi(x,y))$ be an entourage of $\dd_0(x,y)$ in $\dd(x,y)$ and let $(\psi'(x',y')$, $\varphi'(x',y'))$ be an entourage of $\dd'_0(x',y')$ in $\dd'(x',y')$. Then, $(\psi(x,y)\wedge \psi'(x',y')$, $\varphi(x,y)\vee\varphi'(x',y'))$ is an entourage of $\dd_0\times\dd'_0$ in $\dd\times\dd'$.
\end{enumerate}
\end{lem}
\begin{proof} Elementary by \cite[Lemma 2.15]{rodriguez2024completeness}. 
\end{proof}

\begin{lem} \label{l:entourages} Let $M\models^\pc_\lang\theory$. Let $\dd$ be a locality relation and $(\psi,\varphi)$ an entourage in $\dd$. Then, every $\dd${\hyp}ball is a closed neighbourhood of its centre in the \gls{lp logic topology} of $M$. Furthermore, $\psi(M,a)$ is a closed neighbourhood of $a$. 
\end{lem}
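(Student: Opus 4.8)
The plan is to fix the single sort $s$ carrying $\dd$ and work inside $M^s$ with its local positive logic topology, producing a single open set that contains $a$ and is contained in both $\psi(M,a)$ and the ball $\dd(a)$; since $a\in M^s$ is arbitrary, this proves both assertions for every $\dd${\hyp}ball simultaneously. First I would dispose of closedness: $\dd(a)$, $\psi(M,a)=\{b\in M^s\sth M\models\psi(b,a)\}$ and $\varphi(M,a)=\{b\in M^s\sth M\models\varphi(b,a)\}$ are defined in $M$ by the local positive formulas $\dd(x,a)$, $\psi(x,a)$, $\varphi(x,a)$ over $\{a\}$, hence are local positive $M${\hyp}definable, hence closed; in particular $U\coloneqq M^s\setminus\varphi(M,a)$ is open.

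The core of the argument is then three facts about realisation sets in the positively closed model $M$, each a translation of one clause of \cref{d:entourage}: (i) $a\notin\varphi(M,a)$, immediate from $\varphi\perp x=y$; (ii) $\psi(M,a)\subseteq\dd(a)$, from $\psi\leq\dd$ (and symmetry of $\dd$); and (iii) $M^s=\psi(M,a)\cup\varphi(M,a)$, i.e.\ for every $b$ one of $M\models\psi(b,a)$, $M\models\varphi(b,a)$ holds, which is the content of the complementarity $\psi\toprel\varphi$. Granting these: from (iii), $U=M^s\setminus\varphi(M,a)\subseteq\psi(M,a)$, and with (ii) this gives $U\subseteq\psi(M,a)\subseteq\dd(a)$; from (i), $a\in U$. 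Hence $U$ is an open neighbourhood of $a$ lying in both $\psi(M,a)$ and $\dd(a)$, so both are closed neighbourhoods of $a$. (One may also note directly that $M\models\psi(a,a)$, since $\psi$ approximates $x=y$ in any entourage, but this is not needed.)

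The remaining work, and the only place any subtlety enters, is deducing (ii) and (iii) from $\psi\leq\dd$ and $\psi\toprel\varphi$; this is where $M\models^\pc\theory$ is used, via \cite[Lemma 2.12]{rodriguez2024completeness}. For (ii): if $M\models\psi(b,a)$ but $M\not\models\dd(b,a)$, then \cite[Lemma 2.12]{rodriguez2024completeness} gives a local positive $\chi$ with $\chi\perp\dd$ and $M\models\chi(b,a)$; since $\psi\leq\dd$ makes every denial of $\dd$ a denial of $\psi$, we get $\chi\perp\psi$, so $\theory\models\neg\exists x,y\ \chi\wedge\psi$, contradicting $M\models\chi(b,a)\wedge\psi(b,a)$. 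For (iii): if $M\not\models\varphi(b,a)$, then \cite[Lemma 2.12]{rodriguez2024completeness} provides a denial $\chi$ of $\varphi$ with $M\models\chi(b,a)$; as $\psi\toprel\varphi$ means $\psi$ approximates every denial of $\varphi$, we get $\chi\leq\psi$; and then if $M\not\models\psi(b,a)$ a further application of \cite[Lemma 2.12]{rodriguez2024completeness} yields a denial $\rho$ of $\psi$, hence of $\chi$ since $\chi\leq\psi$, realised at $(b,a)$, which is impossible since $\rho\perp\chi$. These are exactly the routine manipulations collected in \cite[Lemma 2.14]{rodriguez2024completeness}; the only care required is to keep straight the direction of $\leq$ and the meaning of $\toprel$, so that in (ii) and (iii) the inclusions come out the right way round — that is the sole (very mild) obstacle, and there is no substantive difficulty.
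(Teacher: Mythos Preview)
Your proof is correct and follows essentially the same approach as the paper's: both argue that $\varphi(M,a)$ and $\psi(M,a)$ are closed, that $a\notin\varphi(M,a)$ from $\varphi\perp{=}$, that $\psi(M,a)\cup\varphi(M,a)=M^s$ from $\psi\toprel\varphi$, and that $\psi(M,a)\subseteq\dd(a)$ from $\psi\leq\dd$, concluding that the complement of $\varphi(M,a)$ is the desired open neighbourhood. The only difference is that the paper cites \cite[Lemma 2.14(2)]{rodriguez2024completeness} as a black box for the last two inclusions, whereas you unpack them via \cite[Lemma 2.12]{rodriguez2024completeness} --- which, as you yourself note, is exactly the content of that lemma.
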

\begin{proof} Obviously, $\psi(M,a)$ and $\varphi(M,a)$ are closed subsets of $M^x$ in the \gls{lp logic topology}. As $\varphi(x,y)\perp x=y$, $a\notin \varphi(M,a)$. On the other hand, as $\psi\toprel \varphi$ and $M\models^\pc_\lang\theory$, we get that $\varphi(M,a)\cup \psi(M,a)=M^x$ by \cite[Lemma 2.15(2)]{rodriguez2024completeness}. Thus, $a\in \neg\varphi(M,a)\subseteq\psi(M,a)$, concluding that $\psi(M,a)$ is a closed neighbourhood of $a$. Finally, as $\psi\leq \dd$ and $M\models^\pc_\lang\theory$, by \cite[Lemma 2.15(2)]{rodriguez2024completeness},  we get $\psi(M,a)\subseteq \dd(a)$, so $\dd(a)$ is a closed neighbourhood of $a$ as well.
\end{proof}

\begin{coro} \label{c:weak local compactness retractor} Let $\retractor$ be a retractor of $\theory$. Suppose $\dd$ has an entourage. Then, for every $a$, the $\dd${\hyp}ball at $a$ is a compact neighbourhood of $a$ in the \gls{lp logic topology}. In particular, $\retractor$ is weakly locally closed compact in the \gls{lp logic topology} whenever $\theory$ has entourages.
\end{coro}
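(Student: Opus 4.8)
The plan is simply to assemble the two immediately preceding results. Since $\retractor$ is a retractor of $\theory$, \cref{t:retractors} gives $\retractor\models^\pc\theory$, so \cref{l:entourages} applies with $M=\retractor$: if $(\psi,\varphi)$ is an entourage in $\dd$, then for every $a$ the $\dd${\hyp}ball $\dd(a)$ is a closed neighbourhood of $a$ in the local positive logic topology (indeed $\psi(\retractor,a)$ already is a closed neighbourhood of $a$, and $\psi(\retractor,a)\subseteq\dd(a)$). On the other hand, \cref{t:compactness of retractors} says that every ball of $\retractor$ is compact in the local positive logic topology. Combining the two, $\dd(a)$ is a compact neighbourhood of $a$, and it is moreover closed; this is the first assertion.

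For the final sentence, suppose $\theory$ has entourages. By \cref{d:entourage} this means precisely that for every single sort $s$ there is a locality relation $\dd$ on $s$ admitting an entourage. Given any single element $a$, apply the first part to such a $\dd$ on the sort of $a$: then $\dd(a)$ is a closed compact neighbourhood of $a$. Since every single element of $\retractor$ thus possesses a closed compact neighbourhood, $\retractor$ is weakly locally closed compact in the local positive logic topology.

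I do not anticipate any real obstacle here: the corollary is a routine consequence of \cref{t:compactness of retractors} (compactness of balls) and \cref{l:entourages} (balls carrying an entourage are neighbourhoods of their centre), together with the identification of retractors as local positively closed models via \cref{t:retractors}. The only point requiring a little care is that entourages are attached to particular locality relations, so in the ``in particular'' statement one must, for each single sort, invoke the locality relation furnished by the hypothesis that $\theory$ has entourages rather than an arbitrary $\dd$.
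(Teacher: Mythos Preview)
Your proof is correct and follows exactly the paper's approach: the paper's proof is simply ``By \cref{t:compactness of retractors,l:entourages}'', and you have merely unpacked this citation, including the observation (via \cref{t:retractors}) that $\retractor\models^\pc\theory$ so that \cref{l:entourages} applies.
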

\begin{proof} By \cref{t:compactness of retractors,l:entourages}.   
\end{proof}

\begin{defi}\label{d:full system entourages} Let $\dd$ be a locality relation. A \emph{full system of entourages} is a family $\{(\psi_i,\varphi_i)\}_{i\in I}$ of entourages in $\dd$ such that $\{\psi_i\}_{i\in I}$ is a full system of approximations of $=$. We say that $\theory$ has \emph{full systems of entourages} if it has a full system of entourages for every sort. \end{defi}

\begin{coro} \label{c:local compactness retractor} Let $\retractor$ be a retractor of $\theory$. Let $\dd(x,y)$ be a locality relation and $\{(\psi_i,\varphi_i)\}_{i\in I}$ a full system of entourages in $\dd$. Then, for every $a\in\retractor^y$, the family $\{\bigwedge_{i\in I_0}\psi_i(\retractor,a)\sth I_0\subseteq I\text{ finite}\}$ is a local base of closed compact neighbourhoods of $a$ in the \gls{lp logic topology}. In particular, $\retractor$ is locally closed compact in the \gls{lp logic topology} whenever $\theory$ has full systems of entourages.
\end{coro}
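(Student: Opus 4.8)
The plan is to assemble three facts already available for the retractor $\retractor$: that $\retractor\models^{\pc}\theory$ and is absolutely saturated (\cref{t:retractors}), that every ball of $\retractor$ is compact in the local positive logic topology (\cref{t:compactness of retractors}), and the basic calculus of entourages (\cref{l:basic entourages,l:entourages}). Nothing deep is needed; the work is in combining these correctly.

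First I would check that each member of the family is a closed compact neighbourhood of $a$. Fix $a\in\retractor^y$ and a finite $I_0\subseteq I$ (we may clearly assume $I_0\neq\emptyset$). By \cref{l:basic entourages}(\ref{itm:basic entourages:finer}), applied inductively, the pair $\bigl(\bigwedge_{i\in I_0}\psi_i,\ \bigvee_{i\in I_0}\varphi_i\bigr)$ is again an entourage in $\dd$. Hence, by \cref{l:entourages} (which uses $\retractor\models^{\pc}\theory$), the set $\bigwedge_{i\in I_0}\psi_i(\retractor,a)$ is a closed neighbourhood of $a$, and since $\bigwedge_{i\in I_0}\psi_i\leq\dd$ it is contained in the ball $\dd(a)$. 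As $\dd(a)$ is compact by \cref{t:compactness of retractors} and $\bigwedge_{i\in I_0}\psi_i(\retractor,a)$ is closed, it is compact.

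For the local base claim, the key observation is that $\bigcap_{i\in I}\psi_i(\retractor,a)=\{a\}$. Indeed, $\{\psi_i\}_{i\in I}$ is a full system of approximations of $=$, so any $b\in\retractor$ with $\retractor\models\psi_i(b,a)$ for all $i$ satisfies $\retractor\models b=a$ (here again we use $\retractor\models^{\pc}\theory$); the reverse inclusion holds since each $\psi_i(\retractor,a)$ is a neighbourhood of $a$, hence contains $a$. Now let $U$ be any neighbourhood of $a$; choose an open $O$ with $a\in O\subseteq U$ and set $C\coloneqq\retractor\setminus O$, a closed set with $a\notin C$. Then $\{\psi_i(\retractor,a)\cap C\}_{i\in I}$ is a family of closed subsets of the compact space $\dd(a)$ (each is closed in $\retractor$ and contained in $\psi_i(\retractor,a)\subseteq\dd(a)$), with intersection $\bigl(\bigcap_{i\in I}\psi_i(\retractor,a)\bigr)\cap C=\{a\}\cap C=\emptyset$. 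By compactness of $\dd(a)$ and the finite intersection property, there is a finite $I_0\subseteq I$ with $\bigl(\bigcap_{i\in I_0}\psi_i(\retractor,a)\bigr)\cap C=\emptyset$, that is, $\bigwedge_{i\in I_0}\psi_i(\retractor,a)\subseteq O\subseteq U$. This shows the family is a local base at $a$.

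The final sentence is then immediate: if $\theory$ has full systems of entourages, then for every single element $a$ of $\retractor$ we may take a full system of entourages in some locality relation on its sort and apply the above, obtaining a local base at $a$ consisting of closed compact neighbourhoods; this is exactly the assertion that $\retractor$ is locally closed compact in the local positive logic topology. I do not expect a real obstacle; the only point needing care is that the full-system-of-approximations hypothesis must be invoked inside the positively closed model $\retractor$ to pin $\bigcap_{i\in I}\psi_i(\retractor,a)$ down to the single point $a$, after which the compactness of balls from \cref{t:compactness of retractors} does all the work.
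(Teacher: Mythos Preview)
Your proof is correct and follows essentially the same approach as the paper's: both first use \cref{l:basic entourages}(\ref{itm:basic entourages:finer}) to see that finite conjunctions of the $\psi_i$ still give entourages (hence closed compact neighbourhoods of $a$ inside $\dd(a)$), then observe $\bigcap_i\psi_i(\retractor,a)=\{a\}$ from the full{\hyp}system{\hyp}of{\hyp}approximations hypothesis, and finally use compactness of $\dd(a)$ from \cref{t:compactness of retractors} to extract a finite subfamily inside any prescribed neighbourhood. The only cosmetic difference is that the paper tests against a single subbasic open $\neg\phi(\retractor)$ (leaving the passage to arbitrary opens implicit), whereas you run the finite{\hyp}intersection argument directly against an arbitrary closed complement $C$; your version is marginally more self{\hyp}contained but not a different route.
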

\begin{proof} By \cref{l:basic entourages}, $(\bigwedge_{i\in I_0}\psi_i,\bigvee_{i\in I_0}\varphi_i)$ is an entourage of $\dd$ for any $I_0\subseteq I$ finite. Therefore, by \cref{c:weak local compactness retractor}, $\{\bigwedge_{i\in I_0}\psi_i(\retractor,a)\sth I_0\subseteq I\text{ finite}\}$ is a family of compact neighbourhoods of $a$ in the \gls{lp logic topology}. Let $\phi(x)$ be a \gls{lp formula} with $\retractor\not\models_\lang\phi(a)$. Since $\{\psi_i\}_{i\in I}$ is a full system of approximations, $\bigcap_{i\in I}\psi_i(\retractor,a)=\{a\}$. Thus, $\bigcap_{i\in I}\psi_i(\retractor,a)\cap \phi(\retractor)=\emptyset$. By compactness of $\dd(a)$ (\cref{t:compactness of retractors}), we conclude that there is $I_0\subseteq I$ finite with $\bigcap_{i\in I_0}\psi_i(\retractor,a)\cap \phi(\retractor)=\emptyset$, so $\bigwedge_{i\in I_0}\psi_i(\retractor,a)\subseteq \neg\phi(\retractor)$.  
\end{proof}

\subsection{Local positive compactness} \label{s:local positive compactness}
Recall that, unless otherwise stated, we have fixed a local language $\lang$ and a locally satisfiable negative local theory $\theory$. 

We say that $\theory$ is \emph{\gls{lpco}}\footnote{Since Hrushovski preferably works with (local) primitive positive formulas, this property is called ``(local) primitive positive compactness'', abbreviated {$\mathrm{(ppC)}$}, in \cite{hrushovski2022lascar}. In his paper, Hrushovski attributes the origin of this property to Walter Taylor.} if for any \gls{lp formula} $\varphi(x,y,z)$ with $\left(\exists z\mathrel{} \varphi(x,y,z)\right)\perp x=y$, any bound $\Bound(x,z)$ and any locality relation symbol $\dd(x,y)$, where $x$ and $y$ are different single variables on the same sort and $z$ is an arbitrary finite variable, there is a number $k\in \N_{>0}$ such that 
\[\theory\models_\lang \neg\exists x_1 \ldots x_k z\mathrel{}  \bigwedge_{i\neq j} \Bound(x_i,z)\wedge \dd(x_i,x_j)\wedge\varphi(x_i,x_j,z).\] 

\begin{lem} \label{l:local positive compactness} A theory is \gls{lpco} if and only if there are no infinite non{\hyp}constant locally positively indiscernible sequences over pointed sets of parameters in \gls{lpc} models. 
\end{lem}
\begin{proof} Suppose there is a \gls{lpc} model $M\models^\pc_\lang\theory$ with a non{\hyp}constant sequence $(b_i)_{i\in\omega}$ which is locally positively $A${\hyp}indiscernible over a pointed set of parameters $A$. By \cite[Lemma 2.13]{rodriguez2024completeness}, as $M$ is \gls{lpc} and $b_0\neq b_1$, there is a positive formula $\varphi_0(x,y)\in\For^{xy}_+(\lang)$ such that $\varphi_0(x,y)\perp x=y$ and $M\models_\lang\varphi_0(b_0,b_1)$. Say $\varphi_0=\exists w\mathrel{} \phi_0(x,y,w)$ with $\phi_0$ quantifier free positive. Take a enumeration $a\in M^z$ of $A$. There is a bound $\Bound(x,a)$ over $a$ realised by $b_0$. As $(b_i)_{i\in\omega}$ is locally positively $A${\hyp}indiscernible, we get that $M\models_\lang\Bound(b_i,a)$ for all $i\in \omega$. Also, there is a locality relation $\dd_1$ such that $M\models_\lang\varphi_1(b_0,b_1,a)$ with $\varphi_1(x,y,z)\coloneqq\exists w\in\dd_1(z_w)\mathrel{} \phi_0(x,y,w)$, where $z_w\subseteq z$ is a subtuple of the sort of $w$. Take $\varphi(x,y,z)=\varphi_1(x,y,z)\vee\varphi_1(y,x,z)$ and note that $\varphi(x,y,z)$ is a \gls{lp formula} with $\left(\exists z\mathrel{} \varphi(x,y,z)\right)\perp x=y$ and $M\models_\lang\varphi(b_0,b_1,a)$. As $(b_i)_{i\in \N}$ is locally positively $A${\hyp}indiscernible, we conclude that $M\models_\lang \varphi(b_i,b_j,a)$ for any $i\neq j$. Hence, $M\models_\lang \bigwedge_{i\neq j} \Bound(b_i,a)\wedge\dd(b_i,b_j)\wedge\varphi(b_i,b_j,a)$. Thus, we have $\exists x_1\ldots x_k z\mathrel{} (\bigwedge_{i\neq j}\Bound(x_i,z)\wedge\dd(x_i,x_j)\wedge\varphi(x_i,x_j,z))\in \theory_+$ for every $k\in \N$ as witnessed by $(b_i)_{i\in\omega}$ and $a$. In particular, $\theory$ is not \gls{lpco}.

Conversely, suppose $\theory$ is not \gls{lpco}. Then, there are a a locality relation $\dd$, a bound $\Bound(x,z)$ and a \gls{lp formula} $\varphi(x,y,z)$ such that $\left(\exists z\mathrel{}\varphi(x,y,z)\right)\perp x=y$ and $\bigwedge_{i\neq j} \Bound(x_i,z)\wedge\dd(x_i,x_j)\wedge\varphi(x_i,x_j,z)$ is finitely satisfiable. As it implies a bound, $\bigwedge_{i\neq j} \Bound(x_i,z)\wedge\dd(x_i,x_j)\wedge\varphi(x_i,x_j,z)$ is boundedly satisfiable. By \cref{l:boundedly satisfiable,l:partial local positive types}, there is $M\models^\pc_\lang\theory$ with $(b'_i)_{i\in\omega}$ and $a$ in $M$ such that $M\models_\lang \bigwedge_{i\neq j}\Bound(b'_i,a)\wedge\dd(b'_i,b'_j)\wedge\varphi(b'_i,b'_j,a)$. Take any pointed set $A$ containing $a$. By \cref{l:standard lemma}, there is $N\models^\pc_\lang\Th_-(M/A)$ with a locally positively $A${\hyp}indiscernible sequence $(b_i)_{i\in\omega}$ such that, for any $\psi\in\LFor_+(\lang(A))$, $N\models_\lang\psi(b_{\bar{\jmath}})$ whenever $M\models_\lang\psi(b'_{\bar{\imath}})$ for every $\bar{\imath}$ with $\qftp(\bar{\imath})=\qftp(\bar{\jmath})$. In particular, we get that $N\models_\lang \varphi(b_i,b_j,a)$ for all $i\neq j$. As $\left(\exists z\mathrel{}\varphi(x,y,z)\right)\perp x=y$ and $M\models_\lang\exists z\mathrel{}\varphi(b'_i,b'_j,a)$ for all $i\neq j$, we conclude that $(b_i)_{i\in\omega}$ is an infinite non{\hyp}constant locally positively $A${\hyp}indiscernible sequence with $A$ pointed. Since $A$ is pointed, we have the \gls{LJCP} over $A$ \cite[Theorem 3.5]{rodriguez2024completeness}, so we can assume that there is a positive $\lang(A)${\hyp}embedding $f\map M\to N$. By \cite[Lemma 2.5]{rodriguez2024completeness}, we get that $N\models^\pc_\lang\theory$. Hence, there is a non{\hyp}constant locally positively indiscernible sequence over a pointed set of parameters in some \gls{lpc} model of $\theory$.
\end{proof}

In \cite[Proposition 2.1(1)]{hrushovski2022lascar}, Hrushovski proved that every \gls{lpco} theory has retractors. Furthermore, in \cite{hrushovski2022lascar}, Hrushovski briefly argues (implicitly assuming the \gls{LJCP}) that local positive compactness, existence of retractors and boundedness of \gls{lpc} models are equivalent properties. Here, in this subsection, we provide the full proof of these equivalences under weak completeness and show how the strength of the equivalence varies under different completeness conditions. 

We start reproving that local positive compactness implies the existence of retractors. The following is exactly \cite[Proposition 2.1(1)]{hrushovski2022lascar}. 
\begin{theo}\label{t:local positive compactness} Suppose $\theory$ is \gls{lpco}. Then, every $|\lang|^+${\hyp}saturated \gls{lpc} model of $\theory$ is a retractor. In particular, $\theory$ has retractors.
\end{theo}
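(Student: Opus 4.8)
The plan is to deduce the statement from \cref{t:compactness and positively closed implies retractor}: writing $M$ for a $|\lang|^+${\hyp}ultrahomogeneous local positively closed model of $\theory$, it suffices to show that every ball in a single sort of $M$ is compact in the local positive logic topology, and then $M$ is a retractor. Two preliminary facts will be used throughout. First, $M$ is $|\lang|^+${\hyp}saturated by \cref{l:ultrahomogeneity and saturation}, since $|\lang|^+>|\lang|$. Second, the set of single sorts has size at most $|\lang|$, so after adjoining one element of each single sort to any parameter set we obtain a pointed set of the same cardinality, over which $M$ automatically has the local joint property; hence $|\lang|^+${\hyp}saturation applies unconditionally to local positive types on a single variable over any parameter set of size at most $|\lang|$. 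Finally, the ``in particular'' clause is immediate, since $|\lang|^+${\hyp}ultrahomogeneous local positively closed models of $\theory$ exist by \cref{t:existence ultrahomogeneous} with $\kappa=|\lang|^+$.

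The compactness of balls I would reduce, in turn, to the assertion that $M$ realises every local positive type over $M$ on a single variable $x$ that contains a formula $\dd(x,a)$ for some locality relation $\dd$ and single element $a\in M$. Indeed, given such a ball $\dd(a)$ and a family $\{V_i\}_{i\in I}$ of local positive $\bigwedge_M${\hyp}definable subsets of $M$ with the finite intersection property relative to $\dd(a)$, the set of their defining formulas together with $\dd(x,a)$ is finitely satisfiable in $M$ and implies the bound $\dd(x,a)$, so by \cref{l:boundedly satisfiable} it is a partial local positive type of $M$ over $M$; it extends to a local positive type over $M$ by \cref{c:there are types}, and any realisation lands in $\bigcap_iV_i\cap\dd(a)$. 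Thus the ball is compact, and $M$ is a retractor by \cref{t:compactness and positively closed implies retractor}.

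To realise such a type $p$, the engine is a local character claim driven by local positive compactness: $p$ is determined by its restriction to some $A_0\subseteq M$ with $|A_0|\le|\lang|$, in the sense that every $b\in M$ satisfying $p\restriction A_0$ satisfies $p$. Granting the claim, enlarge $A_0$ to a pointed set of size at most $|\lang|$ containing $a$, extend $p\restriction A_0$ to a local positive type over $A_0$, and realise it in $M$ by $|\lang|^+${\hyp}saturation; the realisation then satisfies $p$ by the claim. To prove the claim I would argue by contradiction: if $p$ required ever larger parameter sets, I would build a chain $A_0\subsetneq A_1\subsetneq\cdots$ absorbing the relevant parameters, and elements $b_n\in\dd(a)$ with $b_n\models p\restriction A_n$ but $b_n$ pushed out of some $\psi_n(x,\bar c_n)\in p$ with $\bar c_n\in A_{n+1}$; since $M\models^\pc\theory$, \cite[Lemma 2.12]{rodriguez2024completeness} supplies a local primitive positive denial of $\psi_n(x,\bar c_n)$ holding at $b_n$. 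Then $b_m\in\psi_n(M,\bar c_n)$ for $m>n$ while $b_n$ lies in a disjoint local positive set, and all the $b_n$ lie in the single ball $\dd(a)$, hence are pairwise $(\dd\ast\dd)${\hyp}close by the composition axiom. Pigeonholing over the at most $|\lang|$ many local positive formulas, and using compactness of balls in $\Stone(\Th_-(M/M))$ (\cref{l:bounds are compact}) together with positive closedness of $M$ to replace the varying parameter tuples by a single bounded tuple, one recasts an arbitrarily long such configuration as a witness, for every $k$, that the clique statement in the definition of local positive compactness fails --- the desired contradiction.

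The hard part will be precisely this uniformisation. The denials produced by \cite[Lemma 2.12]{rodriguez2024completeness} carry their own, varying, parameters, whereas local positive compactness only bounds cliques witnessed by a single shared tuple $z$; collapsing the varying tuples to one (via boundedness in the type space and positive closedness of $M$) while simultaneously retaining the local joint property needed for the saturation step (handled by working over pointed parameter sets) is the delicate point, and arranging the recursion so that the resulting separating configuration genuinely has the syntactic shape to which the definition of local positive compactness applies is where the real bookkeeping lies.
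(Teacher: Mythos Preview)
Your overall strategy differs from the paper's: you aim to establish compactness of balls in $M$ directly and then invoke \cref{t:compactness and positively closed implies retractor}, whereas the paper embeds $\retractor$ into a larger $\kappa${\hyp}ultrahomogeneous $\retractor'$ and shows $\retractor=\retractor'$. Your reduction in the second paragraph is correct, and the plan via a local{\hyp}character claim is natural. However, there is a genuine gap at exactly the point you flag as ``delicate'': the uniformisation of the parameter tuples $\bar c_n$ into a single $z$.

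Concretely, after pigeonholing --- which already requires running the chain to length $|\lang|^+$ rather than $\omega$, so that one formula pair $(\psi,\chi)$ recurs unboundedly often --- you obtain a ladder: $M\models\psi(b_m,\bar c_n)\wedge\chi(b_n,\bar c_n)$ for $m>n$ in a cofinal $S$, with $\psi\perp\chi$. But local positive compactness demands, for the symmetrised $\varphi(x,y,z)=(\psi(x,z)\wedge\chi(y,z))\vee(\psi(y,z)\wedge\chi(x,z))$, a $k${\hyp}clique sharing a \emph{single} witness $z$. From the ladder you only get a star at each $\bar c_n$: many $b_m$ satisfy $\psi(\cdot,\bar c_n)$ while only $b_n$ is known to satisfy $\chi(\cdot,\bar c_n)$, so no two of the $b_m$ with $m>n$ are separated by $\varphi(\cdot,\cdot,\bar c_n)$. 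Your proposed remedies do not close this: compactness in $\Stone(\Th_-(M/M))$ (\cref{l:bounds are compact}) lets the $\bar c_n$ accumulate to some type, but extracting a realisation in $M$ is exactly what is at stake, and there is no reason the separating relations persist in the limit. Passing to indiscernibles in $((b_n,\bar c_n))_n$ via \cref{l:standard lemma} still leaves the parameter moving with the index, so again no clique on a fixed $z$.

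The paper sidesteps this by manufacturing a single external target. It picks $b\in\retractor'\setminus\retractor$, builds $(a_i)$ in $\retractor$ with $\ltp(a_i/A_i)=\ltp(b/A_i)$, extracts an $A_0${\hyp}indiscernible $(b_i)$ via \cref{l:standard lemma}, and uses one denial of $b_0=b$ --- with parameters localised to a fixed $c\in A_0$ --- together with indiscernibility \emph{over $A_0$} to obtain $\varphi(b_i,b_j,c)$ for all $i\neq j$ with the same $c$. The external $b$ is precisely what pins down one formula and one parameter tuple; your construction, working entirely inside $M$ against an unrealised type, produces a different separation at each stage, and I do not see how to collapse them without reintroducing a device equivalent to the paper's.
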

\begin{proof} See \cite[Proposition 2.1(1)]{hrushovski2022lascar}, noting that no irreducibility condition (especially \gls{LJCP}) is used for the proof of this item. 
\end{proof}

We now list all the conditions equivalent to existence of retractors. We start with a version that does not require irreducibility conditions:
\begin{theo} \label{t:local positive compactness and relative retractors} Let $M\models^\pc_\lang \theory$. Then, the following are equivalent:
\begin{enumerate}[label={\rm{(\arabic*)}}, ref={\rm{\arabic*}}, wide]
\item \label{itm:local positive compactness and relative retractors:compactness} There is a homomorphism $h\map M\to N$ with $N\models_\lang \theory$ whose balls in single sorts are compact in the \gls{lp logic topology}.
\item \label{itm:local positive compactness and relative retractors:existence retractor} $\theory$ has a retractor at $M$.
\item \label{itm:local positive compactness and relative retractors:bounded pc} $\Com^\pc(M)$ is bounded.
\item \label{itm:local positive compactness and relative retractors:local positive compactness} $\Th_-(M/M)$ is \gls{lpco} (in $\lang(M)$). 
\item \label{itm:local positive compactness and relative retractors:|L|+-saturated models are retractors} Every $|\lang(M)|^+${\hyp}saturated \gls{lpc} model $N$ of $\theory$ such that $M$ is a positive substructure of $N$ is a retractor of $\theory$.
\end{enumerate}
\end{theo}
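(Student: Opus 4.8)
\emph{Strategy.} The plan is to establish $(1)\Leftrightarrow(2)$, $(2)\Leftrightarrow(3)$ and $(2)\Rightarrow(4)\Rightarrow(5)\Rightarrow(2)$. Almost all of this is bookkeeping with earlier results; the one substantive new argument is $(2)\Rightarrow(4)$. The guiding observation is that, since $M\models^{\pc}\theory$, the theory $\theory':=\Th_-(M/M)$ over the pointed language $\lang(M)$ is weakly complete --- it equals $\Th_-(M_M)$ with $M_M\models^{\pc}\theory'$, so $\theory'_+=\Th_+(M_M)$ --- and has the local joint property over $\emptyset$ by \cite[Theorem~3.5]{rodriguez2024completeness}; hence any retractor of $\theory'$ is absolutely universal (\cref{l:universality of retractor}) and absolutely saturated (\cref{t:retractors}). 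Moreover, naming the elements of $M$ transfers retractors between $\theory$ and $\theory'$: if $\retractor$ is a retractor of $\theory$ lying in $\Com^{\pc}(M)$, it receives a homomorphism, necessarily a positive embedding, from $M$ by \cref{t:relative retractor}(\ref{itm:relative retractor:universal}), and the expansion $\retractor_M$ is a retractor of $\theory'$, since a homomorphism $\retractor_M\to P$ with $P\models\theory'$ gives, on $\lang$-reducts, a homomorphism $\retractor\to P$ whose retraction automatically fixes the named parameters; read backwards, if $N\models^{\pc}\theory$ with $M\precsim^{+}N$ and $N_M$ is a retractor of $\theory'$, then $N$ is a retractor of $\theory$ (precompose any homomorphism $N\to Q$ with $M\precsim^{+}N$ to regard $Q$ as an $\lang(M)$-model of $\theory'$).

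\emph{The easy implications.} $(1)\Rightarrow(2)$ is the last assertion of the Universality Lemma (\cref{t:universality lemma}) applied with $A=M$, which is a positive substructure of $N$ because $M\models^{\pc}\theory$. The equivalence $(2)\Leftrightarrow(3)$ is \cref{t:relative retractor}(\ref{itm:relative retractor:existence}) for $\mathcal{E}=\Com^{\pc}(M)$. For $(2)\Rightarrow(1)$: a retractor $\retractor$ at $M$ has all balls compact by \cref{t:compactness of retractors} and receives a homomorphism from $M$, so $N:=\retractor$ works. For $(4)\Rightarrow(5)$: given $N\models^{\pc}\theory$ that is $|\lang(M)|^{+}$-ultrahomogeneous with $M\precsim^{+}N$, the expansion $N_M$ is $|\lang(M)|^{+}$-ultrahomogeneous for $\Th_-(N/M)=\theory'$ by \cref{l:ultrahomogeneity and paramters} (using $|M|\le|\lang(M)|$) and local positively closed for $\theory'$ by \cite[Remark~2.10]{rodriguez2024completeness}, hence a retractor of $\theory'$ by \cref{t:local positive compactness}; by the transfer above, $N$ is a retractor of $\theory$. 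For $(5)\Rightarrow(2)$: by \cref{t:existence ultrahomogeneous} and \cref{r:lowenheim skolem ultrahomogeneity} there is $N\models^{\pc}\theory$, $|\lang(M)|^{+}$-ultrahomogeneous, receiving a (hence positive) embedding from $M$; by $(5)$ it is a retractor, and $M\frown N$, so $N\in\Com^{\pc}(M)$ by \cref{l:compatible}, i.e. $\theory$ has a retractor at $M$.

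\emph{The main step $(2)\Rightarrow(4)$.} This is where the work lies. From a retractor $\retractor$ at $M$ we obtain, as above, a retractor $\retractor_M$ of $\theory'$, all of whose balls are compact in the local positive logic topology by \cref{t:compactness of retractors}. Assume for contradiction that $\theory'$ is \emph{not} local positively compact, as witnessed by a local positive $\varphi(x,y,z)$ with $\exists z\,\varphi\perp x=y$, a bound $\Bound(x,z)$, and a locality relation $\dd(x,y)$, so that
\[\psi_k\;:=\;\exists x_1,\dots,x_k,z\;\bigwedge_{i\neq j}\bigl(\Bound(x_i,z)\wedge\dd(x_i,x_j)\wedge\varphi(x_i,x_j,z)\bigr)\;\in\;\theory'_+\]
for every $k$; since $M_M\precsim^{+}\retractor_M$ this gives $\retractor_M\models\psi_k$ for all $k$. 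The countable set of local positive formulas over $\lang(M)$
\[\Gamma\bigl((x_i)_{i<\omega},z\bigr)\;:=\;\{\Bound(x_i,z):i<\omega\}\,\cup\,\{\dd(x_i,x_j):i\neq j\}\,\cup\,\{\varphi(x_i,x_j,z):i\neq j\}\]
is finitely locally satisfiable in $\theory'$ and, as it contains every $\Bound(x_i,z)$ and every $\dd(x_i,x_j)$, entails a bound of its variable; by \cref{l:boundedly satisfiable} it is locally satisfiable, hence --- after extending it to a local positive type via \cref{c:there are types} and invoking the absolute saturation of $\retractor_M$ together with the local joint property of $\theory'$ over $\emptyset$ --- realised in $\retractor_M$ by pairwise distinct $(a_i)_{i<\omega}$ and some $c$ (distinctness because $\retractor_M\models\neg\exists x\,\exists z\,\varphi(x,x,z)$), with all $a_i$ lying in a common ball $B$, which is compact. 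An infinite subset of a compact space has an accumulation point, so fix $a^{\ast}\in B$ accumulating $X:=\{a_i:i<\omega\}$. For each $j$, both $\varphi(\retractor_M,a_j,c)$ and $\{a:\retractor_M\models\varphi(a_j,a,c)\}$ are closed and contain $X\setminus\{a_j\}$. If $a^{\ast}=a_{i_0}$ for some $i_0$, then $a^{\ast}\in\overline{X\setminus\{a_{i_0}\}}\subseteq\varphi(\retractor_M,a_{i_0},c)$, i.e. $\retractor_M\models\varphi(a^{\ast},a^{\ast},c)$, contradicting $\exists z\,\varphi\perp x=y$; hence $a^{\ast}\notin X$. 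Then, the topology being $\mathrm{T}_1$ (singletons are closed, $\{b\}$ being defined by $x=\underline{b}$), for every $j$ we have $a^{\ast}\in\overline{X\setminus\{a_j\}}$, so $\retractor_M\models\varphi(a^{\ast},a_j,c)$ and $\retractor_M\models\varphi(a_j,a^{\ast},c)$. But now $\retractor_M^{x}\setminus\varphi(\retractor_M,a^{\ast},c)$ is an open neighbourhood of $a^{\ast}$ (as $\retractor_M\not\models\varphi(a^{\ast},a^{\ast},c)$) disjoint from $X$ (as $\retractor_M\models\varphi(a_i,a^{\ast},c)$ for each $i$), contradicting that $a^{\ast}$ accumulates $X$. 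Therefore $\theory'$ is local positively compact, i.e. $(4)$ holds. The only genuinely new ingredient, and the main obstacle, is precisely this accumulation-point argument converting compactness of a single ball into the uniform finite bound defining local positive compactness; the care needed is in checking that the feasible bound built into $\Gamma$ lets \cref{l:boundedly satisfiable} apply, and that $a^{\ast}$ is distinct from every $a_i$.
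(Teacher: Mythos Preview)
Your proof is correct. The easy implications and the overall architecture match the paper, and your treatment of $(4)\Rightarrow(5)$ is in fact more explicit than the paper's one{\hyp}line citation of \cref{t:local positive compactness}: you spell out the transfer between $\theory$ and $\theory'=\Th_-(M/M)$ via naming the parameters, which the paper leaves to the reader.

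The genuine divergence is in $(2)\Rightarrow(4)$. The paper, after realising the countable family $(a_i)_{i<\omega}$ and $c$ in the retractor, applies Zorn's Lemma to extend to a \emph{maximal} family $(a_i)_{i\in\lambda}$ with pairwise $\varphi(a_i,a_j,c)$ and $a_i\in\dd(a_0)$, then observes that the open sets $U_i=\neg\varphi(\retractor,a_i,c)\cup\neg\varphi(a_i,\retractor,c)$ cover $\dd(a_0)$ by maximality, extracts a finite subcover by compactness of the ball, and finds some $a_n$ with $n$ outside the finite index set to get the contradiction. You instead stay with the countable family and run a pure accumulation{\hyp}point argument: compactness of the ball gives a limit point $a^\ast$ of $\{a_i\}$; closedness of the sets $\varphi(\retractor,a_j,c)$ together with $\mathrm{T}_1$ forces $a^\ast\notin\{a_i\}$ and $\retractor\models\varphi(a^\ast,a_j,c)\wedge\varphi(a_j,a^\ast,c)$ for every $j$; then $\neg\varphi(\retractor,a^\ast,c)$ is an open neighbourhood of $a^\ast$ missing all $a_i$, contradicting accumulation. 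Your route avoids Zorn and the auxiliary maximal family entirely, trading them for the elementary fact that compact implies limit{\hyp}point compact; the paper's route is perhaps more transparent about the covering{\hyp}number interpretation of local positive compactness, but yours is shorter and uses less machinery. Both are valid.
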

\begin{proof} 
\begin{enumerate}[wide]
\item[\hspace{-1.2em}(\ref*{itm:local positive compactness and relative retractors:compactness})$\Leftrightarrow$(\ref*{itm:local positive compactness and relative retractors:existence retractor})] By \cref{t:universality lemma,t:compactness of retractors}.
\item[(\ref*{itm:local positive compactness and relative retractors:existence retractor})$\Leftrightarrow$(\ref*{itm:local positive compactness and relative retractors:bounded pc})] By \cref{t:relative retractor}(\ref{itm:relative retractor:existence}).
\item[(\ref*{itm:local positive compactness and relative retractors:existence retractor})$\Rightarrow$(\ref*{itm:local positive compactness and relative retractors:local positive compactness})] Let $x$ be a single variable and $z$ a finite variable. Take a bound $\Bound(x,z)$ of $x$ in $\Th_-(M/M)$, a \gls{lp formula} $\varphi(x,y,z)\in\LFor_+(\lang(M))$ with $\Th_-(M/M)\models_\lang \left(\exists z\mathrel{}\varphi(x,y,z)\right)\perp x=y$ and a locality relation $\dd(x,y)$. In order to reach a contradiction, suppose that 
\[M\models_\lang \exists x_1 \ldots x_k z\mathrel{} \big(\bigwedge_{i\neq j} \Bound(x_i,z)\wedge\dd(x_i,x_j)\wedge\varphi(x_i,x_j,z)\big)\] 
for every $k\in\N$. Consider the partial \gls{lp type} 
\[\Gamma(\bar{x},z)=\bigwedge_{i\neq j}\Bound(x_i,z)\wedge\dd(x_i,x_j)\wedge\varphi(x_i,x_j,z)\]
with $\bar{x}=\{x_i\}_{i\in \N}$ countable set of single variables. By supposition, $\Gamma(\bar{x},z)$ is finitely locally satisfiable in $\Th_-(M/M)$. As $\Gamma(\bar{x},z)$ implies a bound, by \cref{l:boundedly satisfiable}, we conclude that $\Gamma(\bar{x},z)$ is locally satisfiable in $\Th_-(M/M)$. Let $\retractor$ be a retractor of $\theory$ for $M$. As $M$ is \gls{lpc}, $\Th_-(\retractor/M)=\Th_-(M/M)$. By \cref{l:saturation and parameters}, $\retractor_M$ is a retractor of $\Th_-(M/M)$. By \cref{t:retractors}, $\retractor$ satisfies $\Gamma(\bar{x},z)$. Therefore, there are $(a_i)_{i\in \N}$ and $c$ in $\retractor$ with $a_i\in\dd(a_0)$ for each $i\in\N$ and $\retractor\models_\lang \varphi(a_i,a_j,c)$ for each $i\neq j$ --- in particular, $a_i\neq a_j$ for $i\neq j$. Applying Zorn{'s} Lemma, find $(a_i)_{i\in \lambda}$ maximal such that $a_i\in \dd(a_0)$ for every $i\in \lambda$ and $\retractor\models_\lang \varphi(a_i,a_j,c)$ for each $i\neq j$ --- in particular, $a_i\neq a_j$ for $i\neq j$, so $\lambda\leq |\retractor|$. Now, consider $U_i=\neg \varphi(\retractor,a_i,c)\cup \neg \varphi(a_i,\retractor,c)$. By maximality of $(a_i)_{i\in\lambda}$, it follows that $\dd(a_0)\subseteq \bigcup_{i\in \lambda} U_i$. By \cref{t:compactness of retractors}, $\dd(a_0)$ is compact in the \gls{lp logic topology}. As $\{U_i\}_{i\in\lambda}$ is an open covering, there is $I_0\subseteq \lambda$ finite such that $\dd(a_0)\subseteq \bigcup_{i\in I_0} U_i$. As $I_0$ is finite, there is $n\in\N$ with $n\notin I_0$. Then, by choice of $(a_i)_{i\in \lambda}$, we have that $a_n\in\dd(a_0)$ and $a_n\notin\bigcup_{i\in I_0} U_i$, getting a contradiction. 
\item[(\ref*{itm:local positive compactness and relative retractors:local positive compactness})$\Rightarrow$(\ref*{itm:local positive compactness and relative retractors:|L|+-saturated models are retractors})] By \cref{t:local positive compactness}.
\item[(\ref*{itm:local positive compactness and relative retractors:|L|+-saturated models are retractors})$\Rightarrow$(\ref*{itm:local positive compactness and relative retractors:existence retractor})] By \cref{t:existence saturated}.\qedhere
\end{enumerate}
\end{proof}
\begin{coro} \label{c:bound of the size of retractors} We have $|\retractor|\leq 2^{|\lang|}$ for every retractor $\retractor$ of $\theory$. 
\end{coro}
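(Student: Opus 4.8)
The plan is to build, inside the compatibility class of $\retractor$ in $\Mod^\pc(\theory)$, a retractor of $\theory$ of cardinality at most $2^{|\lang|}$, and then to invoke the uniqueness of retractors in a compatibility class. We may assume $\lang$ is infinite.

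First I would take, by downwards L\"{o}wenheim{\hyp}Skolem, an elementary substructure $A\preceq\retractor$ with $|A|\le|\lang|$. Then $A\preceq^+\retractor$, so $A\models^\pc\theory$ by \cite[Corollary 2.13]{rodriguez2024completeness}, and $A\frown\retractor$; in particular $\theory$ has a retractor at $A$ (namely $\retractor$), so $A$ satisfies condition (\ref{itm:local positive compactness and relative retractors:existence retractor}) of \cref{t:local positive compactness and relative retractors} and hence also condition (\ref{itm:local positive compactness and relative retractors:|L|+-saturated models are retractors}): every $|\lang(A)|^+${\hyp}ultrahomogeneous local positively closed model of $\theory$ into which $A$ positively embeds is a retractor of $\theory$. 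Note that $|\lang(A)|=|\lang|$ since $|A|\le|\lang|$.

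Next I would run the construction in the proof of \cref{t:existence ultrahomogeneous}, starting from $A$, with $\kappa=|\lang|^+$ and a direct limit of length $|\lang|^+$, applying downwards L\"{o}wenheim{\hyp}Skolem at each stage as in \cref{r:lowenheim skolem ultrahomogeneity}. Because ultrahomogeneity at level $|\lang|^+$ only ever concerns positive substructures and target structures of size $<|\lang|^+$, i.e.\ of size $\le|\lang|$ --- of which there are at most $2^{|\lang|}$ (counting also the homomorphism data) --- every intermediate model can be kept of size $\le 2^{|\lang|}$, so the output is an $|\lang|^+${\hyp}ultrahomogeneous model $N\models^\pc\theory$ of cardinality $\le|\lang|^+\cdot 2^{|\lang|}=2^{|\lang|}$ admitting a homomorphism $A\to N$. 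Since $A\models^\pc\theory$, this homomorphism is a positive embedding, so by the previous paragraph $N$ is a retractor of $\theory$.

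Finally, $A\frown N$ (via the homomorphism $A\to N$ and $\id_N$) while $A\frown\retractor$, so by \cref{l:compatible} the retractors $N$ and $\retractor$ lie in the same compatibility class of $\Mod^\pc(\theory)$; by \cref{t:relative retractor}(\ref{itm:relative retractor:uniqueness}) they are isomorphic, and therefore $|\retractor|=|N|\le 2^{|\lang|}$. The step I expect to be the main obstacle is precisely this cardinality bookkeeping: \cref{r:lowenheim skolem ultrahomogeneity} as stated only gives the bound $2^{|\lang|^+}$, so one must verify that the iteration can be taken of length $|\lang|^+$ and that no intermediate model ever exceeds size $2^{|\lang|}$, while still producing a genuinely $|\lang|^+${\hyp}ultrahomogeneous model.
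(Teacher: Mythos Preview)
Your proposal is correct and follows essentially the same route as the paper's proof: take a small elementary substructure $A\preceq\retractor$ with $|A|\le|\lang|$, build an $|\lang|^+${\hyp}ultrahomogeneous positively closed model of size at most $2^{|\lang|}$ above it, argue via \cref{t:local positive compactness and relative retractors} that this is a retractor, and then invoke \cref{l:compatible} and \cref{t:relative retractor}(\ref{itm:relative retractor:uniqueness}) to conclude. The paper unpacks the passage through \cref{t:local positive compactness and relative retractors} slightly differently (going via local positive compactness of $\Th_-(A)$ and \cref{t:local positive compactness}, then \cref{l:ultrahomogeneity and paramters}), whereas you invoke condition (\ref{itm:local positive compactness and relative retractors:|L|+-saturated models are retractors}) directly; these are the same argument packaged two ways.

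Your flagged concern about the cardinal bookkeeping is exactly right, and the paper's own citation of \cref{r:lowenheim skolem ultrahomogeneity} glosses over the same point. The resolution you sketch is correct: taking the outer chain of length $\lambda=|\lang|^+$ (or $\lambda=2^{|\lang|}$, since $\cf(2^{|\lang|})>|\lang|$ by K\"onig) suffices for $|\lang|^+${\hyp}ultrahomogeneity, and at every step the number of triples $(A_\eta,B_\eta,g_\eta)$ with $|A_\eta|,|B_\eta|\le|\lang|$ drawn from a model of size $\le 2^{|\lang|}$ is at most $2^{|\lang|}$, so L\"owenheim{\hyp}Skolem keeps all intermediate models of size $\le 2^{|\lang|}$.
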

\begin{proof} By L\"{o}wenheim{\hyp}Skolem, take $M\preceq \retractor$ with $|M|\leq |\lang|$. Using \cref{t:existence saturated,r:lowenheim skolem saturation}, find an $|\lang|^+${\hyp}saturated $\retractor_0\lang_\lang\theory$ with $|\retractor_0|\leq 2^{|\lang|}$ and $M\leq \retractor_0$. Since $M\preceq \retractor$ and $\retractor\models^\pc_\lang\theory$, we have $M\models^\pc_\lang\theory$ by \cite[Corollary 2.13]{rodriguez2024completeness}. Thus, $M\preceq^+\retractor_0$. As $\retractor$ is a retractor of $\theory$ at $M$, by \cref{t:local positive compactness and relative retractors}, $\Th_-(M)$ is \gls{lpco}. As $M\preceq^+\retractor_0$, $\Th_-(\retractor_0)=\Th_-(M)$. By \ref{l:saturation and parameters}, $\retractor_0$ is $|\lang|^+${\hyp}saturated for $\Th_-(M)$. By \cref{t:local positive compactness}, we conclude that $\retractor_0$ is a retractor of $\Th_-(M)$. By \ref{l:saturation and parameters}, as $\retractor_0\models^\pc_\lang\theory$, we conclude that it is a retractor of $\theory$ too. Note that $\retractor\frown M\frown\retractor_0$ and $M\models^\pc_\lang\theory$. By \cref{l:compatible}, $\retractor\frown\retractor_0$. By \cref{t:relative retractor}(\ref{itm:relative retractor:uniqueness}), we conclude that $\retractor\cong\retractor_0$. In particular, $|\retractor|\leq 2^{|\lang|}$.
\end{proof}
\begin{coro} \label{c:exist retractors iff pc bounded} The following are equivalent:
\begin{enumerate}[label={\rm{(\arabic*)}}, wide]
\item $\theory$ has retractors.
\item There is some cardinal $\kappa$ such that $|M|<\kappa$ for every $M\models^\pc_\lang\theory$.
\item For every $M\models^\pc_\lang\theory$, $|M|\leq 2^{|\lang|}$.
\end{enumerate}
\end{coro}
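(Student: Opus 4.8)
The plan is to prove the cycle $(1)\Rightarrow(3)\Rightarrow(2)\Rightarrow(1)$; each implication is a direct appeal to results already established, so the corollary is really just a packaging of \cref{t:relative retractor}, \cref{t:local positive compactness and relative retractors} and \cref{c:bound of the size of retractors}. I read $(1)$ as the statement that every compatibility class of $\Mod^\pc(\theory)$ contains a retractor --- equivalently, that for every $M\models^\pc\theory$ there is a retractor at $M$ --- which is the reading forced by the global cardinality statements $(2)$ and $(3)$, since by \cref{t:relative retractor,l:compatible} distinct compatibility classes of local positively closed models behave independently. The one place where a little care is needed is the observation that $(2)$, although phrased as a single global cardinal bound, trivially entails that each individual compatibility class is bounded --- and that is exactly the input that \cref{t:local positive compactness and relative retractors} requires.

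For $(1)\Rightarrow(3)$: fix $M\models^\pc\theory$ and let $\retractor$ be a retractor at $M$, so that $\Com^\pc(\retractor)=\Com^\pc(M)$ by \cref{l:compatible}. By \cref{t:relative retractor}(\ref{itm:relative retractor:universal}), $\retractor$ is universal for $\Com^\pc(\retractor)$; since every homomorphism between local positively closed models of $\theory$ is a positive embedding, the witnessing homomorphism $M\to\retractor$ is injective, so $|M|\le|\retractor|$, and $|\retractor|\le 2^{|\lang|}$ by \cref{c:bound of the size of retractors}. Hence $|M|\le 2^{|\lang|}$, which is $(3)$. The implication $(3)\Rightarrow(2)$ is immediate, taking $\kappa\coloneqq(2^{|\lang|})^+$.

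For $(2)\Rightarrow(1)$: fix $M\models^\pc\theory$. Then $\Com^\pc(M)\subseteq\Mod^\pc(\theory)$ is bounded by the cardinal $\kappa$ provided by $(2)$, so by the equivalence (\ref{itm:local positive compactness and relative retractors:bounded pc})$\Leftrightarrow$(\ref{itm:local positive compactness and relative retractors:existence retractor}) of \cref{t:local positive compactness and relative retractors}, $\theory$ has a retractor at $M$. As $M\models^\pc\theory$ was arbitrary, every compatibility class of $\Mod^\pc(\theory)$ contains a retractor, i.e. $(1)$ holds. There is thus no genuine obstacle; the only subtlety is the bookkeeping around the per-class reading of $(1)$ noted above.
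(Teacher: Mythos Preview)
Your proof is correct and matches the paper's approach: the paper states this corollary without proof, treating it as immediate from \cref{t:relative retractor}, \cref{t:local positive compactness and relative retractors} and \cref{c:bound of the size of retractors}, which is exactly the packaging you give. Your reading of ``$\theory$ has retractors'' as ``every compatibility class of $\Mod^\pc(\theory)$ contains a retractor'' is the intended one, as confirmed by the subsequent corollaries in the paper.
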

We now state the equivalences under irreducibility assumptions.
\begin{coro} \label{c:exist retractors iff local positive compactness} Assume $\theory$ is weakly complete. Then, $\theory$ has retractors if and only if it is \gls{lpco}.
\end{coro}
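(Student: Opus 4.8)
The plan is to prove the two implications separately. The implication ``local positively compact $\Rightarrow$ has retractors'' needs no completeness hypothesis at all: it is exactly \cref{t:local positive compactness}, which upgrades every $|\lang|^+${\hyp}ultrahomogeneous local positively closed model to a retractor, and such a model exists by \cref{t:existence ultrahomogeneous}.

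For the converse I would argue by contraposition, building from a failure of local positive compactness a local positively closed model of $\theory$ of cardinality exceeding the bound $2^{|\lang|}$ that \cref{c:exist retractors iff pc bounded} extracts from the existence of retractors. Suppose $\theory$ is not local positively compact and fix a witnessing local positive formula $\varphi(x,y,z)$ with $\exists z\,\varphi\perp x=y$, a bound $\Bound(x,z)$ and a locality relation $\dd(x,y)$, so that $\exists x_1,\ldots,x_k,z\,\bigwedge_{i\neq j}\Bound(x_i,z)\wedge\dd(x_i,x_j)\wedge\varphi(x_i,x_j,z)\in\theory_+$ for every $k\in\N$. Fix a cardinal $\lambda>2^{|\lang|}$, let $\bar x=(x_i)_{i\in\lambda}$ be single variables on the common sort of $x$ and $y$, and let $\Gamma(\bar x,z)$ consist of the formulas $\Bound(x_i,z)$, $\dd(x_i,x_j)$, $\varphi(x_i,x_j,z)$ for $i\neq j\in\lambda$. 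Every finite subset of $\Gamma$ is locally satisfiable in $\theory$ --- it is a relabelling of the $k${\hyp}th sentence above for a suitable $k$ --- and the family of conjuncts $\Bound(x_i,z)$ and $\dd(x_i,x_j)$, read as a conjunction of locality atoms, forms a bound of $(\bar x,z)$ that is implied by $\Gamma$. Hence $\Gamma$ is boundedly satisfiable, so locally satisfiable in $\theory$ by \cref{l:boundedly satisfiable}; let $(a_i)_{i\in\lambda}$ and $c$ realise it in a local model $N\models\theory$. Since $\theory\models\neg\exists x,y,z\,\varphi(x,y,z)\wedge x=y$, the $a_i$ are pairwise distinct. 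By \cite[Theorem 2.7]{rodriguez2024completeness} there is a homomorphism $g\map N\to P$ with $P\models^\pc\theory$; homomorphisms preserve local positive formulas, so $P\models\varphi(g(a_i),g(a_j),g(c))$ for all $i\neq j$, and, as $P\models\theory$, this again forces $g(a_i)\neq g(a_j)$. Thus $|P|\geq\lambda>2^{|\lang|}$, contradicting \cref{c:exist retractors iff pc bounded}.

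I expect the only delicate point to be the passage ``$\Gamma$ boundedly satisfiable, hence locally satisfiable'': to invoke the local compactness behind \cref{l:boundedly satisfiable} cleanly one should first prenex the local existential quantifiers of $\varphi$ into fresh variables, each bounded relative to $x,y,z$, so as to be dealing with quantifier{\hyp}free local formulas. For context I would also record the route that parallels the proof of \cref{c:bound of the size of retractors} and genuinely uses weak completeness: writing $\theory=\Th_-(M)$ with $M\models^\pc\theory$, the existence of retractors together with \cref{c:exist retractors iff pc bounded} makes $\Com^\pc(M)$ bounded, so \cref{t:local positive compactness and relative retractors} yields that $\Th_-(M/M)$ is local positively compact in $\lang(M)$, which one then descends to $\theory$ since $\theory=\Th_-(M)$. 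The one subtlety on that route --- that bounds over $\lang(M)$ impose strictly more locality constraints than bounds over $\lang$, so that the instances coming from $\lang$ must be handled with a little care --- is precisely what the direct contraposition argument above sidesteps, at the price of making no use of weak completeness.
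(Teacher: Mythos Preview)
Your proposal is correct, and both routes you describe work. The paper takes your second route: it fixes $M\models^{\pc}\theory_{\pm}$ (using weak completeness), takes a retractor $\retractor$ at $M$, observes $\Th_-(\retractor)=\theory$, and then invokes \cref{t:local positive compactness and relative retractors} to get that $\Th_-(\retractor/\retractor)$ is local positively compact, ``so in particular $\theory=\Th_-(\retractor)$ is''. Your primary route --- contraposition via a local positively closed model of size exceeding $2^{|\lang|}$, contradicting \cref{c:exist retractors iff pc bounded} --- is genuinely different and, as you observe, makes no use of weak completeness, so it in fact proves a stronger statement than the corollary claims. Your concern about prenexing $\varphi$ before invoking \cref{l:boundedly satisfiable} is unnecessary: local positive formulas are local, hence already $\Pi_1${\hyp}local (take the universal prefix to be empty), and the paper itself applies \cref{l:boundedly satisfiable} to precisely the same shape of $\Gamma$ in the proof of \cref{t:local positive compactness and relative retractors}(\ref{itm:local positive compactness and relative retractors:existence retractor})$\Rightarrow$(\ref{itm:local positive compactness and relative retractors:local positive compactness}). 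The subtlety you flag in the second route --- that a bound in $\lang$ need not be a bound in $\lang(M)$ because $\lang(M)$ has more constants --- is real; the paper's ``in particular'' is justified because $\Th_-(\retractor)=\theory$ forces $\retractor$ itself to satisfy each $\sigma_k$, whence $\neg\sigma_k\in\Th_-(\retractor)=\theory$ once one has it in $\Th_-(\retractor/\retractor)$, but your direct argument sidesteps this entirely.
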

\begin{proof} By \cref{t:local positive compactness}, if $\theory$ is \gls{lpco}, then $\theory$ has retractors. On the other hand, suppose that $\theory$ is weakly complete and has retractors. Let $\retractor$ be a retractor at $M$ with $M\models^\pc_\lang\theory_{\pm}$. As homomorphisms preserve satisfaction, we get $\Th_-(\retractor)=\theory$. By \cref{t:local positive compactness and relative retractors}, $\Th_-(\retractor/\retractor)$ is \gls{lpco} so, in particular, $\theory=\Th_-(\retractor)$ is \gls{lpco}. 
\end{proof}
\begin{coro} \label{c:exist retractors iff exists one retractor} Assume $\theory$ is complete. Then, $\theory$ has retractors if and only if it has at least one retractor.
\end{coro}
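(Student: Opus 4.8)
The forward implication is trivial: since $\theory$ is locally satisfiable it has a local positively closed model (by \cite[Theorem 2.7]{rodriguez2024completeness}), so $\Mod^\pc(\theory)$ has at least one compatibility class, and if $\theory$ has retractors then that class contains a retractor by \cref{t:relative retractor}(\ref{itm:relative retractor:existence}). So the plan is to prove the converse by routing ``having one retractor'' through local positive compactness, in the spirit of \cref{c:exist retractors iff local positive compactness}.

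Assume $\theory$ has at least one retractor $\retractor$. By \cref{t:retractors} we have $\retractor\models^\pc\theory$, so completeness of $\theory$ gives $\Th_-(\retractor)=\theory$. Since $\retractor\in\Com^\pc(\retractor)$ and $\retractor$ is a retractor, $\theory$ has a retractor at $\retractor$; hence, applying the implication (\ref{itm:local positive compactness and relative retractors:existence retractor})$\Rightarrow$(\ref{itm:local positive compactness and relative retractors:local positive compactness}) of \cref{t:local positive compactness and relative retractors} with $M\coloneqq\retractor$, the theory $\Th_-(\retractor/\retractor)$ is local positively compact in $\lang(\retractor)$. I would then transfer this down to $\theory$ exactly as in the proof of \cref{c:exist retractors iff local positive compactness}: in the definition of local positive compactness the data $\varphi(x,y,z)$, $\Bound(x,z)$, $\dd(x,y)$ may be taken parameter-free; the hypothesis $\theory\models\exists z\,\varphi\perp x=y$ is then a parameter-free negative sentence, so it belongs to $\theory=\Th_-(\retractor)\subseteq\Th_-(\retractor/\retractor)$, and local positive compactness of $\Th_-(\retractor/\retractor)$ produces some $k\in\N_{>0}$ with $\Th_-(\retractor/\retractor)\models\neg\exists x_1,\dots,x_k,z\,\bigwedge_{i\neq j}\Bound(x_i,z)\wedge\dd(x_i,x_j)\wedge\varphi(x_i,x_j,z)$; but this conclusion is again parameter-free and negative, hence lies in $\Th_-(\retractor/\retractor)\cap\For_-(\lang)=\Th_-(\retractor)=\theory$. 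Thus $\theory$ is local positively compact, and $\theory$ has retractors by \cref{t:local positive compactness}.

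The only essential use of completeness is the equality $\Th_-(\retractor)=\theory$, which is what makes the transfer step land inside $\theory$ itself; under mere weak completeness the retractor one is handed need not have negative theory exactly $\theory$ (weak completeness only produces \emph{some} positively closed model with negative theory $\theory$), and then the same argument would only yield local positive compactness of $\Th_-(\retractor)$, which can be strictly larger than $\theory$. Beyond this I do not expect a real obstacle: every step is a citation or an elementary verification, and the only point requiring a little care is the bookkeeping in the transfer step — that the relevant formulas and both of the negative sentences in the definition of local positive compactness can be chosen without parameters, and that $\Th_-(\retractor/\retractor)$ and $\Th_-(\retractor)$ agree on parameter-free negative sentences.
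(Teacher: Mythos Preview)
Your proposal is correct and follows essentially the same route as the paper's proof: use the existence of a retractor $\retractor$ to get local positive compactness of $\Th_-(\retractor/\retractor)$ via \cref{t:local positive compactness and relative retractors}, transfer this down to $\Th_-(\retractor)=\theory$ using completeness, and conclude via \cref{t:local positive compactness}. Your explanation of the transfer step is more detailed than the paper's terse ``in particular'', but the argument is identical.
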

\begin{proof} Suppose $\theory$ is complete and has at least one retractor $\retractor$. By \cref{t:local positive compactness and relative retractors}, we conclude that $\Th_-(\retractor/\retractor)$ is \gls{lpco}. In particular, $\Th_-(\retractor)$ is \gls{lpco}. By completeness, $\theory=\Th_-(\retractor)$ is \gls{lpco}. Then, by \cref{t:local positive compactness}, $\theory$ has retractors.
\end{proof}

\begin{ex} \cref{c:exist retractors iff exists one retractor} may not hold if $\theory$ is merely weakly complete. Indeed, take the one{\hyp}sorted local language $\lang$ with locality relations $\Dtt=\{\dd_n\}_{n\in\N}$ (with the usual ordered monoid structure induced by the natural numbers), unary predicates $\{P_n,Q_n\}_{n\in\N}$ and a binary relation $\prec$. 

Consider the local structure in $\R$ given by $P_n(x)\Leftrightarrow x\geq n$, $Q_n(x)\Leftrightarrow x\leq -n$, $x\prec y\Leftrightarrow 0<x<y$ and $\dd_n(x,y)\Leftrightarrow |x-y|\leq n$. Let $\theory=\Th_-(\R)$. Then, $\theory$ is not \gls{lpco}. Indeed, we can take $\varphi(x,y)\coloneqq x\prec y\vee y\prec x$, that satisfies $\varphi(x,y)\perp x=y$, and the locality relation $\dd_1$. In that case, we find that $S=\{\sfrac{1}{i}\}_{i\in\N_{>0}}$ satisfies that, for any two distinct $i,j$, we have $\varphi(i,j)\wedge\dd_1(i,j)$. Thus, by \cref{c:exist retractors iff local positive compactness}, $\theory$ does not have retractors (that is, there is some models of $\theory$ incompatible with all retractors).

On the other hand, let $\widetilde{\R}$ be a (non{\hyp}local) $\omega${\hyp}saturated elementary extension of $\R$. Then, since $\bigwedge_n Q_n(x)$ is finitely satisfiable, there is some $a\in\widetilde{\R}$ realising it. Consider the substructure $A$ with universe $\{a\}$. Since $\widetilde{\R}\cong \R$, we have $\widetilde{\R}\premodels\theory$, so $A\models_\lang\theory$. Furthermore, $A$ is a retractor as, if $g\map A\to M$ is a homomorphism with $M\models_\lang\theory$, then the constant function $f\map M\to A$ mapping everything to $a$ is homomorphism too (so a retractor). Indeed, if there is a homomorphism $g\map A\to M$, we must have that $P_n(M)=\prec(M)=\emptyset$, since $M\models_\lang Q_{m+1}(h(a))$ while $\neg\exists x y\mathrel{} (Q_{m+1}(x)\wedge P_n(y)\wedge\dd_{m+1}(x,y))\in\theory$ and $\neg\exists x y z\mathrel{} (Q_{m+1}(x)\wedge y\prec z\wedge\dd_m(x,y))\in\theory$ for all $m\in\N$. On the other hand, for any $n\in\N$, we have $A\models_\lang Q_n(a)$ and $A\models_\lang \dd_n(a,a)$, thus $f$ is a homomorphism.
\end{ex}

\subsection{Local retractors and non{\hyp}local universal models} \label{s:local retractors and non-local universal models}
As covered in \cite[Subsection 2.5]{hrushovski2022definability}, negative theories have a non{\hyp}local (homomorphism) universal positively closed model if and only if there is a bound on the cardinality of their non{\hyp}\gls{lpc} models. Since every negative local theory is, in particular, a negative theory, it is natural to ask whether there is a connection between the existence of local retractors and the existence of non{\hyp}local universal positively closed models. One direction is straightforward:
\begin{lem} \label{l:non-local retractors} Assume $\theory$ is a negative local theory that has a non{\hyp}local universal model $\mathfrak{U}$. Then, $\theory$ has local retractors, and every local retractor embeds into $\mathfrak{U}$. 
\end{lem}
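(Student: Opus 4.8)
The plan is to deduce the existence of retractors from \cref{c:exist retractors iff pc bounded} by bounding the size of the local positively closed models of $\theory$, and to read off the embedding statement from the same argument. Recall that $\mathfrak{U}\mmodels\theory$ and that, by universality, every $N\mmodels\theory$ admits a homomorphism into $\mathfrak{U}$. The key claim is: \emph{every local positively closed model $N$ of $\theory$ admits a positive embedding into some local substructure $\mathfrak{U}'\leq\mathfrak{U}$ with $\mathfrak{U}'\models\theory$.} Granting this, $|N|\leq|\mathfrak{U}|$ for every $N\models^\pc\theory$, so $\Mod^\pc(\theory)$ is bounded and \cref{c:exist retractors iff pc bounded} gives that $\theory$ has retractors; moreover, if $\retractor$ is a local retractor then $\retractor\models^\pc\theory$ by \cref{t:retractors}, so the claim provides a positive embedding $\retractor\to\mathfrak{U}'$, and composing with the substructure inclusion $\mathfrak{U}'\hookrightarrow\mathfrak{U}$ yields an embedding $\retractor\hookrightarrow\mathfrak{U}$ (a positive embedding is in particular an embedding, and its composite with a substructure inclusion remains injective and reflects atomic formulas).

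To prove the claim, pick a homomorphism $h\map N\to\mathfrak{U}$ supplied by universality. The point is that although $\mathfrak{U}$ need not be local, the image $h(N)$ sits inside a local component of $\mathfrak{U}$: choose, for each single sort $s$, a base point $o_s\in\mathfrak{U}$ equal to $c^{\mathfrak{U}}$ for some constant $c$ of sort $s$ when one exists, equal to $h(a)$ for some $a\in N^s$ when $N^s\neq\emptyset$ and there is no constant of sort $s$, and arbitrary otherwise; set $o=(o_s)_{s\in\Sorts}$ and $\mathfrak{U}'\coloneqq\Dtt(o)$. Since each $o_s$ is a constant whenever its sort has one and otherwise satisfies a (possibly empty) bound, \cite[Lemma 1.8]{rodriguez2024completeness} applies and gives that $\mathfrak{U}'$ is a local $\lang${\hyp}structure with $\mathfrak{U}'\models\theory$. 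For $h(N)\subseteq\mathfrak{U}'$: given $a\in N^s$, apply locality axiom \cref{itm:axiom 5} in the local structure $N$ to get a locality relation $\dd$ on $s$ with $N\models\dd(a,a')$, where $a'=c^N$ in the first case and $a'$ is the chosen element with $h(a')=o_s$ in the second; since homomorphisms preserve satisfaction of positive formulas, $\mathfrak{U}\mmodels\dd(h(a),o_s)$, i.e.\ $h(a)\in\dd(o_s)\subseteq(\mathfrak{U}')^s$ (the remaining case has $h(N)^s=\emptyset$). Thus $h$ corestricts to a homomorphism $N\to\mathfrak{U}'$ into a local model of $\theory$, and, as $N\models^\pc\theory$, this is a positive embedding; in particular it is injective, so $|N|\leq|\mathfrak{U}'|\leq|\mathfrak{U}|$, proving the claim.

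The bulk of this is bookkeeping; I expect the only real content to be the observation that a homomorphism from a local positively closed model into the (non-local) $\mathfrak{U}$ can be corestricted onto a local submodel of $\mathfrak{U}$, at which point local positive closedness upgrades it to a positive embedding. The one mildly delicate point is the choice of base points $o_s$ so that $\Dtt(o)$ simultaneously contains $h(N)$ and remains a model of $\theory$; in the many{\hyp}sorted setting this requires separating out the sorts with and without constants and the possibly empty sorts, but it is entirely covered by \cite[Lemma 1.8]{rodriguez2024completeness}.
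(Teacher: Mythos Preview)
Your proof is correct and follows the same strategy as the paper: bound $|M|$ for every $M\models^\pc\theory$ by mapping it into $\mathfrak{U}$ and then invoke the boundedness criterion for existence of retractors (the paper cites \cref{t:relative retractor}(\ref{itm:relative retractor:existence}) where you cite \cref{c:exist retractors iff pc bounded}). The paper's argument is shorter: rather than passing to a local component $\Dtt(o)\leq\mathfrak{U}$, it observes directly that the image $h(M)\leq\mathfrak{U}$ is itself a local model of $\theory$ --- negative theories pass to substructures, and axiom (\ref{itm:axiom 5}) is inherited from $M$ through $h$ --- so $h\map M\to h(M)$ is already a homomorphism into a local model of $\theory$ and hence a positive embedding. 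Your construction of $\Dtt(o)$ via \cite[Lemma 1.8]{rodriguez2024completeness} is not wrong, just more elaborate than needed; and you spell out the embedding statement for retractors, which the paper leaves implicit (it follows immediately since retractors are local positively closed).
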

\begin{proof}
Let $M\models^\pc_\lang\theory$. Then, since $M\premodels\theory$, there is some homomorphism $h\map M\to\mathfrak{U}$. Since $h(M)\models_\lang\theory$, we conclude that $h$ is a positive embedding into $h(M)$ and, in particular, injective. We get that $|M|\leq|\mathfrak{U}|$ for all $M\models^\pc_\lang\theory$. Thus, by \cref{t:relative retractor}(\cref{itm:relative retractor:existence}), $\theory$ has retractors.
\end{proof}

Unfortunately, this is the only connection that holds --- a theory with local retractors does not need to have a non{\hyp}local universal model. 

\begin{ex} Consider the one{\hyp}sorted local language $\lang=\{0,1,+,\cdot\}$ with locality relations $\Dtt=\{\dd_n\}_{n\in\N}$ (with the usual ordered monoid structure induced by the natural numbers), where $0,1$ are constants and $+,\cdot$ are ternary relations.

Consider the structure $\R$ with the usual field interpretation ($+,\cdot$ are the graphs of addition and multiplication respectively) and metric interpretation of the locality relations. By \cref{t:universality lemma}, $\R$ is local universal for $\theory_{\R}=\Th_-(\R)$. Since every field endomorphism of $\R$ is the identity, it is in particular an automorphism thus  $\R$ is a retractor by \cref{l:retractor iff endomorphism are automorphisms}. Since $\R$ is local universal, it is the unique retractor (up to isomorphism).

However, as a negative theory, $\theory_{\R}$ is unbounded. Indeed, we have the positive formula $x<y$, which is expressed by $\exists z w\mathrel{} (z\cdot w=1\wedge y=x+z^2)$. Now, $x<y$ denials equality in $\theory_\R$. Since $\R$ has infinite ordered sequences, by compactness, there is a positively closed model of $\theory_\R$ of any cardinality. 

More generally, in fields we can use the formula $\exists z w\mathrel{} (z\cdot w=1\wedge y=x+z)$, which positively defines inequality in every field. Hence, by compactness, for any infinite field $K$, the negative theory $\Th_-(K)$ must have positively closed models of arbitrarily large cardinality. Thus, for instance, we can consider $\C$ or $\Q_p$. Indeed:

The $p${\hyp}adic numbers $\Q_p$ as $\lang${\hyp}structure where the locality predicates are interpreted according to the $p${\hyp}adic metric is again local universal in their own theory $\theory_p=\Th_-(\Q_p)$ by \cref{t:universality lemma}. Since every field endomorphism of $\Q_p$ is also the identity, $\Q_p$ is the unique retractor of $\theory_p$ by \cref{l:retractor iff endomorphism are automorphisms}. However, it has arbitrarily large (non{\hyp}local) positively closed models as inequality is positively definable. 

Similarly, we can consider $\C$ as $\lang${\hyp}structure and observe that, again, $\theory_\C=\Th_-(\C)$ has arbitrarily large (non{\hyp}local) positively closed models, and $\C$ is local universal
 for $\theory_\C$ by \cref{t:universality lemma}. If $h\map\C\to\C$ is an $\lang${\hyp}endomorphism, it fixes $\Q$ (since it is in particular a field endomorphism). For any $a,b\in\C$, we have that $\C\models \dd_m(na,nb)$ if and only if $\|a-b\|\leq \sfrac{m}{n}$. Consequently, $h$ is continuous. Since $\R$ is the closure of $\Q$, $h$ setwise fixes $\R$, so $h_{\mid\R}$ is an endomorphism of $\R$, so it pointwise fixes $\R$. Thus, it is either the identity or complex conjugation, and in any case an automorphism. We thus get here as well that $\C$ is the unique retractor for $\theory_\C$ by \cref{l:retractor iff endomorphism are automorphisms}. 
\end{ex}

If there is a non{\hyp}local universal positively closed model $\mathfrak{U}$, one might expect that the local retractors are precisely the maximal local substructures of $\mathfrak{U}$ (i.e. the local components containing the constants). This is the case for instance in \cite[Example 3.7(3)]{rodriguez2024completeness}. However, the following example shows that this is not true in general:

\begin{ex} \label{e:local retractors inside universal models} Consider the one{\hyp}sorted local language $\lang$ with locality predicates $\Dtt=\{\dd_n\}_{n\in\N}$ (with the usual ordered monoid structure induced by the natural numbers) and binary predicates $\{S\}\cup \{E_n,U_n\}_{n\in\N}$. Consider the $\lang${\hyp}structure $M$ with universe $2^\omega$ and interpretations $M\models \dd_n(\eta,\xi)\Leftrightarrow |\{i\sth \eta(i)\neq \xi(i)\}|\leq n$, $M\models E_n(\eta,\xi)\Leftrightarrow \eta(n)=\xi(n)$, $M\models U_n(\eta,\xi)\Leftrightarrow \eta(n)\neq \xi(n)$ and $M\models S(\eta,\xi)\Leftrightarrow \eta,\xi\text{ have no common zeros}$. Let $\theory=\Th_-(M)$ be the negative theory of $M$. Since $M\models \theory_\loc$, $\theory$ is locally satisfiable by \cite[Theorem 1.8]{rodriguez2024completeness}. For $\eta\in M$, write $M_\eta\coloneqq \Dtt(\eta)$ for the local component at $\eta$. Let $1\in M$ be the constant sequence $1$. 

Now, we look at $M$ as non{\hyp}local structure, i.e. as $\lang_\ast${\hyp}structure. Note that the positive logic topology in $M$ is coarser than the Tychonoff topology, so $M$ is compact with the positive logic topology. Thus, by \cite[Lemma 2.24]{segel2022positive}, we get that $M$ is universal for $\mathcal{M}_\ast(\theory)$. Furthermore, note that, in $M$, the unique element $\eta$ satisfying $M\premodels S(\eta,\eta)$ is precisely $\eta=1$. Thus, $\eta(n)=1\Leftrightarrow M\premodels \exists x\mathrel{} (S(x,x)\wedge E_n(x,\eta))$ and $\eta(n)=0\Leftrightarrow M\premodels \exists x\mathrel{} (S(x,x)\wedge U_n(x,\eta))$. Consequently, the only endomorphism of $M$ is the identity. By \cref{l:retractor iff endomorphism are automorphisms} applied to $\lang_\star${\hyp}structures, $M$ is a non{\hyp}local universal positively closed model of $\theory$. 

Suppose $M\premodels \varphi$ with $\varphi$ a positive formula in $\lang$. For $n\in \N$, let $\lang_n$ be the reduct of $\lang$ consisting of $\{S\}\cup \{\dd_k\}_{k\in\N}\cup \{E_k,U_k\}_{k\leq n}$ and take $n$ such that $\varphi$ is a formula in $\lang_n$. Consider the map $f\map M\to M_1$ given by $f(\xi)(k)=\xi(k)$ for $k\leq n$ and $f(\xi)(k)=1$ for $k>n$. Obviously, $E_k(\xi,\zeta)$ implies $E_k(f(\xi),f(\zeta))$ and $U_k(\xi,\zeta)$ implies $U_k(f(\xi),f(\zeta))$ for any $k\leq n$. Also, $\{i\sth f(\xi)(i)=f(\zeta)(i)=0\}\subseteq\{i\sth \xi(i)=\zeta(i)=0\}$, so $S(\xi,\zeta)$ implies $S(f(\xi),f(\zeta))$. On the other hand, $|\{m\sth f(\xi)(m)\neq f(\zeta)(m)\}|\leq |\{m\sth \xi(m)\neq\zeta(m)\}|$, so $\dd_k(\xi,\zeta)$ implies $\dd_k(f(\xi),f(\zeta))$ for any $k$. Thus, $f$ is an $\lang_n${\hyp}homomorphism. Since homomorphisms preserve satisfaction, we conclude that $M_1\models_\lang\varphi$. As $\varphi$ is arbitrary, we conclude that $\Th_-(M_1)=\theory$. Hence, $\theory$ is a weakly complete local theory.

Pick $\eta\in M\setminus M_1$. We claim that $M_\eta$ is not a retractor of $\theory$. In particular, we claim that $M_\eta$ is not \gls{lpc}. Consider the map $f\map M_\eta\to M_1$ given by $f(\xi)(i)=1$ if $x(i)=\eta(i)$ and $f(\xi)(i)=0$ if $\xi(i)\neq \eta(i)$. We claim that $f$ is a homomorphism from $M_\eta$ to $M_1$ which is not a positive embedding. Obviously, 
\[\begin{aligned} \{i\sth \zeta(i)=\xi(i)\}&=\{i\sth \zeta(i)=\xi(i)=\eta(i)\}\cup \{i\sth \zeta(i)=\xi(i)\neq \eta(i)\}\\ &=\{i\sth f(\zeta)(i)=f(\xi)(i)=1\}\cup\{i\sth f(\zeta)(i)=f(\xi)(i)=0\}\\ &=\{i\sth f(\zeta)(i)=f(\xi)(i)\},\end{aligned}\]
so $f$ preserves $E_n,U_n,\dd_n$ for all $n$. As $\eta\notin M_1$, for any $\zeta,\xi\in M_\eta$, we have $M_\eta\models_\lang\neg S(\zeta,\xi)$. Hence, $f$ is a homomorphism. However, $M_1\models_\lang S(1,1)$ and $M_\eta\not\models_\lang S(\eta,\eta)$, so $f$ is not a positive embedding.  
\end{ex}

On the other hand, in \cref{e:local retractors inside universal models}, there is one local component which is \gls{lpc}, so a local retractor. In fact, it turns out that \cref{e:local retractors inside universal models} has the \gls{LJCP}, so this is the only local retractor.

\begin{que} \label{q:local components are local retractors}
Suppose $\theory$ has a non{\hyp}local universal positively closed model $\mathfrak{U}$. Is there always at least one local component of $\mathfrak{U}$ which is a local retractor of $\theory$? For instance, consider the pointed case: Is the local component at the constants of $\mathfrak{U}$ a \gls{lpc} model of $\theory$?
\end{que}

\begin{ex} Without assuming $\theory$ is a local theory (i.e. closed under local implications), \cref{q:local components are local retractors} is obviously false. 

Consider the single sorted language $\lang=\{S,R\}$ where $S,R$ are binary relations, and with $\Dtt=\{=,\dd_1\}$ (with trivial ordered monoid structure). Consider $N$ with universe $\{a,b,c,d\}$ and interpretations $R=\{(a,d),(b,c),(d,a),(c,b)\}$, $S=\{(a,b),(b,a),(c,d),(d,c)\}$, and $\dd_1=\Delta_N\cup\{(a,c),(c,a),(b,d),(d,b)\}$, where $\Delta_N$ is the diagonal in $N$. 

%
%
%

Then, $N$ has two maximal local substructures --- $\{a,c\}$ and $\{b,d\}$ --- but neither is a \gls{lpc} model of $\Th_-(N)$. In fact, the unique (local) retractor for $N$ is a singleton with both $R$ and $S$ empty.
\end{ex}

\subsection{Automorphisms of retractors} \label{s:automorphisms of retractor} Recall that, unless otherwise stated, we have fixed a local language $\lang$ and a locally satisfiable negative local theory $\theory$. From now on, unless otherwise stated, we also fix a retractor $\retractor$ of $\theory$.

The \emph{pointwise topology} on $\Aut(\retractor)$ is the initial topology from the \gls{lp logic topology} given by the evaluation maps $\ev_a\map \Aut(\retractor)\to \retractor^x$ for $a\in \retractor^x$ with $x$ finite. In other words, a base of open sets is given by the sets 
\[\{\sigma\in\Aut(\retractor)\sth \retractor\models\varphi(\sigma(a))\}\]
where $\varphi(x)\in\LFor_-(\lang(\retractor))$ and $a\in \retractor^x$.

\begin{rmk} \label{r:continuity evaluations} Let $x$ be a variable and $a\in\retractor^x$. Then, $\ev_a\map\Aut(\retractor)\to \retractor^x$ is continuous. For $x$ finite this is precisely the definition of the pointwise topology. For $x$ infinite it follows from the fact that the \gls{lp logic topology} of $\retractor^x$ is the initial topology given by the projections $\proj_{x'}\map \retractor^x\to \retractor^{x'}$ for $x'\subseteq x$ finite.
\end{rmk}

Let $\dd=(\dd_s)_{s\in\Sorts}$ be a choice of locality relations and $a=(a_s)_{s\in\Sorts}$ a choice of points for each sort. The \emph{$\dd${\hyp}ball at $a$ in $\Aut(\retractor)$} is the subset $\dd(\Aut(\retractor),a)\coloneqq \{f\sth f(a_s)\in \dd_s(a_s)\text{ for each }s\in\Sorts\}$. \medskip

We now prove the main result of this section. In the one{\hyp}sort case, \cref{t:automorphism retractor}(\ref{itm:automorphism retractor:compact balls}) was proved in \cite[Lemma 2.6]{hrushovski2022lascar} by Hrushovski. His proof works without modifications in the many sorts case as well. In any case, we recall it here for the sake of completeness. The trivial statements \cref{t:automorphism retractor}(\ref{itm:automorphism retractor:frechet},\ref{itm:automorphism retractor:semitopological group}) were already noted in \cite{hrushovski2022lascar} too. The easy fact \cref{t:automorphism retractor}(\ref{itm:automorphism retractor:closed subspace}) is original. Finally, \cref{t:automorphism retractor}(\ref{itm:automorphism retractor:compact-open}) corresponds to \cite[Remark 3.27]{hrushovski2022definability} in the non{\hyp}local case, but was not discussed in \cite{hrushovski2022lascar}. 

\begin{theo} \label{t:automorphism retractor} The following hold:
\begin{enumerate}[label={\rm{(\arabic*)}}, ref={\rm{\arabic*}}, wide]
\item \label{itm:automorphism retractor:closed subspace} $\Aut(\retractor)$ is a closed subspace of the set of all function on $\retractor$ with the \emph{pointwise topology}, i.e. the initial topology given by the evaluation maps on tuples. 
\item \label{itm:automorphism retractor:compact-open} Let $B$ be a ball in $\retractor^x$ and $F$ a closed subset of $B$ and $U$ an open subset. Then, $\{\sigma \sth \sigma(F)\subseteq U\}$ is open. In particular, if there is an entourage on $\retractor^x$, then $\{\sigma \sth \sigma(F)\subseteq U\}$ is open for every closed compact subset $F$ and open set $U$.
\item \label{itm:automorphism retractor:frechet} $\Aut(\retractor)$ is $\mathrm{T}_1$, i.e. Fr\'{e}chet.
\item \label{itm:automorphism retractor:compact balls} Every ball of $\Aut(\retractor)$ is compact.
\item \label{itm:automorphism retractor:semitopological group} The inversion map $g\mapsto g^{-1}$ is continuous and left and right translations are continuous, i.e. $\Aut(\retractor)$ is a semi{\hyp}topological group.
\end{enumerate}
\end{theo}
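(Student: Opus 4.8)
The plan is to treat all five parts uniformly, by first identifying the set of all sort{\hyp}preserving self{\hyp}maps of $\retractor$ with $\retractor^{x^{\ast}}$, where $x^{\ast}$ is a variable enumerating every single element of $\retractor$: the bijection $\sigma\mapsto(\sigma(e))_{e}$ carries each evaluation $\ev_{\bar e}\colon\sigma\mapsto\sigma(\bar e)$ to a coordinate projection, and since the local positive logic topology on $\retractor^{x^{\ast}}$ is the initial topology from the finite projections (cf.\ \cref{l:product logic topologies}), the pointwise topology on self{\hyp}maps is precisely the local positive logic topology on $\retractor^{x^{\ast}}$. For (\ref{itm:automorphism retractor:closed subspace}) and (\ref{itm:automorphism retractor:frechet}) I would then use \cref{c:endomorphism of retractor}, by which an automorphism of $\retractor$ is the same thing as a homomorphism $\retractor\to\retractor$: thus $\Aut(\retractor)=\bigcap_{c}\ev_{c^{\retractor}}^{-1}(\{c^{\retractor}\})\cap\bigcap\ev_{\bar e}^{-1}(R(\retractor))$, where $c$ ranges over constant symbols and the second intersection is over atomic $R$ and tuples $\bar e$ with $\retractor\models R(\bar e)$; each factor is the preimage under a continuous projection of a zero set of an atomic (hence local positive) formula, so $\Aut(\retractor)$ is closed, which is (\ref{itm:automorphism retractor:closed subspace}); and for (\ref{itm:automorphism retractor:frechet}) every singleton $\{b_0\}\subseteq\retractor^{y}$ is the zero set of the atomic formula $y=b_0$ and hence closed, so $\{\sigma:\sigma(e)=\sigma_0(e)\}=\ev_{e}^{-1}(\{\sigma_0(e)\})$ is closed for each single $e$ and $\{\sigma_0\}$ is the intersection of these.

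For (\ref{itm:automorphism retractor:compact balls}) I would observe that the $\dd${\hyp}ball $D=\{\sigma\in\Aut(\retractor):\sigma(a_s)\in\dd_s(a_s)\text{ for all }s\}$ is closed, being $\Aut(\retractor)$ intersected with the closed sets $\ev_{a_s}^{-1}(\dd_s(a_s))$, and then place $D$ inside a compact set. Fixing, for each single element $e$ and using \cref{itm:axiom 5}, a locality relation $\dd_e'$ with $\retractor\models\dd_e'(e,a_{s(e)})$, the homomorphism property of $\sigma\in D$ together with \cref{itm:axiom 1,itm:axiom 3} gives $\retractor\models(\dd_e'\ast\dd_{s(e)})(\sigma(e),a_{s(e)})$; hence $D\subseteq\Gamma(\retractor)$ where $\Gamma\coloneqq\{(\dd_e'\ast\dd_{s(e)})(x_e,a_{s(e)}):e\text{ single}\}$. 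Since $\Gamma$ entails a bound of $x^{\ast}$ over $\lang(\retractor)$ (the same{\hyp}sort pairs being controlled again by \cref{itm:axiom 1,itm:axiom 3}), the argument of \cref{t:compactness of retractors} --- finite local satisfiability together with \cref{l:boundedly satisfiable} and the absolute saturation of $\retractor$ (by \cref{t:retractors}) --- shows $\Gamma(\retractor)$ is compact in $\retractor^{x^{\ast}}$, and a closed subset of a compact set is compact.

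For (\ref{itm:automorphism retractor:compact-open}), writing the ball as $B=\dd(a)$, the closed set as $F=B\cap\bigcap_i\psi_i(\retractor,\bar d_i)$ with $\psi_i\in\LFor_+(\lang(\retractor))$, and the closed complement as $V\coloneqq\retractor^{x}\setminus U=\bigcap_j\delta_j(\retractor,\bar e_j)$, I would use that $\sigma\in\Aut(\retractor)$ preserves every formula, so $\sigma(F)=\dd(\sigma(a))\cap\bigcap_i\psi_i(\retractor,\sigma(\bar d_i))$; then $\sigma(F)\cap V$ is an intersection of closed subsets of the compact ball $\dd(\sigma(a))$ (compact by \cref{t:compactness of retractors}), so by compactness it is non{\hyp}empty if and only if every finite subintersection is, that is, if and only if $\retractor\models\exists v{\in}\dd(z)\bigl(\bigwedge_{i\in I_0}\psi_i(v,\bar w_i)\wedge\bigwedge_{j\in J_0}\delta_j(v,\bar e_j)\bigr)$ at $z=\sigma(a)$ and $\bar w_i=\sigma(\bar d_i)$, for all finite $I_0,J_0$ --- each such condition being the preimage under the continuous evaluation $\ev_{a\,\bar d}$ of a zero set of a local positive formula. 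Hence $\{\sigma:\sigma(F)\cap V\neq\emptyset\}$ is closed and $\{\sigma:\sigma(F)\subseteq U\}$ is open. For the ``in particular'', if $\retractor^{x}$ carries an entourage then its balls are closed neighbourhoods there (\cref{l:entourages}), so a compact $F$ is covered by finitely many ball interiors; intersecting with $F$ writes $F=F_1\cup\dots\cup F_k$ with each $F_l$ closed and contained in a ball, and then $\{\sigma:\sigma(F)\subseteq U\}=\bigcap_l\{\sigma:\sigma(F_l)\subseteq U\}$ is a finite intersection of open sets.

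Part (\ref{itm:automorphism retractor:semitopological group}) should be a routine conjugation{\hyp}of{\hyp}parameters computation: for $\tau\in\Aut(\retractor)$ and $\varphi=\neg\phi$ with $\phi\in\LFor_+(\lang(\retractor))$ of parameters $\bar d$, one has $\{\sigma:\retractor\models\varphi(\tau\sigma(\bar e))\}=\ev_{\bar e}^{-1}(\{b:\retractor\models\varphi(b,\tau^{-1}(\bar d))\})$, $\{\sigma:\retractor\models\varphi(\sigma\tau(\bar e))\}=\ev_{\tau(\bar e)}^{-1}(\{b:\retractor\models\varphi(b)\})$ and $\{\sigma:\retractor\models\varphi(\sigma^{-1}(\bar e))\}=\ev_{\bar d}^{-1}(\{b:\retractor\models\neg\phi(\bar e,b)\})$, so in each case a negative local formula is pulled back along a continuous evaluation map, giving continuity of left translations, right translations and inversion. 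I expect (\ref{itm:automorphism retractor:compact-open}) to be the main obstacle: a priori $\{\sigma:\sigma(F)\subseteq U\}$ is only a ``compact--open''{\hyp}type set, hence visibly open only for the compact--open topology, and making it open for the coarser pointwise topology forces one to exploit that $\sigma$ is an automorphism (so that the parameters defining $F$, rather than the points of $F$, can be moved) together with compactness of balls, so as to replace the infinitary assertion ``$\sigma(F)$ meets $V$'' by an intersection of first{\hyp}order conditions on the tuple $\sigma(a\,\bar d)$.
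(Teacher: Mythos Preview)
Your proposal is correct. Parts (\ref{itm:automorphism retractor:closed subspace}), (\ref{itm:automorphism retractor:compact-open}), (\ref{itm:automorphism retractor:frechet}) and (\ref{itm:automorphism retractor:semitopological group}) follow essentially the same lines as the paper: (\ref{itm:automorphism retractor:closed subspace}) and (\ref{itm:automorphism retractor:frechet}) via the identification $\Aut(\retractor)=\mathrm{End}(\retractor)$ from \cref{c:endomorphism of retractor} together with the description of endomorphisms as an intersection of evaluation{\hyp}preimages of positively definable sets; (\ref{itm:automorphism retractor:compact-open}) via the automorphism trick of moving the \emph{parameters} of $F$ rather than its points, combined with compactness of a ball (you compactify $\dd(\sigma(a))$, the paper compactifies $B$ itself --- the two are interchangeable since $\sigma$ is an isomorphism; likewise your finite decomposition $F=F_1\cup\dots\cup F_k$ in the ``in particular'' is a harmless variant of the paper's argument that any compact set sits in a single ball); and (\ref{itm:automorphism retractor:semitopological group}) via the same parameter{\hyp}conjugation identities.

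The one genuine divergence is (\ref{itm:automorphism retractor:compact balls}). The paper argues with ultralimits: given $(g_i)_{i\in I}$ in the ball and an ultrafilter $\mathfrak{u}$, it forms $g_{\mathfrak u}\colon x\mapsto[g_i(x)]_{\mathfrak u}$ into the non{\hyp}local ultrapower $\retractor^{\mathfrak u}$, observes that the ball condition forces the image into the local component $\Dtt(a)$, retracts back to $\retractor$, and checks via {\L}o\'{s} that the resulting endomorphism is the $\mathfrak u${\hyp}limit. Your route instead embeds the ball as a closed subset of a bound $\Gamma(\retractor)\subseteq\retractor^{x^{\ast}}$ and reruns the argument of \cref{t:compactness of retractors} for the infinite variable $x^{\ast}$ (finite satisfiability $+$ bound $\Rightarrow$ local satisfiability by \cref{l:boundedly satisfiable} $\Rightarrow$ realised in $\retractor$ by absolute saturation over the pointed parameter set $\retractor$). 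This is more elementary --- it avoids ultraproducts and stays entirely within the saturation framework already set up --- at the mild cost of noting that the proof of \cref{t:compactness of retractors} goes through verbatim for variables of arbitrary cardinality. The paper's ultralimit argument, by contrast, makes the role of the ball hypothesis transparent: it is precisely what forces $g_{\mathfrak u}$ to land in a single local component of the ultrapower.
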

\begin{proof}
\begin{enumerate}[label={\rm{(\arabic*)}}, wide]
\item[\hspace{-1.2em}\setcounter{enumi}{1}\theenumi] Note that the set of endomorphisms of $\retractor$ is given by 
\[\mathrm{End}(\retractor)=\bigcap_a \ev_a^{-1}(\qftp_+(a)(\retractor)),\] 
so it is a closed subset in the pointwise topology. Since $\Aut(\retractor)=\mathrm{End}(\retractor)$ by \cref{c:endomorphism of retractor}, we conclude.
\item Let $\underline{F}$ be the partial \gls{lp type} defining $F$ and $\underline{U}$ the set of local negative formulas such that $\neg\underline{U}$ defines $\retractor^x\setminus U$. Assume $\underline{F}$ is closed under finite conjunctions and $\underline{U}$ is closed under finite disjuctions. By compactness of $B$ (\cref{t:compactness of retractors}), $\{\sigma\sth\sigma(F)\subseteq U\}=\bigcup_{\varphi\in \underline{F}, \psi\in\underline{U}}\{\sigma\sth\sigma(\varphi(\retractor))\subseteq \psi(\retractor)\}$. Hence, it suffices to show that $\{\sigma\sth\sigma(\varphi(\retractor))\subseteq \psi(\retractor)\}$ is open for $\psi$ local negative and $\varphi$ \gls{lp formula} with $\varphi(\retractor)\subseteq B$. 

Say $B=\dd(b)$ and $c$ are the parameters of $\varphi$ and $\psi$. Then, $\sigma(\varphi(\retractor,c))\subseteq \psi(\retractor,c)$ if and only if $\retractor\models_\lang \varphi(a,c)$ implies $\retractor\models_\lang \psi(\sigma(a),c)$. Now, since $\sigma\in\Aut(\retractor)$, it follows that $\retractor\models_\lang \psi(\sigma(a),c)$ if and only if $\retractor\models_\lang \psi(a,\sigma^{-1}(c))$. Therefore, $\sigma(\varphi(\retractor,c))\subseteq \psi(\retractor,c)$ if and only if $\retractor\models_\lang\forall x\mathrel{} (\varphi(x,c)\rightarrow \psi(x,\sigma^{-1}(c)))$. Now, since $\varphi(\retractor,c)\subseteq B$, we have that $\forall x\mathrel{} (\varphi(x,c)\rightarrow \psi(x,\sigma^{-1}(c)))$ is equivalent to $\phi(bc,\sigma^{-1}(c))\coloneqq\neg\exists x\in\dd(b)\mathrel{} (\varphi(x,c)\wedge \neg\psi(x,\sigma^{-1}(c)))$. Hence, 
\[\begin{aligned}\{\sigma\sth \sigma(\varphi(\retractor,c))\subseteq \psi(\retractor,c)\}=&\{\sigma \sth \retractor\models_\lang \phi(bc,\sigma^{-1}(c))\}\\ =&\{\sigma\sth\retractor\models_\lang\phi(\sigma(bc),c)\}=\ev^{-1}_{bc}(\phi(\retractor,\retractor,c)).\end{aligned}\]

If there is an entourage on $\retractor^x$, then every compact subset of $\retractor^x$ is contained in some ball. Indeed, if $F$ is compact and $\dd$ is an entourage on $\retractor^x$, as $F\subseteq \bigcup_{a\in F}\dd(a)$, we find finitely many $a_0,\ldots,a_{n-1}\in F$ such that $F\subseteq \bigcup_{i<n}\dd(a_i)$ by compactness of $F$ and \cref{l:entourages}. By the locality axiom \cref{itm:axiom 5}, there are $\dd_i$ for $0<i<n$ such that $a_i\in \dd_i(a_0)$. Hence, we get that $F\subseteq \dd'(a_0)$ for $\dd'\coloneqq\dd\ast \dd_1\ast\cdots\ast \dd_{n-1}$.    
\item Obvious as $\retractor$ is $\mathrm{T}_1$. Indeed, we have $\{f\}=\bigcap_a \ev^{-1}_a(\{f(a)\})$ for any $f\in \Aut(\retractor)$. 
\item Consider a ball $\dd(\Aut(\retractor),a)$ in $\Aut(\retractor)$. It suffices to show that $\dd(\Aut(\retractor),a)$ is closed under ultralimits. 

Pick a sequence $(g_i)_{i\in I}$ in $\dd(\Aut(\retractor),a)$ and $\mathfrak{u}$ an ultrafilter in $I$. Let $\retractor^\mathfrak{u}$ be the corresponding (non{\hyp}local) ultrapower. Consider the diagonal inclusion $\inc\map \retractor\to\retractor^{\mathfrak{u}}$ and $\Dtt(a)$ the local substructure of $\retractor^\mathfrak{u}$ at $a\coloneqq\inc(a)$. Note that $\inc\map \retractor\to\Dtt(a)$. By {\L}o\'{s}'s Theorem, $\retractor^\mathfrak{u}\premodels\theory$. By \cite[Lemma 1.8]{rodriguez2024completeness}, $\Dtt(a)\models_\lang \theory$. Consider the map $g_\mathfrak{u}\map x\mapsto [g_i(x)]_{\mathfrak{u}}$. By {\L}o\'{s}'s Theorem, $g_\mathfrak{u}$ is a homomorphism. As $\retractor\models_\lang \dd(g_i(a),a)$ for all $i\in I$ (and $\retractor$ is local), $g_\mathfrak{u}\map \retractor\to\Dtt(a)$. On the other hand, since $\retractor$ is a retractor, there is $r\map \Dtt(a)\to\retractor$ such that $\id=r\circ\inc$. Set $g\coloneqq r\circ g_\mathfrak{u}\map \retractor\to\retractor$. 

By \cref{c:endomorphism of retractor}, $g\in\Aut(\retractor,a)$. Since $(g_i)_{i\in I}$ is in $\dd(\Aut(\retractor),a)$, we get that $\retractor\models_\lang\dd_s(g_i(a_s),a_s)$ for all $i\in I$ and all sort $s$. Thus, by {\L}o\'{s}'s Theorem, $\retractor^{\mathfrak{u}}\premodels \dd_s(g_\mathfrak{u}(a_s),a_s)$ for all $s$, concluding that $\retractor\models_\lang \dd_s(g(a_s),a_s)$. Therefore, $g\in\dd(\Aut(\retractor),a)$.

Let $\varphi(x)\in\LFor^x_-(\lang(\retractor))$ and $b\in \retractor^x$ such that $\retractor\models_\lang \varphi(g(b))$. Then, $\Dtt(a)\models_\lang \varphi(g_\mathfrak{u}(b))$. By \cite[Lemma 1.8]{rodriguez2024completeness}, as $\varphi$ is local, we conclude that $\retractor^{\mathfrak{u}}\models \varphi(g_{\mathfrak{u}}(b))$. Hence, by {\L}o\'{s}'s Theorem, $\{i\sth \retractor\models_\lang\varphi(g_i(b))\}\in\mathfrak{u}$. Therefore, as $\varphi$ and $b$ are arbitrary, we conclude that $g$ is the $\mathfrak{u}${\hyp}limit of $(g_i)_{i\in I}$. 
\item $\ev_a^{-1}(\varphi(\retractor,b))^{-1}=\{\sigma^{-1}\sth \retractor\models_\lang\varphi(\sigma(a),b)\}=\{\sigma^{-1}\sth\retractor\models_\lang\varphi(a,\sigma^{-1}(b))\}=\ev_b^{-1}(a,\retractor)$. Thus, the inverse map is a homeomorphism. On the other hand, pick $\alpha\in\Aut(\retractor)$. Then, $\ev_a^{-1}(\varphi(\retractor))\circ\alpha=\{\sigma\circ\alpha\sth \retractor\models_\lang \varphi(\sigma(a))\}=\{\sigma\circ\alpha\sth \retractor\models_\lang\varphi(\sigma\circ\alpha(\alpha^{-1}(a)))\}=\ev_{\alpha^{-1}(a)}(\varphi(\retractor))$. Thus, right translations are homeomorphism. As the inverse map and right translations are continuous, left translations are continuous too. \qedhere
\end{enumerate}
\end{proof}
\begin{rmk} By \cref{t:automorphism retractor}(\cref{itm:automorphism retractor:closed subspace}), it seems natural to try to apply Tychonoff{'s} Theorem to prove \cref{t:automorphism retractor}(\cref{itm:automorphism retractor:compact balls}). Unfortunately, the pointwise topology in the space of function of $\retractor$ to itself is not the usual Tychonoff topology, as we are considering valuations under arbitrary finite tuples and not only under single elements. Still, it is obvious from the definition that the space of functions of $\retractor$ to itself with the pointwise topology can be canonically identified with the topological subspace
\[\left\{f\in\prod \retractor^a\sth \text{there is }g\map\retractor\to\retractor\text{ with } f_a=g(a)\text{ for all }a\right\}\]
of $\prod \retractor^a$ with the Tychonoff topology, where $\retractor^a$ is denoting a copy of $\retractor^s$ for each $a\in \retractor^s$ of finite sort $s$. Unfortunately, it seems that the space of functions from $\retractor$ to itself is not a closed subset of $\prod \retractor^a$, as in general $\prod \retractor^a$ is not Hausdorff. \end{rmk}

For the rest of the section, we adapt to the local case several results from \cite{hrushovski2022definability} which were not discussed at all in \cite{hrushovski2022lascar}. We start with \cite[Lemma 3.23]{hrushovski2022definability}.

\begin{lem} \label{l:pointed evaluation are closed} Any evaluation on a pointed tuple is a continuous closed map.
\end{lem}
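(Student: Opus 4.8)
\section*{Proof proposal}

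Continuity is immediate and carries no content: for a finite variable $x$ it is literally the definition of the pointwise topology on $\Aut(\retractor)$, and for an arbitrary $x$ it follows from \cref{r:continuity evaluations}. So the statement to prove is that $\ev_a$ is a \emph{closed} map whenever $a$ is pointed, and the plan is to exploit that $\ev_a$ factors through the orbit of $a$.

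First I would record two structural facts coming from pointedness. Since $\retactor\models^{\pc}\theory$ and $x$ is pointed over $\lang(B)$ for every $B\subseteq\retractor$, \cref{l:positively closed and local positive types} gives that $\ltp^{\retactor}_+(c/B)$ is a \emph{maximal} local positive type for every tuple $c$ of sort $x$ and every $B$; in particular, by \cref{c:homogeneity of retractor}, the orbit $O_a\coloneqq\{\sigma(a):\sigma\in\Aut(\retactor)\}$ equals $\{b\in\retractor^x: \ltp^{\retractor}_+(a)\subseteq\ltp^{\retractor}_+(b)\}=\bigcap_{\varphi\in\ltp^{\retractor}_+(a)}\varphi(\retractor)$, which is local positive $\bigwedge${\hyp}definable and hence closed in $\retractor^x$. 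Thus it suffices to show that $\ev_a$ is a closed map onto $O_a$. Now let $C\subseteq\Aut(\retractor)$ be closed and $b\in\overline{\ev_a(C)}\subseteq O_a$. Pick $\tau\in\Aut(\retactor)$ with $\tau(a)=b$; left translation by $\tau^{-1}$ is a homeomorphism of $\Aut(\retactor)$ by \cref{t:automorphism retractor}(\ref{itm:automorphism retractor:semitopological group}), and the coordinatewise action of $\tau^{-1}$ is a homeomorphism of $\retactor^x$, so $\tau^{-1}C$ is closed and $a=\tau^{-1}(b)\in\overline{\ev_a(\tau^{-1}C)}$. Replacing $C$ by $\tau^{-1}C$, I am reduced to: \emph{if $a\in\overline{\ev_a(C)}$ then $C$ meets the stabiliser $\Aut(\retactor/a)$.}

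The key inputs for this are: (i) $\Aut(\retactor/a)$ is compact --- it is $\bigcap_i\ev_{a_i}^{-1}(\{a_i\})$, hence closed (each $\{a_i\}$ is defined by $y=a_i$, and each $\ev_{a_i}$ is continuous), and it is contained in the equality ball of $\Aut(\retactor)$ based at a choice $a'=(a'_s)_s$ with $a'_s$ a coordinate of $a$ of sort $s$ if $s$ occurs in $a$ and a constant of sort $s$ otherwise (this ball is compact by \cref{t:automorphism retractor}(\ref{itm:automorphism retractor:compact balls})); and (ii) for each basic open neighbourhood $W$ of $a$ in $\retactor^x$ there is $\sigma_W\in C$ with $\sigma_W(a)\in W$. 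I would then produce an element of $C\cap\Aut(\retactor/a)$ as an ultralimit of the $\sigma_W$, exactly as in the proof of \cref{t:automorphism retractor}(\ref{itm:automorphism retractor:compact balls}): fix an ultrafilter $\mathfrak u$ on the net of such $W$ refining the neighbourhood filter of $a$, form the non{\hyp}local ultrapower $\retractor^{\mathfrak u}$, set $g_{\mathfrak u}(c)\coloneqq[\sigma_W(c)]_{\mathfrak u}$ (a homomorphism $\retractor\to\retactor^{\mathfrak u}$ by {\L}o\'{s}), pass to a local component $\Dtt$ of $\retactor^{\mathfrak u}$ containing both the diagonal copy of $\retactor$ and $g_{\mathfrak u}(\retactor)$, note $\Dtt\models\theory$ by \cite[Lemma 1.8]{rodriguez2024completeness}, retract along a retraction $r\colon\Dtt\to\retactor$ of the diagonal inclusion (which exists because $\retactor$ is a retractor), and put $g\coloneqq r\circ g_{\mathfrak u}\colon\retactor\to\retactor$. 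Then $g$ is an endomorphism, hence an automorphism by \cref{c:endomorphism of retractor}, it is the $\mathfrak u${\hyp}limit of $(\sigma_W)$ in $\Aut(\retactor)$ and therefore lies in $C$ since $C$ is closed, and $g(a)=a$: as $\sigma_W(a)\to a$, the element $g(a)$ satisfies every local negative formula over $\retactor$ satisfied by $a$, i.e.\ $\ltp^{\retactor}_+(g(a)/\retactor)\subseteq\ltp^{\retactor}_+(a/\retactor)$; the left{\hyp}hand side is a maximal local positive type (first paragraph), so the two types coincide, and since the right{\hyp}hand side contains the formulas $y_i=a_i$ we conclude $g(a)=a$.

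The hard part is the passage in step~(ii) from the $\sigma_W$ to the honest automorphism $g$: one must verify that the relevant elements of $\retactor^{\mathfrak u}$ can all be placed inside a single local component $\Dtt$ and that the retracted map $g$ really is the topological $\mathfrak u${\hyp}limit of the $\sigma_W$ in $\Aut(\retactor)$. This is precisely where pointedness of $a$ is essential --- it is what lets one control, sort by sort, both the coordinates of $\sigma_W(a)$ (for the sorts occurring in $a$) and the fixed constants (for the remaining sorts) --- and it is the one place where the proof genuinely re{\hyp}uses the ultraproduct machinery of \cref{t:automorphism retractor}(\ref{itm:automorphism retractor:compact balls}); once that is in hand, the rest is routine bookkeeping with the local positive logic topology, \cref{c:homogeneity of retractor}, and \cref{c:endomorphism of retractor}.
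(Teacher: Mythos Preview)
Your reduction is sound and in fact matches the skeleton of the paper's argument: the orbit $O_a$ is closed (it is $\ltp_+(a)(\retractor)$ by \cref{c:homogeneity of retractor}), the translation trick reduces to showing $C\cap\Aut(\retractor/a)\neq\emptyset$ whenever $a\in\overline{\ev_a(C)}$, and the stabiliser $\Aut(\retractor/a)$ is compact for exactly the reason you give (pointedness of $a$ supplies a point in every sort, either a coordinate of $a$ or a constant, placing the stabiliser inside a ball of $\Aut(\retractor)$). The paper uses the same compactness of the fibre in its passage from basic closed sets to arbitrary closed sets.

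The gap is in your ultralimit step. Knowing $\sigma_W(a)\in W$ for a basic open $W\ni a$ gives \emph{no} locality bound on the coordinates $\sigma_W(a_i)$: open sets in the local positive logic topology need not be contained in balls, and projections of small open neighbourhoods of $a$ onto a single coordinate can be all of $\retractor$. Pointedness only handles the sorts with constants (where $\sigma_W$ fixes the constant); it does \emph{not} control $\sigma_W(a_i)$ for sorts occurring in $a$, contrary to what you write. Hence there is no reason $g_{\mathfrak u}(\retractor)$ lands in a single local component of $\retractor^{\mathfrak u}$, and the retraction $r$ cannot be applied. Compare with \cref{t:automorphism retractor}(\ref{itm:automorphism retractor:compact balls}), where the standing hypothesis $g_i\in\dd(\Aut(\retractor),a)$ is exactly this missing locality control. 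There is a second, related gap: even if $g$ were well{\hyp}defined and were a $\mathfrak u${\hyp}limit of $(\sigma_W)$, your inclusion $\ltp_+(g(a)/\retractor)\subseteq\ltp_+(a/\retractor)$ does not follow, because the retraction $r$ is only a homomorphism (it preserves positive formulas forward, not backward), and in a non{\hyp}Hausdorff space $\mathfrak u${\hyp}limits are not unique --- both $a$ and $g(a)$ are $\mathfrak u${\hyp}limits of $\sigma_W(a)$ in $\retractor^x$ without being equal.

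The paper avoids both issues by first treating \emph{basic} closed sets $D=\ev_b^{-1}\varphi(\retractor)$ directly: using \cref{c:homogeneity of retractor} one rewrites $\ev_a(D)$ as $\{a'\sth\exists b'\,\varphi(b')\wedge p(a',b')\}$ with $p=\ltp_+(a,b)$, and pointedness of $a$ is used precisely to get a bound on the $y${\hyp}variables from $p(a',y)$, so that bounded satisfiability and saturation of $\retractor$ identify $\ev_a(D)$ with the realisation set of a partial local positive type, hence closed. Your compactness of the stabiliser then finishes the arbitrary{\hyp}closed case by a straightforward finite{\hyp}intersection argument (if $C\cap\Aut(\retractor/a)=\emptyset$, cover the compact stabiliser by finitely many basic opens disjoint from $C$, obtaining a basic closed $D\supseteq C$ with $a\notin\ev_a(D)$, contradicting $a\in\overline{\ev_a(C)}\subseteq\ev_a(D)$). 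So your framework is salvageable, but the ultralimit must be replaced by the explicit type computation for basic closed sets.
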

\begin{proof} Continuity was already noted in \cref{r:continuity evaluations}. Consider a non{\hyp}empty basic closed subset $D$ of $\Aut(\retractor)$, so $D=\ev_b^{-1}\varphi(\retractor)$ where $\varphi(y)$ is a \gls{lp formula} with parameters in $\retractor$ and $b\in \retractor^y$. Pick $a\in\retractor^x$ pointed. We now show that $\ev_a(D)$ is closed. By \cref{c:homogeneity of retractor}, $a'\in \ev_a(D)$ if and only if there is $b'\in\retractor^y$ with $\retractor\models_\lang \varphi(b')$ such that $\ltp_+(a'b')=\ltp_+(ab)$. Write $p(x,y)\coloneqq\ltp_+(ab)$. We have 
\[\ev_a(D)=\{a'\sth \text{there is }y\text{ such that } \varphi(y,c)\wedge p(a',y)\}.\]
 
Consider $\Sigma(x)=\{\exists y\mathrel{} (\varphi(y)\wedge \psi(x,y))\sth \psi\in p\}\subseteq \LFor^x_+(\lang(\retractor))$. We conclude that $\Sigma(\retractor)=\ev_a(D)$. Indeed, if $a'\in \ev_a(D)$, then there is $b'$ such that $\retractor\models_\lang\varphi(b')\wedge p(a',b')$, so, in particular, $a'$ satisfies $\Sigma$. On the other hand, if $a'$ satisfies $\Sigma$, then $\varphi(y)\wedge p(a',y)$ is finitely locally satisfiable. As $a'$ is pointed, $p(a',y)$ implies some bound of $y$ with parameters in $a'$. Thus, $\varphi(y,c)\wedge p(a',y)$ is boundedly satisfiable in $\Th_-(\retractor/\retractor)$, so it is locally satisfiable by \cref{l:boundedly satisfiable}. Hence, by saturation of $\retractor$ (\cref{t:retractors}), we get that $\varphi(y)\wedge p(a',y)$ is realised by some $b'$ in $\retractor$, concluding that $a'\in \ev_a(D)$. In particular, $\ev_a(D)$ is closed. As $D$ is arbitrary, we conclude that the image of every basic closed subset of $\Aut(\retractor)$ is closed.

Let $F$ be an arbitrary closed subset of $\Aut(\retractor)$. Then, $F=\bigcap_{i\in I} D_i$ with $D_i$ basic closed sets. Suppose $a'$ is in the closure of $\ev_a(F)$. Then, $a'$ is in the closure of $\ev_a(\bigcap_{i\in I_0}D_i)$ for all $I_0\subseteq I$. Since the basic closed subsets are closed under finite intersections, we get that $\ev_a(\bigcap_{i\in I_0}D_i)$ is closed for $I_0\subseteq I$ finite. Hence, $\ev^{-1}_a(a')\cap \bigcap_{i\in I_0} D_i\neq\emptyset$ for any $I_0\subseteq I$ finite. Now, pick $\dd=(\dd_s)_{s\in\Sorts}$ such that $\dd(a,a')$. Then, $\ev^{-1}_a(a')\subseteq \dd(\Aut(\retractor),a)$. By \cref{t:automorphism retractor}(\ref{itm:automorphism retractor:compact balls}), we get that $\ev^{-1}_a(a')$ is compact. Therefore, $\ev^{-1}_a(a')\cap F\neq\emptyset$, concluding that $a'\in\ev_a(F)$. As $a'$ is arbitrary, we get  that $\ev_a(F)$ is closed.
\end{proof}

\begin{lem} \label{l:automorphisms are topological group} Suppose $\theory$ has full systems of entourages. Then, $\Aut(\retractor)$ is a Hausdorff topological group and evaluation maps on finite sorts are jointly continuous. 
\end{lem}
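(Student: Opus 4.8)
The plan is to leverage the general topology of semitopological groups together with the local compactness of $\retractor$ supplied by \cref{c:local compactness retractor}. From \cref{t:automorphism retractor} we already know that $\Aut(\retractor)$ is a $\mathrm{T}_1$ semitopological group whose balls are compact, that each evaluation $\ev_a$ is continuous, and — crucially, using that $\theory$ has entourages — that $\{\sigma\sth\sigma(F)\subseteq U\}$ is open whenever $F$ is closed and compact and $U$ is open. Since $\theory$ has full systems of entourages on every single sort, by \cref{l:basic entourages}(\ref{itm:basic entourages:product}) (and an easy induction, noting that a product of full systems of approximations of $=$ is again one) it has full systems of entourages on every finite tuple sort; hence \cref{c:local compactness retractor} applies to $\retractor^x$ for every finite $x$, giving at each point a local base of closed compact neighbourhoods.

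First I would prove Hausdorffness. Given $\sigma\neq\tau$ in $\Aut(\retractor)$, pick a single element $a$ with $\sigma(a)\neq\tau(a)$ and a full system of entourages $\{(\psi_i,\varphi_i)\}_{i\in I}$ on the sort of $a$. As $\{\psi_i\}_{i\in I}$ is a full system of approximations of $=$ and $\retractor\models^\pc\theory$ while $\sigma(a)\neq\tau(a)$, there is some $i$ with $\retractor\not\models\psi_i(\sigma(a),\tau(a))$. By \cref{l:entourages}, $\psi_i(\retractor,\sigma(a))$ is a closed neighbourhood of $\sigma(a)$, so $\ev_a^{-1}(\mathrm{int}\,\psi_i(\retractor,\sigma(a)))$ and $\ev_a^{-1}(\retractor\setminus\psi_i(\retractor,\sigma(a)))$ are disjoint open neighbourhoods of $\sigma$ and $\tau$.

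The key step is joint continuity of the action $\Aut(\retractor)\times\retractor^x\to\retractor^x$ at points of the form $(\id,a)$, for $x$ finite. Given a basic open $O\ni a$, \cref{c:local compactness retractor} provides a closed compact neighbourhood $W$ of $a$ with $W\subseteq O$; set $V\coloneqq\mathrm{int}\,W$ and $U\coloneqq\{\sigma\sth\sigma(W)\subseteq O\}$. Then $U$ is open by \cref{t:automorphism retractor}(\ref{itm:automorphism retractor:compact-open}), $\id\in U$, and $\sigma(v)\in\sigma(W)\subseteq O$ for all $\sigma\in U$ and $v\in V$. The remaining continuity claims are then routine semitopological-group manipulations. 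A neighbourhood of $\id$ in $\Aut(\retractor)$ has the form $\ev_a^{-1}(O')$ for a finite tuple $a$ and an open $O'\ni a$ in $\retractor^x$ (projections $\retractor^x\to\retractor^{a_j}$ are continuous, and one concatenates), so joint continuity of the action at $(\id,a)$ together with continuity of $\ev_a$ yields joint continuity of multiplication at $(\id,\id)$; joint continuity of multiplication everywhere follows from joint continuity at $(\id,\id)$ plus separate continuity, using that translations are homeomorphisms; and joint continuity of the action everywhere follows likewise, using that each $\sigma$ induces a homeomorphism of $\retractor^x$ and that left translations are homeomorphisms. Combined with continuity of inversion from \cref{t:automorphism retractor}(\ref{itm:automorphism retractor:semitopological group}) and the Hausdorffness above, this gives that $\Aut(\retractor)$ is a Hausdorff topological group and that evaluation on finite sorts is jointly continuous.

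I expect the only real obstacle to be the bookkeeping around the third paragraph: the delicate point is the openness of $U=\{\sigma\sth\sigma(W)\subseteq O\}$, which is precisely where the compact-open statement \cref{t:automorphism retractor}(\ref{itm:automorphism retractor:compact-open}) and the local compactness of $\retractor$ — i.e. the hypothesis that $\theory$ has full systems of entourages — are genuinely needed; the rest is the standard passage between one-point and everywhere joint continuity and between the pointwise topology and neighbourhood bases of the form $\ev_a^{-1}(O')$.
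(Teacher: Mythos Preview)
Your argument is correct and hits the same essential ingredients as the paper --- local closed compactness of $\retractor^x$ from \cref{c:local compactness retractor} together with openness of the compact{\hyp}open sets $B(F,U)=\{\sigma\sth\sigma(F)\subseteq U\}$ from \cref{t:automorphism retractor}(\ref{itm:automorphism retractor:compact-open}) --- but the organisation is genuinely different. The paper works directly with the family $\{B(F,U)\}$ as a (sub)base for the pointwise topology and proves continuity of composition at an arbitrary pair $(\alpha,\beta)$ in one shot: given $\alpha\beta\in B(F,U)$, local compactness yields $V$ open and $K$ closed compact with $\beta(F)\subseteq V\subseteq K\subseteq\alpha^{-1}(U)$, whence $(\alpha,\beta)\in B(K,U)\times B(F,V)$. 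Hausdorffness then comes for free from $\mathrm{T}_1$ plus ``topological group'' (the paper cites \cite[\S 21 Theorem 4]{husain1966introduction}), so your separate Hausdorffness argument is not needed. Joint continuity of evaluation is handled by the same sandwich trick at an arbitrary $(\sigma,a)$. Your route instead establishes joint continuity of the action at $(\id,a)$, then bootstraps to multiplication at $(\id,\id)$ via $\ev_a$, and finally spreads everywhere using that translations and automorphisms are homeomorphisms; this is more modular and makes the role of the action explicit, at the cost of a longer chain of reductions. One minor bookkeeping point in your Hausdorffness paragraph: from $\retractor\not\models\psi_i(\sigma(a),\tau(a))$ you cannot immediately conclude $\tau(a)\notin\psi_i(\retractor,\sigma(a))$ unless $\psi_i$ is symmetric --- but this is harmless, since one may simply use $\psi_i(\retractor,\tau(a))$ instead (which is a closed neighbourhood of $\tau(a)$ not containing $\sigma(a)$), or observe that Hausdorffness follows anyway once you have a $\mathrm{T}_1$ topological group.
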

\begin{proof} Let us show first that composition in $\Aut(\retractor)$ is continuous. By \cref{t:automorphism retractor}(\ref{itm:automorphism retractor:compact-open},\ref{itm:automorphism retractor:frechet}), we have the topological base
\[\{B(F,U)\sth U\text{ open and }F\text{ closed compact in the same sort of }\retractor\},\]
where $B(F,U)\coloneqq\{\sigma\sth \sigma(F)\subseteq U\}$. Thus, it suffices to show that $\{(f,g)\sth f\circ g\in B(F,U)\}$ is open for every closed compact $F$ and open $U$.

Pick $\alpha\circ\beta\in B(F,U)$. Then, $\beta(F)\subseteq \alpha^{-1}(U)$. Since $\retractor$ is locally closed compact by \cref{c:local compactness retractor}, there are $V$ open and $K$ closed compact such that $\beta(F)\subseteq V\subseteq K\subseteq \alpha^{-1}(U)$. Hence, $(\alpha,\beta)\in B(K,U)\times B(F,V)\subseteq \{(f,g)\sth f\circ g\in B(F,U)\}$. Indeed, if $f\in B(K,U)$ and $g\in B(F,V)$, then $g(F)\subseteq V\subseteq K\subseteq f^{-1}(U)$, concluding that $f\circ g\in B(F,U)$. Since $\alpha,\beta$ and $B(F,U)$ are arbitrary, we conclude that composition is continuous. Since we already know that the inverse map is continuous by \cref{t:automorphism retractor}(\ref{itm:automorphism retractor:semitopological group}), we conclude that $\Aut(\retractor)$ is a topological group. Since $\Aut(\retractor)$ is Fr\'{e}chet by \cref{t:automorphism retractor}(\ref{itm:automorphism retractor:frechet}), it is also Hausdorff by \cite[\S 21 Theorem 4]{husain1966introduction}.

Let us now show that evaluations are jointly continuous. Pick a finite variable $x$, $a\in \retractor^x$, $\sigma\in\Aut(\retractor)$ and $U$ open subset of $\retractor^x$ such that $\sigma(a)\in U$. Then, $a\in \sigma^{-1}(U)$. By local closed compactness of $\retractor^x$, find an open set $V$ and a closed compact set $F$ such that $a\in V\subseteq F\subseteq \sigma^{-1}(U)$. Then, $(\sigma,a)\in B(F,U)\times V\subseteq \{(f,b)\sth f(b)\in U\}$. Indeed, if $f\in B(F,U)$ and $b\in V$, then $b\in V\subseteq F\subseteq f^{-1}(U)$, concluding that $f(b)\in U$. Since $\sigma$, $a$ and $U$ are arbitrary, we conclude that evaluations are jointly continuous.
\end{proof}
  
\begin{lem} \label{l:local compactness automorphisms} Suppose that $\lang$ only has finitely many single sorts and set an enumeration $s$ of all of them. Let $\dd$ be a locality relation of sort $s$. 
\begin{enumerate}[label={\rm{(\arabic*)}}, ref={\rm{\arabic*}}, wide]
\item \label{itm:local compactness automorphisms:weak local compactness} Suppose $(\psi,\varphi)$ is an entourage in $\dd$. Then, $\ev_a^{-1}\psi(\retractor,a)=\{\sigma\sth \retractor\models_\lang\psi(\sigma(a),a)\}$ is a compact neighbourhood of the identity for any $a\in \retractor^s$. In particular, $\Aut(\retractor)$ is weakly locally compact if $\retractor$ has entourages.
\item \label{itm:local compactness automorphisms:local compactness} Suppose $\{(\psi_i,\varphi_i)\}_{i\in I}$ is a full system of entourages in $\dd$. Then, 
\[\left\{\bigcap_{a\in A,\, i\in I_0} \ev_a^{-1}\psi_i(\retractor,a)\sth I_0\subseteq I,\ A\subseteq \retractor^s\text{ both finite}\right\}\] 
is a base of compact neighbourhoods of the identity. In particular, $\Aut(\retractor)$ is locally compact if $\retractor$ has full systems of entourages.
\end{enumerate}
\end{lem}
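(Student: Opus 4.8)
The plan is to derive both parts from the compactness of the balls of $\Aut(\retractor)$ (\cref{t:automorphism retractor}(\ref{itm:automorphism retractor:compact balls})) together with the facts about compact neighbourhoods in $\retractor$ itself (\cref{l:entourages,c:local compactness retractor}). For part (1), fix an entourage $(\psi,\varphi)$ in $\dd$ and $a\in\retractor^s$. First, $\ev_a^{-1}\psi(\retractor,a)=\{\sigma\sth\retractor\models\psi(\sigma(a),a)\}$ is closed, since $\ev_a$ is continuous (\cref{r:continuity evaluations}) and $\psi(\retractor,a)$ is closed in the local positive logic topology of $\retractor^s$. As $\retractor\models^\pc\theory$ by \cref{t:retractors}, \cref{l:entourages} shows $\psi(\retractor,a)$ is a closed neighbourhood of $a=\ev_a(\id)$, so $\ev_a^{-1}\psi(\retractor,a)$ is a neighbourhood of $\id$, and it also shows $\psi(\retractor,a)\subseteq\dd(a)$, whence $\ev_a^{-1}\psi(\retractor,a)\subseteq\ev_a^{-1}\dd(a)=\dd(\Aut(\retractor),a)$; this is a ball of $\Aut(\retractor)$, hence compact by \cref{t:automorphism retractor}(\ref{itm:automorphism retractor:compact balls}), so its closed subset $\ev_a^{-1}\psi(\retractor,a)$ is compact. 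For the ``in particular'', when $\theory$ has entourages one obtains an entourage in a locality relation of sort $s$ by combining entourages in the finitely many single sorts via \cref{l:basic entourages}(\ref{itm:basic entourages:product}), and then translates the resulting compact neighbourhood of $\id$ to every point using \cref{t:automorphism retractor}(\ref{itm:automorphism retractor:semitopological group}).

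For part (2), each member of the displayed family is a finite intersection of sets $\ev_a^{-1}\psi_i(\retractor,a)$ with $a\in\retractor^s$ and $(\psi_i,\varphi_i)$ an entourage in $\dd$, hence, by part (1), a closed neighbourhood of $\id$ contained in one of those compact sets and therefore itself compact --- one argues via ``a closed subset of a compact set is compact'' rather than intersecting compacts, since $\Aut(\retractor)$ need only be $\mathrm{T}_1$. To show the family is a neighbourhood base at $\id$, I would use that the pointwise topology is the initial topology of the evaluation maps, so the sets $\ev_a^{-1}(U)$ with $a\in\retractor^x$ a finite tuple and $U$ an open neighbourhood of $a$ in $\retractor^x$ form a neighbourhood base at $\id$ (a finite intersection of subbasic neighbourhoods collapses to such a set after concatenating the tuples and using continuity of the Cartesian projections, \cref{l:product logic topologies}). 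Fix such an $\ev_a^{-1}(U)$. Since $s$ lists all single sorts, one can pad: choose tuples $a^{(1)},\dots,a^{(m)}\in\retractor^s$ and a Cartesian projection $\pi$ of $(\retractor^s)^m$ onto $\retractor^x$ with $\pi(\bar a)=a$ for $\bar a=(a^{(1)},\dots,a^{(m)})$; then $\ev_a=\pi\circ\ev_{\bar a}$ and $\pi$ is continuous, so $\ev_a^{-1}(U)=\ev_{\bar a}^{-1}(\pi^{-1}(U))$ with $\pi^{-1}(U)$ open around $\bar a$.

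Next, the $m${\hyp}fold products of the $(\psi_i,\varphi_i)$ form a full system of entourages on the sort of $\bar a$ (their first components are still a full system of approximations of equality, checked coordinatewise), so \cref{c:local compactness retractor} applied on that sort yields, after collecting the finitely many indices occurring, a finite $I_0\subseteq I$ with $\bigcap_{l\le m}\bigcap_{i\in I_0}\psi_i(\retractor,a^{(l)})\subseteq\pi^{-1}(U)$ inside $(\retractor^s)^m$; pulling this back along $\ev_{\bar a}$ gives $\bigcap_{b\in A,\,i\in I_0}\ev_b^{-1}\psi_i(\retractor,b)\subseteq\ev_a^{-1}(U)$ with $A=\{a^{(1)},\dots,a^{(m)}\}$, as required. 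Thus $\id$ has a base of compact neighbourhoods, and translating (\cref{t:automorphism retractor}(\ref{itm:automorphism retractor:semitopological group})) so does every point, giving local compactness; for the last ``in particular'', if $\theory$ has full systems of entourages one first builds a full system of entourages in a locality relation of sort $s$ by taking products over the finitely many single sorts (\cref{l:basic entourages}(\ref{itm:basic entourages:product})) and then applies the above.

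The main difficulty is the bookkeeping in part (2): the displayed base involves evaluations only at full{\hyp}sort tuples $a\in\retractor^s$, whereas an arbitrary pointwise neighbourhood of $\id$ is controlled by evaluations at arbitrary finite tuples, so one must pad and transport the full systems of entourages from the single sorts (or from $s$) to the product sorts $(s,\dots,s)$ --- this is precisely where the finiteness of $\Sorts$, \cref{l:basic entourages}(\ref{itm:basic entourages:product}) and \cref{c:local compactness retractor} enter. Everything else is routine point{\hyp}set topology, with the single caveat that $\Aut(\retractor)$ need not be Hausdorff, so ``intersection of compacts'' has to be replaced by ``closed subset of a compact set''.
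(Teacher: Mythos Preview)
Your argument for part~(1) is correct and matches the paper's proof essentially line for line: the paper cites \cref{c:weak local compactness retractor} where you unpack its ingredients (\cref{l:entourages} plus continuity of $\ev_a$), and both conclude via the closed{\hyp}in{\hyp}compact argument inside the ball $\dd(\Aut(\retractor),a)$.

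For part~(2) your proof is correct but takes a genuinely different route. The paper argues abstractly: since each member of the displayed family is a compact neighbourhood of $\id$ (by part~(1)) and the family is closed under finite intersection, it suffices --- by a one{\hyp}line compactness argument --- to check that the total intersection $\bigcap_{a\in\retractor^s,\,i\in I}\ev_a^{-1}\psi_i(\retractor,a)$ equals $\{\id\}$; this is immediate because $\{\psi_i\}$ is a full system of approximations of equality, so $\bigcap_i\ev_a^{-1}\psi_i(\retractor,a)=\{\sigma\sth\sigma(a)=a\}$ for each $a$. You instead verify the base property constructively: you start from an arbitrary subbasic neighbourhood $\ev_a^{-1}(U)$, pad the finite tuple $a$ to a tuple in $(\retractor^s)^m$, transport the full system of entourages to that product sort via \cref{l:basic entourages}(\ref{itm:basic entourages:product}), and invoke \cref{c:local compactness retractor} on $(\retractor^s)^m$ to extract a finite $I_0$ and finite $A$. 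Your approach is more explicit and makes transparent exactly where the finiteness of the sort set and the product construction are used; the paper's approach is considerably shorter and avoids the padding bookkeeping entirely, at the cost of relying on the (standard but perhaps less visible) ``directed family of compact neighbourhoods with singleton intersection is a base'' trick.
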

\begin{proof} 
\begin{enumerate}[wide]
\item[\hspace{-1.2em} (\ref*{itm:local compactness automorphisms:weak local compactness})] By \cref{c:weak local compactness retractor}, $\psi(\retractor,a)\subseteq \dd(a)$ is a closed neighbourhood of $a=\id(a)$. Since $\ev_a$ is continuous, we conclude that $\ev_a^{-1}\psi(\retractor,a)$ is a closed neighbourhood of $\id$. By \cref{t:automorphism retractor}(\ref{itm:automorphism retractor:compact balls}), $\dd(\Aut(\retractor),a)=\ev^{-1}_a\dd(a)$ is compact, concluding that $\ev_a^{-1}\psi(\retractor,a)$ is a closed compact neighbourhood of $\id$. Since translations are continuous, we get that $\Aut(\retractor)$ is weakly locally closed compact. 
\item[(\ref*{itm:local compactness automorphisms:local compactness})] By compactness and the fact that translations are continuous, it suffices to show that $\{\id\}=\bigcap_{a\in\retractor^s,\, i\in I}\ev_a^{-1}\psi_i(\retractor,a)$. This is obvious since $\bigcap_{i\in I}\ev_a^{-1}\psi_i(\retractor,a)=\{\sigma\sth \sigma(a)=a\}$ when $\{\psi_i\}$ a full system of approximations of $=$.\qedhere
\end{enumerate}
\end{proof}

As \cref{l:automorphisms are topological group,l:local compactness automorphisms} show, the existence of full systems of entourages is a very strong hypothesis and, even in the non{\hyp}local context, there is a plenty of examples that do not satisfy it. In its absence, one considers Hausdorff quotients of $\Aut(\retractor)$. We conclude this subsection adapting the study of these Hausdorff quotients from \cite{hrushovski2022definability}. \medskip

Let $x$ be a variable and $X$ a locally positively $\bigwedge_0${\hyp}definable subset of $\retractor^x$. Consider
\[\inftesimal_X\coloneqq \{\sigma\sth \sigma(U)\cap U\neq\emptyset \text{ for every nonempty open subset }U\subseteq X\},\]
In other words, according to \cite[Appendix C]{hrushovski2022definability}, $\inftesimal_X$ is the infinitesimal subgroup of the evaluation action of $\Aut(\retractor)$ on $X$. In the case $X=\retractor^s$, write $\inftesimal_s\coloneqq \inftesimal_{\retractor^s}$. Finally, let $\inftesimal\coloneqq\bigcap\inftesimal_s$ and $\inftesimal_!\coloneqq\bigcap_X\inftesimal_X$. So, according to \cite[Appendix C]{hrushovski2022definability}, $\inftesimal$ is the infinitesimal subgroup of the evaluation action on $\bigsqcup \retractor^s$ and $\inftesimal_!$ is the infinitesimal subgroup of the evaluation action on $\bigsqcup X$.

By \cite[Lemma C.1]{hrushovski2022definability}, we know that $\inftesimal$, $\inftesimal_X$ and $\inftesimal_!$ are closed normal subgroups of $\Aut(\retractor)$. Let $\Gscr_X\coloneqq\rfrac{\Aut(\retractor)}{\inftesimal_X}$, $\Gscr_s\coloneqq\rfrac{\Aut(\retractor)}{\inftesimal_s}$, $\Gscr\coloneqq\rfrac{\Aut(\retractor)}{\inftesimal}$ and $\Gscr_!\coloneqq\rfrac{\Aut(\retractor)}{\inftesimal_!}$. Then, by \cite[Lemma C.1]{hrushovski2022definability}, \cite[Theorem 1.5.3]{arhangelskii2008topological} and \cref{t:automorphism retractor}(\ref{itm:automorphism retractor:semitopological group}), $\Gscr_X$, $\Gscr_s$, $\Gscr$ and $\Gscr_!$ are Hausdorff semitopological groups. Furthermore, by \cite[Theorem 1.5.3]{arhangelskii2008topological}, the quotient homomorphisms from $\Aut(\retractor)$ are continuous and open.
\begin{lem} \label{l:relations between the infinitesimal subgroups} The following hold:
\begin{enumerate}[label={\rm{(\arabic*)}}, ref={\rm{\arabic*}}, wide]
\item \label{itm:relations between infinitesimal subgroups:inclusion} Let $X$ and $Y$ be locally positively $\bigwedge_0${\hyp}definable with $Y\subseteq X$. Then, $\inftesimal_Y\subseteq \inftesimal_X$. In particular, the quotient homomorphism $\pi\map\Gscr_Y\to \Gscr_X$ is a surjective continuous open map.
\item \label{itm:relations between infinitesimal subgroups:projection} Let $X\subseteq \retractor^x$ and $Y\subseteq \retractor^y$ be locally positively $\bigwedge_0${\hyp}definable with $x\subseteq y$. Suppose $\proj(Y)\subseteq X$, where $\proj$ is the restriction of the projection of $\retractor^y$ to $\retractor^x$. Then, $\inftesimal_Y\subseteq\inftesimal_X$. In particular, the quotient homomorphism $\pi\map\Gscr_Y\to \Gscr_X$ is a surjective continuous open map.
\item \label{itm:relations between infinitesimal subgroups:full} $\inftesimal_!\subseteq \inftesimal$. In particular, the quotient homomorphism $\pi\map\Gscr_!\to \Gscr$ is a surjective continuous open map.
\end{enumerate}
\end{lem}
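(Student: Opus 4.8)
The statement to prove is \cref{l:relations between the infinitesimal subgroups}: each of the three inclusions $\inftesimal_Y\subseteq\inftesimal_X$ (under inclusion $Y\subseteq X$), $\inftesimal_Y\subseteq\inftesimal_X$ (under a projection $\proj(Y)\subseteq X$), and $\inftesimal_!\subseteq\inftesimal$, and then the "in particular" clauses about the induced quotient maps $\pi\map\Gscr_Y\to\Gscr_X$.

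The plan is to reduce everything to a single elementary topological observation about infinitesimal subgroups of an action, combined with continuity/openness of the evaluation maps established earlier. Recall $\inftesimal_X=\{\sigma\sth \sigma(U)\cap U\neq\emptyset$ for every nonempty open $U\subseteq X\}$.

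First, for \cref{itm:relations between infinitesimal subgroups:inclusion}: suppose $Y\subseteq X$ and let $\sigma\in\inftesimal_Y$. Take a nonempty open $U\subseteq X$. If $U\cap Y$ is nonempty, it is a nonempty relatively open subset of $Y$ (since $Y$ carries the subspace topology), so $\sigma(U\cap Y)\cap(U\cap Y)\neq\emptyset$, hence $\sigma(U)\cap U\neq\emptyset$. The subtlety is when $U\cap Y=\emptyset$: then I cannot argue directly. The correct way around this is the characterisation of $\inftesimal_X$ via a \emph{base} of the topology (or via dense sets) — one should note that it suffices to test $\sigma(U)\cap U\neq\emptyset$ on a dense family of open sets, and since $Y$ may not be dense in $X$ this approach is exactly where the argument is delicate. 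The cleanest fix, following \cite[Appendix C]{hrushovski2022definability}, is to observe that $\inftesimal_X$ depends only on the action restricted to the \emph{closure-minimal} pieces: more precisely, $\inftesimal_X=\inftesimal_{\overline{X}}$ where $\overline{X}$ is the closure, and that an $\Aut(\retractor)$-invariant subset $Y\subseteq X$ automatically has $\inftesimal_Y\subseteq\inftesimal_X$ because every nonempty open $U\subseteq X$ with $U\cap\overline{Y}\neq\emptyset$ meets $Y$ in a nonempty open set, while the locus $X\setminus\overline{Y}$ is invariant and open, and one checks $\sigma\in\inftesimal_Y$ forces $\sigma$ to act trivially enough on the relevant invariant open pieces. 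In fact the truly elementary route: since $Y\subseteq X$ is $\Aut(\retractor)$-invariant (every local positive $\bigwedge_0$-definable set is), we have for any $\sigma$ that $\sigma(U)\cap U=\emptyset$ for a nonempty open $U\subseteq X$ disjoint from $Y$ gives no information about membership in $\inftesimal_Y$, so the honest statement one proves is: $\inftesimal_Y\supseteq\inftesimal_X$ is false in general but $\inftesimal_Y\subseteq\inftesimal_X$ holds because testing on open sets $U\subseteq Y$ is testing on a subfamily. I would phrase it as: for $\sigma\in\inftesimal_Y$ and nonempty open $U\subseteq X$, either $U\cap Y\neq\emptyset$ (done as above) or $U\subseteq X\setminus Y$; but $X\setminus Y$ need not be open, so instead I test only those $U$ lying in $Y$ and invoke that $\inftesimal_X$ is the infinitesimal subgroup of the action on $X$, which by \cite[Appendix C]{hrushovski2022definability} equals the infinitesimal subgroup of the action on any $\Aut(\retractor)$-invariant dense subset — so I should really argue $Y$'s closure, and conclude via \cite[Lemma C.1]{hrushovski2022definability} that $\inftesimal_Y\subseteq\inftesimal_{\overline Y}=\inftesimal_X$ when $\overline Y=X$, handling the general case by the transitivity $\inftesimal_Y\subseteq\inftesimal_{\overline Y}$ together with $\inftesimal_{\overline Y}\subseteq\inftesimal_X$ for closed invariant $\overline Y\subseteq X$, the latter being immediate.

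For \cref{itm:relations between infinitesimal subgroups:projection}: here the key tool is that the projection $\proj\map\retractor^y\to\retractor^x$ is continuous, open, and $\Aut(\retractor)$-equivariant (it commutes with the evaluation action since automorphisms act coordinatewise). Given $\sigma\in\inftesimal_Y$ and a nonempty open $U\subseteq X$, pull back: $\proj^{-1}(U)\cap Y$ is a nonempty open subset of $Y$ (nonempty because $\proj(Y)\subseteq X$ is, essentially by hypothesis, such that $\proj(Y)$ meets $U$ — one needs $\proj(Y)$ dense in $X$ here, which again is the real hypothesis to make explicit, or one weakens the conclusion). Then $\sigma(\proj^{-1}(U)\cap Y)\cap(\proj^{-1}(U)\cap Y)\neq\emptyset$, and applying $\proj$ and using equivariance gives $\sigma(U)\cap U\neq\emptyset$. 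So $\sigma\in\inftesimal_X$. For \cref{itm:relations between infinitesimal subgroups:full}: $\inftesimal_!=\bigcap_X\inftesimal_X$ is an intersection over a family that includes all $\retractor^s$, so $\inftesimal_!\subseteq\bigcap_s\inftesimal_s=\inftesimal$ is immediate from the definition of intersection.

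Finally, the "in particular" clauses: in each case we have an inclusion $\inftesimal_Y\subseteq\inftesimal_X$ of closed normal subgroups of $\Aut(\retractor)$, so the identity on $\Aut(\retractor)$ descends to a well-defined group homomorphism $\pi\map\Gscr_Y=\rfrac{\Aut(\retractor)}{\inftesimal_Y}\to\Gscr_X=\rfrac{\Aut(\retractor)}{\inftesimal_X}$, surjective since the quotient maps from $\Aut(\retractor)$ are surjective. Continuity and openness of $\pi$ follow from \cite[Theorem 1.5.3]{arhangelskii2008topological}: the two quotient maps $q_Y\map\Aut(\retractor)\to\Gscr_Y$ and $q_X\map\Aut(\retractor)\to\Gscr_X$ are both continuous and open, $q_X=\pi\circ q_Y$, and for such a factorisation with $q_Y$ a continuous open surjection and $q_X$ continuous, $\pi$ is continuous; since $q_X$ is also open and $q_Y$ surjective, $\pi$ is open.

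\textbf{The main obstacle.} The delicate point is the density/invariance issue in parts \cref{itm:relations between infinitesimal subgroups:inclusion} and \cref{itm:relations between infinitesimal subgroups:projection}: the naive argument that "$\sigma\in\inftesimal_Y$ tests on fewer open sets than $\inftesimal_X$, hence $\inftesimal_Y\subseteq\inftesimal_X$" is \emph{backwards} — fewer tests should give a \emph{larger} group. The statement is nonetheless correct, and the resolution is that $\inftesimal_X$ must be understood via \cite[Appendix C]{hrushovski2022definability} as depending only on an invariant dense (or invariant closed) subobject, so that the actual content is: $Y$ invariant inside $X$ forces $\inftesimal_Y\subseteq\inftesimal_{\overline Y}$ and $\inftesimal_{\overline Y}\subseteq\inftesimal_X$ via the closed-invariant case, which uses that an automorphism fixing $Y$ setwise and moving no open subset of $Y$ off itself also moves no open subset of $X$ meeting $\overline Y$ off itself, while on the invariant open complement $X\setminus\overline Y$ the action is by \cite[Lemma C.1]{hrushovski2022definability} irrelevant to $\inftesimal_X$. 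I would spend most of the write-up making precisely this reduction rigorous, citing \cite[Appendix C, Lemma C.1]{hrushovski2022definability} for the structural facts about infinitesimal subgroups and keeping the equivariance of $\proj$ explicit.
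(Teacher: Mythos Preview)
Your plan diverges substantially from the paper's proof, and it never becomes a complete argument.

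The paper's proof of parts (\ref{itm:relations between infinitesimal subgroups:inclusion}) and (\ref{itm:relations between infinitesimal subgroups:projection}) is exactly the direct argument you sketch first and then abandon: take $\sigma\in\inftesimal_Y$ and a nonempty open $U\subseteq X$; since the inclusion $\inc\colon Y\to X$ (respectively $\proj\colon Y\to X$) is continuous and $\Aut(\retractor)$-equivariant, its preimage of $U$ is open in $Y$, so by $\sigma\in\inftesimal_Y$ the preimage meets its $\sigma$-translate, and pushing forward along $\inc$ (respectively $\proj$) gives $\sigma(U)\cap U\neq\emptyset$. Part (\ref{itm:relations between infinitesimal subgroups:full}) is deduced from (\ref{itm:relations between infinitesimal subgroups:inclusion}) in the paper, though your observation that it is immediate from $\inftesimal_!=\bigcap_X\inftesimal_X$ being an intersection over a family already containing every $\retractor^s$ is correct and arguably cleaner. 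The ``in particular'' clauses are handled exactly as you propose, via \cite[Theorem 1.5.3]{arhangelskii2008topological}.

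Your ``main obstacle'' --- that the preimage $\inc^{-1}(U)$ or $\proj^{-1}(U)$ could be empty --- is a legitimate worry about this direct argument, and your slogan that ``fewer tests should give a larger group'' correctly isolates why the reasoning looks suspicious. But the paper simply does not engage with this case: it applies the definition of $\inftesimal_Y$ to the preimage and moves on. None of the density, closure, or invariant-open-complement machinery you propose appears there. More importantly, you never actually carry your workaround through: the plan meanders between several incompatible reductions (dense subsets, closed invariant subsets, $\inftesimal_{\overline{Y}}$) and ends with ``I would spend most of the write-up making precisely this reduction rigorous,'' which is not a proof. As written, your proposal has a genuine gap: you reject the paper's short argument on grounds you articulate clearly, but you do not supply a replacement that closes the case you worry about.
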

\begin{proof}
\begin{enumerate}[wide]
\item[\hspace{-1.2em}\rm{(\ref*{itm:relations between infinitesimal subgroups:inclusion})}] By definition, the inclusion $\inc\map Y\to X$ is continuous. Also, note that the evaluation action of $\Aut(\retractor)$ commutes with $\inc$, i.e. $\inc(\sigma(a))=\sigma(\inc(a))$. Suppose $\sigma\in\inftesimal_Y$. Pick $U\subseteq X$ open. As $\sigma\in\inftesimal_X$, there is $a\in \sigma(\inc^{-1}(U))\cap\inc^{-1}(U)$. Hence, $\inc(a)\in U$ and $\sigma^{-1}(\inc(a))=\inc(\sigma^{-1}(a))\in U$, concluding $\inc(a)\in \sigma(U)\cap U$. As $U$ and $\sigma$ are arbitrary, $\inftesimal_Y\subseteq \inftesimal_X$. By \cite[Theorem 1.5.3]{arhangelskii2008topological}, the quotient homomorphism $\pi\map\Gscr_Y\to \Gscr_X$ is continuous and open.
\item[\rm{(\ref*{itm:relations between infinitesimal subgroups:projection})}] By \cref{l:product logic topologies}, $\proj$ is continuous. Also, note that the evaluation action of $\Aut(\retractor)$ commutes with $\proj$, i.e. $\proj(\sigma(a))=\sigma(\proj(a))$. Suppose $\sigma\in\inftesimal_Y$. Pick $U\subseteq X$ open. As $\sigma\in\inftesimal_Y$, there is $a\in \sigma(\proj^{-1}(U))\cap\proj^{-1}(U)$. Hence, $\proj(a)\in U$ and $\sigma^{-1}(\proj(a))=\proj(\sigma^{-1}(a))\in U$, concluding $\proj(a)\in \sigma(U)\cap U$. As $U$ and $\sigma$ are arbitrary, $\inftesimal_Y\subseteq \inftesimal_X$. By \cite[Theorem 1.5.3]{arhangelskii2008topological}, the quotient homomorphism $\pi\map\Gscr_Y\to \Gscr_X$ is continuous and open.
\item[\rm{(\ref*{itm:relations between infinitesimal subgroups:full})}] By {\rm{(\ref*{itm:relations between infinitesimal subgroups:inclusion})}}, $\inftesimal_!\subseteq\inftesimal$. By \cite[Theorem 1.5.3]{arhangelskii2008topological}, the quotient homomorphism $\pi\map\Gscr_!\to \Gscr$ is continuous and open. \qedhere
\end{enumerate}
\end{proof}

\begin{lem} Suppose that $\lang$ only has finitely many single sorts and $\theory$ has entourages. Then, $\Gscr_X$ is a locally compact Hausdorff topological group for any locally positively $\bigwedge_0${\hyp}definable set $X$. Similarly, $\Gscr$ and $\Gscr_!$ are locally compact Hausdorff topological groups too.
\end{lem}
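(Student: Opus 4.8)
The plan is to build up from the infinitesimal-quotient machinery already established. First I would observe that, with $\lang$ having only finitely many single sorts and $\theory$ having entourages, \cref{l:local compactness automorphisms}(\ref{itm:local compactness automorphisms:weak local compactness}) already gives that $\Aut(\retractor)$ is weakly locally compact: for a locality relation $\dd$ on some enumeration $s$ of all the single sorts, an entourage $(\psi,\varphi)$ in $\dd$ yields a compact neighbourhood $\ev_a^{-1}\psi(\retractor,a)$ of the identity. Moreover, by \cref{c:weak local compactness retractor}, $\retractor$ is weakly locally closed compact in the local positive logic topology under the entourage hypothesis. The strategy is then to transfer compactness of a neighbourhood of the identity in $\Aut(\retractor)$ down to each Hausdorff quotient $\Gscr_X = \rfrac{\Aut(\retractor)}{\inftesimal_X}$ via the continuous open quotient homomorphism $\pi\map \Aut(\retractor)\to \Gscr_X$, and then to invoke the standard fact that a Hausdorff semitopological group with a compact neighbourhood of the identity is in fact a locally compact Hausdorff topological group (Ellis' theorem, as used in \cite{hrushovski2022definability}).

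The key steps, in order: (1) Recall that $\Gscr_X$, $\Gscr$, $\Gscr_!$ are Hausdorff semitopological groups with continuous open quotient maps from $\Aut(\retractor)$, as already noted right before \cref{l:relations between the infinitesimal subgroups}. (2) Fix an entourage $(\psi,\varphi)$ in a locality relation $\dd$ on the (finite, enumerated) sort $s$, and fix $a\in\retractor^s$; by \cref{l:local compactness automorphisms}(\ref{itm:local compactness automorphisms:weak local compactness}), $K\coloneqq\ev_a^{-1}\psi(\retractor,a)$ is a compact neighbourhood of $\id$ in $\Aut(\retractor)$. (3) Since $\pi$ is continuous, $\pi(K)$ is compact in $\Gscr_X$; since $\pi$ is open and $K$ is a neighbourhood of $\id$, $\pi(K)$ is a neighbourhood of the identity of $\Gscr_X$. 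Hence $\Gscr_X$ is a Hausdorff semitopological group with a compact neighbourhood of the identity. (4) Apply Ellis' theorem (every locally compact Hausdorff semitopological group is a topological group --- this is precisely the ingredient invoked for the analogous non-local statement in \cite{hrushovski2022definability}; cite it there, e.g. \cite[Appendix C]{hrushovski2022definability} or the original Ellis reference) to conclude $\Gscr_X$ is a locally compact Hausdorff topological group. (5) The cases of $\Gscr=\Gscr_{\bigsqcup\retractor^s}$ and $\Gscr_!$ are identical, since the same $\pi$ and $K$ work --- indeed $\Gscr$ and $\Gscr_!$ are themselves quotients of the form $\rfrac{\Aut(\retractor)}{\inftesimal}$ with $\inftesimal,\inftesimal_!$ closed normal, and the argument never used anything about $X$ beyond $\Gscr_X$ being a Hausdorff semitopological group admitting a continuous open quotient map from $\Aut(\retractor)$.

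I expect the main (and essentially only) subtlety to be the appeal to Ellis' theorem: one must be careful that the hypotheses --- local compactness and Hausdorffness of a \emph{semitopological} group --- are exactly what is available, and that this is genuinely enough to upgrade to a topological group structure (so that, in particular, the quotient group operations are jointly continuous even though we only directly know separate continuity from \cref{t:automorphism retractor}(\ref{itm:automorphism retractor:semitopological group})). Everything else is routine: compactness and neighbourhood-of-identity are preserved by a continuous open surjective homomorphism, and the finitely-many-sorts plus entourages hypotheses are used solely to produce one compact neighbourhood of $\id$ in $\Aut(\retractor)$ upstairs via \cref{l:local compactness automorphisms}. A short proof suffices:

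\begin{proof} By the discussion preceding \cref{l:relations between the infinitesimal subgroups}, $\Gscr_X$ is a Hausdorff semitopological group and the quotient homomorphism $\pi\map\Aut(\retractor)\to\Gscr_X$ is continuous and open. Fix an enumeration $s$ of the finitely many single sorts, a locality relation $\dd$ on $s$, an entourage $(\psi,\varphi)$ in $\dd$ (which exists as $\theory$ has entourages), and $a\in\retractor^s$. By \cref{l:local compactness automorphisms}(\ref{itm:local compactness automorphisms:weak local compactness}), $K\coloneqq\ev_a^{-1}\psi(\retractor,a)$ is a compact neighbourhood of $\id$ in $\Aut(\retractor)$. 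Since $\pi$ is continuous, $\pi(K)$ is compact; since $\pi$ is open and surjective and $K$ is a neighbourhood of $\id$, $\pi(K)$ is a neighbourhood of the identity of $\Gscr_X$. Thus $\Gscr_X$ is a locally compact Hausdorff semitopological group, hence a topological group by Ellis{'} Theorem (see \cite[Appendix C]{hrushovski2022definability}). The same argument, with $X$ replaced by $\bigsqcup_s\retractor^s$ and by $\bigsqcup_X X$ respectively, shows that $\Gscr$ and $\Gscr_!$ are locally compact Hausdorff topological groups. \end{proof}
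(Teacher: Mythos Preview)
Your proof is correct and follows essentially the same approach as the paper: push a compact neighbourhood of the identity in $\Aut(\retractor)$ (obtained from \cref{l:local compactness automorphisms}(\ref{itm:local compactness automorphisms:weak local compactness})) through the continuous open quotient map to get weak local compactness of the Hausdorff semitopological quotient, then apply Ellis' Theorem. The only cosmetic difference is that the paper cites Ellis directly (\cite[Theorem 2]{ellis1957locally}) rather than via \cite{hrushovski2022definability}, and phrases the transfer as ``weakly locally compact, hence locally compact by Hausdorffness'' before invoking Ellis.
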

\begin{proof} By \cref{l:local compactness automorphisms}(\ref{itm:local compactness automorphisms:weak local compactness}), $\Aut(\retractor)$ is weakly compact. Then, since $\pi\map\Aut(\retractor)\to \Gscr_X$ is continuous and open, we get that $\Gscr_X$ is weakly locally compact. Hence, by Hausdorffness, it is locally compact. As it is a semitopological group, by Ellis' Theorem \cite[Theorem 2]{ellis1957locally}, we conclude that it is a locally compact Hausdorff topological group. Similar for $\Gscr_!$ and $\Gscr$.
\end{proof}

Let $X$ be a locally positively $\bigwedge_0${\hyp}definable subset. Let $\rfrac{X}{\inftesimal_X}$ denote the quotient space of $\inftesimal_X${\hyp}orbits of $X$. In \cite{hrushovski2022definability}, it was proved that this quotient space is Hausdorff and, if $X$ is defined by a \gls{lp type}, the $\inftesimal_X${\hyp}conjugacy relation in $X$ is locally positively $\bigwedge_0${\hyp}definable --- see \cite[Propositions 3.24]{hrushovski2022definability}. We adapt here these results.

\begin{lem} \label{l:orbits space} Let $X$ be a locally positively $\bigwedge_0${\hyp}definable subset of $\retractor^x$ on a pointed variable $x$. Write $\underline{X}(x)$ for a partial \gls{lp type} without parameters defining it. Then:
\begin{enumerate}[label={\rm{(\arabic*)}}, ref={\rm{\arabic*}}, wide]
\item \label{itm:orbits space:Hausdorffness} $\rfrac{X}{\inftesimal_X}=\rfrac{X}{\inftesimal_!}$ is Hausdorff.
\item \label{itm:orbits space:orbit equivalence} Suppose $\underline{X}$ is a \gls{lp type}. Then, the $\inftesimal_X${\hyp}conjugacy equivalence relation on $X$ is locally positively $\bigwedge_0${\hyp}definable.
\end{enumerate}
\end{lem}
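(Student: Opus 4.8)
The plan is to deduce both parts from the general topological dynamics of \cite[Appendix C]{hrushovski2022definability} together with the specific local positive logic facts established earlier. For \cref{itm:orbits space:Hausdorffness}, I would first argue that $\inftesimal_!$ acts on $X$ with the same orbits as $\inftesimal_X$. The inclusion $\inftesimal_!\subseteq\inftesimal_X$ is immediate from \cref{l:relations between the infinitesimal subgroups}(\ref{itm:relations between infinitesimal subgroups:inclusion}), so one orbit equivalence refines the other; for the reverse, I would invoke the general fact from \cite[Appendix C]{hrushovski2022definability} that the infinitesimal subgroup of an action on a space $Y$ acts on $Y$ with the same orbit closures/orbits as any larger infinitesimal subgroup coming from a finer action, using that the evaluation action on $X$ factors through the evaluation action on the full disjoint union $\bigsqcup_Z Z$ of all local positive $\bigwedge_0$-definable sets. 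Once the orbits coincide, $\rfrac{X}{\inftesimal_X}=\rfrac{X}{\inftesimal_!}$ as sets, and then I would apply \cite[Proposition 3.24]{hrushovski2022definability} (or the underlying abstract statement in the appendix) which gives that the orbit space of the infinitesimal subgroup of a group action on a topological space is always Hausdorff; since $X$ carries the local positive logic topology and the evaluation action is by homeomorphisms (as in \cref{t:automorphism retractor}), this applies directly.

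For \cref{itm:orbits space:orbit equivalence}, the goal is to find a partial local positive type $\Delta(x,y)$ over $\emptyset$, on two disjoint copies of the sort of $x$, such that for $a,b\in X$ we have $\retractor\models\Delta(a,b)$ iff $b\in\inftesimal_X\cdot a$. The natural candidate, echoing the semi-Hausdorffness construction earlier in the paper, is
\[\Delta(x,y)\coloneqq \underline{X}(x)\wedge\underline{X}(y)\wedge\{\varphi(x)\vee\psi(y)\sth \varphi,\psi\in\LFor^x_+(\lang),\ \varphi\toprel\psi\},\]
i.e. the restriction to $X$ of the semi-Hausdorff-style diagonal. I would show: if $\ltp_+(a)=\ltp_+(b)$ then by \cref{c:homogeneity of retractor} there is $\sigma\in\Aut(\retractor)$ with $\sigma(a)=b$, and a separate argument (this is where $\inftesimal_X$ enters) shows the relevant $\sigma$ can be taken in $\inftesimal_X$ when $a,b$ both lie in $X$ — here one uses that $\underline{X}$ being a single local positive \emph{type} forces $a$ and $b$ to have the same local positive type, hence $X$ is a single orbit-closure, and \cite[Proposition 3.24]{hrushovski2022definability} identifies the $\inftesimal_X$-conjugacy relation with equality of local positive types on $X$. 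Conversely I would check that $\retractor\models\Delta(a,b)$ forces $\ltp_+(a)=\ltp_+(b)$ exactly as in the proof that Hausdorff implies semi-Hausdorff, and then again invoke \cite[Proposition 3.24]{hrushovski2022definability} to pass from equality of types to $\inftesimal_X$-conjugacy. Pointedness of $x$ is used throughout so that $\ltp_+$ is maximal (\cref{l:positively closed and local positive types}, \cref{c:local positive types}) and so that bounds behave well (\cref{l:types over pointed parameters}).

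The main obstacle I anticipate is the precise bridge between the purely topological-dynamical statements of \cite[Appendix C]{hrushovski2022definability} and the model-theoretic characterisation: namely, verifying that for $X$ defined by a local positive \emph{type} (as opposed to a mere $\bigwedge$-definable set), the $\inftesimal_X$-orbit equivalence on $X$ coincides with equality of local positive types, and that this coincidence is exactly what \cite[Proposition 3.24]{hrushovski2022definability} delivers once one knows $\retractor$ is saturated and local positively closed (\cref{t:retractors}) and that balls are compact in the local positive logic topology (\cref{t:compactness of retractors}). The compactness of balls is what makes the orbit-closure and orbit coincide on the relevant pieces, so I would be careful to reduce everything to a single ball (using the bound implicit in $\underline{X}$ together with \cref{l:boundedly satisfiable}) before applying the appendix. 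The rest — that $\Delta$ as written is a partial local positive type, and that it is realised by $(a,b)$ precisely when the types agree — is routine given the earlier semi-Hausdorffness argument.
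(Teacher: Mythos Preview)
Your approach to \cref{itm:orbits space:Hausdorffness} is broadly on track, though you are vague about exactly which hypothesis from \cite[Appendix C]{hrushovski2022definability} makes the orbits of $\inftesimal_!$ and $\inftesimal_X$ coincide on $X$. The paper's proof isolates this: one needs that $\Aut(\retractor)$ acts on $X$ by closed maps, which follows from \cref{l:pointed evaluation are closed} (evaluation at a pointed tuple is a closed map), and then \cite[Remark C.3]{hrushovski2022definability} applies. You should make this explicit rather than gesturing at ``factoring through the full disjoint union''.

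Your approach to \cref{itm:orbits space:orbit equivalence}, however, is wrong, and the error is conceptual. If $\underline{X}$ is a local positive \emph{type}, then every element of $X$ already realises $\underline{X}$, so equality of local positive types over $\emptyset$ is the trivial relation $X\times X$. Your proposed $\Delta$ is (at best, under a Hausdorffness assumption you do not have) a definition of type equality, hence on $X$ it would define all of $X\times X$. But $\inftesimal_X$-conjugacy is typically a \emph{proper} equivalence relation on $X$: the infinitesimal subgroup $\inftesimal_X$ is generally much smaller than $\Aut(\retractor)$, so its orbits are finer than $\Aut(\retractor)$-orbits, not equal to them. The assertion that ``\cite[Proposition 3.24]{hrushovski2022definability} identifies the $\inftesimal_X$-conjugacy relation with equality of local positive types on $X$'' is simply false; that proposition is the non-local analogue of the present lemma and does not say this.

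The paper's argument for \cref{itm:orbits space:orbit equivalence} is entirely different. From part (1), $\rfrac{X}{\inftesimal_X}$ is Hausdorff, so its diagonal is closed; pulling back via the quotient map (and using \cref{l:product logic topologies}) shows the conjugacy relation $E$ is closed in $X\times X$, hence defined by some partial local positive type $\Sigma(x,y,c)$ \emph{with parameters} $c$. One then checks, using normality of $\inftesimal_X$ in $\Aut(\retractor)$ and \cref{c:homogeneity of retractor}, that whether $\Sigma(a,b,c')$ holds depends only on $\ltp_+(c')$, not on $c'$ itself. Finally, the parameters are eliminated by existentially quantifying over $z$ realising $p(z)=\ltp_+(c)$ together with a bound $\Bound(x,z)$; bounded satisfiability (\cref{l:boundedly satisfiable}) and saturation of $\retractor$ ensure this existential quantification does not introduce spurious pairs. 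The hypothesis that $\underline{X}$ is a type is used only to guarantee that for \emph{every} $a\in X$ one can find such a $c'$ realising $p$ with $\Bound(a,c')$, via homogeneity. You should abandon the semi-Hausdorff template and follow this closedness-then-eliminate-parameters route.
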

\begin{proof} 
\begin{enumerate}[wide]
\item[\hspace{-1.2em}{\rm{(\ref*{itm:orbits space:Hausdorffness})}}] By \cite[Lemma C.1]{hrushovski2022definability}, $\Gscr_!$ and $\Gscr_X$ are Hausdorff. By \cref{l:pointed evaluation are closed}, $\Aut(\retractor)$ acts by closed maps. By \cite[Remark C.3]{hrushovski2022definability}, it follows that $\rfrac{X}{\inftesimal_!}=\rfrac{X}{\inftesimal_X}$. By \cite[Lemma C.2(1)]{hrushovski2022definability}, $\rfrac{X}{\inftesimal_X}$ is Hausdorff. 
\item[{\rm{(\ref*{itm:orbits space:orbit equivalence})}}] By (\ref*{itm:orbits space:Hausdorffness}), $\rfrac{X}{\inftesimal_X}$ is Hausdorff, so the diagonal is closed. By \cref{l:product logic topologies}, we conclude that the $\inftesimal_X${\hyp}conjugacy equivalence relation $E$ is closed in $X\times X$. Let $\Sigma(x,y,c)$ be a partial \gls{lp type} with parameters $c$ defining $\inftesimal_X${\hyp}conjugacy in $X$. Without loss of generality, take $c$ pointed. Then, $\Sigma(a,b,c)$ if and only if $\Sigma(a,b,c')$ for any $c'$ such that $\ltp_+(c)=\ltp_+(c')$. One direction is obvious. On the other hand, by \cref{c:homogeneity of retractor}, since $\ltp_+(c)=\ltp(c')$, there is $\sigma\in\Aut(\retractor)$ such that $\sigma(c')=c$. Then, as $\Sigma(a,b,c')$, we get $\Sigma(\sigma(a),\sigma(b),c)$, so there is $g\in \inftesimal_X$ with $g\sigma(a)=\sigma(b)$, so $\sigma^{-1}g\sigma(a)=b$. By normality of $\inftesimal_X$, we get $\sigma^{-1}g\sigma\in\inftesimal_X$, so $\Sigma(a,b,c)$ by definition of $E$. 

Let $p(z)\coloneqq\ltp_+(c)$. Picking any $a\in X$, by \cref{itm:axiom 5}, there is some bound $\Bound(x,z)$ such that $\retractor\models_\lang \Bound(a,c)$. Then, by \cref{c:homogeneity of retractor}, for any $a'\in X$, there is $c'\in p(\retractor)$ with $\retractor\models \Bound(a',c')$. Hence, we conclude that $\inftesimal_X${\hyp}conjugacy is defined by 
\[\begin{aligned} \underline{E}(x,y)&\coloneqq \exists z\mathrel{} (\Sigma(x,y,z)\wedge p(z)\wedge \Bound(x,z))\\ &\coloneqq \left\{\exists z\mathrel{} (\varphi(x,y,z)\wedge \psi(z)\wedge \bigwedge B_0(x,z))\sth\varphi\in \Sigma,\ \psi\in p,\ B_0\subseteq B\text{ finite}\right\}.\end{aligned}\] 
Indeed, if $a$ and $b$ are $\inftesimal_X${\hyp}conjugates, then $\retractor\models_\lang \Sigma(a,b,c')$ for any $c'\in p(\retractor)$. Now, there is $c'\in p(\retractor)$ such that $\retractor\models_\lang \Bound(a,c')$, so we get that $\retractor\models_\lang \underline{E}(a,b)$ witnessed by $z=c'$. On the other hand, suppose $a$ and $b$ satisfy $\underline{E}$. Then, the partial \gls{lp type} $\Sigma(a,b,z)\wedge p(z)\wedge \Bound(a,z)$ is finitely locally satisfiable. Now, $\Bound(a,z)$ is a bound, so we get by \cref{l:boundedly satisfiable}, that there is $c'$ realising it. In other words, $\ltp_+(c')=\ltp_+(c)$ with $\Sigma(a,b,c')$, so $a$ and $b$ are $\inftesimal_X${\hyp}conjugates.

Therefore, we conclude that the $\inftesimal_X${\hyp}conjugacy equivalence relation is locally positively $\bigwedge_0${\hyp}definable. \qedhere 
\end{enumerate}
\end{proof}

We finish with \cite[Proposition 3.25 and Corollary 3.26]{hrushovski2022definability}. 

\begin{lem} \label{l:orbits space dense subset}  Let $p(x)$ be a \gls{lp type} without parameters of $\retractor$ on a pointed variable $x$. Slightly abusing of the notation, denote by $p$ the set of realisations $p(\retractor)$. Then, $\rfrac{p}{\inftesimal_!}$ has a dense subset of size $|\lang|^{|x|}$. In particular, $\left|\Aut\left(\rfrac{p}{\inftesimal_!}\right)\right|\leq 2^{|\lang|^{|x|}}$.
\end{lem}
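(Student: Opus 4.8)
The plan is to exhibit an explicit dense subset of $\rfrac{p}{\inftesimal_!}$ of the claimed cardinality by using the density of the definable topology and the fact that $\inftesimal_!$-orbit closures are controlled by local positive formulas without parameters. First I would recall from \cref{l:orbits space}(\ref{itm:orbits space:Hausdorffness}) that $\rfrac{p}{\inftesimal_!}=\rfrac{p}{\inftesimal_p}$ is Hausdorff, and that the quotient map $\pi\map p\to \rfrac{p}{\inftesimal_p}$ is continuous and open (the latter being a general feature of quotients by group actions, as used already in the construction of the $\Gscr$'s). Since $p$ carries the local positive logic topology, a basis of closed sets is given by the traces on $p$ of sets $\langle\Gamma\rangle$ for $\Gamma\subseteq\LFor^x_+(\lang(\retractor))$; but by \cref{c:homogeneity of retractor} the action of $\Aut(\retractor)$ on $\retractor^x$ is transitive on $p$ modulo parameters, so to detect membership in an $\inftesimal_p$-orbit closure it suffices to look at $\bigwedge_0$-definable sets, i.e. formulas over $\emptyset$. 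There are only $|\lang|^{|x|}$ local positive formulas over $\emptyset$ on $x$ (one single variable of $x$ at a time contributes $|\lang|$ choices, and there are $|x|$ of them; more precisely the count of finite positive formulas in the language $\lang$ in the variable $x$ is $|\lang|+|x|\le|\lang|^{|x|}$ when $x$ is infinite, and is $|\lang|$ when $x$ is finite, so in all cases $\le|\lang|^{|x|}$).

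The key step is then: for each consistent local positive formula $\varphi(x)\in\LFor^x_+(\lang)$ that is compatible with $p$, pick (using \cref{l:partial local positive types} and saturation of $\retractor$ via \cref{t:retractors}) a point $a_\varphi\in p(\retractor)$ with $\retractor\models\varphi(a_\varphi)$, and let $D\coloneqq\{\pi(a_\varphi)\sth \varphi\in\LFor^x_+(\lang),\ p\wedge\varphi\text{ locally satisfiable}\}$. Then $|D|\le|\lang|^{|x|}$. To see $D$ is dense, take any nonempty open $V\subseteq\rfrac{p}{\inftesimal_p}$; then $\pi^{-1}(V)$ is open and nonempty in $p$, so by definition of the topology there is some $\varphi\in\LFor^x_+(\lang(\retractor))$ with $\langle\varphi\rangle^c\cap p\subseteq\pi^{-1}(V)$ meeting $p$ — I would argue that, since $\inftesimal_p$ is $\Aut(\retractor)$-invariant and $\pi^{-1}(V)$ is therefore $\Aut(\retractor/\emptyset)$-invariant, the parameters in $\varphi$ can be absorbed: the complement of $\pi^{-1}(V)$ is a closed $\Aut(\retractor)$-invariant subset of $p$, hence $\bigwedge_0$-definable by a standard argument (a type over $\retractor$ whose solution set is $\Aut(\retractor)$-invariant is, by \cref{c:homogeneity of retractor}, equivalent to one over $\emptyset$). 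So there is a consistent $\varphi_0\in\LFor^x_+(\lang)$ with $\langle\varphi_0\rangle^c\cap p$ nonempty and contained in $\pi^{-1}(V)$; then $\psi\coloneqq$ a suitable witness formula over $\emptyset$ put $a_\psi$ into $\pi^{-1}(V)$, giving $\pi(a_\psi)\in D\cap V$.

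The main obstacle I expect is the parameter-absorption argument: making precise that the preimage under $\pi$ of an open set, which a priori is cut out by local positive formulas \emph{over $\retractor$}, can be described using only formulas over $\emptyset$ — equivalently that $\inftesimal_p$-orbit closures in $p$ are $\bigwedge_0$-definable. This is exactly the kind of statement packaged in \cref{l:orbits space}(\ref{itm:orbits space:orbit equivalence}) (with $X=p$): the $\inftesimal_p$-conjugacy relation on $p$ is local positive $\bigwedge_0$-definable, so each $\inftesimal_p$-orbit closure is $\bigwedge_0$-definable by fibering that relation, and hence the quotient topology on $\rfrac{p}{\inftesimal_p}$ is generated by images of $\bigwedge_0$-definable sets; this is what bounds the weight, hence the density character, of the quotient by $|\lang|^{|x|}$. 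Finally, for the ``in particular'' clause: a Hausdorff space with a dense subset of size $\le\kappa$ has at most $2^{2^{\kappa}}$ points and $\le 2^{2^{\kappa}}$ self-homeomorphisms in general, but here the relevant bound $2^{|\lang|^{|x|}}$ follows because a continuous map out of a space is determined by its values on a dense set into a Hausdorff space, so $\Aut(\rfrac{p}{\inftesimal_!})$ injects into $(\rfrac{p}{\inftesimal_!})^{D}$, and $|\rfrac{p}{\inftesimal_!}|\le 2^{|D|}\le 2^{|\lang|^{|x|}}$ since $\rfrac{p}{\inftesimal_!}$ is Hausdorff with dense set $D$ (each point is the unique limit of an ultrafilter on $D$); thus $\left|\Aut\left(\rfrac{p}{\inftesimal_!}\right)\right|\le \bigl(2^{|\lang|^{|x|}}\bigr)^{|D|}=2^{|\lang|^{|x|}}$.
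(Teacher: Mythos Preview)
Your approach has a genuine gap at the parameter{\hyp}absorption step, and it cannot be repaired along the lines you suggest.

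The claim that $\pi^{-1}(V)$ is $\Aut(\retractor)${\hyp}invariant is false. Normality of $\inftesimal_p$ in $\Aut(\retractor)$ only tells you that $\Aut(\retractor)$ acts on the quotient $\rfrac{p}{\inftesimal_p}$; it does not say that every open set in the quotient is fixed by that action. Concretely, for $\sigma\in\Aut(\retractor)$ one has $\sigma(\inftesimal_p\cdot a)=\inftesimal_p\cdot\sigma(a)$, which is typically a different orbit. More decisively: $p(\retractor)$ is a single $\Aut(\retractor)${\hyp}orbit by \cref{c:homogeneity of retractor}, so every $\Aut(\retractor)${\hyp}invariant subset of $p(\retractor)$ is either $\emptyset$ or $p(\retractor)$ itself. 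In particular, formulas over $\emptyset$ do not separate any two points of $p(\retractor)$ (they all realise the \emph{same} complete type $p$), so your family $\{a_\varphi\}$ carries no information and there is no reason $D$ should be dense. The reference to \cref{l:orbits space}(\ref{itm:orbits space:orbit equivalence}) does not help here: that the conjugacy \emph{relation} is $\bigwedge_0${\hyp}definable does not make its individual fibres $\bigwedge_0${\hyp}definable.

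The paper{'s} argument is structurally different and avoids this trap by introducing a parameter. One fixes $c\in p(\retractor)$, notes $p=\bigcup_\dd(p\cap\dd(c))$ with at most $|\lang|^{|x|}$ balls, and works on each compact piece $\dd(p,c)$. There, \cref{l:orbits space}(\ref{itm:orbits space:orbit equivalence}) gives that the $\inftesimal_!${\hyp}conjugacy relation is $\bigwedge_c${\hyp}definable (note: over $c$, not over $\emptyset$), so its complement is covered by $|\lang|$ local positive $c${\hyp}definable sets $D_i$. Saturating each $D_i$ by $E$ and passing to the compact Hausdorff quotient, one finds that the diagonal in $\rfrac{\dd(p,c)}{\inftesimal_!}\times\rfrac{\dd(p,c)}{\inftesimal_!}$ is an intersection of $|\lang|$ open sets in the product topology, whence the density bound by a standard compactness argument. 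The essential idea you are missing is that compactness (of balls) is what converts ``few formulas over a parameter'' into ``small density'', and that some parameter is unavoidable.

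A smaller issue: in your ``in particular'' clause, the inequality $|\rfrac{p}{\inftesimal_!}|\leq 2^{|D|}$ does not follow from Hausdorffness alone (think of $\beta\omega$). One needs the compactness{\hyp}and{\hyp}weight bound on each $\rfrac{\dd(p,c)}{\inftesimal_!}$ to control the size of the space before bounding its automorphism group.
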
 
\begin{proof} Pick $c\in p$ arbitrary. By \cref{itm:axiom 5}, $p=p\cap \Dtt(c)$. As there are at most $|\lang|^{|x|}$ balls centred at $c$, it suffices to show that $\rfrac{\dd(p,c)}{\inftesimal_!}$ has a dense subset of size $|\lang|$ for any ball $\dd(c)$, where $\dd(p,c)\coloneqq p\cap \dd(c)$. 

Here, we are interested in $\rfrac{\dd(p,c)}{\inftesimal_!}$ with the subspace topology from $\rfrac{p}{\inftesimal_!}$ with the quotient topology from the \gls{lp logic topology} of $p$. By \cref{l:orbits space}(\cref{itm:orbits space:Hausdorffness}), $\rfrac{p}{\inftesimal_!}$ is Hausdorff, so $\rfrac{\dd(p,c)}{\inftesimal_!}$ is Hausdorff with the subspace topology. On the other hand, we can consider the quotient topology in $\rfrac{\dd(p,c)}{\inftesimal_!}$  from the \gls{lp logic topology} of $\dd(p,c)$. As $\dd(p,c)$ is compact in the \gls{lp logic topology} (\cref{t:compactness of retractors}), we get that $\rfrac{\dd(p,c)}{\inftesimal_!}$ is compact with the quotient topology. Now, by the initial and final properties of the quotient topology and the subspace topology respectively, we have that the identity map in $\rfrac{\dd(p,c)}{\inftesimal_!}$ is continuous from the quotient topology to the subspace topology. Consequently, both topologies agree and $\rfrac{\dd(p,c)}{\inftesimal_!}$ is a compact Hausdorff topological space. 

Then, we have that $\rfrac{\dd(p,c)}{\inftesimal_!}\times \rfrac{\dd(p,c)}{\inftesimal_!}$ with the product topology is a compact Hausdorff topological space. On the other hand, we can consider the space of orbits $\rfrac{\dd(p,c)\times\dd(p,c)}{\inftesimal_!\times\inftesimal_!}$ with the coordinate{\hyp}wise action. In this space, we get the quotient topology from the \gls{lp logic topology} in $\dd(p,c)\times\dd(p,c)$. By \cref{t:compactness of retractors} again, this is a compact topology. Now, we have the natural bijection between $\rfrac{\dd(p,c)\times\dd(p,c)}{\inftesimal_!\times\inftesimal_!}$ and $\rfrac{\dd(p,c)}{\inftesimal_!}\times \rfrac{\dd(p,c)}{\inftesimal_!}$ given by $[a,b]_{\inftesimal_!\times\inftesimal_!}\mapsto ([a]_{\inftesimal_!},[b]_{\inftesimal_!})$. By \cref{l:product logic topologies} and the final and initial properties of the quotient topology and the product topology respectively, this map is also continuous. Consequently, both topologies agree. 
 
Let $E$ be the equivalence relation of $\inftesimal_!${\hyp}conjugacy restricted to $\dd(p,c)$. By \cref{l:orbits space}(\ref{itm:orbits space:orbit equivalence}), $E$ is locally positively $\bigwedge_c${\hyp}definable. Thus, by \cite[Lemma 2.12]{rodriguez2024completeness}, as $\retractor\models^\pc_\lang\theory$, if $(a,b)\notin E$, there is a locally positively $c${\hyp}definable subset $D$ such that $(a,b)\in D$ and $D\cap E=\emptyset$. Since there are at most $|\lang|$ \glspl{lp formula} over $c$, we have a family $\{D_i\}_{i<|\lang|}$ of locally positively $c${\hyp}definable subsets disjoint to $E$ such that, for any $(a,b)\notin E$, there is $i<|\lang|$ with $(a,b)\in D_i$. 

For each $i<|\lang|$, consider
\[\widetilde{D}_i\coloneqq \{(a',b') \sth \text{there are }(a,b)\in D_i\text{ with }(a,a'),(b,b')\in E\}.\]
Let $\varphi_i$ be a \gls{lp formula} over $c$ defining $D_i$ and $\Psi$ be a partial \gls{lp type} without parameters closed under finite conjunctions defining $E$. Then, $\widetilde{D}_i$ is defined over $c$ by the partial \gls{lp type}
\[\Phi_i(x,y)\coloneqq\{\exists x' y'\mathrel{} (\varphi_i(x',y')\wedge \psi(x,x')\wedge \psi(y,y')\wedge \dd(x',c)\wedge \dd(y',c))\sth \psi\in \Psi\}.\]
Indeed, obviously $\widetilde{D}_i\subseteq \Phi_i(\retractor)$. On the other hand, if $\retractor\models_\lang \Phi_i(a,b)$, then $\{\varphi_i(x',y')\wedge \psi(a,x')\wedge \psi(b,y')\wedge \dd(x',c)\wedge \dd(y',c)\sth \psi\in \Psi\}$ is finitely satisfiable. As it implies a bound, we conclude that it is boundedly satisfiable, so there are $(a',b')\in D_i$ with $(a,a'),(b,b')\in E$ by saturation of $\retractor$. In other words, $(a,b)\in \widetilde{D}_i$.

Since $E$ is an equivalence relation and $D_i\cap E=\emptyset$, we conclude that $\widetilde{D}_i\cap E=\emptyset$. Also, note that $\widetilde{D}_i=\bar{\pi}^{-1}\bar{\pi}(D_i)$ where $\bar{\pi}=(\pi,\pi)\map \dd(p,c)\times\dd(p,c)\to \rfrac{\dd(p,c)}{\inftesimal_!}\times\rfrac{\dd(p,c)}{\inftesimal_!}$ is the quotient map --- i.e. $\pi\map \dd(p,c)\to \rfrac{\dd(p,c)}{\inftesimal_!}$ is the quotient map. Hence, $\bar{\pi}^{-1}\bar{\pi}(\widetilde{D}^c_i)=\widetilde{D}^c_i$ where $\widetilde{D}^c_i\coloneqq (\dd(p,c)\times\dd(p,c))\setminus \widetilde{D}_i$, so $\bar{\pi}(\widetilde{D}^c_i)$ is open in the quotient topology, thus also in the product topology as both agree. 

As $\bar{\pi}^{-1}\bar{\pi}(\widetilde{D}^c_i)=\widetilde{D}^c_i$, we get that $\bar{\pi}(E)=\bigcap\bar{\pi}(\widetilde{D}^c_i)$. Now, $\bar{\pi}(E)$ is the diagonal, so we have concluded that the diagonal in $\rfrac{\dd(p,c)}{\inftesimal_!}$ is the intersection of $|\lang|$ many open subsets in the product topology. As $\rfrac{\dd(p,c)}{\inftesimal_!}$ is a compact Hausdorff topological space whose diagonal is the intersection of $|\lang|$ open sets in the product topology, we conclude that there is a dense subset of $\rfrac{\dd(p,c)}{\inftesimal_!}$ of size $|\lang|$. 

As $\rfrac{p}{\inftesimal_!}$ is Hausdorff, any automorphism fixing a dense subset is the identity. Hence, as $\rfrac{p}{\inftesimal_!}$ has a dense subset of size $|\lang|^{|x|}$, we conclude $\left|\Aut\left(\rfrac{p}{\inftesimal_!}\right)\right|\leq 2^{|\lang|^{|x|}}$.
\end{proof}

\begin{coro} \label{c:size of Hausdorff quotient} Let $p(x)$ be a \gls{lp type} without parameters of $\retractor$ on a pointed variable. Then, $|\Gscr_p|\leq 2^{|\lang|^{|x|}}$. 
\end{coro}
\begin{proof} Slightly abusing notation, let us keep writing $p$ for the set of realisations $p(\retractor)$. Consider the group homomorphism $f\map \Aut(\retractor)\to \Aut\left(\rfrac{p}{\inftesimal_p}\right)$ given by $f(\sigma)([a])=[\sigma(a)]$, where $[\tbullet]$ denotes the $\inftesimal_p${\hyp}conjugacy class. By \cref{l:orbits space}(\ref{itm:orbits space:orbit equivalence}), $\inftesimal_p${\hyp}conjugacy in $p$ is locally positively $\bigwedge_0${\hyp}definable, so $[\sigma(a)]=[\sigma(b)]$ when $[a]=[b]$. In other words, $f$ is well{\hyp}defined. 

By \cref{l:orbits space dense subset}, $\left|\Aut\left(\rfrac{p}{\inftesimal_p}\right)\right|\leq 2^{|\lang|^{|x|}}$. Thus, by the isomorphism theorem, it suffices to show that $\ker(f)=\inftesimal_p$. Trivially, $\inftesimal_p\subseteq \ker(f)$. 

Pick $\sigma\in \ker(f)$ and an arbitrary open subset $U\subseteq p$ in the \gls{lp logic topology}. By \cite[Appendix C, eq. 2]{hrushovski2022definability}, we need to show that $\sigma(U)\subseteq \cl(U)$, where $\cl(U)$ is the closure of $U$ in the \gls{lp logic topology}. Let $a\in U$ arbitrary. Since $\sigma\in \ker(f)$, $\sigma(a)$ and $a$ are $\inftesimal_p${\hyp}conjugates. Thus, there is $\sigma'\in \inftesimal_p$ such that $\sigma(a)=\sigma'(a)$. Hence, $\sigma(a)=\sigma'(a)\in \sigma'(U)\subseteq \cl(U)$. As $a\in U$ is arbitrary, $\sigma(U)\subseteq \cl(U)$.
\end{proof}

\printglossaries

\bibliographystyle{alphaurl}
\bibliography{Retractors_in_local_positive_logic}
%
%
%
%
%
%
\end{document}